
\documentclass[numbers=enddot,12pt,final,onecolumn,notitlepage]{scrartcl}%
\usepackage[headsepline,footsepline,manualmark]{scrlayer-scrpage}
\usepackage{amsfonts}
\usepackage{amssymb}
\usepackage{amsmath}
\usepackage{amsthm}
\usepackage{framed}
\usepackage{comment}
\usepackage{color}
\usepackage[breaklinks=true]{hyperref}
\usepackage[sc]{mathpazo}
\usepackage[T1]{fontenc}
\usepackage{needspace}
\usepackage{ytableau}
\usepackage{hyperxmp}
\providecommand{\U}[1]{\protect\rule{.1in}{.1in}}
\theoremstyle{definition}
\newtheorem{theo}{Theorem}[section]
\newenvironment{theorem}[1][]
{\begin{theo}[#1]\begin{leftbar}}
{\end{leftbar}\end{theo}}
\newtheorem{lem}[theo]{Lemma}
\newenvironment{lemma}[1][]
{\begin{lem}[#1]\begin{leftbar}}
{\end{leftbar}\end{lem}}
\newtheorem{prop}[theo]{Proposition}
\newenvironment{proposition}[1][]
{\begin{prop}[#1]\begin{leftbar}}
{\end{leftbar}\end{prop}}
\newtheorem{defi}[theo]{Definition}
\newenvironment{definition}[1][]
{\begin{defi}[#1]\begin{leftbar}}
{\end{leftbar}\end{defi}}
\newtheorem{remk}[theo]{Remark}

\newtheorem{coro}[theo]{Corollary}
\newenvironment{corollary}[1][]
{\begin{coro}[#1]\begin{leftbar}}
{\end{leftbar}\end{coro}}
\newtheorem{conv}[theo]{Convention}

\newtheorem{quest}[theo]{Question}
\newenvironment{question}[1][]
{\begin{quest}[#1]\begin{leftbar}}
{\end{leftbar}\end{quest}}
\newtheorem{warn}[theo]{Warning}

\newtheorem{conj}[theo]{Conjecture}

\newtheorem{exam}[theo]{Example}
\newenvironment{example}[1][]
{\begin{exam}[#1]\begin{leftbar}}
{\end{leftbar}\end{exam}}
\newcommand{\silentsection}{\section}

\newenvironment{noncompile}{}{}
\excludecomment{verlong}
\includecomment{vershort}
\excludecomment{noncompile}

\let\sumnonlimits\sum
\let\prodnonlimits\prod
\renewcommand{\sum}{\sumnonlimits\limits}
\renewcommand{\prod}{\prodnonlimits\limits}

\voffset=0cm
\hoffset=-0.7cm
\setlength\textheight{22.5cm}
\setlength\textwidth{15.5cm}
\ihead{Representations of somewhere-to-below shuffles, version 2026-04-17}
\ohead{page \thepage}
\cfoot{}
\begin{document}

\title{The representation theory of somewhere-to-below shuffles}
\author{Darij Grinberg}
\date{version 3, April 17, 2026}
\maketitle

\begin{abstract}
\textbf{Abstract.} The \emph{somewhere-to-below shuffles} are the elements%
\[
t_{\ell}:=\operatorname*{cyc}\nolimits_{\ell}+\operatorname*{cyc}%
\nolimits_{\ell,\ell+1}+\operatorname*{cyc}\nolimits_{\ell,\ell+1,\ell
+2}+\cdots+\operatorname*{cyc}\nolimits_{\ell,\ell+1,\ldots,n}%
\]
(for $\ell\in\left\{  1,2,\ldots,n\right\}  $) in the group algebra
$\mathbf{k}\left[  S_{n}\right]  $ of the $n$-th symmetric group $S_{n}$.
Their linear combinations are called the \emph{one-sided cycle shuffles}. We
determine the eigenvalues of the action of any one-sided cycle shuffle on any
Specht module $\mathcal{S}^{\lambda}$ of $S_{n}$. \medskip

\textbf{Mathematics Subject Classifications:} 05E99, 20C30, 60J10. \medskip

\textbf{Keywords:} symmetric group, permutations, card shuffling,
top-to-random shuffle, group algebra, filtration, Specht module,
representation theory, substitutional analysis.

\end{abstract}
\tableofcontents

\section{Introduction}

This paper is a continuation of \cite{s2b1} with other means. Specifically,
our goal here is to answer some natural representation-theoretical questions
around the somewhere-to-below shuffles in the symmetric group algebra
(including \cite[Question 16.12]{s2b1}).

We recall that the \emph{somewhere-to-below shuffles} are $n$ special elements
$t_{1},t_{2},\ldots,t_{n}$ of the group algebra $\mathbf{k}\left[
S_{n}\right]  $ of a symmetric group $S_{n}$ over an arbitrary commutative
ring $\mathbf{k}$; they are defined by
\[
t_{\ell}:=\operatorname*{cyc}\nolimits_{\ell}+\operatorname*{cyc}%
\nolimits_{\ell,\ell+1}+\operatorname*{cyc}\nolimits_{\ell,\ell+1,\ell
+2}+\cdots+\operatorname*{cyc}\nolimits_{\ell,\ell+1,\ldots,n}\in
\mathbf{k}\left[  S_{n}\right]  ,
\]
where $\operatorname*{cyc}\nolimits_{\ell,\ell+1,\ldots,k}$ denotes the cycle
that sends $\ell\mapsto\ell+1\mapsto\ell+2\mapsto\cdots\mapsto k\mapsto\ell$
(and leaves all remaining elements of $\left[  n\right]  =\left\{
1,2,\ldots,n\right\}  $ unchanged). Together with their linear combinations
(called the \emph{one-sided cycle shuffles}), they have been introduced and
studied in the paper \cite{s2b1} (published with abridgements as
\cite{s2b1-pub}\footnote{The numbering of results in \cite{s2b1} and in
\cite{s2b1-pub} is identical except for Section 9, so the reader can consult
either version.}) by Lafreni\`{e}re and the present author. One of the main
results is \cite[Theorem 11.1]{s2b1}, which constructs a basis $\left(
a_{w}\right)  _{w\in S_{n}}$ of $\mathbf{k}\left[  S_{n}\right]  $ on which
each of the shuffles $t_{1},t_{2},\ldots,t_{n}$ acts (by right multiplication)
triangularly -- i.e., which satisfies%
\[
a_{w}t_{\ell}\in\operatorname*{span}\left\{  a_{v}\ \mid\ v\leq w\right\}
\ \ \ \ \ \ \ \ \ \ \text{for all }w\in S_{n}\text{ and }\ell\in\left\{
1,2,\ldots,n\right\}
\]
(for an appropriate total order $<$ on $S_{n}$). This entails that the
shuffles $t_{1},t_{2},\ldots,t_{n}$ and their linear combinations have integer
eigenvalues; these eigenvalues have indeed been found (\cite[\S 12]{s2b1})
along with their multiplicities (\cite[\S 13]{s2b1}). As a further
consequence, the $\mathbf{k}$-subalgebra of $\mathbf{k}\left[  S_{n}\right]  $
generated by $t_{1},t_{2},\ldots,t_{n}$ is isomorphic to an algebra of
upper-triangular matrices, and the commutators $\left[  t_{i},t_{j}\right]
:=t_{i}t_{j}-t_{j}t_{i}$ are nilpotent; a followup work \cite{s2b2} proves
even stronger claims.

However, like any elements of the group algebra $\mathbf{k}\left[
S_{n}\right]  $, the shuffles $t_{1},t_{2},\ldots,t_{n}$ act not just on the
whole algebra $\mathbf{k}\left[  S_{n}\right]  $, but on any of its modules,
i.e., on any representation of $S_{n}$. Thus, the question about eigenvalues
can be asked for each representation of $S_{n}$, in particular for the
\emph{Specht modules} (which are the irreducible representations of $S_{n}$,
at least in characteristic $0$).

The main goal of this paper is to answer this latter question. Let us give a
quick outline of the answer (which was announced in \cite[\S 11]%
{fps2024sn})\footnote{The formulation in \cite[\S 11]{fps2024sn} uses the
Frobenius characteristic map, but this has turned out to be a red herring.}:

We shall use some basic notions from the representation theory of $S_{n}$ and
from symmetric functions; the reader can find all prerequisites in
\cite[Chapters 6 and 7]{Fulton97}. For any partition $\lambda$ of $n$, a
Specht module $\mathcal{S}^{\lambda}$ is defined, which is a representation of
$S_{n}$ with a basis indexed by standard tableaux of shape $\lambda$. (In
\cite{Fulton97}, it is called $S^{\lambda}$.) This $S_{n}$-module
$\mathcal{S}^{\lambda}$ is irreducible when $\mathbf{k}$ has characteristic
$0$. Each $u\in\mathbf{k}\left[  S_{n}\right]  $ acts (on the left) on this
Specht module $\mathcal{S}^{\lambda}$; we let $L_{\lambda}\left(  u\right)  $
denote this action (viewed as a $\mathbf{k}$-module endomorphism of
$\mathcal{S}^{\lambda}$).

\begin{noncompile}
We let $\mathcal{R}$ denote the (inductive) representation ring of the
symmetric groups (called $R$ in \cite[\S 7.3]{Fulton97}), and $\Lambda$ denote
the ring of symmetric functions over $\mathbb{Z}$ (defined in \cite[\S 6.2]%
{Fulton97}). The \emph{Frobenius characteristic map} is the ring isomorphism
$\varphi:\Lambda\rightarrow\mathcal{R}$ defined in \cite[\S 7.3]{Fulton97},
and the famous \emph{Schur function} $s_{\lambda}\in\Lambda$ corresponding to
a partition $\lambda$ is the preimage of the Specht module $\mathcal{S}%
^{\lambda}$ (defined over a field of characteristic $0$) under this
isomorphism $\varphi$.
\end{noncompile}

We let $\Lambda$ denote the ring of symmetric functions over $\mathbb{Z}$
(defined in \cite[\S 6.2]{Fulton97}). We recall that it has a basis $\left(
s_{\lambda}\right)  _{\lambda\text{ is a partition}}$ of \emph{Schur
functions} $s_{\lambda}$.

For each $m\in\mathbb{N}$, we let $h_{m}\in\Lambda$ denote the $m$-th complete
homogeneous symmetric function. For each $m>1$, we let $z_{m}\in\Lambda$
denote the Schur function
\[
z_{m}:=s_{\left(  m-1,1\right)  }=h_{m-1}h_{1}-h_{m}\in\Lambda.
\]

A set of integers is called \emph{lacunar} if it contains no two consecutive
integers. For each lacunar subset $I$ of $\left[  n-1\right]  $, we define a
symmetric function%
\[
z_{I}:=h_{i_{1}-1}\prod_{j=2}^{m}z_{i_{j}-i_{j-1}}\in\Lambda,
\]
where $i_{1},i_{2},\ldots,i_{m}$ are the elements of $I\cup\left\{
n+1\right\}  $ in increasing order (so that $i_{m}=n+1$ and $I=\left\{
i_{1}<i_{2}<\cdots<i_{m-1}\right\}  $). When this symmetric function $z_{I}$
is expanded in the basis $\left(  s_{\lambda}\right)  _{\lambda\text{ is a
partition}}$ of $\Lambda$, the coefficient of a given Schur function
$s_{\lambda}$ shall be called $c_{I}^{\lambda}$. This coefficient
$c_{I}^{\lambda}$ is actually a Littlewood--Richardson coefficient (since
$z_{I}$ is a skew Schur function), hence a nonnegative integer.

We now claim the following:

\begin{theorem}
[part of Theorem \ref{thm.eigs}]\label{thm.intro.eigs}Let $\lambda$ be a
partition. Let $\omega_{1},\omega_{2},\ldots,\omega_{n}\in\mathbf{k}$. Then,
the eigenvalues of the operator $L_{\lambda}\left(  \omega_{1}t_{1}+\omega
_{2}t_{2}+\cdots+\omega_{n}t_{n}\right)  $ on the Specht module $\mathcal{S}%
^{\lambda}$ are the linear combinations%
\[
\omega_{1}m_{I,1}+\omega_{2}m_{I,2}+\cdots+\omega_{n}m_{I,n}%
\ \ \ \ \ \ \ \ \ \ \text{for }I\subseteq\left[  n-1\right]  \text{ lacunar
satisfying }c_{I}^{\lambda}\neq0,
\]
where the $m_{I,k}$ are certain nonnegative integers defined combinatorially
(namely, $m_{I,k}$ is the distance between $k$ and the smallest element of
$I\cup\left\{  n+1\right\}  $ that is $\geq k$). The algebraic multiplicities
of these eigenvalues are the $c_{I}^{\lambda}$ in the generic case (i.e., if
no two $I$'s produce the same linear combination; otherwise the multiplicities
of colliding eigenvalues should be added together). Moreover, if all these
linear combinations are distinct, then $L_{\lambda}\left(  \omega_{1}%
t_{1}+\omega_{2}t_{2}+\cdots+\omega_{n}t_{n}\right)  $ is diagonalizable.
\end{theorem}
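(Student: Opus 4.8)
The plan is to set up a filtration of the Specht module $\mathcal{S}^\lambda$ on which the one-sided cycle shuffles act triangularly, so that the eigenvalues can be read off from the associated graded pieces. The starting point is \cite[Theorem 11.1]{s2b1}, which gives a basis $\left(a_w\right)_{w\in S_n}$ of $\mathbf{k}\left[S_n\right]$ on which each $t_\ell$ acts triangularly with respect to a suitable total order. I would first recall (or re-derive) the precise form of the diagonal entries: $a_w t_\ell$ equals $m_{\ell}(w)\, a_w$ plus lower-order terms, where $m_\ell(w)$ is the combinatorial integer that, after passing to the Specht quotient, will become the $m_{I,k}$ in the statement. The key observation is that this triangular action, although originally a statement about right multiplication on $\mathbf{k}\left[S_n\right]$, descends to any quotient module, and in particular to $\mathcal{S}^\lambda$ realized as a quotient (or submodule) of the regular representation — or, dually, that left multiplication by $t_\ell$ on $\mathcal{S}^\lambda$ is governed by the same data.

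The second step is to identify which basis vectors $a_w$ survive (with what multiplicity) when we restrict attention to the isotypic component of type $\lambda$. Here I expect the Littlewood--Richardson / skew-Schur bookkeeping to enter: the symmetric function $z_I = h_{i_1-1}\prod_{j=2}^m z_{i_j - i_{j-1}}$ is a skew Schur function, so $c_I^\lambda = \langle z_I, s_\lambda\rangle$ counts the multiplicity of $\mathcal{S}^\lambda$ inside the induced module whose Frobenius characteristic is $z_I$. The claim is that the associated graded of the filtration on $\mathcal{S}^\lambda$ decomposes as a direct sum, over lacunar $I \subseteq [n-1]$, of $c_I^\lambda$ copies of a one-dimensional piece on which $\omega_1 t_1 + \cdots + \omega_n t_n$ acts by the scalar $\sum_{k=1}^n \omega_k m_{I,k}$. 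Establishing this correspondence between the combinatorial index $w$ (equivalently, the "$\lambda$-relevant" data extracted from $w$) and the pair $(I, \text{an LR filling})$ is where the real work lies; it should follow by combining the explicit description of the $a_w$-action from \cite{s2b1} with the classical branching/Pieri rules, possibly via the intermediate modules $\mathcal{S}^{(i_1-1)} \otimes \mathcal{S}^{(i_2-i_1-1,1)} \otimes \cdots$ induced up to $S_n$.

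Once the graded decomposition is in hand, the eigenvalue claim is immediate: the eigenvalues of $L_\lambda\left(\omega_1 t_1 + \cdots + \omega_n t_n\right)$ are exactly the diagonal scalars $\sum_k \omega_k m_{I,k}$ for $I$ with $c_I^\lambda \neq 0$, and their algebraic multiplicities are the sums of the $c_I^\lambda$ over all $I$ producing a given scalar — in the generic case just $c_I^\lambda$ itself. For the diagonalizability statement at the end, I would argue as follows: a triangular operator whose diagonal entries are pairwise distinct is automatically diagonalizable, because its characteristic polynomial has distinct roots. Concretely, if all the linear combinations $\sum_k \omega_k m_{I,k}$ (over lacunar $I$ with $c_I^\lambda\neq 0$) are distinct, then the filtration exhibits $L_\lambda\left(\sum_\ell \omega_\ell t_\ell\right)$ as an upper-triangular matrix with distinct eigenvalues along the diagonal, hence it has $\dim \mathcal{S}^\lambda$ distinct eigenvalues counted with multiplicity — wait, not distinct, but the distinct *values* are the $\sum_k \omega_k m_{I,k}$ and the generalized eigenspace for each has dimension $c_I^\lambda$; one then checks the operator has no nontrivial nilpotent part on each generalized eigenspace. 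The cleanest route is: the minimal polynomial divides $\prod_I \left(X - \sum_k \omega_k m_{I,k}\right)$ (product over the relevant distinct $I$), which has simple roots, so $L_\lambda$ is diagonalizable. To justify that the minimal polynomial is squarefree one can invoke the stronger structural results — e.g., that the subalgebra generated by the $t_\ell$ acting on $\mathcal{S}^\lambda$ is, in this generic regime, a product of fields, or more elementarily that each $t_\ell$ individually is diagonalizable on $\mathcal{S}^\lambda$ (a known fact) and the filtration is compatible.

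The main obstacle I anticipate is the precise matching in the second step: translating the combinatorial labels from the $a_w$-basis of \cite{s2b1} into the skew-Schur/Littlewood--Richardson language, i.e., proving that the multiplicity of the scalar $\sum_k \omega_k m_{I,k}$ in the graded object is exactly $c_I^\lambda$ and not merely bounded by it. This likely requires either a dimension count ($\sum_I c_I^\lambda \cdot (\text{dim of the }I\text{-piece}) = \dim \mathcal{S}^\lambda = f^\lambda$, which should reduce to a Schur-function identity $\sum_{I \text{ lacunar}} z_I = $ some generating function with the right $s_\lambda$-coefficients) or an explicit construction of enough eigenvectors. A secondary subtlety is making the diagonalizability argument fully rigorous without circularity — i.e., getting squarefreeness of the minimal polynomial from the triangular structure plus genericity, rather than assuming it.
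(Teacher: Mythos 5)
Your outline has the same overall shape as the paper's argument (a filtration on which $\omega_{1}t_{1}+\cdots+\omega_{n}t_{n}$ acts triangularly with diagonal scalars $\omega_{I}$, multiplicities $c_{I}^{\lambda}$, and diagonalizability from a squarefree annihilating polynomial), but the two steps you defer are exactly where the content lies, and your bridging claim is wrong as stated. The triangularity of \cite[Theorem 11.1]{s2b1} concerns \emph{right} multiplication by $t_{\ell}$ on $\mathbf{k}\left[S_{n}\right]$; this does not ``descend to any quotient module'', because $\mathcal{S}^{\lambda}$ is a quotient (or submodule) of the \emph{left} regular module, and the operator $L_{\lambda}\left(t_{\ell}\right)$ is the left action, which is not induced by the right-multiplication operator on any left-module quotient. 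What makes the transfer possible is the bimodule structure of the Fibonacci filtration: each $F_{i}$ is a left ideal that is also stable under right multiplication by the $t_{\ell}$, with $F_{i}\cdot\left(t_{\ell}-m_{Q_{i},\ell}\right)\subseteq F_{i-1}$ (Theorem \ref{thm.t-simultri}). The paper transports this to $\mathcal{S}^{\lambda}$ via the annihilator construction $F_{i}^{\lambda}=\left\{v\in\mathcal{S}^{\lambda}\ \mid\ F_{i}v=0\right\}$ (Theorem \ref{thm.specht.filtr}): for $v\in F_{i-1}^{\lambda}$ one has $F_{i}\left(t_{\ell}-m_{Q_{i},\ell}\right)v\subseteq F_{i-1}v=0$, so $\left(t_{\ell}-m_{Q_{i},\ell}\right)v\in F_{i}^{\lambda}$. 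Some device of this kind (equivalently, $\operatorname{Hom}_{\mathcal{A}}\left(\mathcal{A}/F_{i},\mathcal{S}^{\lambda}\right)$) is indispensable; it does not come for free from the $a_{w}$-basis.

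The second gap is the multiplicity statement itself: that the subquotients $F_{i-1}^{\lambda}/F_{i}^{\lambda}$ have dimension exactly $c_{Q_{i}}^{\lambda}$. You name this as your ``anticipated obstacle'' but offer no proof; in the paper it is the other main theorem, namely $F_{i}/F_{i-1}\cong\mathcal{H}_{j_{1}}\ast\mathcal{Z}_{j_{2}}\ast\cdots\ast\mathcal{Z}_{j_{m}}$ (Theorem \ref{thm.Fi/Fi-1.as-ind}), proved by exhibiting the orbit-sum generators $\nabla_{\mathbf{p}}$ of $F\left(Q_{i}\right)$ (Lemma \ref{lem.Fi/Fi-1.L3}), building a surjection from the induced module, and concluding with a rank count (Lemma \ref{lem.Fi/Fi-1.dim}, which rests on the dimension formula of \cite[Theorem 13.1]{s2b1}); afterwards, exactness of $\operatorname{Hom}_{\mathcal{A}}\left(-,\mathcal{S}^{\lambda}\right)$ and Schur's lemma (Lemma \ref{lem.specht.exact}, Proposition \ref{prop.specht.tensA}) convert multiplicities in $F_{i}/F_{i-1}$ into the dimensions of $F_{i-1}^{\lambda}/F_{i}^{\lambda}$. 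Your proposed global dimension count $\sum_{I}c_{I}^{\lambda}\cdot\dim\left(\text{$I$-piece}\right)=f^{\lambda}$ only matches totals and cannot by itself fix the piece-by-piece dimensions unless paired with a one-sided bound in every slot, which is again the real work. Two smaller points: the theorem is asserted for arbitrary $\mathbf{k}$, while your argument implicitly lives in characteristic $0$ (isotypic components, ``product of fields''), so you still need the paper's polynomial-identity base-change step; and your diagonalizability argument is already complete in its first form -- the product $\prod_{I}\left(X-\omega_{I}\right)$ annihilates $L_{\lambda}\left(\omega_{1}t_{1}+\cdots+\omega_{n}t_{n}\right)$ because each factor maps $F_{i-1}^{\lambda}$ into $F_{i}^{\lambda}$, and squarefreeness in the generic case is automatic, so no appeal to diagonalizability of the individual $t_{\ell}$ is needed (nor would it be easy to justify).
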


The proof of this theorem will rely on the filtration $0=F_{0}\subseteq
F_{1}\subseteq F_{2}\subseteq\cdots\subseteq F_{f_{n+1}}=\mathbf{k}\left[
S_{n}\right]  $ of $\mathbf{k}\left[  S_{n}\right]  $ introduced in
\cite[\S 8.1]{s2b1}. We call this the \emph{Fibonacci filtration} of
$\mathbf{k}\left[  S_{n}\right]  $, as its length $f_{n+1}$ is the $\left(
n+1\right)  $-st Fibonacci number. We note that this filtration is not
completely canonical, as it depends on the choice of a listing $Q_{1}%
,Q_{2},\ldots,Q_{f_{n+1}}$ of all lacunar subsets of $\left[  n-1\right]  $ in
the order of increasing sum of elements (the ties can be broken arbitrarily,
whence the non-canonicity). Much about this filtration was already understood
in \cite{s2b1}, but we will need some additional information about the action
of $S_{n}$ on its subquotients $F_{i}/F_{i-1}$:

Let $\mathcal{A}$ be the $\mathbf{k}$-algebra $\mathbf{k}\left[  S_{n}\right]
$, and let $\mathcal{T}$ be its $\mathbf{k}$-subalgebra generated by
$t_{1},t_{2},\ldots,t_{n}$. Then, each $F_{i}$ is a left ideal of
$\mathcal{A}$ but is also fixed under right multiplication by each $t_{\ell}$;
therefore, each $F_{i}$ is an $\left(  \mathcal{A},\mathcal{T}\right)
$-subbimodule of $\mathcal{A}$. Thus, each subquotient $F_{i}/F_{i-1}$ of the
Fibonacci filtration is an $\left(  \mathcal{A},\mathcal{T}\right)
$-bimodule. As a right $\mathcal{T}$-module, it is \emph{scalar} (meaning that
each $t_{\ell}$ acts on it by a scalar, which is in fact the integer
$m_{Q_{i},\ell}$ from \cite[Theorem 8.1 \textbf{(c)}]{s2b1}). As a left
$\mathcal{A}$-module (i.e., as a representation of $S_{n}$), we describe it
explicitly here:

\begin{theorem}
[part of Theorem \ref{thm.Fi/Fi-1.as-ind}]\label{thm.intro.Fi/Fi-1}Let
$i\in\left[  f_{n+1}\right]  $.

\begin{noncompile}
Then, the isomorphism class of the $S_{n}$-representation $F_{i}/F_{i-1}$ is
$\varphi\left(  z_{Q_{i}}\right)  $. More explicitly, and true in arbitrary
characteristic, we can describe it as follows:
\end{noncompile}

Consider the lacunar subset $Q_{i}$ of $\left[  n-1\right]  $ (from the above
listing $Q_{1},Q_{2},\ldots,Q_{f_{n+1}}$). Write the set $Q_{i}\cup\left\{
n+1\right\}  $ as $\left\{  i_{1}<i_{2}<\cdots<i_{m}\right\}  $, so that
$i_{m}=n+1$. Furthermore, set $i_{0}:=1$. Set $j_{k}:=i_{k}-i_{k-1}$ for each
$k\in\left[  m\right]  $. Note that $j_{1}\geq0$ and $j_{2},j_{3},\ldots
,j_{m}>1$ and $j_{1}+j_{2}+\cdots+j_{m}=i_{m}-i_{0}=n$.

For each $p\in\mathbb{N}$, we let $\mathcal{H}_{p}$ denote the trivial
$1$-dimensional representation of $S_{p}$ (that is, the $\mathbf{k}$-module
$\mathbf{k}$ on which $S_{p}$ acts trivially), and we let $\mathcal{Z}_{p}$
denote the reflection quotient representation of $S_{p}$ (that is, the free
$\mathbf{k}$-module $\mathbf{k}^{p}$ on which $S_{p}$ acts by permuting the
coordinates, divided by the submodule consisting of all vectors of the form
$\left(  a,a,\ldots,a\right)  \in\mathbf{k}^{p}$). Then,%
\[
F_{i}/F_{i-1}\cong\operatorname*{Ind}\nolimits_{S_{j_{1}}\times S_{j_{2}%
}\times\cdots\times S_{j_{m}}}^{S_{n}}\underbrace{\left(  \mathcal{H}_{j_{1}%
}\otimes\mathcal{Z}_{j_{2}}\otimes\mathcal{Z}_{j_{3}}\otimes\cdots
\otimes\mathcal{Z}_{j_{m}}\right)  }_{\substack{\text{the first tensorand is
an }\mathcal{H}\text{,}\\\text{while all others are }\mathcal{Z}\text{'s}}}
\]
as $S_{n}$-representations. Here, we embed $S_{j_{1}}\times S_{j_{2}}%
\times\cdots\times S_{j_{m}}$ into $S_{n}$ by the usual parabolic embedding
(since $j_{1}+j_{2}+\cdots+j_{m}=n$).
\end{theorem}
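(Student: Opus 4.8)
The plan is to build, for each $i$, an explicit $S_{n}$-equivariant surjection $\operatorname*{Ind}\nolimits_{Y}^{S_{n}}U\twoheadrightarrow F_{i}/F_{i-1}$ and then upgrade it to an isomorphism by comparing dimensions (an argument that works over any $\mathbf{k}$). Here I write $Q:=Q_{i}$, let $j_{1},j_{2},\ldots,j_{m}$ and $Y:=S_{j_{1}}\times S_{j_{2}}\times\cdots\times S_{j_{m}}$ be as in the statement, let $B_{1},B_{2},\ldots,B_{m}$ be the consecutive blocks of $\left[  n\right]  $ of sizes $j_{1},j_{2},\ldots,j_{m}$ (so $\min B_{k}=1+j_{1}+\cdots+j_{k-1}$), and set $U:=\mathcal{H}_{j_{1}}\otimes\mathcal{Z}_{j_{2}}\otimes\cdots\otimes\mathcal{Z}_{j_{m}}$, an $\left(  S_{B_{1}}\times\cdots\times S_{B_{m}}\right)  $-module.

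The first step is to present $U$ (and hence $\operatorname*{Ind}\nolimits_{Y}^{S_{n}}U$) as a cyclic module by generators and relations. Realize $\mathcal{H}_{j_{1}}$ as $\mathbf{k}\left[  S_{B_{1}}\right]  $ modulo its augmentation ideal, and $\mathcal{Z}_{j_{k}}$ (for $k\geq2$) as $\mathbf{k}\left[  S_{B_{k}}\right]  $ modulo the left ideal $I_{k}$ generated by the elements $\tau-1$ (for $\tau\in S_{B_{k}\setminus\left\{  \min B_{k}\right\}  }$) together with $\sum_{x\in B_{k}}\left(  \min B_{k},x\right)  $, where $\left(  \min B_{k},x\right)  $ is the transposition exchanging $\min B_{k}$ and $x$ (with the convention $\left(  \min B_{k},\min B_{k}\right)  :=\id$); this last element is just the preimage of the all-ones vector under $\mathbf{k}\left[  S_{B_{k}}\right]  \twoheadrightarrow\mathbf{k}^{B_{k}}$, so dividing by it passes from $\mathbf{k}^{B_{k}}$ to its reflection quotient. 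Frobenius reciprocity then identifies $\operatorname{Hom}_{S_{n}}\left(  \operatorname*{Ind}\nolimits_{Y}^{S_{n}}U,\ F_{i}/F_{i-1}\right)  $ with the set of all $v\in F_{i}/F_{i-1}$ that are fixed by $S_{B_{1}}$, are fixed by $S_{B_{k}\setminus\left\{  \min B_{k}\right\}  }$ for every $k\geq2$, and satisfy
\[
\sum_{x\in B_{k}}\left(  \min B_{k},x\right)  v=0\qquad\text{for every }k\geq2,
\]
the map attached to such a $v$ having image $\mathbf{k}\left[  S_{n}\right]  v$. So it will suffice to produce one such $v$ that moreover generates $F_{i}/F_{i-1}$ as a left $\mathbf{k}\left[  S_{n}\right]  $-module: the attached map is then onto, and since $\dim\operatorname*{Ind}\nolimits_{Y}^{S_{n}}U=\dfrac{n!}{j_{1}!\,j_{2}!\cdots j_{m}!}\prod_{k=2}^{m}\left(  j_{k}-1\right)  $ equals $\dim\left(  F_{i}/F_{i-1}\right)  $ -- a finite combinatorial fact deducible from \cite{s2b1}, and also checkable directly by grouping the lacunar sets according to $j_{1}$ and invoking the derangement-number identity $\sum_{r\geq0}\sum_{c_{1},\ldots,c_{r}\geq2}\binom{M}{c_{1},\ldots,c_{r}}\prod_{s}\left(  c_{s}-1\right)  =D_{M}$ (with $D_{M}$ the $M$-th derangement number) -- a surjection of $\mathbf{k}$-modules of equal finite rank is an isomorphism.

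Everything thus reduces to the explicit construction of $v$ and the check that it generates. For $Q=\emptyset$ one has $m=1$, $U=\mathcal{H}_{n}$, $F_{1}/F_{0}=$ the trivial module, and $v=\overline{\sum_{g\in S_{n}}g}$ does the job; this is the prototype. In general I would build $v$ out of the triangular basis $\left(  a_{w}\right)  _{w\in S_{n}}$ of \cite[Theorem 11.1]{s2b1}: the permutations $w$ whose associated lacunar set is $Q_{i}$ index a basis $\left\{  \overline{a_{w}}\right\}  $ of $F_{i}/F_{i-1}$, and $v$ should be a suitable symmetrization of these $\overline{a_{w}}$ over the blocks $B_{k}$. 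The invariances under $S_{B_{1}}$ and under $S_{B_{k}\setminus\left\{  \min B_{k}\right\}  }$ should come down to a telescoping effect when one left-multiplies by an adjacent transposition supported inside a single block -- it either permutes the relevant $\overline{a_{w}}$'s among themselves or sends their differences into $F_{i-1}$ -- and the facts that $v$ generates and that the resulting map is injective should follow together by checking that the $S_{n}$-translates of $v$, paired off against a basis of $U$, are triangular with respect to the basis $\left\{  \overline{a_{w}}\right\}  $.

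The step I expect to be the genuine obstacle is the reflection relation $\sum_{x\in B_{k}}\left(  \min B_{k},x\right)  v=0$. This is exactly the relation distinguishing the reflection quotient $\mathcal{Z}_{j_{k}}$ from the full permutation module $\mathbf{k}^{j_{k}}$, and it cannot follow from mere bookkeeping of how transpositions permute the $\overline{a_{w}}$; it must instead be extracted from the way the cycles $\operatorname*{cyc}\nolimits_{\ell,\ell+1,\ldots,k}$ interact with the Fibonacci filtration. The natural lever is the scalar-on-the-right identity $\overline{a_{w}}\,t_{\ell}=m_{Q_{i},\ell}\,\overline{a_{w}}$ of \cite[Theorem 8.1 \textbf{(c)}]{s2b1}: differences such as $t_{\min B_{k}}-t_{\min B_{k}+1}$ are, up to cycles anchored strictly to the right of $B_{k}$, built out of precisely the kind of transpositions appearing in the relation, so the hoped-for argument is to combine a telescoping of the $t_{\ell}$'s across the block $B_{k}$ with the known values of the scalars $m_{Q_{i},\ell}$ to force the transposition-sum to annihilate $v$. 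Making this precise -- and ruling out that $v$ might vanish or fail to generate -- is where the bulk of the work, and the real use of \cite{s2b1}, will lie.
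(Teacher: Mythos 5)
Your overall architecture is the same as the paper's: produce an $S_{n}$-equivariant surjection from $\operatorname*{Ind}\nolimits_{S_{j_{1}}\times\cdots\times S_{j_{m}}}^{S_{n}}\left(\mathcal{H}_{j_{1}}\otimes\mathcal{Z}_{j_{2}}\otimes\cdots\otimes\mathcal{Z}_{j_{m}}\right)$ onto $F_{i}/F_{i-1}$, then conclude by the equality of ranks (from \cite[Theorem 13.1]{s2b1}) together with the fact that a surjection between free $\mathbf{k}$-modules of the same finite rank is an isomorphism. Your reduction via the cyclic presentation of $U$ and Frobenius reciprocity is also sound over any $\mathbf{k}$. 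But the proposal stops exactly where the proof has to begin: you never construct the cyclic vector $v$, never verify the block invariances, and explicitly defer the reflection relation $\sum_{x\in B_{k}}\left(\min B_{k},x\right)v=0$ as ``the genuine obstacle.'' That relation, together with the generation statement, \emph{is} the theorem; without them you have only a framework. Moreover, the two levers you propose are doubtful. Building $v$ as a symmetrization of the triangular basis elements $\overline{a_{w}}$ gives you no usable handle on left multiplication by transpositions, since the $a_{w}$ are adapted to the \emph{right} $\mathcal{T}$-action; and the right-scalar identities $a_{w}t_{\ell}\equiv m_{Q_{i},\ell}a_{w}$ live on the wrong side to control the left action of $\sum_{x}\left(\min B_{k},x\right)$ (the left and right multiplications are unrelated operators here, and $t_{\ell}-t_{\ell+1}$ is not a sum of the transpositions occurring in the relation).

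What actually closes the gap in the paper is a different choice of generator with built-in combinatorial meaning: the orbit sums $\nabla_{\mathbf{p}}$ of the right action of the parabolic subgroup attached to $Q_{i}^{\prime}$, i.e.\ sums of all $\sigma\in S_{n}$ preserving the blocks $J_{k}$ and sending each block-minimum $i_{k-1}$ to a prescribed $p_{k}$ (Lemma \ref{lem.Fi/Fi-1.L3}). For these, left multiplication by $\tau\in S_{J_{1}}\times\cdots\times S_{J_{m}}$ simply relabels the parameters ($\tau\nabla_{\mathbf{p}}=\nabla_{\tau\mathbf{p}}$), which gives the equivariance for free, and the reflection relation becomes transparent: summing $\nabla_{\left(p_{2},\ldots,p_{m}\right)}$ over $p_{\ell}\in J_{\ell}$ erases the constraint $\sigma\left(i_{\ell-1}\right)=p_{\ell}$, so the resulting element of $F\left(Q_{i}\right)$ is fixed under \emph{right} multiplication by $s_{i_{\ell-1}}$, and a separate lemma (Lemma \ref{lem.Fi/Fi-1.L2}, resting on \cite[Proposition 8.6 and Corollary 8.8]{s2b1}) shows that any element of $F\left(Q_{i}\right)$ with this extra right invariance already lies in $F\left(J\right)\subseteq F_{i-1}$ for some lacunar $J$ with smaller sum. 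Note in particular that the mechanism is an extra \emph{right} $s_{j}$-invariance pushing you down the filtration, not a manipulation of the shuffles $t_{\ell}$. If you want to complete your write-up, replacing your unspecified symmetrization of the $\overline{a_{w}}$ by these orbit sums, and supplying the two lemmas just described, is the missing content.
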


This theorem will be a crucial stepping stone on our way to Theorem
\ref{thm.intro.eigs}.

We note that neither of our two main results requires any assumption about the
characteristic of $\mathbf{k}$. However, in positive characteristic, care must
be taken to distinguish between the reflection quotient representation
$\mathcal{Z}_{p}$ in Theorem \ref{thm.intro.Fi/Fi-1} and the reflection
subrepresentation $\mathcal{R}_{p}$ (which consists of the zero-sum vectors in
$\mathbf{k}^{p}$); the two representations have the same dimension $p-1$ (for
$p\geq1$), but are not isomorphic unless $\operatorname*{char}\mathbf{k}\neq
p$ or $p\leq2$.

\begin{noncompile}
It is worth noticing that the diagonalizability claim in Theorem
\ref{thm.intro.eigs} is an improvement upon the diagonalizability claim in
\cite[Theorem 12.3]{s2b1}. Indeed, if $\mathbf{k}$ is a field of
characteristic $0$, and if each partition $\lambda$ of $n$ has the property
that the scalars%
\[
\omega_{1}m_{I,1}+\omega_{2}m_{I,2}+\cdots+\omega_{n}m_{I,n}%
\ \ \ \ \ \ \ \ \ \ \text{for }I\subseteq\left[  n-1\right]  \text{ lacunar
satisfying }c_{I}^{\lambda}\neq0
\]
are distinct, then it follows that the action of $\omega_{1}t_{1}+\omega
_{2}t_{2}+\cdots+\omega_{n}t_{n}$ on each Specht module $\mathcal{S}^{\lambda
}$ is diagonalizable, and therefore so is the action of $\omega_{1}%
t_{1}+\omega_{2}t_{2}+\cdots+\omega_{n}t_{n}$ on $\mathcal{A}$ itself (since
$\mathcal{A}$ can be decomposed into a direct sum of Specht modules). In
\cite[Theorem 12.3]{s2b1}, this is proved only under the stronger condition
that the scalars%
\[
\omega_{1}m_{I,1}+\omega_{2}m_{I,2}+\cdots+\omega_{n}m_{I,n}%
\ \ \ \ \ \ \ \ \ \ \text{for }I\subseteq\left[  n-1\right]  \text{ lacunar}%
\]
are all distinct. However, I don't know of any specific cases in which this
helps (e.g., it does not help with the one-sided cycle shuffle $\sum_{k=1}%
^{n}\dfrac{1}{n+1-k}t_{k}\in\mathbb{Q}\left[  S_{n}\right]  $ for $n=12$).
\end{noncompile}

We suspect that our results can be generalized (\textquotedblleft%
$q$-deformed\textquotedblright) from the symmetric group algebra to the Hecke
algebra $\mathcal{H}_{q}\left(  S_{n}\right)  $. Most results from \cite{s2b1}
can definitely be generalized this way, as will be detailed in forthcoming work.

\paragraph{Acknowledgements}

The author would like to thank Sarah Brauner, Nadia Lafreni\`{e}re, Martin
Lorenz, Victor Reiner, Sheila Sundaram and Mark Wildon for inspiring discussions,
as well as two referees for helpful commentary
(in particular the suggestion of Example~\ref{exa.Fi/Fi-1.L3.e1}).

\begin{noncompile}
The SageMath CAS \cite{sagemath} was indispensable at every stage of the
research presented here.
\end{noncompile}

\section{Definitions and notations}

\subsection{Basics}

We recall some notations from \cite{s2b1}.

Let $\mathbf{k}$ be any commutative ring. (We don't require that $\mathbf{k}$
is a field or a $\mathbb{Q}$-algebra, but the reader can think of
$\mathbf{k}=\mathbb{Q}$ as a standing example.)

Let $\mathbb{N}:=\left\{  0,1,2,\ldots\right\}  $ be the set of all
nonnegative integers.

For any integers $a$ and $b$, we set
\[
\left[  a,b\right]  :=\left\{  k\in\mathbb{Z}\ \mid\ a\leq k\leq b\right\}
=\left\{  a,a+1,\ldots,b\right\}  .
\]
This is an empty set if $a>b$. In general, $\left[  a,b\right]  $ is called an
\emph{integer interval}.

For each $n\in\mathbb{Z}$, let $\left[  n\right]  :=\left[  1,n\right]
=\left\{  1,2,\ldots,n\right\}  $.

Fix an integer $n\in\mathbb{N}$. Let $S_{n}$ be the $n$-th symmetric group,
i.e., the group of all permutations of $\left[  n\right]  $. We multiply
permutations in the \textquotedblleft continental\textquotedblright\ way: that
is, $\left(  \pi\sigma\right)  \left(  i\right)  =\pi\left(  \sigma\left(
i\right)  \right)  $ for all $\pi,\sigma\in S_{n}$ and $i\in\left[  n\right]
$.

For any $k$ distinct elements $i_{1},i_{2},\ldots,i_{k}$ of $\left[  n\right]
$, we let $\operatorname*{cyc}\nolimits_{i_{1},i_{2},\ldots,i_{k}}$ be the
permutation in $S_{n}$ that sends $i_{1},i_{2},\ldots,i_{k-1},i_{k}$ to
$i_{2},i_{3},\ldots,i_{k},i_{1}$, respectively while leaving all remaining
elements of $\left[  n\right]  $ unchanged. This permutation is known as a
\emph{cycle}. Note that $\operatorname*{cyc}\nolimits_{i}=\operatorname*{id}$
for any single $i\in\left[  n\right]  $.

For any $i\in\left[  n-1\right]  $, we denote the cycle $\operatorname*{cyc}%
\nolimits_{i,i+1}$ by $s_{i}$ and call it a \emph{simple transposition}.

\subsection{Somewhere-to-below shuffles, $\mathcal{A}$ and $\mathcal{T}$}

Let $\mathcal{A}$ be the group algebra $\mathbf{k}\left[  S_{n}\right]  $. In
this algebra, define $n$ elements $t_{1},t_{2},\ldots,t_{n}$ by
setting\footnote{We view $S_{n}$ as a subset of $\mathbf{k}\left[
S_{n}\right]  $ in the obvious way.}%
\[
t_{\ell}:=\operatorname*{cyc}\nolimits_{\ell}+\operatorname*{cyc}%
\nolimits_{\ell,\ell+1}+\operatorname*{cyc}\nolimits_{\ell,\ell+1,\ell
+2}+\cdots+\operatorname*{cyc}\nolimits_{\ell,\ell+1,\ldots,n}\in
\mathbf{k}\left[  S_{n}\right]
\]
for each $\ell\in\left[  n\right]  $. Thus, in particular, $t_{n}%
=\operatorname*{cyc}\nolimits_{n}=\operatorname*{id}=1$ (where $1$ means the
unity of $\mathbf{k}\left[  S_{n}\right]  $). The $n$ elements $t_{1}%
,t_{2},\ldots,t_{n}$ are known as the \emph{somewhere-to-below shuffles}.

We let $\mathcal{T}$ be the $\mathbf{k}$-subalgebra of $\mathcal{A}$ generated
by these $n$ somewhere-to-below shuffles $t_{1},t_{2},\ldots,t_{n}$. Clearly,
$\mathcal{A}$ is an $\left(  \mathcal{A},\mathcal{T}\right)  $-bimodule (with
$\mathcal{A}$ acting from the left by multiplication, and $\mathcal{T}$ acting
from the right by multiplication).

\subsection{Some $S_{n}$-representation theory}

We recall that the representations of the symmetric group $S_{n}$ (over
$\mathbf{k}$) are precisely the left $\mathbf{k}\left[  S_{n}\right]
$-modules, i.e., the left $\mathcal{A}$-modules. We will use the following
four classes of $S_{n}$-representations in particular:

\begin{enumerate}
\item \emph{The Specht modules }$\mathcal{S}^{\lambda}$\emph{:} If $\lambda$
is any partition of $n$, then the \emph{Specht module} $\mathcal{S}^{\lambda}$
is a representation of $S_{n}$ constructed using the Young diagram of shape
$\lambda$. For its definition, see \cite[Definition 5.4.1 \textbf{(b)}]{sga}
(where it is called $\mathcal{S}^{Y\left(  \lambda\right)  }$) or
\cite[\S 7.2]{Fulton97} (where it is called $S^{\lambda}$). If $\mathbf{k}$ is
a field of characteristic $0$, then the Specht module $\mathcal{S}^{\lambda}$
is irreducible.

\item \emph{The trivial representation }$\mathcal{H}_{n}$\emph{:} We let
$\mathcal{H}_{n}$ denote the $\mathbf{k}$-module $\mathbf{k}$, equipped with a
trivial $S_{n}$-action (that is, $\sigma\cdot v=v$ for all $\sigma\in S_{n}$
and $v\in\mathbf{k}$). This is called the \emph{trivial representation} of
$S_{n}$. It is isomorphic to the Specht module $\mathcal{S}^{\left(  n\right)
}$.

\item \emph{The natural representation }$\mathcal{N}_{n}$\emph{:} We let
$\mathcal{N}_{n}$ denote the free $\mathbf{k}$-module $\mathbf{k}^{n}=\left\{
\left(  v_{1},v_{2},\ldots,v_{n}\right)  \ \mid\ \text{all }v_{i}\in
\mathbf{k}\right\}  $, on which $S_{n}$ acts by permuting the coordinates:%
\[
\sigma\cdot\left(  v_{1},v_{2},\ldots,v_{n}\right)  =\left(  v_{\sigma
^{-1}\left(  1\right)  },v_{\sigma^{-1}\left(  2\right)  },\ldots
,v_{\sigma^{-1}\left(  n\right)  }\right)  \ \ \ \ \ \ \ \ \ \ \text{for all
}\sigma\in S_{n}.
\]
This is called the \emph{natural representation} of $S_{n}$.

\item \emph{The reflection quotient representation }$\mathcal{Z}_{n}$\emph{:}
If $n>0$, then the natural representation $\mathcal{N}_{n}$ has a
$1$-dimensional subrepresentation
\[
\mathcal{D}_{n}:=\left\{  \left(  v_{1},v_{2},\ldots,v_{n}\right)
\ \mid\ \text{all }v_{i}\text{ are equal}\right\}  =\left\{  \left(
a,a,\ldots,a\right)  \ \mid\ a\in\mathbf{k}\right\}  .
\]
The quotient
\[
\mathcal{Z}_{n}:=\mathcal{N}_{n}/\mathcal{D}_{n}%
\]
is thus another representation of $S_{n}$. This $\mathcal{Z}_{n}$ is called
the \emph{reflection quotient representation} of $S_{n}$. As a $\mathbf{k}%
$-module, it is free of rank $n-1$ (with basis $\left(  \overline{e_{1}%
},\overline{e_{2}},\ldots,\overline{e_{n-1}}\right)  $, where $e_{1}%
,e_{2},\ldots,e_{n}$ are the standard basis vectors of $\mathbf{k}^{n}$). Here
and in the following, the notation $\overline{v}$ denotes the residue class of
a vector $v$ modulo some submodule (the submodule is to be inferred from the context).

If $\mathbf{k}$ is a field of characteristic $0$ (or, more generally, if $n$
is invertible in $\mathbf{k}$), then this representation $\mathcal{Z}_{n}$ is
isomorphic to the Specht module $\mathcal{S}^{\left(  n-1,1\right)  }$.
Without any such assumptions, $\mathcal{Z}_{n}$ is isomorphic to the dual
$\left(  \mathcal{S}^{\left(  n-1,1\right)  }\right)  ^{\ast}$ of this Specht
module. (See Proposition \ref{prop.Z-iso} below.)
\end{enumerate}

If $V$ is any $\mathbf{k}$-module, then $V^{\ast}$ shall denote its dual
$\mathbf{k}$-module $\operatorname*{Hom}\nolimits_{\mathbf{k}}\left(
V,\mathbf{k}\right)  $. If $V$ is an $S_{n}$-representation, then its dual
$V^{\ast}$ becomes an $S_{n}$-representation as well (see \cite[\S 5.19.3]%
{sga}).

\begin{proposition}
\label{prop.Z-iso}Let $n>1$ be an integer. Then:

\begin{enumerate}
\item[\textbf{(a)}] The reflection quotient representation $\mathcal{Z}_{n}$
is isomorphic (as an $S_{n}$-representation) to the dual $\left(
\mathcal{S}^{\left(  n-1,1\right)  }\right)  ^{\ast}$ of the Specht module
$\mathcal{S}^{\left(  n-1,1\right)  }$.

\item[\textbf{(b)}] If $n$ is invertible in $\mathbf{k}$, then $\mathcal{Z}%
_{n}$ is isomorphic (as an $S_{n}$-representation) to the Specht module
$\mathcal{S}^{\left(  n-1,1\right)  }$.
\end{enumerate}
\end{proposition}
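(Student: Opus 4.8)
The plan is to prove part~\textbf{(a)} first and then deduce part~\textbf{(b)} from it. For part~\textbf{(a)}, I would produce an explicit $S_n$-equivariant pairing between $\mathcal{N}_n$ and the permutation module $M^{(n-1,1)}$, then pass to quotients and submodules. Concretely, recall that the $S_n$-representation $\mathcal{S}^{(n-1,1)}$ sits inside the permutation module $M^{(n-1,1)} = \mathbf{k}\{\,f_1,f_2,\ldots,f_n\,\}$ (where $S_n$ permutes the $f_i$ the same way it permutes the $e_i$), namely as the submodule $\mathcal{R}_n' := \{\sum_i a_i f_i \mid \sum_i a_i = 0\}$ of zero-sum vectors; this is the standard realization of the Specht module for a two-row partition (see \cite[\S 7.2]{Fulton97}). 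Now the bilinear form $\mathcal{N}_n \times M^{(n-1,1)} \to \mathbf{k}$ sending $(e_i, f_j) \mapsto \delta_{i,j}$ is $S_n$-invariant, hence it induces a $\mathbf{k}$-linear $S_n$-equivariant map $\Phi : \mathcal{N}_n \to \left(M^{(n-1,1)}\right)^\ast$, which is an isomorphism since the form is a perfect pairing. Under $\Phi$, the submodule $\mathcal{D}_n = \{(a,a,\ldots,a)\}$ is sent precisely to the annihilator of $\mathcal{R}_n'$, because $(a,\ldots,a)$ pairs to $a\sum_i b_i = 0$ against every zero-sum vector $\sum_i b_i f_i$, and conversely a functional vanishing on all of $\mathcal{R}_n'$ must be a scalar multiple of $\sum_i f_i^\ast$ (here I use $n \geq 1$, so $\mathcal{R}_n'$ has a complementary line; more carefully, $\operatorname{rank}\mathcal{R}_n' = n-1$ forces its annihilator to be a rank-$1$ direct summand, and $\Phi(\mathcal{D}_n)$ clearly lands inside it and is itself a rank-$1$ direct summand). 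Therefore $\Phi$ descends to an $S_n$-equivariant isomorphism
\[
\mathcal{Z}_n = \mathcal{N}_n / \mathcal{D}_n \;\xrightarrow{\ \sim\ }\; \left(M^{(n-1,1)}\right)^\ast \big/ \left(\mathcal{R}_n'\right)^{\perp} \;\cong\; \left(\mathcal{R}_n'\right)^\ast \cong \left(\mathcal{S}^{(n-1,1)}\right)^\ast,
\]
where the middle isomorphism is the standard fact that $V^\ast / W^\perp \cong W^\ast$ for a direct summand $W \subseteq V$ of a finite free module, and is $S_n$-equivariant because all maps involved are.

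For part~\textbf{(b)}, assume $n$ is invertible in $\mathbf{k}$. Then the trace (averaging) argument applies: the short exact sequence $0 \to \mathcal{D}_n \to \mathcal{N}_n \to \mathcal{Z}_n \to 0$ splits $S_n$-equivariantly, with splitting given by the projection $v \mapsto v - \frac{1}{n}\big(\sum_i v_i\big)(1,1,\ldots,1)$ onto the zero-sum submodule $\mathcal{R}_n \subseteq \mathcal{N}_n$; hence $\mathcal{Z}_n \cong \mathcal{R}_n$ as $S_n$-representations. The same averaging also shows $M^{(n-1,1)} \cong \mathcal{D}_n' \oplus \mathcal{R}_n'$ with the two summands swapped in the obvious self-dual fashion, so $\mathcal{R}_n \cong \mathcal{R}_n'$ and each is self-dual (the pairing $(e_i,f_j)\mapsto\delta_{i,j}$ restricts to a perfect $S_n$-invariant pairing on the two zero-sum pieces once $n$ is invertible — its Gram matrix is $I - \tfrac1n J$, which is invertible in this case). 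Combining, $\mathcal{Z}_n \cong \mathcal{R}_n' \cong \left(\mathcal{R}_n'\right)^\ast \cong \left(\mathcal{S}^{(n-1,1)}\right)^\ast$, and by part~\textbf{(a)} this is also $\cong \mathcal{Z}_n$, consistent; but more to the point, $\mathcal{R}_n' \cong \mathcal{S}^{(n-1,1)}$ directly (the Specht module for $(n-1,1)$ is defined as the span of polytabloids, which for a two-row shape is exactly the zero-sum span $\mathcal{R}_n'$, with no passage to a quotient needed), so $\mathcal{Z}_n \cong \mathcal{S}^{(n-1,1)}$.

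I expect the main subtlety to be entirely characteristic-theoretic bookkeeping rather than any deep obstacle: in part~\textbf{(a)} one must be careful that $\mathbf{k}$ is only a commutative ring, so "complementary line" arguments must be phrased via direct summands and ranks rather than via dimension counts or averaging, and the identification $\mathcal{S}^{(n-1,1)} \cong \mathcal{R}_n'$ should be justified from the chosen definition (polytabloids for a two-row partition, as in \cite[\S 7.2]{Fulton97} or \cite[Definition 5.4.1]{sga}) without invoking semisimplicity. The dualization step $V^\ast/W^\perp\cong W^\ast$ for a direct summand $W$ is routine but worth stating as a lemma, and one should check explicitly that the isomorphism it produces respects the $S_n$-action, which it does because the restriction map $V^\ast \to W^\ast$ is $S_n$-equivariant and its kernel is exactly $W^\perp$. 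Everything else is a short computation with the standard pairing $(e_i,f_j)\mapsto\delta_{i,j}$.
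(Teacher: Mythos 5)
Your proof is correct and takes essentially the same route as the paper's: part \textbf{(a)} rests on the standard perfect pairing $(e_i,f_j)\mapsto\delta_{i,j}$ between $\mathcal{N}_n$ and the permutation module together with the identification of $\mathcal{S}^{(n-1,1)}$ with the zero-sum submodule, and part \textbf{(b)} on the averaging splitting of $\mathcal{D}_n$; the only organizational difference is that the paper dualizes in the opposite direction in (a) (proving $\mathcal{S}^{(n-1,1)}\cong\mathcal{D}_n^{\perp}\cong\mathcal{Z}_n^{\ast}$ and then invoking the double dual), whereas your arrangement via $V^{\ast}/W^{\perp}\cong W^{\ast}$ avoids the double-dual step. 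One cosmetic slip in a dispensable aside: the Gram matrix of the pairing restricted to the two zero-sum pieces, computed in the bases $e_i-e_n$ and $f_j-f_n$, is $I_{n-1}+J_{n-1}$ with determinant $n$ — not $I-\tfrac{1}{n}J$, which is a rank-$(n-1)$ idempotent and never invertible — but your final chain $\mathcal{Z}_n\cong\mathcal{R}_n\cong\mathcal{R}_n'\cong\mathcal{S}^{(n-1,1)}$ does not use that self-duality claim, so nothing is affected.
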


This proposition is clearly part of the folklore, but we outline a proof in
the Appendix (Subsection \ref{subsec.apx.pf.prop.Z-iso}) for the sake of completeness.

\subsection{Tensor products, induction and induction products}

We shall now discuss certain ways to produce new representations from old.

The symbol \textquotedblleft$\otimes$\textquotedblright\ shall always mean a
tensor product over $\mathbf{k}$, unless a different base ring is provided as
a subscript.

It is well-known that if $A$ and $B$ are two $\mathbf{k}$-algebras, then the
tensor product $U\otimes V$ of any left $A$-module $U$ and any left $B$-module
$V$ is canonically a left $A\otimes B$-module. An analogous construction
exists for tensor products of $k$ left modules. Thus, if $U$ is a
representation of a group $G$, and if $V$ is a representation of a group $H$,
then $U\otimes V$ is a representation of $G\times H$, and a similar fact holds
for tensor products of $k$ representations.

We recall the notion of an induced representation: If $G$ is a group, and if
$H$ is a subgroup of $G$, then any $H$-representation $V$ gives rise to a
$G$-representation $\operatorname*{Ind}\nolimits_{H}^{G}V$ defined by%
\begin{equation}
\operatorname*{Ind}\nolimits_{H}^{G}V=\mathbf{k}\left[  G\right]
\otimes_{\mathbf{k}\left[  H\right]  }V, \label{eq.IndGH.def}%
\end{equation}
where we view $\mathbf{k}\left[  G\right]  $ as a $\left(  \mathbf{k}\left[
G\right]  ,\mathbf{k}\left[  H\right]  \right)  $-bimodule while viewing $V$
as a left $\mathbf{k}\left[  H\right]  $-module (so that the tensor product
over $\mathbf{k}\left[  H\right]  $ becomes a left $\mathbf{k}\left[
G\right]  $-module). This $G$-representation $\operatorname*{Ind}%
\nolimits_{H}^{G}V$ is called the \emph{induced representation} of $V$ to $G$.

We furthermore recall the notion of an induction product (\cite[\S 7.3]%
{Fulton97}):

\begin{definition}
Let $n$ and $m$ be two nonnegative integers. Then, the direct product
$S_{n}\times S_{m}$ can be canonically embedded as a subgroup into $S_{n+m}$,
by the group morphism that sends each pair $\left(  \sigma,\tau\right)  \in
S_{n}\times S_{m}$ to the permutation $\sigma\ast\tau\in S_{n+m}$ that applies
$\sigma$ to the first $n$ elements while applying $\tau$ (appropriately
shifted) to the last $m$ elements of $\left[  n+m\right]  $. (To be fully
precise: $\sigma\ast\tau$ is the permutation of $\left[  n+m\right]  $ that
sends $1,2,\ldots,n$ to $\sigma\left(  1\right)  ,\sigma\left(  2\right)
,\ldots,\sigma\left(  n\right)  $ while sending $n+1,n+2,\ldots,n+m$ to
$n+\tau\left(  1\right)  ,n+\tau\left(  2\right)  ,\ldots,n+\tau\left(
m\right)  $.) This is called the \emph{parabolic embedding} of $S_{n}\times
S_{m}$ into $S_{n+m}$.

Now, if $U$ is an $S_{n}$-representation and if $V$ is an $S_{m}%
$-representation, then the tensor product $U\otimes V$ is an $S_{n}\times
S_{m}$-representation, and thus (by the embedding of $S_{n}\times S_{m}$ into
$S_{n+m}$ we just explained) we can construct the induced representation%
\[
U\ast V:=\operatorname*{Ind}\nolimits_{S_{n}\times S_{m}}^{S_{n+m}}\left(
U\otimes V\right)
\]
of $S_{n+m}$. This induced representation $U\ast V$ is called the
\emph{induction product} of $U$ and $V$.

More generally, if $n_{1},n_{2},\ldots,n_{k}$ are any $k$ nonnegative
integers, and if $U_{i}$ is an $S_{n_{i}}$-representation for each
$i\in\left[  k\right]  $, then the \emph{induction product }$U_{1}\ast
U_{2}\ast\cdots\ast U_{k}$ is defined to be the $S_{n_{1}+n_{2}+\cdots+n_{k}}%
$-representation%
\[
\operatorname*{Ind}\nolimits_{S_{n_{1}}\times S_{n_{2}}\times\cdots\times
S_{n_{k}}}^{S_{n_{1}+n_{2}+\cdots+n_{k}}}\left(  U_{1}\otimes U_{2}%
\otimes\cdots\otimes U_{k}\right)  ,
\]
where we embed $S_{n_{1}}\times S_{n_{2}}\times\cdots\times S_{n_{k}}$ into
$S_{n_{1}+n_{2}+\cdots+n_{k}}$ in the obvious way (having each $S_{n_{i}}$ act
on an appropriate interval\footnotemark). The latter embedding is again called
the \emph{parabolic embedding} of $S_{n_{1}}\times S_{n_{2}}\times\cdots\times
S_{n_{k}}$ into $S_{n_{1}+n_{2}+\cdots+n_{k}}$.
\end{definition}

\footnotetext{To make this precise: Let $m_{i}:=n_{1}+n_{2}+\cdots+n_{i}$ for
each $i\in\left[  0,k\right]  $. Then, the integer interval $\left[
n_{1}+n_{2}+\cdots+n_{k}\right]  $ is partitioned into the intervals $\left[
m_{i-1}+1,\ m_{i}\right]  $ for all $i\in\left[  k\right]  $. The embedding of
$S_{n_{1}}\times S_{n_{2}}\times\cdots\times S_{n_{k}}$ into $S_{n_{1}%
+n_{2}+\cdots+n_{k}}$ sends each $k$-tuple $\left(  \sigma_{1},\sigma
_{2},\ldots,\sigma_{k}\right)  \in S_{n_{1}}\times S_{n_{2}}\times\cdots\times
S_{n_{k}}$ to the permutation $\sigma_{1}\ast\sigma_{2}\ast\cdots\ast
\sigma_{k}\in S_{n_{1}+n_{2}+\cdots+n_{k}}$ defined by%
\[
\left(  \sigma_{1}\ast\sigma_{2}\ast\cdots\ast\sigma_{k}\right)  \left(
m_{i-1}+x\right)  :=m_{i-1}+\sigma_{i}\left(  x\right)
\ \ \ \ \ \ \ \ \ \ \text{for each }i\in\left[  k\right]  \text{ and each
}x\in\left[  n_{i}\right]  .
\]
} These induction products satisfy associativity up to isomorphism: e.g., we
have isomorphisms
$\left(  U\ast V\right)  \ast W\cong U\ast V\ast W\cong U\ast\left(
V\ast W\right)  $ for all $U,V,W$. More generally:

\begin{proposition}
\label{prop.indprod.ass}Let $n_{1},n_{2},\ldots,n_{k}$ be any $k$ nonnegative
integers, and let $U_{i}$ be an $S_{n_{i}}$-representation for each
$i\in\left[  k\right]  $. Let $i\in\left[  0,k\right]  $. Then,%
\[
U_{1}\ast U_{2}\ast\cdots\ast U_{k}\cong\left(  U_{1}\ast U_{2}\ast\cdots\ast
U_{i}\right)  \ast\left(  U_{i+1}\ast U_{i+2}\ast\cdots\ast U_{k}\right)  .
\]

\end{proposition}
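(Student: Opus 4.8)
The plan is to unfold both sides of the asserted isomorphism using the definition $\operatorname*{Ind}\nolimits_{H}^{G}V=\mathbf{k}\left[ G\right] \otimes_{\mathbf{k}\left[ H\right] }V$, and to reduce everything to two standard facts about tensor products of modules. The first is the \emph{transitivity of induction}: if $K\leq H\leq G$ are groups and $V$ is a $K$-representation, then $\operatorname*{Ind}\nolimits_{H}^{G}\operatorname*{Ind}\nolimits_{K}^{H}V\cong\operatorname*{Ind}\nolimits_{K}^{G}V$; this is immediate from the associativity of $\otimes$ together with the cancellation $\mathbf{k}\left[ G\right] \otimes_{\mathbf{k}\left[ H\right] }\mathbf{k}\left[ H\right] \cong\mathbf{k}\left[ G\right] $. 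The second, which is the substantive ingredient, is the compatibility of induction with \emph{external} tensor products: for groups $G_{1},G_{2}$ with subgroups $H_{1},H_{2}$ and representations $V_{j}$ of $H_{j}$, there is a natural isomorphism of $G_{1}\times G_{2}$-representations
\[
\left( \operatorname*{Ind}\nolimits_{H_{1}}^{G_{1}}V_{1}\right) \otimes\left( \operatorname*{Ind}\nolimits_{H_{2}}^{G_{2}}V_{2}\right) \cong\operatorname*{Ind}\nolimits_{H_{1}\times H_{2}}^{G_{1}\times G_{2}}\left( V_{1}\otimes V_{2}\right) .
\]

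I would obtain this from the purely ring-theoretic base-change isomorphism $\left( R_{1}\otimes_{S_{1}}M_{1}\right) \otimes_{\mathbf{k}}\left( R_{2}\otimes_{S_{2}}M_{2}\right) \cong\left( R_{1}\otimes_{\mathbf{k}}R_{2}\right) \otimes_{S_{1}\otimes_{\mathbf{k}}S_{2}}\left( M_{1}\otimes_{\mathbf{k}}M_{2}\right) $, valid for $\mathbf{k}$-algebra maps $S_{j}\rightarrow R_{j}$ and modules $M_{j}$ over $S_{j}$, applied with $R_{j}=\mathbf{k}\left[ G_{j}\right] $ and $S_{j}=\mathbf{k}\left[ H_{j}\right] $, using the canonical identifications $\mathbf{k}\left[ G_{1}\right] \otimes_{\mathbf{k}}\mathbf{k}\left[ G_{2}\right] \cong\mathbf{k}\left[ G_{1}\times G_{2}\right] $ and $\mathbf{k}\left[ H_{1}\right] \otimes_{\mathbf{k}}\mathbf{k}\left[ H_{2}\right] \cong\mathbf{k}\left[ H_{1}\times H_{2}\right] $. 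The base-change isomorphism itself is proved by writing down the maps $\left( r_{1}\otimes m_{1}\right) \otimes\left( r_{2}\otimes m_{2}\right) \mapsto\left( r_{1}\otimes r_{2}\right) \otimes\left( m_{1}\otimes m_{2}\right) $ and its evident inverse and checking well-definedness; this is routine.

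Given these tools, the proof is a chain of isomorphisms. Set $N:=n_{1}+n_{2}+\cdots+n_{k}$, $n:=n_{1}+n_{2}+\cdots+n_{i}$ and $m:=n_{i+1}+n_{i+2}+\cdots+n_{k}$, so $N=n+m$. Put $A:=U_{1}\otimes\cdots\otimes U_{i}$ (a representation of $S_{n_{1}}\times\cdots\times S_{n_{i}}$) and $B:=U_{i+1}\otimes\cdots\otimes U_{k}$ (a representation of $S_{n_{i+1}}\times\cdots\times S_{n_{k}}$), and note $A\otimes B\cong U_{1}\otimes\cdots\otimes U_{k}$ by associativity of $\otimes$ (up to the obvious reassociation of the direct product of symmetric groups). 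Writing $X:=U_{1}\ast\cdots\ast U_{i}=\operatorname*{Ind}\nolimits_{S_{n_{1}}\times\cdots\times S_{n_{i}}}^{S_{n}}A$ and $Y:=U_{i+1}\ast\cdots\ast U_{k}=\operatorname*{Ind}\nolimits_{S_{n_{i+1}}\times\cdots\times S_{n_{k}}}^{S_{m}}B$, and letting $H:=\left( S_{n_{1}}\times\cdots\times S_{n_{i}}\right) \times\left( S_{n_{i+1}}\times\cdots\times S_{n_{k}}\right) $ as a subgroup of $S_{n}\times S_{m}$, we get
\[
X\ast Y=\operatorname*{Ind}\nolimits_{S_{n}\times S_{m}}^{S_{N}}\left( X\otimes Y\right) \cong\operatorname*{Ind}\nolimits_{S_{n}\times S_{m}}^{S_{N}}\operatorname*{Ind}\nolimits_{H}^{S_{n}\times S_{m}}\left( A\otimes B\right) \cong\operatorname*{Ind}\nolimits_{H}^{S_{N}}\left( A\otimes B\right) ,
\]
using the external-tensor-product isomorphism and then transitivity of induction. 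Since $H$, viewed as a subgroup of $S_{N}$, coincides with the parabolic image of $S_{n_{1}}\times\cdots\times S_{n_{k}}$ (see below), and $A\otimes B\cong U_{1}\otimes\cdots\otimes U_{k}$, the last term equals $\operatorname*{Ind}\nolimits_{S_{n_{1}}\times\cdots\times S_{n_{k}}}^{S_{N}}\left( U_{1}\otimes\cdots\otimes U_{k}\right) =U_{1}\ast\cdots\ast U_{k}$, which is the claim.

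The one point genuinely requiring care — and thus the main (if modest) obstacle — is verifying that the composite of parabolic embeddings $H\hookrightarrow S_{n}\times S_{m}\hookrightarrow S_{N}$ agrees with the single parabolic embedding $S_{n_{1}}\times\cdots\times S_{n_{k}}\hookrightarrow S_{N}$, so that the invocation of transitivity is legitimate and the final induced module really is the stated induction product. This is a bookkeeping check with the partition of $\left[ N\right] $ into the consecutive blocks $\left[ n_{1}+\cdots+n_{j-1}+1,\ n_{1}+\cdots+n_{j}\right] $ for $j\in\left[ k\right] $: both embeddings send a tuple $\left( \sigma_{1},\ldots,\sigma_{k}\right) $ to the permutation of $\left[ N\right] $ that acts on the $j$-th block by $\sigma_{j}$ suitably shifted, and leaves the relative order of blocks intact. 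Finally, the degenerate cases $i=0$ and $i=k$ are trivial, since $S_{0}$ is the trivial group and $\operatorname*{Ind}\nolimits_{S_{0}\times S_{m}}^{S_{m}}\left( \mathbf{k}\otimes W\right) \cong W$; alternatively one may simply assume $0<i<k$ from the outset.
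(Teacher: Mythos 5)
Your proposal is correct and follows essentially the same route as the paper's proof: both reduce the claim to transitivity of induction together with the compatibility of induction with external tensor products, and then chain the resulting isomorphisms, with the only delicate point being that the composite parabolic embedding $H\hookrightarrow S_{n}\times S_{m}\hookrightarrow S_{N}$ matches the single parabolic embedding of $S_{n_{1}}\times\cdots\times S_{n_{k}}$. The paper cites these two facts from the literature where you sketch their proofs, but the structure of the argument is identical.
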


This is again a folklore result, but we sketch a proof in the Appendix
(Subsection \ref{subsec.apx.pf.prop.indprod.ass}) to fill a little gap in the literature.

\subsection{Lacunar sets and the submodules $F\left(  I\right)  $}

Next, we recall some more concepts from \cite{s2b1}.

If $I$ is a finite set of integers, then we let $\operatorname*{sum}I$ denote
the sum of all elements of $I$. For instance, $\operatorname*{sum}\left\{
3,7\right\}  =3+7=10$.

Let $\left(  f_{0},f_{1},f_{2},\ldots\right)  $ be the \emph{Fibonacci
sequence}. This is the sequence of integers defined recursively by%
\[
f_{0}=0,\ \ \ \ \ \ \ \ \ \ f_{1}=1,\ \ \ \ \ \ \ \ \ \ \text{and}%
\ \ \ \ \ \ \ \ \ \ f_{m}=f_{m-1}+f_{m-2}\text{ for all }m\geq2.
\]

We shall say that a set $I\subseteq\mathbb{Z}$ is \emph{lacunar} if it
contains no two consecutive integers (i.e., there exists no $i\in I$ such that
$i+1\in I$). For instance, the set $\left\{  1,4,6\right\}  $ is lacunar,
while the set $\left\{  1,4,5\right\}  $ is not.

The number of lacunar subsets of $\left[  n-1\right]  $ is the Fibonacci
number $f_{n+1}$. Let $Q_{1},Q_{2},\ldots,Q_{f_{n+1}}$ be all these $f_{n+1}$
lacunar subsets of $\left[  n-1\right]  $, listed in an order that satisfies%
\begin{equation}
\operatorname*{sum}\left(  Q_{1}\right)  \leq\operatorname*{sum}\left(
Q_{2}\right)  \leq\cdots\leq\operatorname*{sum}\left(  Q_{f_{n+1}}\right)  .
\label{pf.thm.t-simultri.sum-order}%
\end{equation}
We fix this order once and for all\footnote{For $n\leq3$, this order is
uniquely defined. For $n>3$, we need to make a choice.}. Many of our
constructions will formally (though rather shallowly) depend on this order.

For any subset $I$ of $\left[  n\right]  $, we define the following:

\begin{itemize}
\item We let $I-1$ denote the set $\left\{  i-1\ \mid\ i\in I\right\}
=\left\{  j\in\mathbb{Z}\ \mid\ j+1\in I\right\}  $. For instance, $\left\{
2,4,5\right\}  -1=\left\{  1,3,4\right\}  $. Note that $I$ is lacunar if and
only if $I\cap\left(  I-1\right)  =\varnothing$.

\item We let $I^{\prime}$ be the set $\left[  n-1\right]  \setminus\left(
I\cup\left(  I-1\right)  \right)  $. This is the set of all $i\in\left[
n-1\right]  $ satisfying $i\notin I$ and $i+1\notin I$. We shall refer to
$I^{\prime}$ as the \emph{non-shadow} of $I$.

For example, if $n=5$, then $\left\{  2,3\right\}  ^{\prime}=\left[  4\right]
\setminus\left\{  1,2,3\right\}  =\left\{  4\right\}  $.

\item We let
\[
F\left(  I\right)  :=\left\{  q\in\mathbf{k}\left[  S_{n}\right]
\ \mid\ qs_{i}=q\text{ for all }i\in I^{\prime}\right\}  .
\]
We can rewrite this equality as%
\begin{align}
F\left(  I\right)   &  =\left\{  t\in\mathbf{k}\left[  S_{n}\right]
\ \mid\ ts_{j}=t\text{ for all }j\in I^{\prime}\right\} \nonumber\\
&  =\left\{  t\in\mathcal{A}\ \mid\ ts_{j}=t\text{ for all }j\in I^{\prime
}\right\}  \label{eq.FI=.2}%
\end{align}
(since $\mathbf{k}\left[  S_{n}\right]  =\mathcal{A}$).
\end{itemize}

\section{The first main theorem: the Fibonacci filtration}

\subsection{The theorem}

For each $i\in\left[  0,f_{n+1}\right]  $, we define a $\mathbf{k}$-submodule%
\[
F_{i}:=F\left(  Q_{1}\right)  +F\left(  Q_{2}\right)  +\cdots+F\left(
Q_{i}\right)  \ \ \ \ \ \ \ \ \ \ \text{of }\mathbf{k}\left[  S_{n}\right]
\]
(so that $F_{0}=0$). In \cite[Theorem 8.1]{s2b1}, the following is shown:

\begin{theorem}
\label{thm.t-simultri}\ 

\begin{enumerate}
\item[\textbf{(a)}] We have%
\[
0=F_{0}\subseteq F_{1}\subseteq F_{2}\subseteq\cdots\subseteq F_{f_{n+1}%
}=\mathbf{k}\left[  S_{n}\right]  .
\]
In other words, the $\mathbf{k}$-submodules $F_{0},F_{1},\ldots,F_{f_{n+1}}$
form a $\mathbf{k}$-module filtration of $\mathbf{k}\left[  S_{n}\right]  $.

\item[\textbf{(b)}] We have $F_{i}\cdot t_{\ell}\subseteq F_{i}$ for each
$i\in\left[  0,f_{n+1}\right]  $ and $\ell\in\left[  n\right]  $.

\item[\textbf{(c)}] For each $i\in\left[  f_{n+1}\right]  $ and $\ell
\in\left[  n\right]  $, we have%
\[
F_{i}\cdot\left(  t_{\ell}-m_{Q_{i},\ell}\right)  \subseteq F_{i-1}.
\]
Here, $m_{Q_{i},\ell}$ is a certain integer whose definition we will give in
Subsection \ref{subsec.specht-spec.thm} (as we will not use it until then).
\end{enumerate}
\end{theorem}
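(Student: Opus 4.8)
The plan is to reduce the whole theorem to a single containment lemma about right multiplication by the cycles $\operatorname{cyc}_{\ell,\ell+1,\ldots,k}$, and then to prove that lemma by induction on $\operatorname{sum}I$. Parts \textbf{(a)} and \textbf{(b)} are cheap once \textbf{(c)} is available: the inclusions $F_{i-1}\subseteq F_i$ are built into $F_i=F\left(Q_1\right)+\cdots+F\left(Q_i\right)$ and $F_0$ is an empty sum; $F_{f_{n+1}}=\mathbf{k}\left[S_n\right]$ holds because there is always a lacunar $I\subseteq\left[n-1\right]$ whose non-shadow $I^{\prime}$ is empty (take $I=\left\{1,3,5,\ldots\right\}\cap\left[n-1\right]$ for $n$ even and $I=\left\{2,4,6,\ldots\right\}\cap\left[n-1\right]$ for $n$ odd, so that $I\cup\left(I-1\right)\supseteq\left[n-1\right]$), and then $F\left(I\right)=\mathbf{k}\left[S_n\right]$ is one of the summands of $F_{f_{n+1}}$; and \textbf{(b)} follows from \textbf{(a)} and \textbf{(c)} via $F_i t_{\ell}=F_i\left(t_{\ell}-m_{Q_i,\ell}\right)+m_{Q_i,\ell}F_i\subseteq F_{i-1}+F_i=F_i$.

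For \textbf{(c)} I would first record that $q\in F\left(I\right)$ exactly when $q$ is fixed under right multiplication by the Young subgroup $S_{I^{\prime}}$ generated by $\left\{s_j:j\in I^{\prime}\right\}$, so $F\left(I\right)$ is a free $\mathbf{k}$-module with the coset-sum basis $\left\{\sum_{w\in C}w:C\in S_n/S_{I^{\prime}}\right\}$. Writing $e_{I,\ell}$ for the least element of $I\cup\left\{n+1\right\}$ that is $\geq\ell$ (so that $m_{I,\ell}=e_{I,\ell}-\ell$), and using $\operatorname{cyc}_{\ell,\ell+1,\ldots,k}=s_{\ell}s_{\ell+1}\cdots s_{k-1}$ together with the fact that $s_{\ell},s_{\ell+1},\ldots,s_{k-1}\in S_{I^{\prime}}$ whenever $k<e_{I,\ell}$, I find $q\operatorname{cyc}_{\ell,\ell+1,\ldots,k}=q$ for all $q\in F\left(I\right)$ and all $k$ with $\ell\leq k<e_{I,\ell}$, and hence $q\left(t_{\ell}-m_{I,\ell}\right)=\sum_{k=e_{I,\ell}}^{n}q\operatorname{cyc}_{\ell,\ell+1,\ldots,k}$. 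Thus \textbf{(c)} will follow from the lemma below, in which $G_{<s}:=\sum_{J\subseteq\left[n-1\right]\text{ lacunar},\ \operatorname{sum}J<s}F\left(J\right)$ is a filtration that, unlike the $F_i$, does not depend on how ties in $\operatorname{sum}$ were broken when ordering the $Q$'s:

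\begin{quote}
\textbf{Lemma.} For every lacunar $I\subseteq\left[n-1\right]$, every $\ell\in\left[n\right]$, and every $k$ with $e_{I,\ell}\leq k\leq n$, one has $F\left(I\right)\operatorname{cyc}_{\ell,\ell+1,\ldots,k}\subseteq G_{<\operatorname{sum}I}$.
\end{quote}

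Granting the Lemma, deducing \textbf{(c)} is bookkeeping with the $\operatorname{sum}$-ordering. The Lemma and the formula above give $F\left(I\right)\left(t_{\ell}-m_{I,\ell}\right)\subseteq G_{<\operatorname{sum}I}$ for every lacunar $I$ and every $\ell$. Fix $i$ and set $s=\operatorname{sum}Q_i$; then every lacunar $J$ with $\operatorname{sum}J<s$ occurs among $Q_1,\ldots,Q_{i-1}$ (the $Q$'s are weakly increasing in sum), so $G_{<s}\subseteq F_{i-1}$. For $j\leq i$ I then have $F\left(Q_j\right)\left(t_{\ell}-m_{Q_j,\ell}\right)\subseteq G_{<\operatorname{sum}Q_j}\subseteq G_{<s}$, while $\left(m_{Q_j,\ell}-m_{Q_i,\ell}\right)F\left(Q_j\right)\subseteq F_{i-1}$ (this term is $0$ when $j=i$, and lies in $F\left(Q_j\right)\subseteq F_{i-1}$ when $j<i$); adding these gives $F\left(Q_j\right)\left(t_{\ell}-m_{Q_i,\ell}\right)\subseteq F_{i-1}$, and summing over $j$ yields $F_i\left(t_{\ell}-m_{Q_i,\ell}\right)\subseteq F_{i-1}$, which is \textbf{(c)}.

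It remains to prove the Lemma, and this is where the main work lies; it is also the step I expect to be the hardest. I would induct on $\operatorname{sum}I$. If $e_{I,\ell}=n+1$ the statement is vacuous, so assume $e_{I,\ell}\in I$. The tools are the cycle identities $\operatorname{cyc}_{\ell,\ell+1,\ldots,k}=\operatorname{cyc}_{\ell,\ell+1,\ldots,k-1}\,s_{k-1}$ and $\operatorname{cyc}_{\ell,\ell+1,\ldots,k}\,s_{\ell}=s_{\ell+1}\operatorname{cyc}_{\ell,\ell+1,\ldots,k}$ (both verified directly on $\left[n\right]$), which allow simple transpositions to be pushed across a cycle. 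Using these and the coset-sum basis of $F\left(I\right)$, I would compute the image of each basis vector under right multiplication by $\operatorname{cyc}_{\ell,\ell+1,\ldots,k}$ for $k\geq e_{I,\ell}$. The mechanism is that this cycle drags the element $e_{I,\ell}\in I$ down to position $\ell$; exploiting the right-$S_{I^{\prime}}$-invariance of $F\left(I\right)$ to cancel terms, the image becomes a $\mathbf{k}$-combination of coset sums attached to the Young subgroups $S_{J^{\prime}}$ of lacunar sets $J$ obtained from $I$ by deleting $e_{I,\ell}$ (and, for the longer cycles, via the cycle identities, by further local surgery near the next element of $I$), each such $J$ of strictly smaller sum — so that the inductive hypothesis applies. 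Carrying out this computation precisely — pinning down every Young subgroup that appears and checking that its associated lacunar set really has sum $<\operatorname{sum}I$ — is the technical heart of the theorem and the main obstacle; it is done in full in \cite[\S 8]{s2b1}. Granting the Lemma, \textbf{(c)}, and with it \textbf{(a)} and \textbf{(b)}, follow as above.
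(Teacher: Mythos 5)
Your overall architecture is reasonable and close in spirit to the source: parts \textbf{(a)} and \textbf{(b)} do reduce to \textbf{(c)} plus the observation that some lacunar $I$ has empty non-shadow, your tie-breaking-independent filtration $G_{<s}$ correctly handles the arbitrary ordering of the $Q_i$'s with equal sums, and the bookkeeping deducing \textbf{(c)} from a statement about a single $F\left(I\right)$ is correct. (For the record, the present paper does not prove this theorem at all; it quotes it from \cite[Theorem 8.1]{s2b1}, so any complete argument necessarily reproduces the work of \cite[\S 8]{s2b1}.)

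However, the Lemma to which you reduce everything is false as stated. Take $n=2$, $I=\left\{1\right\}$, $\ell=1$, $k=1$: then $e_{I,\ell}=1$ and $\operatorname{cyc}_{1}=\operatorname{id}$, while $\left\{1\right\}^{\prime}=\varnothing$ gives $F\left(\left\{1\right\}\right)=\mathbf{k}\left[S_{2}\right]$ and $G_{<1}=F\left(\varnothing\right)=\mathbf{k}\left(1+s_{1}\right)$, so $F\left(I\right)\operatorname{cyc}_{\ell,\ldots,k}\not\subseteq G_{<\operatorname{sum}I}$. More generally, whenever $\ell\in I$ the term $k=e_{I,\ell}=\ell$ of your sum is $F\left(I\right)$ itself, which is never contained in $G_{<\operatorname{sum}I}$, and the terms with $k>e_{I,\ell}$ fail as well (e.g.\ $n=3$, $I=\left\{2\right\}$, $\ell=1$, $k=2$ gives $\mathbf{k}\left[S_{3}\right]\not\subseteq F\left(\left\{1\right\}\right)$). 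Only the aggregate
\[
F\left(I\right)\left(t_{\ell}-m_{I,\ell}\right)=\sum_{k=e_{I,\ell}}^{n}F\left(I\right)\operatorname{cyc}_{\ell,\ldots,k}
\]
lands in $G_{<\operatorname{sum}I}$: the mechanism is that this sum -- and not any individual summand -- acquires invariance under an additional simple transposition near $e_{I,\ell}$, which places it in some $F\left(K\right)$ with $K$ of smaller sum (compare Lemma \ref{lem.Fi/Fi-1.L2a}, and the exactly analogous phenomenon in Lemma \ref{lem.Fi/Fi-1.L3} \textbf{(c)}, where $\sum_{p_{\ell}\in J_{\ell}}\nabla_{\left(p_{2},\ldots,p_{m}\right)}$ is fixed by $s_{i_{\ell-1}}$ although no single $\nabla_{\mathbf{p}}$ is). So the key lemma must be stated for the summed element; your term-by-term version cannot be proved because it is not true, and the induction you sketch would stall at exactly this point. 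Since you also defer the proof of the (false) lemma to \cite[\S 8]{s2b1}, the technical heart remains unestablished either way.
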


The filtration $0=F_{0}\subseteq F_{1}\subseteq F_{2}\subseteq\cdots\subseteq
F_{f_{n+1}}=\mathbf{k}\left[  S_{n}\right]  $ will be called the
\emph{Fibonacci filtration} of $\mathcal{A}$. We can easily see that it is a
filtration of $\left(  \mathcal{A},\mathcal{T}\right)  $-bimodules:

\begin{proposition}
\label{prop.fibfilt.A}Let $i\in\left[  0,f_{n+1}\right]  $. Then, $F_{i}$ is
an $\left(  \mathcal{A},\mathcal{T}\right)  $-subbimodule of $\mathcal{A}$.
\end{proposition}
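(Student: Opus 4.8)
The plan is to verify the two defining closure properties of an $\left(\mathcal{A},\mathcal{T}\right)$-subbimodule separately: that $F_{i}$ is a left $\mathcal{A}$-submodule of $\mathcal{A}$, and that $F_{i}$ is stable under right multiplication by every element of $\mathcal{T}$. The second property is essentially immediate from Theorem \ref{thm.t-simultri} \textbf{(b)}, since $\mathcal{T}$ is generated as a $\mathbf{k}$-algebra by $t_{1},t_{2},\ldots,t_{n}$, and $F_{i}\cdot t_{\ell}\subseteq F_{i}$ for each $\ell\in\left[n\right]$ by that result; hence $F_{i}\cdot\mathcal{T}\subseteq F_{i}$ follows because $F_{i}$ is a $\mathbf{k}$-submodule and right multiplication by $t_{\ell}$'s composes to give right multiplication by any monomial in the $t_{\ell}$'s (and $\mathbf{k}$-linear combinations thereof).

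The first property — that $F_{i}=F\left(Q_{1}\right)+F\left(Q_{2}\right)+\cdots+F\left(Q_{i}\right)$ is a left ideal of $\mathcal{A}=\mathbf{k}\left[S_{n}\right]$ — is where the actual (small) work lies. Since a sum of left submodules is again a left submodule, it suffices to show that each $F\left(I\right)$ (for $I$ a lacunar subset of $\left[n-1\right]$, in particular $I=Q_{j}$) is a left ideal of $\mathcal{A}$. By the description in \eqref{eq.FI=.2}, we have $F\left(I\right)=\left\{t\in\mathcal{A}\ \mid\ ts_{j}=t\text{ for all }j\in I^{\prime}\right\}$. The key observation is that the conditions cutting out $F\left(I\right)$ involve only \emph{right} multiplication by the simple transpositions $s_{j}$, whereas a left ideal is about \emph{left} multiplication. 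These two actions commute: for any $a\in\mathcal{A}$, any $t\in F\left(I\right)$, and any $j\in I^{\prime}$, we have $\left(at\right)s_{j}=a\left(ts_{j}\right)=at$, so $at\in F\left(I\right)$. Hence $F\left(I\right)$ is closed under left multiplication by $\mathcal{A}$, and being visibly a $\mathbf{k}$-submodule (it is the intersection of the kernels of the $\mathbf{k}$-linear maps $q\mapsto qs_{j}-q$), it is a left ideal.

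Combining the two parts: $F_{i}$ is a left $\mathcal{A}$-submodule and is stable under right multiplication by $\mathcal{T}$, hence it is an $\left(\mathcal{A},\mathcal{T}\right)$-subbimodule of $\mathcal{A}$. I do not anticipate any genuine obstacle here; the only thing to be careful about is to invoke Theorem \ref{thm.t-simultri} \textbf{(b)} (rather than re-proving it) for the right-stability, and to note explicitly that left and right multiplications in a group algebra commute, which is what converts the "right-invariance" definition of $F\left(I\right)$ into a "left ideal" conclusion.
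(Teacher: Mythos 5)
Your proposal is correct and follows essentially the same route as the paper: each $F\left(Q_{j}\right)$ is a left ideal because the defining conditions involve only right multiplication (which commutes with the left $\mathcal{A}$-action by associativity), a sum of left ideals is a left ideal, and right $\mathcal{T}$-stability follows from Theorem \ref{thm.t-simultri} \textbf{(b)} together with the fact that $\mathcal{T}$ is generated by $t_{1},\ldots,t_{n}$. The only difference is that you spell out the associativity computation $\left(at\right)s_{j}=a\left(ts_{j}\right)=at$, which the paper dismisses as holding "by its very definition."
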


\begin{proof}
For any $I\subseteq\left[  n\right]  $, the set $F\left(  I\right)  $ is
closed under addition and left action of $\mathcal{A}$ (by its very
definition), hence is a left $\mathcal{A}$-submodule of $\mathcal{A}$. Thus,
$F_{i}$ (being defined as a sum of such sets $F\left(  I\right)  $) is also a
left $\mathcal{A}$-submodule of $\mathcal{A}$. Moreover, $F_{i}$ is also
closed under right multiplication by each $t_{\ell}$ (by Theorem
\ref{thm.t-simultri} \textbf{(b)}), and hence under the right action of
$\mathcal{T}$ (since $\mathcal{T}$ is the subalgebra generated by $t_{1}%
,t_{2},\ldots,t_{n}$). Thus, $F_{i}$ is also a right $\mathcal{T}$-submodule
of $\mathcal{A}$. Altogether, we conclude that $F_{i}$ is an $\left(
\mathcal{A},\mathcal{T}\right)  $-subbimodule of $\mathcal{A}$.
\end{proof}

Proposition \ref{prop.fibfilt.A} shows that the subquotients $F_{i}/F_{i-1}$
are $\left(  \mathcal{A},\mathcal{T}\right)  $-bimodules as well. In
particular, they are therefore left $\mathcal{A}$-modules, i.e.,
representations of $S_{n}$. Our second main theorem characterizes these representations:

\begin{theorem}
\label{thm.Fi/Fi-1.as-ind}Let $i\in\left[  f_{n+1}\right]  $. Consider the
lacunar subset $Q_{i}$ of $\left[  n-1\right]  $. Write the set $Q_{i}%
\cup\left\{  n+1\right\}  $ as $\left\{  i_{1}<i_{2}<\cdots<i_{m}\right\}  $,
so that $i_{m}=n+1$. Furthermore, set $i_{0}:=1$. Set $j_{k}:=i_{k}-i_{k-1}$
for each $k\in\left[  m\right]  $. (Note that $j_{1}\geq0$ and $j_{2}%
,j_{3},\ldots,j_{m}>1$ and $j_{1}+j_{2}+\cdots+j_{m}=n$; this follows from
Lemma \ref{lem.Fi/Fi-1.sum-js} below (applied to $I=Q_{i}$).) Then,%
\[
F_{i}/F_{i-1}\cong\underbrace{\mathcal{H}_{j_{1}}\ast\mathcal{Z}_{j_{2}}%
\ast\mathcal{Z}_{j_{3}}\ast\cdots\ast\mathcal{Z}_{j_{m}}}_{\substack{\text{the
first factor is an }\mathcal{H}\text{,}\\\text{while all others are
}\mathcal{Z}\text{'s}}}
\]
as $S_{n}$-representations.\footnotemark
\end{theorem}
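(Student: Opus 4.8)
The plan is to identify $F_i/F_{i-1}$ explicitly as an induced representation by producing a concrete spanning set coming from the module $F(Q_i)$ and analyzing its image in the quotient. First I would recall from \cite{s2b1} (and reprove as much as is needed) the structure of an individual summand $F(Q_i)$: by \eqref{eq.FI=.2}, $F(Q_i)$ consists of those $q \in \mathbf{k}[S_n]$ fixed under right multiplication by every $s_j$ with $j \in Q_i'$, i.e.\ it is spanned by the orbit sums $\sum_{w \in \sigma \cdot W_{Q_i'}} w$ over the right cosets of the parabolic (Young) subgroup $W_{Q_i'} := \langle s_j : j \in Q_i'\rangle$. As a left $\mathcal{A}$-module, this is precisely $\operatorname{Ind}_{W_{Q_i'}}^{S_n}$ of the trivial module, i.e.\ the Young/permutation module on the composition of $n$ read off from $Q_i'$. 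The composition cut out by $Q_i'$ has parts that, because $Q_i$ is lacunar, organize into the block pattern $j_1, j_2, \ldots, j_m$ described in the statement: the first block (of length $j_1 = i_1 - 1$, a string of consecutive elements of $[n-1]$ none of which lie in $Q_i$ or $Q_i-1$ only up to the left end) contributes a single $\mathcal{H}$-factor, and each later gap between consecutive elements $i_{k-1}, i_k$ of $Q_i \cup \{n+1\}$ contributes a factor on $j_k = i_k - i_{k-1}$ letters.

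The key step is then to pass to the quotient $F_i/F_{i-1}$ and see which permutation module degenerates to a product of reflection quotients $\mathcal{Z}$. Here I would use the ordering convention $\operatorname{sum}(Q_1) \le \cdots \le \operatorname{sum}(Q_{f_{n+1}})$: the submodule $F_{i-1} = \sum_{r < i} F(Q_r)$ absorbs exactly those parts of $F(Q_i)$ coming from cosets that are ``more symmetric'' — concretely, $F(Q_i) \cap F_{i-1}$ is the sum of $F(Q_i \cup \{i_{k-1}\})$-type pieces (replacing a $\mathcal{Z}_{j_k}$-style block by a further-symmetrized $\mathcal{H}$-style block, which has strictly smaller sum of the governing lacunar set). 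Modding out by these pieces replaces, in each block $j_k$ for $k \ge 2$, the full permutation module $\mathcal{N}_{j_k}$ (equivalently $\mathcal{H}_{j_k}$-induced from a smaller subgroup) by its quotient $\mathcal{Z}_{j_k} = \mathcal{N}_{j_k}/\mathcal{D}_{j_k}$, since the $\mathcal{D}_{j_k}$-direction is exactly the extra symmetrization that lands in $F_{i-1}$. Since induction is exact and commutes with these quotients block by block (using Proposition~\ref{prop.indprod.ass} to split the induction product into the $j_1$-block and the $j_k$-blocks), this yields
\[
F_i/F_{i-1} \;\cong\; \operatorname{Ind}_{S_{j_1} \times S_{j_2} \times \cdots \times S_{j_m}}^{S_n}\bigl(\mathcal{H}_{j_1} \otimes \mathcal{Z}_{j_2} \otimes \cdots \otimes \mathcal{Z}_{j_m}\bigr) \;=\; \mathcal{H}_{j_1} \ast \mathcal{Z}_{j_2} \ast \cdots \ast \mathcal{Z}_{j_m}.
\]

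The main obstacle I anticipate is the bookkeeping around the intersection $F(Q_i) \cap F_{i-1}$: showing that it equals precisely the submodule whose quotient produces the $\mathcal{Z}$'s, rather than something larger or smaller. This requires carefully matching right cosets of the various parabolic subgroups $W_{Q_r'}$ and using the lacunarity of the $Q_r$ together with the sum-ordering to control which symmetrizations have already appeared in $F_{i-1}$; Theorem~\ref{thm.t-simultri} and the combinatorics of \cite[\S 8]{s2b1} should supply the needed inclusions, but verifying that the dimension of $F_i/F_{i-1}$ matches $\dim(\mathcal{H}_{j_1} \ast \mathcal{Z}_{j_2} \ast \cdots \ast \mathcal{Z}_{j_m}) = \binom{n}{j_1, j_2, \ldots, j_m}\prod_{k \ge 2}(j_k - 1)$ (and hence that no further collapsing occurs) is the delicate point. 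A clean way to finish, if the direct coset argument gets unwieldy, is to establish the isomorphism on the level of a natural surjection $\mathcal{H}_{j_1} \ast \mathcal{Z}_{j_2} \ast \cdots \ast \mathcal{Z}_{j_m} \twoheadrightarrow F_i/F_{i-1}$ first (the easy direction, built from the generator of $F(Q_i)$), and then compare ranks as free $\mathbf{k}$-modules using the known dimensions of the Fibonacci subquotients from \cite{s2b1}; since both sides are free $\mathbf{k}$-modules of the same rank, a surjection between them is an isomorphism.
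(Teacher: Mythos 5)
Your proposal is correct and, in its decisive form, is essentially the paper's own argument: the ``clean way to finish'' you describe -- a surjection $\mathcal{H}_{j_1}\ast\mathcal{Z}_{j_2}\ast\cdots\ast\mathcal{Z}_{j_m}\twoheadrightarrow F_i/F_{i-1}$ built from a single orbit-sum generator of $F\left(Q_i\right)$ (with the block-summed, extra-symmetrized elements shown to fall into $F_{i-1}$ via the sum-ordering and the combinatorial lemmas of \cite[\S 8]{s2b1}), followed by a rank comparison with the known dimension $\frac{n!}{j_1!\cdots j_m!}\prod_{k\geq 2}\left(j_k-1\right)$ and the fact that a surjection of free $\mathbf{k}$-modules of equal finite rank is an isomorphism -- is exactly how the paper proves the theorem. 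The only difference is that your first, more direct route (pinning down $F\left(Q_i\right)\cap F_{i-1}$ precisely) is left as a sketch, but you correctly identify it as the delicate point and your fallback resolves it the same way the paper does.
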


\footnotetext{Note that the factor $\mathcal{H}_{j_{1}}$ can be omitted when
$j_{1}=0$, since $\mathcal{H}_{0}\cong\mathbf{k}$ with the trivial $S_{0}%
$-action.}We will spend the rest of this section proving this theorem, then
restating it (in the characteristic-$0$ case) using Littlewood--Richardson coefficients.

\subsection{Lemmas on $F\left(  Q_{i}\right)  $}

First, let us show some lemmas about lacunar sets $I$ and the corresponding
$\mathbf{k}$-modules $F\left(  I\right)  $:

\begin{lemma}
\label{lem.Fi/Fi-1.sum-js}Let $I$ be a lacunar subset of $\left[  n-1\right]
$. Write the set $I\cup\left\{  n+1\right\}  $ as $\left\{  i_{1}<i_{2}%
<\cdots<i_{m}\right\}  $, so that $i_{m}=n+1$. Furthermore, set $i_{0}:=1$.
Set $j_{k}:=i_{k}-i_{k-1}$ for each $k\in\left[  m\right]  $. Then, $j_{1}%
\geq0$ and $j_{2},j_{3},\ldots,j_{m}>1$ and $j_{1}+j_{2}+\cdots+j_{m}=n$.
\end{lemma}

\begin{proof}
By definition, we have $j_{1}=i_{1}-i_{0}\geq0$, since $i_{1}\geq1=i_{0}$.

Next, we recall that the set $I$ is lacunar. This lacunarity is preserved even
when we insert the new element $n+1$ into this set, since all existing
elements of $I$ are $\leq n-1$ (since $I\subseteq\left[  n-1\right]  $) and
thus cannot be consecutive with $n+1$. That is, the set $I\cup\left\{
n+1\right\}  $ is again lacunar. Since we have written this set as $\left\{
i_{1}<i_{2}<\cdots<i_{m}\right\}  $, this yields that any $k\in\left[
2,m\right]  $ satisfies $i_{k}-i_{k-1}>1$. In other words, any $k\in\left[
2,m\right]  $ satisfies $j_{k}>1$ (since $j_{k}=i_{k}-i_{k-1}$). In other
words, $j_{2},j_{3},\ldots,j_{m}>1$.

It remains to prove that $j_{1}+j_{2}+\cdots+j_{m}=n$. But recall that
$j_{k}=i_{k}-i_{k-1}$ for each $k\in\left[  m\right]  $. Hence,%
\begin{align*}
\sum_{k=1}^{m}j_{k}  &  =\sum_{k=1}^{m}\left(  i_{k}-i_{k-1}\right)
=\underbrace{i_{m}}_{=n+1}-\underbrace{i_{0}}_{=1}\ \ \ \ \ \ \ \ \ \ \left(
\text{by the telescope principle}\right) \\
&  =n+1-1=n.
\end{align*}
In other words, $j_{1}+j_{2}+\cdots+j_{m}=n$. Thus, Lemma
\ref{lem.Fi/Fi-1.sum-js} is fully proved.
\end{proof}

\begin{lemma}
\label{lem.Fi/Fi-1.dim}Let $i\in\left[  f_{n+1}\right]  $. Consider the
lacunar subset $Q_{i}$ of $\left[  n-1\right]  $. Write the set $Q_{i}%
\cup\left\{  n+1\right\}  $ as $\left\{  i_{1}<i_{2}<\cdots<i_{m}\right\}  $.
Furthermore, set $i_{0}:=1$. Set $j_{k}:=i_{k}-i_{k-1}$ for each $k\in\left[
m\right]  $. Then, the $\mathbf{k}$-module $F_{i}/F_{i-1}$ is free of rank%
\[
\dfrac{n!}{j_{1}!j_{2}!\cdots j_{m}!}\cdot\prod_{k=2}^{m}\left(
j_{k}-1\right)  .
\]

\end{lemma}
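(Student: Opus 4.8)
The plan is to reduce the statement to a rank computation on the single module $F\left(Q_i\right)$ and then to pin down $\operatorname{rank}\left(F_i/F_{i-1}\right)$ by a global count over all lacunar sets. We may assume $\mathbf{k}$ is a field (freeness and the final isomorphism over a general $\mathbf{k}$ will then follow by a standard base-change argument, since all the submodules below are defined over $\mathbb{Z}$). Since $F_i=F_{i-1}+F\left(Q_i\right)$, the second isomorphism theorem gives $F_i/F_{i-1}\cong F\left(Q_i\right)/\left(F\left(Q_i\right)\cap F_{i-1}\right)$. For a lacunar $I\subseteq\left[n-1\right]$, formula \eqref{eq.FI=.2} says that $F\left(I\right)$ is the set of $q\in\mathcal{A}$ fixed by right multiplication by every element of the parabolic subgroup $P_I:=\left\langle s_j\mid j\in I^{\prime}\right\rangle$; writing $q=\sum_w c_w w$, this just means that $w\mapsto c_w$ is constant on each coset $wP_I$, so $F\left(I\right)$ is free with basis the coset sums $\bigl(\sum_{p\in P_I}wp\bigr)_{wP_I\in S_n/P_I}$, and $\operatorname{rank}F\left(I\right)=\left[S_n:P_I\right]$. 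For $I=Q_i$, the non-shadow $\left(Q_i\right)^{\prime}$ is $\left[n-1\right]$ with the ``dominoes'' $\left\{i_l-1,i_l\right\}$ $\left(1\le l\le m-1\right)$ deleted, hence a disjoint union of separated integer intervals of sizes $j_1-1,j_2-2,\dots,j_m-2$ (all $\ge-1$ by Lemma \ref{lem.Fi/Fi-1.sum-js}, a value $-1$ meaning an absent interval); since $\ell$ consecutive simple transpositions generate a copy of $S_{\ell+1}$, this gives $P_{Q_i}\cong S_{j_1}\times S_{j_2-1}\times\cdots\times S_{j_m-1}$ and
\[
\operatorname{rank}F\left(Q_i\right)=\frac{n!}{j_1!\,\left(j_2-1\right)!\cdots\left(j_m-1\right)!}=\frac{n!}{j_1!\,j_2!\cdots j_m!}\cdot\prod_{k=2}^{m}j_k .
\]

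Next I would exhibit a large piece of $F\left(Q_i\right)\cap F_{i-1}$. Besides the blocks of sizes $j_1,j_2-1,\dots,j_m-1$, the group $P_{Q_i}$ has the singleton blocks $\left\{i_1\right\},\dots,\left\{i_{m-1}\right\}$. For $k\in\left\{2,\dots,m\right\}$ let $\widehat{P}_k:=\left\langle\left(Q_i\right)^{\prime}\cup\left\{i_{k-1}\right\}\right\rangle\supseteq P_{Q_i}$ be the parabolic obtained by absorbing the singleton $\left\{i_{k-1}\right\}$ into the $k$-th block (which thereby becomes a copy of $S_{j_k}$), and put $\mathcal{G}_k:=\operatorname{span}_{\mathbf{k}}\!\bigl\{\sum_{p\in\widehat{P}_k}wp\bigr\}\subseteq F\left(Q_i\right)$. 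Using $\mathcal{N}_{j_k}\cong\mathbf{k}\left[S_{j_k}/S_{j_k-1}\right]$ and transitivity of induction (and Proposition \ref{prop.indprod.ass}) one identifies $F\left(Q_i\right)\cong\operatorname{Ind}_{S_{j_1}\times\cdots\times S_{j_m}}^{S_n}\left(\mathcal{H}_{j_1}\otimes\mathcal{N}_{j_2}\otimes\cdots\otimes\mathcal{N}_{j_m}\right)$, under which $\mathcal{G}_k$ becomes the submodule with the $k$-th tensorand $\mathcal{N}_{j_k}$ replaced by its $S_{j_k}$-invariant line $\mathcal{D}_{j_k}$. Induction being exact, $\sum_{k=2}^{m}\mathcal{G}_k$ becomes $\operatorname{Ind}\!\left(\mathcal{H}_{j_1}\otimes K\right)$ with $K=\ker\!\left(\mathcal{N}_{j_2}\otimes\cdots\otimes\mathcal{N}_{j_m}\twoheadrightarrow\mathcal{Z}_{j_2}\otimes\cdots\otimes\mathcal{Z}_{j_m}\right)$, which (splitting each $\mathcal{N}_{j_k}\twoheadrightarrow\mathcal{Z}_{j_k}$ as a $\mathbf{k}$-module) is free of rank $\prod_{k=2}^{m}j_k-\prod_{k=2}^{m}\left(j_k-1\right)$, while $F\left(Q_i\right)\big/\sum_{k=2}^{m}\mathcal{G}_k\cong\mathcal{H}_{j_1}\ast\mathcal{Z}_{j_2}\ast\cdots\ast\mathcal{Z}_{j_m}$ is free of rank $\frac{n!}{j_1!\,j_2!\cdots j_m!}\prod_{k=2}^{m}\left(j_k-1\right)$. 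The crucial (and only non-formal) point is that $\mathcal{G}_k\subseteq F_{i-1}$; granting it, the induced surjection $\mathcal{H}_{j_1}\ast\mathcal{Z}_{j_2}\ast\cdots\ast\mathcal{Z}_{j_m}\cong F\left(Q_i\right)/\sum_k\mathcal{G}_k\twoheadrightarrow F\left(Q_i\right)/\left(F\left(Q_i\right)\cap F_{i-1}\right)\cong F_i/F_{i-1}$ shows $\operatorname{rank}\left(F_i/F_{i-1}\right)\le\frac{n!}{j_1!\,j_2!\cdots j_m!}\prod_{k=2}^{m}\left(j_k-1\right)$.

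To establish $\mathcal{G}_k\subseteq F_{i-1}$ one exhibits, for each $k$, a lacunar $R_k\subseteq\left[n-1\right]$ with $\operatorname{sum}R_k<\operatorname{sum}Q_i$ (so that $R_k$ occurs before $Q_i$ in the chosen listing and hence $F\left(R_k\right)\subseteq F_{i-1}$) and with $P_{R_k}\subseteq\widehat{P}_k$ (so that $\mathcal{G}_k\subseteq F\left(R_k\right)$): roughly, $R_k$ is obtained from $Q_i$ by shifting the prefix $\left\{i_1,\dots,i_{k-1}\right\}$ down by one and truncating the resulting collisions. The matching lower bound then comes from a global count: telescoping Theorem \ref{thm.t-simultri}\textbf{(a)} gives $\sum_{i=1}^{f_{n+1}}\operatorname{rank}\left(F_i/F_{i-1}\right)=\operatorname{rank}\mathbf{k}\left[S_n\right]=n!$, while the bijection $I\mapsto\left(j_1,\dots,j_m\right)$ between lacunar subsets of $\left[n-1\right]$ and compositions of $n$ with $j_1\ge0$ and $j_2,\dots,j_m\ge2$ yields
\[
\sum_{i=1}^{f_{n+1}}\frac{n!}{j_1!\,j_2!\cdots j_m!}\prod_{k=2}^{m}\left(j_k-1\right)=n!\cdot\left[x^{n}\right]\!\left(e^{x}\sum_{r\ge0}B\left(x\right)^{r}\right),\qquad B\left(x\right):=\sum_{j\ge2}\frac{\left(j-1\right)x^{j}}{j!},
\]
and since $B\left(x\right)=1-\left(1-x\right)e^{x}$ one gets $e^{x}/\left(1-B\left(x\right)\right)=1/\left(1-x\right)$, whose $x^n$-coefficient is $1$, so this sum is also $n!$. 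As each $\operatorname{rank}\left(F_i/F_{i-1}\right)$ is a nonnegative integer $\le$ its summand and the two totals coincide, equality holds termwise; hence $\operatorname{rank}\left(F_i/F_{i-1}\right)=\frac{n!}{j_1!\,j_2!\cdots j_m!}\prod_{k=2}^{m}\left(j_k-1\right)$, the surjection above is a surjection of free $\mathbf{k}$-modules of equal finite rank, hence an isomorphism, and $F_i/F_{i-1}$ is free of the asserted rank (this also identifies it with $\mathcal{H}_{j_1}\ast\mathcal{Z}_{j_2}\ast\cdots\ast\mathcal{Z}_{j_m}$, preparing Theorem \ref{thm.Fi/Fi-1.as-ind}). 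I expect the main obstacle to be exactly the construction of the $R_k$ and the verification that $\operatorname{sum}R_k<\operatorname{sum}Q_i$ and $P_{R_k}\subseteq\widehat{P}_k$: a run of consecutive indices $l$ with $j_l=2$ forces a cascade of further downward shifts, and the boundary values $j_1\in\left\{0,1\right\}$ need separate attention; everything else is formal once one has the coset-sum description of the $F\left(I\right)$ and the elementary representation theory of $\mathcal{H}$, $\mathcal{N}$, $\mathcal{D}$, $\mathcal{Z}$, so this is where the combinatorics of lacunar sets and the sum-ordering of the $Q_i$ (as developed in \cite{s2b1}) genuinely enters.
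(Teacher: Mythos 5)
Your proposal is correct in strategy but takes a genuinely different route from the paper, which disposes of this lemma in one line by citing \cite[Theorem 13.1 \textbf{(a)}, \textbf{(c)}]{s2b1}. You essentially re-derive that external result. Your ingredients run parallel to the machinery the paper builds later for Theorem \ref{thm.Fi/Fi-1.as-ind}: your $P_{Q_i}$-coset sums are the paper's $\nabla_{\mathbf{p}}$ (Lemma \ref{lem.Fi/Fi-1.L3} \textbf{(b)}), your inclusion $\mathcal{G}_k\subseteq F_{i-1}$ is Lemma \ref{lem.Fi/Fi-1.L3} \textbf{(c)} combined with Lemma \ref{lem.Fi/Fi-1.L2}, and your surjection is the paper's $\Psi$. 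The difference is how the surjection is shown to be injective: the paper imports the rank of $F_i/F_{i-1}$ from \cite{s2b1} and applies Lemma \ref{lem.surj=bij}, whereas you sum the upper bounds over all lacunar sets and verify, via the exponential generating function identity $e^{x}/\left(1-B\left(x\right)\right)=1/\left(1-x\right)$ with $B\left(x\right)=1-\left(1-x\right)e^{x}$, that the total is $n!$, forcing termwise equality. This buys a self-contained proof that delivers Lemma \ref{lem.Fi/Fi-1.dim} and Theorem \ref{thm.Fi/Fi-1.as-ind} simultaneously; the cost is that it only proves the statement for all $i$ at once and, as written, only over a field. Your computations (the block structure of $Q_i^{\prime}$, the index $\left[S_n:P_{Q_i}\right]$, the identification of $\sum_k\mathcal{G}_k$ with the kernel of $\mathcal{N}^{\otimes}\twoheadrightarrow\mathcal{Z}^{\otimes}$ after induction, and the EGF identity) all check out.

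Two points need repair. First, the step you rightly flag as non-formal -- producing a lacunar $R_k$ with $\operatorname*{sum}R_k<\operatorname*{sum}Q_i$ and $R_k^{\prime}\subseteq Q_i^{\prime}\cup\left\{i_{k-1}\right\}$ -- is exactly Lemma \ref{lem.Fi/Fi-1.L2a} (applied to $I=Q_i$ and $j=i_{k-1}$), which rests on \cite[Proposition 8.6 \textbf{(a)}]{s2b1} and \cite[Corollary 8.8]{s2b1}; you should invoke that rather than attempt the direct \textquotedblleft shift the prefix and truncate\textquotedblright\ construction, which (as you anticipate) cascades awkwardly when several consecutive $j_l$ equal $2$. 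Second, the reduction from a general commutative $\mathbf{k}$ to a field is not a one-liner: the formation of the sum $F_{i-1}=\sum_{j<i}F\left(Q_j\right)$ commutes with base change from $\mathbb{Z}$ only once one knows that $\mathbb{Z}\left[S_n\right]/F_{i-1}^{\mathbb{Z}}$ is $\mathbb{Z}$-free, which is itself a consequence of the termwise freeness being proved. The workable order is: run the dimension count over $\mathbb{Q}$; deduce $F\left(Q_i\right)^{\mathbb{Z}}\cap F_{i-1}^{\mathbb{Z}}=\sum_k\mathcal{G}_k^{\mathbb{Z}}$ from the $\mathbb{Q}$-statement together with the fact that $\sum_k\mathcal{G}_k^{\mathbb{Z}}$ is saturated in $F\left(Q_i\right)^{\mathbb{Z}}$ (its quotient being $\mathbb{Z}$-free); conclude freeness of all subquotients over $\mathbb{Z}$, hence that each $F_j^{\mathbb{Z}}$ is a direct summand of $\mathbb{Z}\left[S_n\right]$; and only then tensor with an arbitrary $\mathbf{k}$. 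With these two repairs the argument is complete.
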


\begin{proof}
We have $Q_{i}\subseteq\left[  n-1\right]  $. Hence, $n+1$ is the largest
element of $Q_{i}\cup\left\{  n+1\right\}  $. Thus, from $Q_{i}\cup\left\{
n+1\right\}  =\left\{  i_{1}<i_{2}<\cdots<i_{m}\right\}  $, we obtain
\[
i_{m}=n+1\ \ \ \ \ \ \ \ \ \ \text{and}\ \ \ \ \ \ \ \ \ \ Q_{i}=\left\{
i_{1}<i_{2}<\cdots<i_{m-1}\right\}  .
\]

Lemma \ref{lem.Fi/Fi-1.sum-js} (applied to $I=Q_{i}$) shows that $j_{1}%
+j_{2}+\cdots+j_{m}=n$. Let $\dbinom{n}{j_{1},j_{2},\ldots,j_{m}}$ denote the
multinomial coefficient $\dfrac{n!}{j_{1}!j_{2}!\cdots j_{m}!}$. We know from
\cite[Theorem 13.1 \textbf{(a)} and \textbf{(c)}]{s2b1} (applied to $p=m-1$)
that the $\mathbf{k}$-module $F_{i}/F_{i-1}$ is free of rank%
\[
\delta_{i}=\dbinom{n}{j_{1},j_{2},\ldots,j_{m}}\cdot\prod_{k=2}^{m}\left(
j_{k}-1\right)  .
\]
In view of $\dbinom{n}{j_{1},j_{2},\ldots,j_{m}}=\dfrac{n!}{j_{1}!j_{2}!\cdots
j_{m}!}$, this is precisely the claim of Lemma \ref{lem.Fi/Fi-1.dim}.
\end{proof}

\begin{lemma}
\label{lem.Fi/Fi-1.L2a}Let $I$ be a subset of $\left[  n\right]  $. Let $j\in
I$. Then, there exists a lacunar subset $J$ of $\left[  n-1\right]  $ such
that $\operatorname*{sum}J<\operatorname*{sum}I$ and $J^{\prime}\subseteq
I^{\prime}\cup\left\{  j\right\}  $.
\end{lemma}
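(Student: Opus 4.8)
The plan is to first reformulate the conclusion so that it becomes a purely combinatorial covering problem. Since $I'=\left[n-1\right]\setminus\left(I\cup\left(I-1\right)\right)$ and $J'=\left[n-1\right]\setminus\left(J\cup\left(J-1\right)\right)$, taking complements inside $\left[n-1\right]$ shows that the requirement $J'\subseteq I'\cup\left\{j\right\}$ is equivalent to $K\subseteq J\cup\left(J-1\right)$, where $K:=\left(\left(I\cup\left(I-1\right)\right)\cap\left[n-1\right]\right)\setminus\left\{j\right\}$. So it suffices to exhibit a lacunar subset $J\subseteq\left[n-1\right]$ with $K\subseteq J\cup\left(J-1\right)$ and $\operatorname*{sum}J<\operatorname*{sum}I$.

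Next I would construct $J$ explicitly. Note $K\subseteq\left[n-1\right]$. Decompose $K$ into its maximal integer intervals $\left[c_{1},d_{1}\right],\ldots,\left[c_{r},d_{r}\right]$ (so $c_{t}\leq d_{t}$ and $d_{t}+2\leq c_{t+1}$), and inside each one take the $2$-spaced progression $J_{t}:=\left\{d_{t},\,d_{t}-2,\,d_{t}-4,\,\ldots\right\}\cap\left[c_{t},d_{t}\right]$ ending at the right endpoint. Let $J:=J_{1}\cup\cdots\cup J_{r}$. Then $J\subseteq K\subseteq\left[n-1\right]$; the set $J$ is lacunar, since within each $J_{t}$ consecutive elements differ by $2$, while $\max J_{t}=d_{t}<d_{t}+1<c_{t+1}\leq\min J_{t+1}$; and a one-line check gives $\left[c_{t},d_{t}\right]\subseteq J_{t}\cup\left(J_{t}-1\right)$ for every $t$, hence $K\subseteq J\cup\left(J-1\right)$. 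The first two desiderata are thereby met.

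The hard part will be the inequality $\operatorname*{sum}J<\operatorname*{sum}I$, and the idea is to build an injection from $J$ into $I$ that does not decrease values. Because $J\subseteq K\subseteq I\cup\left(I-1\right)$, each $k\in J$ satisfies $k\in I$ or $k+1\in I$, so I can define $\phi\colon J\to I$ by $\phi\left(k\right):=k$ if $k\in I$ and $\phi\left(k\right):=k+1$ otherwise; then $\phi\left(k\right)\geq k$ always. This $\phi$ is injective, because the only possible preimages of a given $i\in I$ are $i$ and $i-1$, and the lacunar set $J$ cannot contain both. Consequently $\operatorname*{sum}J\leq\operatorname*{sum}\phi\left(J\right)\leq\operatorname*{sum}I$ (the last step using $\phi\left(J\right)\subseteq I$ and positivity of the elements of $I$). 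Strictness comes from the fact that $j\in I$ while $j\notin K$ by the definition of $K$, so $j\notin J$: if $\phi$ fixes every element of $J$ then $J=\phi\left(J\right)$ is a proper subset of $I$ missing $j$, so $\operatorname*{sum}J<\operatorname*{sum}I$; otherwise $\phi\left(k\right)=k+1>k$ for some $k\in J$, and again $\operatorname*{sum}J<\operatorname*{sum}\phi\left(J\right)\leq\operatorname*{sum}I$.

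I expect the genuine obstacle to be precisely this last step: the ``greedily cover $K$'' construction above is the easy part, but one has to make sure the covering set $J$ is actually cheaper than $I$, and the clean reason is the injection $\phi$ combined with the observation that $j$ itself has been deleted from $K$ (and hence from $J$) -- which is exactly what turns the $\leq$ into a strict $<$. Everything else (the complement computation, the lacunarity of $J$, and the interval-by-interval covering check) is routine.
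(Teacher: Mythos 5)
Your proof is correct, but it takes a genuinely different route from the paper's. The paper's proof is a two-step reduction leaning on the predecessor paper \cite{s2b1}: it first perturbs $I$ to $K:=\left(I\setminus\left\{j\right\}\right)\cup\left\{j-1\right\}$ (or just deletes $j$ when $j=1$), which strictly lowers the sum, cites \cite[Proposition 8.6 \textbf{(a)}]{s2b1} for $K^{\prime}\subseteq I^{\prime}\cup\left\{j\right\}$, and then cites \cite[Corollary 8.8]{s2b1} to replace $K$ by a lacunar $J$ with $\operatorname*{sum}J\leq\operatorname*{sum}K$ and $J^{\prime}\subseteq K^{\prime}$. You instead give a self-contained argument: you complement the non-shadow condition into the covering condition $K\subseteq J\cup\left(J-1\right)$ for $K=\left(\left(I\cup\left(I-1\right)\right)\cap\left[n-1\right]\right)\setminus\left\{j\right\}$ (note your $K$ is a different set from the paper's), solve the covering problem greedily on each maximal interval of $K$, and obtain the sum bound via a value-nondecreasing injection $\phi\colon J\to I$, with strictness supplied by $j\in I\setminus J$. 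I checked the details -- the reformulation by complementation, the lacunarity and covering of your $J$, the well-definedness and injectivity of $\phi$, and both cases of the strictness argument -- and they all hold (including the degenerate case $K=\varnothing$, where $J=\varnothing$ works). In effect you have re-proved the relevant special case of \cite[Corollary 8.8]{s2b1} from scratch; the paper's version is shorter given that machinery, while yours has the advantage of being entirely self-contained and making the construction of $J$ explicit.
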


\begin{proof}
Set $K:=\left(  I\setminus\left\{  j\right\}  \right)  \cup\left\{
j-1\right\}  $ if $j>1$, and otherwise set $K:=I\setminus\left\{  j\right\}
$. Then, $K$ is a subset of $\left[  n\right]  $ and satisfies
$\operatorname*{sum}K<\operatorname*{sum}I$ (since $K$ is obtained from $I$ by
removing the element $j$ and possibly inserting the smaller element $j-1$).
Furthermore, \cite[Proposition 8.6 \textbf{(a)}]{s2b1} says that $K^{\prime
}\subseteq I^{\prime}\cup\left\{  j\right\}  $.

Now, \cite[Corollary 8.8]{s2b1} (applied to $K$ instead of $I$) shows that
there exists a lacunar subset $J$ of $\left[  n-1\right]  $ such that
$\operatorname*{sum}J\leq\operatorname*{sum}K$ and $J^{\prime}\subseteq
K^{\prime}$. Consider this $J$.

The set $J$ is a lacunar subset of $\left[  n-1\right]  $ and satisfies
$\operatorname*{sum}J<\operatorname*{sum}I$ (since $\operatorname*{sum}%
J\leq\operatorname*{sum}K<\operatorname*{sum}I$) and $J^{\prime}\subseteq
I^{\prime}\cup\left\{  j\right\}  $ (since $J^{\prime}\subseteq K^{\prime
}\subseteq I^{\prime}\cup\left\{  j\right\}  $). Hence, such a $J$ exists.
This proves Lemma \ref{lem.Fi/Fi-1.L2a}.
\end{proof}

\begin{lemma}
\label{lem.Fi/Fi-1.L2}Let $i\in\left[  f_{n+1}\right]  $. Consider the lacunar
subset $Q_{i}$ of $\left[  n-1\right]  $. Write the set $Q_{i}\cup\left\{
n+1\right\}  $ as $\left\{  i_{1}<i_{2}<\cdots<i_{m}\right\}  $. Let
$k\in\left[  m-1\right]  $. Then,%
\[
\left\{  t\in F\left(  Q_{i}\right)  \ \mid\ ts_{i_{k}}=t\right\}  \subseteq
F_{i-1}.
\]

\end{lemma}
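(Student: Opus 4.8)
The plan is to show that any $t \in F(Q_i)$ satisfying $t s_{i_k} = t$ actually lies in $F(J)$ for some lacunar subset $J$ of $[n-1]$ with $\operatorname{sum} J < \operatorname{sum}(Q_i)$, because then $t \in F(J) \subseteq F_{i-1}$: indeed, since the $Q_1, Q_2, \ldots, Q_{f_{n+1}}$ are listed in order of weakly increasing sum, any lacunar set $J$ with $\operatorname{sum} J < \operatorname{sum}(Q_i)$ must equal $Q_r$ for some $r < i$, whence $F(J) = F(Q_r) \subseteq F_{i-1}$. (One has to be slightly careful here: if several $Q_j$ share the same sum, the bound $\operatorname{sum} J < \operatorname{sum}(Q_i)$ — strict — guarantees $J$ appears strictly before $Q_i$ in the listing, which is exactly what we need. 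This is why Lemma~\ref{lem.Fi/Fi-1.L2a} was stated with a strict inequality.)

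The key observation is that $i_k \in Q_i$ (since $k \in [m-1]$ and the elements of $Q_i$ are exactly $i_1 < i_2 < \cdots < i_{m-1}$, while $i_m = n+1 \notin Q_i$). The condition $t s_{i_k} = t$ says that $t$ is invariant under right multiplication by the simple transposition $s_{i_k}$, i.e., $i_k$ can be thought of as being added to the "symmetry set" of $t$. More precisely, recall that $F(I) = \{t \in \mathcal{A} \mid t s_j = t \text{ for all } j \in I'\}$. So if $t \in F(Q_i)$ and $t s_{i_k} = t$, then $t s_j = t$ for all $j \in Q_i' \cup \{i_k\}$.

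Now I apply Lemma~\ref{lem.Fi/Fi-1.L2a} with $I = Q_i$ and $j = i_k$ (this is legitimate since $i_k \in Q_i \subseteq [n]$): it produces a lacunar subset $J$ of $[n-1]$ with $\operatorname{sum} J < \operatorname{sum}(Q_i)$ and $J' \subseteq Q_i' \cup \{i_k\}$. For this $J$, every $j \in J'$ lies in $Q_i' \cup \{i_k\}$, and we just saw that $t s_j = t$ holds for all such $j$; hence $t \in F(J)$ by the defining description \eqref{eq.FI=.2} of $F(J)$. Combining this with the first paragraph gives $t \in F(J) \subseteq F_{i-1}$, which is the desired inclusion. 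Since $t \in \{t \in F(Q_i) \mid t s_{i_k} = t\}$ was arbitrary, the lemma follows.

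The main obstacle — really the only non-bookkeeping point — is the bridge between "$\operatorname{sum} J < \operatorname{sum}(Q_i)$" and "$F(J) \subseteq F_{i-1}$"; this rests on the fact that the listing $Q_1, \ldots, Q_{f_{n+1}}$ is sorted by sum and that $F_{i-1} = \sum_{r < i} F(Q_r)$. Everything else is an unpacking of the definition of $F(I)$ and a direct invocation of Lemma~\ref{lem.Fi/Fi-1.L2a}. One should double-check that $i_k$ is genuinely an element of $Q_i$ rather than the artificial top element $n+1$ — which is guaranteed by the hypothesis $k \le m-1$ — so that Lemma~\ref{lem.Fi/Fi-1.L2a} applies with $j = i_k \in Q_i$.
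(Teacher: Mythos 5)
Your proof is correct and follows essentially the same route as the paper: note $i_k\in Q_i$, invoke Lemma \ref{lem.Fi/Fi-1.L2a} with $I=Q_i$ and $j=i_k$ to get a lacunar $J$ with $\operatorname{sum}J<\operatorname{sum}\left(Q_i\right)$ and $J'\subseteq Q_i'\cup\left\{i_k\right\}$, conclude via (\ref{eq.FI=.2}) that the set in question lies in $F\left(J\right)$, and use the sum-ordering of the $Q_r$ to get $F\left(J\right)=F\left(Q_s\right)\subseteq F_{i-1}$ for some $s<i$. Your remark about the strict inequality handling ties in the ordering is exactly the point the paper relies on as well.
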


\begin{proof}
As in the proof of Lemma \ref{lem.Fi/Fi-1.dim}, we find $Q_{i}=\left\{
i_{1}<i_{2}<\cdots<i_{m-1}\right\}  $. Thus, $i_{k}\in Q_{i}$ (since
$k\in\left[  m-1\right]  $). Hence, Lemma \ref{lem.Fi/Fi-1.L2a} (applied to
$I=Q_{i}$ and $j=i_{k}$) shows that there exists a lacunar subset $J$ of
$\left[  n-1\right]  $ such that $\operatorname*{sum}J<\operatorname*{sum}%
\left(  Q_{i}\right)  $ and $J^{\prime}\subseteq Q_{i}^{\prime}\cup\left\{
i_{k}\right\}  $. Consider this $J$. Since $J$ is lacunar, we have $J=Q_{s}$
for some $s\in\left[  f_{n+1}\right]  $. Consider this $s$. Thus, $Q_{s}=J$,
so that $\operatorname*{sum}\left(  Q_{s}\right)  =\operatorname*{sum}%
J<\operatorname*{sum}\left(  Q_{i}\right)  $ and therefore $s<i$ (by
(\ref{pf.thm.t-simultri.sum-order})). Hence, $s\leq i-1$, so that $F\left(
Q_{s}\right)  \subseteq F_{i-1}$ (since the definition of $F_{i-1}$ says that
$F_{i-1}=F\left(  Q_{1}\right)  +F\left(  Q_{2}\right)  +\cdots+F\left(
Q_{i-1}\right)  $). In view of $Q_{s}=J$, we can rewrite this as%
\[
F\left(  J\right)  \subseteq F_{i-1}.
\]

Now, (\ref{eq.FI=.2}) (applied to $Q_{i}$ instead of $I$) shows that%
\[
F\left(  Q_{i}\right)  =\left\{  t\in\mathcal{A}\ \mid\ ts_{j}=t\text{ for all
}j\in Q_{i}^{\prime}\right\}  ,
\]
so that%
\begin{align*}
&  \left\{  t\in F\left(  Q_{i}\right)  \ \mid\ ts_{i_{k}}=t\right\} \\
&  =\left\{  t\in\mathcal{A}\ \mid\ ts_{j}=t\text{ for all }j\in Q_{i}%
^{\prime}\text{, and also }ts_{i_{k}}=t\right\} \\
&  =\left\{  t\in\mathcal{A}\ \mid\ ts_{j}=t\text{ for all }j\in Q_{i}%
^{\prime}\cup\left\{  i_{k}\right\}  \right\} \\
&  \subseteq\left\{  t\in\mathcal{A}\ \mid\ ts_{j}=t\text{ for all }j\in
J^{\prime}\right\}  \ \ \ \ \ \ \ \ \ \ \left(  \text{since }J^{\prime
}\subseteq Q_{i}^{\prime}\cup\left\{  i_{k}\right\}  \right) \\
&  =F\left(  J\right)  \ \ \ \ \ \ \ \ \ \ \left(  \text{by (\ref{eq.FI=.2}),
applied to }J\text{ instead of }I\right) \\
&  \subseteq F_{i-1}.
\end{align*}
Thus, Lemma \ref{lem.Fi/Fi-1.L2} follows.
\end{proof}

\subsection{The elements $\nabla_{\mathbf{p}}$}

\begin{lemma}
\label{lem.Fi/Fi-1.L3}Let $i\in\left[  f_{n+1}\right]  $. Consider the lacunar
subset $Q_{i}$ of $\left[  n-1\right]  $. Write the set $Q_{i}\cup\left\{
n+1\right\}  $ as $\left\{  i_{1}<i_{2}<\cdots<i_{m}\right\}  $. Furthermore,
set $i_{0}:=1$.

For each $k\in\left[  m\right]  $, let $J_{k}$ denote the integer interval
$\left[  i_{k-1},\ i_{k}-1\right]  $. Note that the intervals $J_{1}%
,J_{2},\ldots,J_{m}$ are disjoint and -- except possibly for $J_{1}$ --
nonempty ($J_{1}$ is empty if and only if $1\in Q_{i}$), and their union is
$\left[  n\right]  $. Thus, we can view the direct product $S_{J_{1}}\times
S_{J_{2}}\times\cdots\times S_{J_{m}}$ as a subgroup of $S_{n}$ in the obvious
way (each factor $S_{J_{k}}$ acts on the elements of $J_{k}$ while leaving all
remaining elements of $\left[  n\right]  $ unchanged).

For each $\left(  m-1\right)  $-tuple $\mathbf{p}=\left(  p_{2},p_{3}%
,\ldots,p_{m}\right)  \in J_{2}\times J_{3}\times\cdots\times J_{m}$ (that is,
with $p_{k}\in J_{k}$ for each $k\in\left[  2,m\right]  $), we define an
element%
\[
\nabla_{\mathbf{p}}:=\sum_{\substack{\sigma\in S_{n};\\\sigma\left(
J_{k}\right)  =J_{k}\text{ for each }k\in\left[  m\right]  ;\\\sigma\left(
i_{k-1}\right)  =p_{k}\text{ for each }k\in\left[  2,m\right]  }}\sigma
\in\mathcal{A}.
\]
(Note that this also depends on $i$, not just on $\mathbf{p}$.)

Then:

\begin{enumerate}
\item[\textbf{(a)}] For any $\tau=\left(  \tau_{1},\tau_{2},\ldots,\tau
_{m}\right)  \in S_{J_{1}}\times S_{J_{2}}\times\cdots\times S_{J_{m}}$ and
$\mathbf{p}=\left(  p_{2},p_{3},\ldots,p_{m}\right)  \in J_{2}\times
J_{3}\times\cdots\times J_{m}$, we have%
\[
\tau\nabla_{\mathbf{p}}=\nabla_{\tau\mathbf{p}},
\]
where%
\[
\tau\mathbf{p}:=\left(  \tau_{2}\left(  p_{2}\right)  ,\tau_{3}\left(
p_{3}\right)  ,\ldots,\tau_{m}\left(  p_{m}\right)  \right)  =\left(
\tau\left(  p_{2}\right)  ,\tau\left(  p_{3}\right)  ,\ldots,\tau\left(
p_{m}\right)  \right)  .
\]

\item[\textbf{(b)}] The left $\mathcal{A}$-module $F\left(  Q_{i}\right)  $ is
generated (as a left $\mathcal{A}$-module) by any single element of the form
$\nabla_{\mathbf{p}}$ (with $\mathbf{p}\in J_{2}\times J_{3}\times\cdots\times
J_{m}$).

\item[\textbf{(c)}] Let $\ell\in\left[  2,m\right]  $. For each $k\in\left[
2,m\right]  \setminus\left\{  \ell\right\}  $, let $p_{k}\in J_{k}$ be an
element. Then,%
\[
\sum_{p_{\ell}\in J_{\ell}}\nabla_{\left(  p_{2},p_{3},\ldots,p_{m}\right)
}\in F_{i-1}.
\]
(Note that the elements $p_{2},p_{3},\ldots,p_{\ell-1},p_{\ell+1},p_{\ell
+2},\ldots,p_{m}$ in this sum are fixed, whereas $p_{\ell}$ runs through the
set $J_{\ell}$.)
\end{enumerate}
\end{lemma}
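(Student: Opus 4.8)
The plan is to prove the three parts in order, as each builds on the previous.

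For part \textbf{(a)}: I would observe that left multiplication by $\tau \in S_{J_1}\times\cdots\times S_{J_m}$ permutes the set of permutations $\sigma$ appearing in the defining sum of $\nabla_{\mathbf p}$. Indeed, if $\sigma$ satisfies $\sigma(J_k)=J_k$ for all $k$ and $\sigma(i_{k-1})=p_k$ for $k\in[2,m]$, then $\tau\sigma$ also stabilizes each $J_k$ (since both $\tau$ and $\sigma$ do), and $(\tau\sigma)(i_{k-1})=\tau(\sigma(i_{k-1}))=\tau(p_k)=\tau_k(p_k)$ because $p_k\in J_k$ and $\tau$ acts on $J_k$ via $\tau_k$. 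So the map $\sigma\mapsto\tau\sigma$ is a bijection from the index set of $\nabla_{\mathbf p}$ to the index set of $\nabla_{\tau\mathbf p}$, giving $\tau\nabla_{\mathbf p}=\nabla_{\tau\mathbf p}$. Also note $\tau(p_k)=\tau_k(p_k)$ justifies the two forms of $\tau\mathbf p$.

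For part \textbf{(b)}: First I would identify $F(Q_i)$ concretely. Using (\ref{eq.FI=.2}), $F(Q_i)=\{t\in\mathcal A \mid ts_j=t \text{ for all } j\in Q_i^{\prime}\}$. I expect that $Q_i^{\prime}=\bigcup_{k=1}^m \{j\in[n-1] : j,j+1\in J_k\}$ is exactly the set of simple transpositions generating the parabolic subgroup $S_{J_1}\times\cdots\times S_{J_m}$ (this should follow from the definitions of $Q_i^{\prime}$ and $J_k$, plus the fact that consecutive $i_k$'s differ by more than $1$ except possibly for $i_0,i_1$). Hence $F(Q_i)$ is the right-invariants space $\mathcal A^{S_{J_1}\times\cdots\times S_{J_m}}$ under right translation, which has as $\kk$-basis the elements $\sum_{\rho\in W}\sigma\rho$ as $\sigma W$ ranges over left cosets of $W:=S_{J_1}\times\cdots\times S_{J_m}$. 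Now $\nabla_{\mathbf p}$ is a sum of such coset-sums: it collects all $\sigma$ stabilizing every $J_k$ with prescribed values $\sigma(i_{k-1})=p_k$. As a left $\mathcal A$-module, $F(Q_i)\cong \kk[S_n/W]$, which is cyclic generated by the coset sum of the identity; and by part \textbf{(a)}, every $\nabla_{\mathbf p}$ is obtained from $\nabla_{\mathbf p_0}$ (for any fixed $\mathbf p_0$) by left-multiplying by a suitable $\tau$, while conversely $\nabla_{\mathbf p_0}$ lies in the $\mathcal A$-span of $\nabla_{\mathbf p}$. So it suffices to show one particular $\nabla_{\mathbf p}$ generates; taking $\mathbf p=(i_1,i_2,\ldots,i_{m-1})$ (so that the sum's permutations are exactly those in $W$ that additionally fix each $i_{k-1}$), I would show that $\mathcal A\cdot\nabla_{\mathbf p}$ contains all $W$-coset sums by left-multiplying by coset representatives and using that the sum over the remaining freedom inside $W$ can be produced.

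For part \textbf{(c)}: The key point is that $\sum_{p_\ell\in J_\ell}\nabla_{(p_2,\ldots,p_m)}$ is exactly $\{t\in F(Q_i) : ts_{i_{\ell-1}}=t\}$, or rather lies in it — summing over all values of $\sigma(i_{\ell-1})$ inside $J_\ell$ produces an element invariant under right multiplication by the transposition $s_{i_{\ell-1}}=\operatorname{cyc}_{i_{\ell-1},i_{\ell-1}+1}$, since $i_{\ell-1}+1\in J_\ell$ as well. More precisely, the sum ranges over all $\sigma$ stabilizing each $J_k$ with $\sigma(i_{k-1})=p_k$ for $k\neq\ell$ and $\sigma(i_{\ell-1})$ arbitrary in $J_\ell$; this set of $\sigma$ is stable under $\sigma\mapsto\sigma s_{i_{\ell-1}}$, so the sum is fixed by right multiplication by $s_{i_{\ell-1}}$. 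Hence $\sum_{p_\ell\in J_\ell}\nabla_{(p_2,\ldots,p_m)}\in\{t\in F(Q_i) : ts_{i_{\ell-1}}=t\}$, and since $i_{\ell-1}=i_{k}$ with $k=\ell-1\in[m-1]$, Lemma \ref{lem.Fi/Fi-1.L2} gives that this set is contained in $F_{i-1}$, which is exactly the claim. (I must double-check that $i_{\ell-1}+1\in J_\ell=[i_{\ell-1},i_\ell-1]$, i.e. that $i_\ell-1\geq i_{\ell-1}+1$; this holds because $j_\ell=i_\ell-i_{\ell-1}>1$ for $\ell\in[2,m]$ by Lemma \ref{lem.Fi/Fi-1.sum-js}.)

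The main obstacle I anticipate is part \textbf{(b)}: pinning down the precise description of $F(Q_i)$ as right-$W$-invariants (verifying $Q_i^{\prime}$ is the correct generating set of simple transpositions), and then carefully checking that a single $\nabla_{\mathbf p}$ cyclically generates the permutation-module $\kk[S_n/W]$ — the cyclicity is clear in the abstract, but identifying $\nabla_{\mathbf p}$ with (a multiple of, or an $\mathcal A$-generator equivalent to) the canonical generator requires bookkeeping about which $W$-cosets appear and with what multiplicity.
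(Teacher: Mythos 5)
Your parts \textbf{(a)} and \textbf{(c)} follow essentially the same route as the paper (for \textbf{(a)}, left multiplication by $\tau$ bijects the index set onto that of $\nabla_{\tau\mathbf{p}}$; for \textbf{(c)}, collapsing the condition on $\sigma\left(  i_{\ell-1}\right)  $ makes the index set stable under right multiplication by $s_{i_{\ell-1}}$, and then Lemma \ref{lem.Fi/Fi-1.L2} with $k=\ell-1$ finishes), and these are fine, granting that $\nabla_{\mathbf{q}}\in F\left(  Q_{i}\right)  $, which must come from \textbf{(b)}.

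The genuine gap is in \textbf{(b)}: your identification of $Q_{i}^{\prime}$ is wrong, and the plan built on it does not work. By definition, $Q_{i}^{\prime}$ consists of the $j\in\left[  n-1\right]  $ with $j\notin Q_{i}$ \emph{and} $j+1\notin Q_{i}$, so it excludes not only $i_{k}-1$ but also $i_{k}$ itself for each $k\in\left[  m-1\right]  $; concretely, $Q_{i}^{\prime}=\left[  1,i_{1}-2\right]  \cup\bigcup_{k=2}^{m}\left[  i_{k-1}+1,\ i_{k}-2\right]  $. Hence the group generated by $\left\{  s_{j}\ \mid\ j\in Q_{i}^{\prime}\right\}  $ is \emph{not} the full Young subgroup $W=S_{J_{1}}\times\cdots\times S_{J_{m}}$, but the strictly smaller group $\Gamma\cong S_{J_{1}}\times\prod_{k=2}^{m}S_{\left[  i_{k-1}+1,\ i_{k}-1\right]  }$, i.e.\ the permutations preserving each $J_{k}$ and additionally \emph{fixing} the points $i_{1},\ldots,i_{m-1}$. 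Your claim that $F\left(  Q_{i}\right)  $ is the space of right $W$-invariants (so $\cong\mathbf{k}\left[  S_{n}/W\right]  $, spanned by $W$-coset sums) is therefore false; indeed, under that description $\nabla_{\mathbf{p}}$ would not even lie in $F\left(  Q_{i}\right)  $, since $\nabla_{\mathbf{p}}$ is not fixed by right multiplication by $s_{i_{\ell-1}}\in W$ (this non-invariance is exactly what your own part \textbf{(c)} exploits), and a rank count against Lemma \ref{lem.Fi/Fi-1.dim} also rules it out. The correct statement repairs everything and in fact simplifies your plan: $F\left(  Q_{i}\right)  $ is the space of fixed points of the right $\Gamma$-action, hence spanned by the left $\Gamma$-coset sums; the set $\Omega_{\mathbf{p}}$ of permutations indexing $\nabla_{\mathbf{p}}$ is precisely \emph{one} left coset $\tau\Gamma$ (its conditions pin down exactly the data that $\Gamma$ preserves), so each $\nabla_{\mathbf{p}}$ is a single orbit sum; and since any two orbit sums are left translates of each other by elements of $S_{n}$, any single $\nabla_{\mathbf{p}}$ generates $F\left(  Q_{i}\right)  $ as a left $\mathcal{A}$-module. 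This is the paper's argument, and no bookkeeping with multiplicities of $W$-cosets is needed.
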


\begin{example}
\label{exa.Fi/Fi-1.L3.e1}Let $n=7$ and $Q_{i}=\left\{  3,6\right\}  $ (clearly
a lacunar subset of $\left[  n-1\right]  =\left[  6\right]  $). Then,
following the notations of Lemma \ref{lem.Fi/Fi-1.L3}, we have $\left\{
i_{1}<i_{2}<\cdots<i_{m}\right\}  =Q_{i}\cup\left\{  n+1\right\}  =\left\{
3,6,8\right\}  $, so that $i_{1}=3$ and $i_{2}=6$ and $i_{3}=8$ and $i_{0}=1$.
Thus, the integer intervals $J_{k}=\left[  i_{k-1},\ i_{k}-1\right]  $ are
\[
J_{1}=\left[  1,2\right]  ,\ \ \ \ \ \ \ \ \ \ J_{2}=\left[  3,5\right]
,\ \ \ \ \ \ \ \ \ \ J_{3}=\left[  6,7\right]  .
\]
Taking $\mathbf{p}$ to be the $2$-tuple $\left(  p_{2},p_{3}\right)  =\left(
4,7\right)  \in J_{2}\times J_{3}$, we then have%
\begin{align*}
\nabla_{\mathbf{p}}  & =\sum_{\substack{\sigma\in S_{n};\\\sigma\left(
J_{k}\right)  =J_{k}\text{ for each }k\in\left[  m\right]  ;\\\sigma\left(
i_{k-1}\right)  =p_{k}\text{ for each }k\in\left[  2,m\right]  }}\sigma
=\sum_{\substack{\sigma\in S_{n};\\\sigma\left(  \left[  1,2\right]  \right)
=\left[  1,2\right]  ;\ \sigma\left(  \left[  3,5\right]  \right)  =\left[
3,5\right]  ;\ \sigma\left(  \left[  6,7\right]  \right)  =\left[  6,7\right]
;\\\sigma\left(  3\right)  =4;\ \sigma\left(  6\right)  =7}}\sigma\\
& =\left[  1243576\right]  +\left[  1245376\right]  +\left[  2143576\right]
+\left[  2145376\right]
\end{align*}
(writing permutations in one-line notation). Lemma \ref{lem.Fi/Fi-1.L3}
\textbf{(b)} says that this element $\nabla_{\mathbf{p}}$ generates the left
$\mathcal{A}$-module $F\left(  Q_{i}\right)  $. Applying Lemma
\ref{lem.Fi/Fi-1.L3} \textbf{(c)} to $\ell=2$ and $p_{3}=7$, we obtain%
\[
\sum_{p_{2}\in J_{2}}\nabla_{\left(  p_{2},7\right)  }\in F_{i-1}%
,\ \ \ \ \ \ \ \ \ \ \text{that is,}\ \ \ \ \ \ \ \ \ \ \nabla_{\left(
3,7\right)  }+\nabla_{\left(  4,7\right)  }+\nabla_{\left(  5,7\right)  }\in
F_{i-1}.
\]

\end{example}

\begin{proof}
[Proof of Lemma \ref{lem.Fi/Fi-1.L3}.] As in the proof of Lemma
\ref{lem.Fi/Fi-1.dim}, we find $i_{m}=n+1$ and $Q_{i}=\left\{  i_{1}%
<i_{2}<\cdots<i_{m-1}\right\}  $. \medskip

\textbf{(a)} Let $\tau=\left(  \tau_{1},\tau_{2},\ldots,\tau_{m}\right)  \in
S_{J_{1}}\times S_{J_{2}}\times\cdots\times S_{J_{m}}$ and $\mathbf{p}=\left(
p_{2},p_{3},\ldots,p_{m}\right)  \in J_{2}\times J_{3}\times\cdots\times
J_{m}$. Recall that
\[
\nabla_{\mathbf{p}}=\sum_{\substack{\sigma\in S_{n};\\\sigma\left(
J_{k}\right)  =J_{k}\text{ for each }k\in\left[  m\right]  ;\\\sigma\left(
i_{k-1}\right)  =p_{k}\text{ for each }k\in\left[  2,m\right]  }}\sigma.
\]
Multiplying this equality by $\tau$ from the left, we obtain%
\begin{align*}
\tau\nabla_{\mathbf{p}} &  =\sum_{\substack{\sigma\in S_{n};\\\sigma\left(
J_{k}\right)  =J_{k}\text{ for each }k\in\left[  m\right]  ;\\\sigma\left(
i_{k-1}\right)  =p_{k}\text{ for each }k\in\left[  2,m\right]  }}\tau
\sigma=\sum_{\substack{\sigma\in S_{n};\\\tau\left(  \sigma\left(
J_{k}\right)  \right)  =\tau\left(  J_{k}\right)  \text{ for each }k\in\left[
m\right]  ;\\\tau\left(  \sigma\left(  i_{k-1}\right)  \right)  =\tau\left(
p_{k}\right)  \text{ for each }k\in\left[  2,m\right]  }}\tau\sigma\\
&  \ \ \ \ \ \ \ \ \ \ \ \ \ \ \ \ \ \ \ \ \left(
\begin{array}
[c]{c}%
\text{here, we have replaced the conditions \textquotedblleft}\sigma\left(
J_{k}\right)  =J_{k}\text{\textquotedblright}\\
\text{and \textquotedblleft}\sigma\left(  i_{k-1}\right)  =p_{k}%
\text{\textquotedblright\ under the summation sign}\\
\text{by the conditions \textquotedblleft}\tau\left(  \sigma\left(
J_{k}\right)  \right)  =\tau\left(  J_{k}\right)  \text{\textquotedblright}\\
\text{and \textquotedblleft}\tau\left(  \sigma\left(  i_{k-1}\right)  \right)
=\tau\left(  p_{k}\right)  \text{\textquotedblright\ (which are equivalent
to}\\
\text{the former two conditions because }\tau\text{ is injective)}%
\end{array}
\right)  \\
&  =\sum_{\substack{\sigma\in S_{n};\\\left(  \tau\sigma\right)  \left(
J_{k}\right)  =J_{k}\text{ for each }k\in\left[  m\right]  ;\\\left(
\tau\sigma\right)  \left(  i_{k-1}\right)  =\tau\left(  p_{k}\right)  \text{
for each }k\in\left[  2,m\right]  }}\tau\sigma\\
&  \ \ \ \ \ \ \ \ \ \ \ \ \ \ \ \ \ \ \ \ \left(
\begin{array}
[c]{c}%
\text{since each }k\in\left[  m\right]  \text{ satisfies }\tau\left(
\sigma\left(  J_{k}\right)  \right)  =\left(  \tau\sigma\right)  \left(
J_{k}\right)  \\
\text{and }\tau\left(  \sigma\left(  i_{k-1}\right)  \right)  =\left(
\tau\sigma\right)  \left(  i_{k-1}\right)  \text{ (if }k>1\text{)}\\
\text{and }\tau\left(  J_{k}\right)  =J_{k}\text{ (since }\tau\in S_{J_{1}%
}\times S_{J_{2}}\times\cdots\times S_{J_{m}}\text{)}%
\end{array}
\right)  \\
&  =\sum_{\substack{\sigma\in S_{n};\\\sigma\left(  J_{k}\right)  =J_{k}\text{
for each }k\in\left[  m\right]  ;\\\sigma\left(  i_{k-1}\right)  =\tau\left(
p_{k}\right)  \text{ for each }k\in\left[  2,m\right]  }}\sigma\\
&  \ \ \ \ \ \ \ \ \ \ \ \ \ \ \ \ \ \ \ \ \left(
\begin{array}
[c]{c}%
\text{here, we have substituted }\sigma\text{ for }\tau\sigma\text{ in the
sum,}\\
\text{since the map }S_{n}\rightarrow S_{n},\ \sigma\mapsto\tau\sigma\text{ is
a bijection}%
\end{array}
\right)  \\
&  =\nabla_{\tau\mathbf{p}}%
\end{align*}
(by the definition of $\nabla_{\tau\mathbf{p}}$, since $\tau\mathbf{p}=\left(
\tau\left(  p_{2}\right)  ,\tau\left(  p_{3}\right)  ,\ldots,\tau\left(
p_{m}\right)  \right)  $). This proves Lemma \ref{lem.Fi/Fi-1.L3}
\textbf{(a)}. \medskip

\textbf{(b)} The definition of the non-shadow $Q_{i}^{\prime}$ yields%
\begin{align*}
Q_{i}^{\prime} &  =\left[  n-1\right]  \setminus\left(  Q_{i}\cup\left(
Q_{i}-1\right)  \right)  \\
&  =\left[  n-1\right]  \setminus\left(  \left\{  i_{1}<i_{2}<\cdots
<i_{m-1}\right\}  \cup\left\{  i_{1}-1<i_{2}-1<\cdots<i_{m-1}-1\right\}
\right)
\end{align*}
(since $Q_{i}=\left\{  i_{1}<i_{2}<\cdots<i_{m-1}\right\}  $). In other words,
$Q_{i}^{\prime}$ consists of all elements of $\left[  n-1\right]  $ except for
those of the forms $i_{k}-1$ and $i_{k}$ for $k\in\left[  m-1\right]  $.

Let $\Gamma$ be the subgroup of $S_{n}$ generated by the simple transpositions
$s_{j}$ with $j\in Q_{i}^{\prime}$. Thus, $\Gamma$ is generated by all simple
transpositions $s_{1},s_{2},\ldots,s_{n-1}$ except for those of the forms
$s_{i_{k}-1}$ and $s_{i_{k}}$ for $k\in\left[  m-1\right]  $ (by the
description of $Q_{i}^{\prime}$ in the previous paragraph). Hence, every
permutation $\omega\in\Gamma$ preserves the intervals $J_{1},J_{2}%
,\ldots,J_{m}$ as well as the elements $i_{1},i_{2},\ldots,i_{m-1}$.

Conversely, if some permutation $\omega\in S_{n}$ preserves the intervals
$J_{1},J_{2},\ldots,J_{m}$ as well as the elements $i_{1},i_{2},\ldots
,i_{m-1}$, then $\omega$ must belong to $\Gamma$ (because such a permutation
$\omega$ must preserve the intervals $J_{1}=\left[  i_{0},\ i_{1}-1\right]  $
as well as $J_{k}\setminus\left\{  i_{k-1}\right\}  =\left[  i_{k-1}%
+1,\ i_{k}-1\right]  $ for all $k\in\left[  2,m\right]  $ (since it preserves
both $J_{k}$ and $i_{k-1}$) as well as the length-$1$ intervals $\left\{
i_{k-1}\right\}  $ for all $k\in\left[  2,m\right]  $, and thus must be a
composition of permutations of these intervals; but any such permutation
belongs to $\Gamma$ (since any permutation of an integer interval $\left[
a,b\right]  $ can be written as a product of simple transpositions $s_{j}$
with $j\in\left[  a,b-1\right]  $)).

The subgroup $\Gamma$ of $S_{n}$ acts from the right on $S_{n}$ (simply by
right multiplication), and thus also acts $\mathbf{k}$-linearly from the right
on $\mathcal{A}=\mathbf{k}\left[  S_{n}\right]  $ (by linear extension),
making $\mathcal{A}$ into a permutation module\footnote{Recall the definition
of a permutation module:
\par
Let $G$ be a finite group. Let $X$ be a right $G$-set. Let $\mathbf{k}%
^{\left(  X\right)  }$ be the free $\mathbf{k}$-module with basis $X$. Then,
$\mathbf{k}^{\left(  X\right)  }$ becomes a right $\mathbf{k}\left[  G\right]
$-module, where the action of $\mathbf{k}\left[  G\right]  $ on $\mathbf{k}%
^{\left(  X\right)  }$ is given by bilinearly extending the action of $G$ on
$X$ (that is, by the rule $\left(  \sum_{g\in G}\alpha_{g}g\right)  \left(
\sum_{x\in X}\beta_{x}x\right)  :=\sum_{g\in G}\ \ \sum_{x\in X}\alpha
_{g}\beta_{x}gx$). This is called the \emph{permutation module} corresponding
to the right $G$-set $X$.
\par
In our present setup, we apply this construction to $G=\Gamma$ and $X=S_{n}$.}
of $\Gamma$. Applying (\ref{eq.FI=.2}) to $I=Q_{i}$, we see that%
\begin{align*}
F\left(  Q_{i}\right)   &  =\left\{  t\in\mathcal{A}\ \mid\ ts_{j}=t\text{ for
all }j\in Q_{i}^{\prime}\right\}  \\
&  =\left\{  t\in\mathcal{A}\ \mid\ t\omega=t\text{ for all }\omega\in
\Gamma\right\}
\end{align*}
(since $\Gamma$ is the group generated by the $s_{j}$ with $j\in Q_{i}%
^{\prime}$, and therefore the condition \textquotedblleft$ts_{j}=t$ for all
$j\in Q_{i}^{\prime}$\textquotedblright\ is equivalent to \textquotedblleft%
$t\omega=t$ for all $\omega\in\Gamma$\textquotedblright). Thus, $F\left(
Q_{i}\right)  $ is the space of fixed points\footnote{Recall the definition of
a space of fixed points: If a $\mathbf{k}$-module $V$ is equipped with a
linear right action of a group $G$ (that is, if $V$ is a right $\mathbf{k}%
\left[  G\right]  $-module), then its \emph{space of fixed points} is defined
to be the set $\left\{  a\in V\ \mid\ ag=a\text{ for all }g\in G\right\}  $.
This is a $\mathbf{k}$-submodule of $V$.} of the right $\Gamma$-action on
$\mathcal{A}$.

However, we know from the basic theory of group actions (see, e.g.,
\cite[\S 3.3.1, \textquotedblleft Invariants of Permutation
Representations\textquotedblright]{Lorenz18} or \cite[Proposition
A.2]{GriPar25}) that when a finite group $G$ acts on a set $X$, the space of
fixed points of the corresponding permutation module is spanned by the orbit
sums\footnote{In more details:
\par
Let $G$ be a finite group. Let $X$ be a right $G$-set. Consider the
corresponding permutation module $\mathbf{k}^{\left(  X\right)  }$, with its
right $G$-action.
\par
For each $G$-orbit $\mathcal{O}$ on $X$, we define the \emph{orbit sum}
$z_{\mathcal{O}}:=\sum_{x\in\mathcal{O}}x\in\mathbf{k}^{\left(  X\right)  }$.
Now, the known fact that we are citing here is saying that these orbit sums
$z_{\mathcal{O}}$ (as $\mathcal{O}$ ranges over all $G$-orbits on $X$) form a
basis of the space of fixed points of $\mathbf{k}^{\left(  X\right)  }$ (as a
$\mathbf{k}$-module).
\par
In \cite[\S 3.3.1, \textquotedblleft Invariants of Permutation
Representations\textquotedblright]{Lorenz18} and in \cite[Proposition
A.2]{GriPar25}, this is stated for left $G$-actions, but the case of right
$G$-actions is analogous.}. Hence, the $\mathbf{k}$-module $F\left(
Q_{i}\right)  $ is spanned by the orbit sums of the right $\Gamma$-action on
$S_{n}$ (since $F\left(  Q_{i}\right)  $ is the set of fixed points of the
right $\Gamma$-action on $\mathcal{A}$, which is the permutation module
corresponding to the right $\Gamma$-action on $S_{n}$). In other words,
$F\left(  Q_{i}\right)  $ is spanned by the orbit sums $\sum_{\sigma\in
\tau\Gamma}\sigma$ for $\tau\in S_{n}$ (since each orbit of the right $\Gamma
$-action on $S_{n}$ has the form $\tau\Gamma$ for some $\tau\in S_{n}$). As a
left $\mathcal{A}$-module, $F\left(  Q_{i}\right)  $ is therefore generated by
any \textbf{one} of these orbit sums (since any two orbit sums $\sum
_{\sigma\in\tau_{1}\Gamma}\sigma$ and $\sum_{\sigma\in\tau_{2}\Gamma}\sigma$
can be transformed into each other by left multiplication by $\tau_{1}\tau
_{2}^{-1}\in S_{n}\subseteq\mathcal{A}$, and therefore each of them generates
the other).

Now, let $\mathbf{p}=\left(  p_{2},p_{3},\ldots,p_{m}\right)  \in J_{2}\times
J_{3}\times\cdots\times J_{m}$. We shall now show that $\nabla_{\mathbf{p}}$
is one of these orbit sums we just mentioned. Indeed, let $\Omega_{\mathbf{p}%
}$ be the set of all permutations $\sigma\in S_{n}$ that satisfy
\textquotedblleft$\sigma\left(  J_{k}\right)  =J_{k}$ for each $k\in\left[
m\right]  $\textquotedblright\ and \textquotedblleft$\sigma\left(
i_{k-1}\right)  =p_{k}$ for each $k\in\left[  2,m\right]  $\textquotedblright.
Then, the definition of $\nabla_{\mathbf{p}}$ can be rewritten as%
\begin{equation}
\nabla_{\mathbf{p}}=\sum_{\sigma\in\Omega_{\mathbf{p}}}\sigma
.\label{pf.lem.Fi/Fi-1.L3.nap=}%
\end{equation}
We shall now show that $\Omega_{\mathbf{p}}$ is an orbit of the right $\Gamma
$-action on $S_{n}$ (that is, a left coset of $\Gamma$ in $S_{n}$).

First, we show that the set $\Omega_{\mathbf{p}}$ is nonempty. Indeed, it is
easy to construct some permutation $\tau\in\Omega_{\mathbf{p}}$: Namely, we
pick a permutation $\tau_{1}\in S_{J_{1}}$ arbitrarily. Furthermore, for each
$k\in\left[  2,m\right]  $, we pick a permutation $\tau_{k}\in S_{J_{k}}$ that
sends $i_{k-1}\in J_{k}$ to $p_{k}\in J_{k}$. The $m$-tuple $\left(  \tau
_{1},\tau_{2},\ldots,\tau_{m}\right)  $ then belongs to $S_{J_{1}}\times
S_{J_{2}}\times\cdots\times S_{J_{m}}$ and -- viewed as an element of $S_{n}$
via the embedding $S_{J_{1}}\times S_{J_{2}}\times\cdots\times S_{J_{m}%
}\rightarrow S_{n}$ -- belongs to $\Omega_{\mathbf{p}}$.

Hence, $\Omega_{\mathbf{p}}$ is nonempty. Pick any $\tau\in\Omega_{\mathbf{p}%
}$. Then, $\tau\left(  J_{k}\right)  =J_{k}$ for each $k\in\left[  m\right]
$, and $\tau\left(  i_{k-1}\right)  =p_{k}$ for each $k\in\left[  2,m\right]
$. Moreover, these equalities remain valid if we replace $\tau$ by $\tau
\omega$ for any $\omega\in\Gamma$ (because every permutation $\omega\in\Gamma$
preserves the sets $J_{1},J_{2},\ldots,J_{m}$ as well as the elements
$i_{1},i_{2},\ldots,i_{m-1}$). Thus, for each $\omega\in\Gamma$, we have
$\tau\omega\in\Omega_{\mathbf{p}}$ as well. In other words, $\tau
\Gamma\subseteq\Omega_{\mathbf{p}}$.

Conversely, we claim that $\Omega_{\mathbf{p}}\subseteq\tau\Gamma$. Indeed,
let $\sigma\in\Omega_{\mathbf{p}}$ be arbitrary. Then, each $k\in\left[
m\right]  $ satisfies $\sigma\left(  J_{k}\right)  =J_{k}=\tau\left(
J_{k}\right)  $, whereas each $k\in\left[  2,m\right]  $ satisfies
$\sigma\left(  i_{k-1}\right)  =p_{k}=\tau\left(  i_{k-1}\right)  $. Set
$\omega=\tau^{-1}\sigma\in S_{n}$; thus, each $k\in\left[  m\right]  $
satisfies $\omega\left(  J_{k}\right)  =\tau^{-1}\left(  \sigma\left(
J_{k}\right)  \right)  =J_{k}$ (since we just saw that $\sigma\left(
J_{k}\right)  =\tau\left(  J_{k}\right)  $), and each $k\in\left[  2,m\right]
$ satisfies $\omega\left(  i_{k-1}\right)  =\tau^{-1}\left(  \sigma\left(
i_{k-1}\right)  \right)  =i_{k-1}$ (since we just saw that $\sigma\left(
i_{k-1}\right)  =\tau\left(  i_{k-1}\right)  $). Thus, the permutation
$\omega\in S_{n}$ preserves the intervals $J_{1},J_{2},\ldots,J_{m}$ as well
as the elements $i_{1},i_{2},\ldots,i_{m-1}$. Hence, $\omega\in\Gamma$
(because if some permutation $\omega\in S_{n}$ preserves the intervals
$J_{1},J_{2},\ldots,J_{m}$ as well as the elements $i_{1},i_{2},\ldots
,i_{m-1}$, then $\omega$ must belong to $\Gamma$). Now, from $\omega=\tau
^{-1}\sigma$, we obtain $\sigma=\tau\omega\in\tau\Gamma$ (since $\omega
\in\Gamma$). Forget that we fixed $\sigma$. We thus have proved that
$\sigma\in\tau\Gamma$ for each $\sigma\in\Omega_{\mathbf{p}}$. In other words,
$\Omega_{\mathbf{p}}\subseteq\tau\Gamma$.

Combining this with $\tau\Gamma\subseteq\Omega_{\mathbf{p}}$, we obtain
$\Omega_{\mathbf{p}}=\tau\Gamma$. Hence, $\Omega_{\mathbf{p}}$ is an orbit of
the right $\Gamma$-action on $S_{n}$. Thus, $\sum_{\sigma\in\Omega
_{\mathbf{p}}}\sigma$ is an orbit sum of this action. In view of
(\ref{pf.lem.Fi/Fi-1.L3.nap=}), this means that $\nabla_{\mathbf{p}}$ is an
orbit sum of this action. Hence, as a left $\mathcal{A}$-module, $F\left(
Q_{i}\right)  $ is generated by $\nabla_{\mathbf{p}}$ (since we have shown
that $F\left(  Q_{i}\right)  $ is generated by any \textbf{one} of the orbit
sums). This proves Lemma \ref{lem.Fi/Fi-1.L3} \textbf{(b)}. \medskip

\textbf{(c)} Let
\begin{equation}
\vartheta:=\sum_{p_{\ell}\in J_{\ell}}\nabla_{\left(  p_{2},p_{3},\ldots
,p_{m}\right)  }.\label{pf.lem.Fi/Fi-1.L3.c.1}%
\end{equation}
We then must show that $\vartheta\in F_{i-1}$.

We have%
\[
\vartheta=\sum_{p_{\ell}\in J_{\ell}}\underbrace{\nabla_{\left(  p_{2}%
,p_{3},\ldots,p_{m}\right)  }}_{\substack{\in F\left(  Q_{i}\right)
\\\text{(since Lemma \ref{lem.Fi/Fi-1.L3} \textbf{(b)} shows that }%
\nabla_{\mathbf{q}}\in F\left(  Q_{i}\right)  \\\text{for any }\mathbf{q}\in
J_{2}\times J_{3}\times\cdots\times J_{m}\text{)}}}\in F\left(  Q_{i}\right)
\]
(since $F\left(  Q_{i}\right)  $ is a $\mathbf{k}$-module). On the other hand,%
\begin{align}
\vartheta &  =\sum_{p_{\ell}\in J_{\ell}}\nabla_{\left(  p_{2},p_{3}%
,\ldots,p_{m}\right)  }\nonumber\\
&  =\sum_{p_{\ell}\in J_{\ell}}\ \ \sum_{\substack{\sigma\in S_{n}%
;\\\sigma\left(  J_{k}\right)  =J_{k}\text{ for each }k\in\left[  m\right]
;\\\sigma\left(  i_{k-1}\right)  =p_{k}\text{ for each }k\in\left[
2,m\right]  }}\sigma\ \ \ \ \ \ \ \ \ \ \left(  \text{by the definition of
}\nabla_{\left(  p_{2},p_{3},\ldots,p_{m}\right)  }\right)  \nonumber\\
&  =\sum_{p_{\ell}\in J_{\ell}}\ \ \sum_{\substack{\sigma\in S_{n}%
;\\\sigma\left(  J_{k}\right)  =J_{k}\text{ for each }k\in\left[  m\right]
;\\\sigma\left(  i_{k-1}\right)  =p_{k}\text{ for each }k\in\left[
2,m\right]  \setminus\left\{  \ell\right\}  ;\\\sigma\left(  i_{\ell
-1}\right)  =p_{\ell}}}\sigma\nonumber\\
&  \ \ \ \ \ \ \ \ \ \ \ \ \ \ \ \ \ \ \ \ \left(
\begin{array}
[c]{c}%
\text{here, we have split up the}\\
\text{condition \textquotedblleft}\sigma\left(  i_{k-1}\right)  =p_{k}\text{
for each }k\in\left[  2,m\right]  \text{\textquotedblright}\\
\text{under the second summation sign}\\
\text{into two: one for }k\neq\ell\text{ and one for }k=\ell
\end{array}
\right)  \nonumber\\
&  =\sum_{\substack{\sigma\in S_{n};\\\sigma\left(  J_{k}\right)  =J_{k}\text{
for each }k\in\left[  m\right]  ;\\\sigma\left(  i_{k-1}\right)  =p_{k}\text{
for each }k\in\left[  2,m\right]  \setminus\left\{  \ell\right\}
;\\\sigma\left(  i_{\ell-1}\right)  \in J_{\ell}}}\sigma
\label{pf.lem.Fi/Fi-1.L3.c.5}%
\end{align}
(here, we have subsumed the two summation signs into one by removing the
variable $p_{\ell}$). The condition \textquotedblleft$\sigma\left(  i_{\ell
-1}\right)  \in J_{\ell}$\textquotedblright\ under the summation sign in
(\ref{pf.lem.Fi/Fi-1.L3.c.5}) is redundant, since it follows from the
condition \textquotedblleft$\sigma\left(  J_{k}\right)  =J_{k}$ for each
$k\in\left[  m\right]  $\textquotedblright\ (indeed, the latter condition
implies that $\sigma\left(  J_{\ell}\right)  =J_{\ell}$ and therefore
$\sigma\left(  \underbrace{i_{\ell-1}}_{\in J_{\ell}}\right)  \in\sigma\left(
J_{\ell}\right)  =J_{\ell}$). Hence, we can remove this condition. Thus,
(\ref{pf.lem.Fi/Fi-1.L3.c.5}) rewrites as
\begin{equation}
\vartheta=\sum_{\substack{\sigma\in S_{n};\\\sigma\left(  J_{k}\right)
=J_{k}\text{ for each }k\in\left[  m\right]  ;\\\sigma\left(  i_{k-1}\right)
=p_{k}\text{ for each }k\in\left[  2,m\right]  \setminus\left\{  \ell\right\}
}}\sigma.\label{pf.lem.Fi/Fi-1.L3.c.2}%
\end{equation}
However, the two conditions \textquotedblleft$\sigma\left(  J_{k}\right)
=J_{k}$ for each $k\in\left[  m\right]  $\textquotedblright\ and
\textquotedblleft$\sigma\left(  i_{k-1}\right)  =p_{k}$ for each $k\in\left[
2,m\right]  \setminus\left\{  \ell\right\}  $\textquotedblright\ under the
summation sign in (\ref{pf.lem.Fi/Fi-1.L3.c.2}) remain unchanged if we replace
$\sigma$ by $\sigma s_{i_{\ell-1}}$ (since this replacement merely swaps the
values of $\sigma$ on $i_{\ell-1}$ and $i_{\ell-1}+1$, but this does not break
any of the two conditions\footnote{Here we use the fact that the two elements
$i_{\ell-1}$ and $i_{\ell-1}+1$ lie in the same $J_{k}$ (namely, in $J_{\ell
}=\left[  i_{\ell-1},\ i_{\ell}-1\right]  $). This is because $Q_{i}$ is
lacunar, so that $i_{\ell-1}<i_{\ell}-1$.}). Hence, the set of the
permutations $\sigma$ over which we sum in (\ref{pf.lem.Fi/Fi-1.L3.c.2}) is
fixed under right multiplication by $s_{i_{\ell-1}}$. Therefore, the whole sum
is fixed under right multiplication by $s_{i_{\ell-1}}$. Because of
(\ref{pf.lem.Fi/Fi-1.L3.c.2}), this shows that $\vartheta s_{i_{\ell-1}%
}=\vartheta$. Combining this with $\vartheta\in F\left(  Q_{i}\right)  $, we
obtain%
\[
\vartheta\in\left\{  t\in F\left(  Q_{i}\right)  \ \mid\ ts_{i_{\ell-1}%
}=t\right\}  \subseteq F_{i-1}%
\]
(by Lemma \ref{lem.Fi/Fi-1.L2}, applied to $k=\ell-1$). This proves Lemma
\ref{lem.Fi/Fi-1.L3} \textbf{(c)}.
\end{proof}

\subsection{Linear algebra lemmas}

We shall furthermore use two facts from linear algebra over any commutative
ring $\mathbf{k}$:

\begin{lemma}
\label{lem.surj=bij}Let $s\in\mathbb{N}$. Let $M$ and $N$ be two free
$\mathbf{k}$-modules of rank $s$. Then, any surjective $\mathbf{k}$-linear map
$\rho:M\rightarrow N$ is an isomorphism.
\end{lemma}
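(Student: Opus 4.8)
The plan is to prove this standard fact about free modules over a commutative ring using the determinant / Cayley--Hamilton circle of ideas, since the statement is exactly the module-theoretic analogue of ``a surjective endomorphism of a finite-dimensional vector space is injective,'' and the usual proof via dimension counting does not survive the passage to arbitrary commutative $\mathbf{k}$.

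First I would reduce to the case $M = N = \mathbf{k}^{s}$ by choosing bases, so that $\rho$ becomes multiplication by an $s \times s$ matrix $A \in \mathbf{k}^{s \times s}$, and surjectivity of $\rho$ translates into the matrix equation: there is a matrix $B \in \mathbf{k}^{s \times s}$ with $AB = I_{s}$. The goal is then to show $A$ is invertible (equivalently, that $\rho$ is injective). Taking determinants of $AB = I_{s}$ gives $\det(A)\det(B) = 1$, so $\det(A)$ is a unit in $\mathbf{k}$. By the adjugate formula $\operatorname{adj}(A) \cdot A = \det(A) \cdot I_{s}$, the matrix $A$ then has the two-sided inverse $\det(A)^{-1}\operatorname{adj}(A)$, hence $\rho$ is an isomorphism.

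Alternatively, if one wishes to avoid invoking $AB = I_s$ directly (in case the author prefers to phrase surjectivity more abstractly), the cleaner route is the classical ``surjective endomorphisms of finitely generated modules are injective'' argument: view $N$ as a module over the polynomial ring $\mathbf{k}[x]$ with $x$ acting as $\rho$ (using any chosen isomorphism $N \cong M$ to regard $\rho$ as an endomorphism), note that surjectivity of $\rho$ says $xN = N$, i.e. $\mathfrak{a}N = N$ for the ideal $\mathfrak{a} = (x)$, and apply the Cayley--Hamilton / determinant-trick version of Nakayama's lemma to produce a polynomial $p(x) = 1 + x q(x)$ annihilating $N$; evaluating at $\rho$ shows $\operatorname{id}_{N} = -\rho \circ q(\rho)$, so $\rho$ has a right inverse, and by symmetry (or by the finite free rank hypothesis) it is bijective. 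I expect the author to take the shorter determinant route via the adjugate, since $M$ and $N$ are explicitly free of the same finite rank.

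The only mildly delicate point — and the reason the lemma is stated at all rather than used silently — is that one genuinely needs commutativity of $\mathbf{k}$ and the equal-rank hypothesis; the argument breaks without them, so I would make sure the write-up pins down where these are used (commutativity for $\det$ to be multiplicative and for the adjugate identity; equal rank so that $A$ is square and $\det$ is defined). None of the steps is a real obstacle; the ``hard part,'' such as it is, is merely citing the adjugate identity over a general commutative ring rather than over a field.
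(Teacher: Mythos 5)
Your proposal is correct and follows essentially the same route as the paper: choose bases, use surjectivity to produce a right inverse matrix $B$ with $AB = I_{s}$, take determinants to see that $\det A$ is a unit, and invoke the adjugate identity to conclude that $A$ (hence $\rho$) is invertible. The paper just spells out the small step you gloss over — that surjectivity yields such a $B$, namely by lifting each basis vector of $N$ to a preimage and assembling these lifts into a linear section $\tau$ of $\rho$.
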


\begin{proof}
This is a well-known folklore result, and follows easily from the known fact
(\textquotedblleft Orzech's theorem\textquotedblright\ in one of its simplest
forms -- see, e.g., \cite[Exercise 2.5.18 (a)]{GriRei}, or \cite[Corollary
0.2]{Grinberg-Orzech} for a more general result) that any surjective
endomorphism of a free $\mathbf{k}$-module of finite rank is an isomorphism.
For the sake of self-containedness, let me nevertheless give a direct proof:

Let $\rho:M\rightarrow N$ be a surjective $\mathbf{k}$-linear map. We must
show that $\rho$ is an isomorphism.

Pick bases $\left(  e_{1},e_{2},\ldots,e_{s}\right)  $ and $\left(
f_{1},f_{2},\ldots,f_{s}\right)  $ of $M$ and $N$ (these exist, since $M$ and
$N$ are free of rank $s$). The surjectivity of $\rho$ then shows that every
basis vector $f_{i}$ of $N$ lies in the image of $\rho$. That is, every
$f_{i}$ can be written as $\rho\left(  g_{i}\right)  $ for some vector
$g_{i}\in M$. Choose such vectors $g_{i}$, and let $\tau:N\rightarrow M$ be
the $\mathbf{k}$-linear map that sends the basis vectors $f_{1},f_{2}%
,\ldots,f_{s}$ to $g_{1},g_{2},\ldots,g_{s}$, respectively. Then, the
composition $\rho\circ\tau:N\rightarrow N$ sends each vector $f_{i}$ to
$f_{i}$ (since $f_{i}\overset{\tau}{\mapsto}g_{i}\overset{\rho}{\mapsto}%
\rho\left(  g_{i}\right)  =f_{i}$), and thus is the identity map
$\operatorname*{id}\nolimits_{N}$ (since $f_{1},f_{2},\ldots,f_{s}$ form a
basis of $N$).

Now, let $A\in\mathbf{k}^{s\times s}$ be the matrix that represents the linear
map $\rho:M\rightarrow N$ with respect to our bases of $M$ and $N$. Likewise,
let $B\in\mathbf{k}^{s\times s}$ be the matrix that represents the linear map
$\tau:N\rightarrow M$ with respect to our bases of $N$ and $M$. Then, $AB$ is
the matrix that represents the linear map $\rho\circ\tau:N\rightarrow N$ with
respect to our basis of $N$. Therefore, $AB$ is the identity matrix $I_{s}$
(since $\rho\circ\tau$ is the identity map $\operatorname*{id}\nolimits_{N}$,
which is represented by the identity matrix $I_{s}$). Hence, $\det\left(
AB\right)  =\det\left(  I_{s}\right)  =1$ and thus $1=\det\left(  AB\right)
=\det A\cdot\det B$. This shows that $\det A$ is invertible (with inverse
$\det B$). Hence, the matrix $A$ is invertible (with inverse $\dfrac{1}{\det
A}\operatorname*{adj}A$, by the well-known identity $\det A\cdot I_{n}%
=A\cdot\operatorname*{adj}A=\operatorname*{adj}A\cdot A$). In other words, the
$\mathbf{k}$-linear map $\rho$ is invertible (since it is represented by the
matrix $A$), thus an isomorphism. This proves Lemma \ref{lem.surj=bij}.
\end{proof}

\begin{lemma}
\label{lem.tensor-of-quots}Let $V_{1},V_{2},\ldots,V_{m}$ be any $\mathbf{k}%
$-modules. For each $\ell\in\left[  m\right]  $, let $W_{\ell}$ be a
$\mathbf{k}$-submodule of $V_{\ell}$. For each $\ell\in\left[  m\right]  $, we
consider the $\mathbf{k}$-submodule
\[
\underbrace{V_{1}\otimes V_{2}\otimes\cdots\otimes W_{\ell}\otimes
\cdots\otimes V_{m}}_{\substack{\text{This means the tensor product }%
V_{1}\otimes V_{2}\otimes\cdots\otimes V_{m}\text{,}\\\text{in which the }%
\ell\text{-th factor is replaced by }W_{\ell}}}\text{ of }V_{1}\otimes
V_{2}\otimes\cdots\otimes V_{m}.
\]

Then, there is a canonical $\mathbf{k}$-module isomorphism%
\begin{align*}
&  \left(  V_{1}\otimes V_{2}\otimes\cdots\otimes V_{m}\right)  \diagup
\sum_{\ell=1}^{m}\ \ \underbrace{\left(  V_{1}\otimes V_{2}\otimes
\cdots\otimes W_{\ell}\otimes\cdots\otimes V_{m}\right)  }%
_{\substack{\text{This means the tensor product }V_{1}\otimes V_{2}%
\otimes\cdots\otimes V_{m}\text{,}\\\text{in which the }\ell\text{-th factor
is replaced by }W_{\ell}}}\\
&  \cong\left(  V_{1}/W_{1}\right)  \otimes\left(  V_{2}/W_{2}\right)
\otimes\cdots\otimes\left(  V_{m}/W_{m}\right)  .
\end{align*}

\end{lemma}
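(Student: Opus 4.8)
The plan is to exhibit the isomorphism together with an explicit two-sided inverse, using the universal properties of tensor products and of quotients. Write $P:=V_{1}\otimes V_{2}\otimes\cdots\otimes V_{m}$, let $S\subseteq P$ denote the sum of the submodules $V_{1}\otimes\cdots\otimes W_{\ell}\otimes\cdots\otimes V_{m}$ (that is, the sum of the images of the canonical maps from these tensor products into $P$ -- note that these maps need not be injective, so "submodule" here means "image", and the whole argument will stay at the level of images), set $Q:=P/S$ (the left-hand side of the claimed isomorphism), and set $R:=\left(V_{1}/W_{1}\right)\otimes\left(V_{2}/W_{2}\right)\otimes\cdots\otimes\left(V_{m}/W_{m}\right)$.

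First I would build a map $Q\to R$. The canonical projections $\pi_{\ell}:V_{\ell}\to V_{\ell}/W_{\ell}$ induce, by functoriality of $\otimes$, a $\mathbf{k}$-linear map $\pi:=\pi_{1}\otimes\pi_{2}\otimes\cdots\otimes\pi_{m}:P\to R$, which is surjective since the $\pi_{\ell}$ are surjective and $R$ is spanned by elementary tensors. For each $\ell\in\left[m\right]$, the map $\pi$ sends every elementary tensor $v_{1}\otimes\cdots\otimes w_{\ell}\otimes\cdots\otimes v_{m}$ with $w_{\ell}\in W_{\ell}$ to $\overline{v_{1}}\otimes\cdots\otimes\overline{w_{\ell}}\otimes\cdots\otimes\overline{v_{m}}=0$ (because $\overline{w_{\ell}}=0$ in $V_{\ell}/W_{\ell}$); hence $\pi$ annihilates each summand of $S$, and therefore $\pi\left(S\right)=0$. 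Consequently $\pi$ factors through a surjective $\mathbf{k}$-linear map $\overline{\pi}:Q=P/S\to R$.

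Next I would construct the inverse $R\to Q$. Consider the map
\[
g:\left(V_{1}/W_{1}\right)\times\cdots\times\left(V_{m}/W_{m}\right)\to Q,\qquad\left(\overline{v_{1}},\ldots,\overline{v_{m}}\right)\mapsto\overline{v_{1}\otimes v_{2}\otimes\cdots\otimes v_{m}},
\]
where the bar on the right denotes passage to $Q=P/S$. This is independent of the chosen representatives $v_{\ell}$: replacing $v_{\ell}$ by $v_{\ell}+w_{\ell}$ with $w_{\ell}\in W_{\ell}$ changes $v_{1}\otimes\cdots\otimes v_{m}$, by multilinearity of $\otimes$, by the elementary tensor $v_{1}\otimes\cdots\otimes w_{\ell}\otimes\cdots\otimes v_{m}$, which lies in the $\ell$-th summand of $S$ and hence in $S$; doing this one coordinate at a time handles simultaneous changes of all $m$ coordinates. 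The map $g$ is visibly $\mathbf{k}$-multilinear, so the universal property of the tensor product yields a $\mathbf{k}$-linear map $\widetilde{g}:R\to Q$ with $\widetilde{g}\left(\overline{v_{1}}\otimes\cdots\otimes\overline{v_{m}}\right)=\overline{v_{1}\otimes\cdots\otimes v_{m}}$.

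Finally I would check that $\overline{\pi}$ and $\widetilde{g}$ are mutually inverse: since $Q$ is spanned by images of elementary tensors and $R$ is spanned by elementary tensors, it suffices to verify $\overline{\pi}\circ\widetilde{g}=\operatorname{id}_{R}$ and $\widetilde{g}\circ\overline{\pi}=\operatorname{id}_{Q}$ on such elements, where both composites reduce to the identity directly from the displayed formulas. This produces the desired canonical $\mathbf{k}$-module isomorphism. The one point that genuinely needs care is the well-definedness of $g$ -- the representative-independence check, which is the short multilinearity/telescoping argument just indicated; everything else is formal. (An alternative, slightly slicker route is induction on $m$ via the right-exactness of $V_{1}\otimes\cdots\otimes V_{m-1}\otimes\left(-\right)$ with the trivial base case $m=1$, but the explicit construction above is self-contained and makes the canonicity manifest.)
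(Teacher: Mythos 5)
Your proposal is correct and follows essentially the same route as the paper's own proof: a map induced by the quotient projections that kills the sum of submodules and factors through the quotient, together with an explicitly constructed inverse whose well-definedness (representative independence plus multilinearity) is the only point needing care. The mutual-inverse check on spanning elements matches the paper's argument as well.
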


\begin{noncompile}
\textbf{(b)} If $V_{1},V_{2},\ldots,V_{m}$ are furthermore left modules over
some $\mathbf{k}$-algebras $A_{1},A_{2},\ldots,A_{m}$, and if $W_{1}%
,W_{2},\ldots,W_{m}$ are their submodules (i.e., each $W_{\ell}$ is a left
$A_{\ell}$-submodule of $V_{\ell}$), then this isomorphism is a left
$A_{1}\otimes A_{2}\otimes\cdots\otimes A_{m}$-module isomorphism.
\end{noncompile}

\begin{proof}
We construct both the isomorphism and its inverse using the universal
properties of tensor products and quotients:

\begin{itemize}
\item There is a canonical $\mathbf{k}$-linear map%
\[
\Phi:V_{1}\otimes V_{2}\otimes\cdots\otimes V_{m}\rightarrow\left(
V_{1}/W_{1}\right)  \otimes\left(  V_{2}/W_{2}\right)  \otimes\cdots
\otimes\left(  V_{m}/W_{m}\right)  ,
\]
sending each pure tensor $v_{1}\otimes v_{2}\otimes\cdots\otimes v_{m}$ to
$\overline{v_{1}}\otimes\overline{v_{2}}\otimes\cdots\otimes\overline{v_{m}}$.
This $\mathbf{k}$-linear map $\Phi$ is easily seen to vanish on the submodule
$\sum_{\ell=1}^{m}\left(  V_{1}\otimes V_{2}\otimes\cdots\otimes W_{\ell
}\otimes\cdots\otimes V_{m}\right)  $, and thus factors through the quotient
module. Hence, we obtain a $\mathbf{k}$-linear map%
\begin{align*}
\overline{\Phi}  &  :\left(  V_{1}\otimes V_{2}\otimes\cdots\otimes
V_{m}\right)  \diagup\sum_{\ell=1}^{m}\left(  V_{1}\otimes V_{2}\otimes
\cdots\otimes W_{\ell}\otimes\cdots\otimes V_{m}\right) \\
&  \rightarrow\left(  V_{1}/W_{1}\right)  \otimes\left(  V_{2}/W_{2}\right)
\otimes\cdots\otimes\left(  V_{m}/W_{m}\right)
\end{align*}
sending each $\overline{v_{1}\otimes v_{2}\otimes\cdots\otimes v_{m}}$ to
$\overline{v_{1}}\otimes\overline{v_{2}}\otimes\cdots\otimes\overline{v_{m}}$.

\item Conversely, there is a canonical $\mathbf{k}$-linear map%
\begin{align*}
\Psi &  :\left(  V_{1}/W_{1}\right)  \otimes\left(  V_{2}/W_{2}\right)
\otimes\cdots\otimes\left(  V_{m}/W_{m}\right) \\
&  \rightarrow\left(  V_{1}\otimes V_{2}\otimes\cdots\otimes V_{m}\right)
\diagup\sum_{\ell=1}^{m}\left(  V_{1}\otimes V_{2}\otimes\cdots\otimes
W_{\ell}\otimes\cdots\otimes V_{m}\right)
\end{align*}
sending each $\overline{v_{1}}\otimes\overline{v_{2}}\otimes\cdots
\otimes\overline{v_{m}}$ to $\overline{v_{1}\otimes v_{2}\otimes\cdots\otimes
v_{m}}$. To show that this map is well-defined, we need to check that
$\overline{v_{1}\otimes v_{2}\otimes\cdots\otimes v_{m}}$ depends only on the
residue classes $\overline{v_{i}}$ rather than on the $v_{i}$ themselves (this
is easy: replacing $v_{i}$ by $v_{i}^{\prime}$ with $v_{i}-v_{i}^{\prime}\in
W_{i}$ only changes $v_{1}\otimes v_{2}\otimes\cdots\otimes v_{m}$ by an
element of $V_{1}\otimes V_{2}\otimes\cdots\otimes W_{i}\otimes\cdots\otimes
V_{m}$) and that this dependence is multilinear (this is again easy).
\end{itemize}

Clearly, the maps $\overline{\Phi}$ and $\Psi$ are mutually inverse, hence
isomorphisms. Thus, Lemma \ref{lem.tensor-of-quots} is proved.
\end{proof}

\begin{noncompile}
\textbf{(b)} This follows easily from the construction in part \textbf{(a)}.
\end{noncompile}

For our specific needs, we specialize Lemma \ref{lem.tensor-of-quots} to the
case $W_{1}=0$:

\begin{lemma}
\label{lem.tensor-of-quots2}Let $V_{1},V_{2},\ldots,V_{m}$ be any $\mathbf{k}%
$-modules with $m\geq1$. For each $\ell\in\left[  2,m\right]  $, let $W_{\ell
}$ be a $\mathbf{k}$-submodule of $V_{\ell}$. For each $\ell\in\left[
2,m\right]  $, we consider the $\mathbf{k}$-submodule
\[
\underbrace{V_{1}\otimes V_{2}\otimes\cdots\otimes W_{\ell}\otimes
\cdots\otimes V_{m}}_{\substack{\text{This means the tensor product }%
V_{1}\otimes V_{2}\otimes\cdots\otimes V_{m}\text{,}\\\text{in which the }%
\ell\text{-th factor is replaced by }W_{\ell}}}\text{ of }V_{1}\otimes
V_{2}\otimes\cdots\otimes V_{m}.
\]

Then, there is a canonical $\mathbf{k}$-module isomorphism%
\begin{align*}
&  \left(  V_{1}\otimes V_{2}\otimes\cdots\otimes V_{m}\right)  \diagup
\sum_{\ell=2}^{m}\ \ \underbrace{\left(  V_{1}\otimes V_{2}\otimes
\cdots\otimes W_{\ell}\otimes\cdots\otimes V_{m}\right)  }%
_{\substack{\text{This means the tensor product }V_{1}\otimes V_{2}%
\otimes\cdots\otimes V_{m}\text{,}\\\text{in which the }\ell\text{-th factor
is replaced by }W_{\ell}}}\\
&  \cong V_{1}\otimes\left(  V_{2}/W_{2}\right)  \otimes\left(  V_{3}%
/W_{3}\right)  \otimes\cdots\otimes\left(  V_{m}/W_{m}\right)  .
\end{align*}

\end{lemma}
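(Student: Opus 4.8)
The plan is to derive Lemma~\ref{lem.tensor-of-quots2} as a direct special case of Lemma~\ref{lem.tensor-of-quots}. Concretely, I would apply Lemma~\ref{lem.tensor-of-quots} to the very same modules $V_1, V_2, \ldots, V_m$, but with the submodules $W_1, W_2, \ldots, W_m$ chosen as follows: take $W_1 := 0$ (the zero submodule of $V_1$), and keep $W_\ell$ for $\ell \in [2,m]$ as given. This is legitimate since $0$ is certainly a $\mathbf{k}$-submodule of $V_1$, and $m \geq 1$ guarantees there is at least one factor.

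With this choice, the left-hand side of the isomorphism in Lemma~\ref{lem.tensor-of-quots} becomes
\[
\left(  V_{1}\otimes V_{2}\otimes\cdots\otimes V_{m}\right)  \diagup
\sum_{\ell=1}^{m}\left(  V_{1}\otimes \cdots\otimes W_{\ell}\otimes
\cdots\otimes V_{m}\right)  .
\]
The $\ell=1$ summand here is $W_1 \otimes V_2 \otimes \cdots \otimes V_m = 0 \otimes V_2 \otimes \cdots \otimes V_m$, which is the zero submodule of $V_1 \otimes V_2 \otimes \cdots \otimes V_m$ (since tensoring with the zero module yields the zero module). Hence that summand contributes nothing to the sum, and the sum $\sum_{\ell=1}^m$ collapses to $\sum_{\ell=2}^m$. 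So the left-hand side equals exactly the left-hand side of Lemma~\ref{lem.tensor-of-quots2}. On the right-hand side, the first tensor factor becomes $V_1/W_1 = V_1/0 \cong V_1$ (via the canonical isomorphism), while the remaining factors $V_\ell/W_\ell$ for $\ell \in [2,m]$ are unchanged; thus the right-hand side of Lemma~\ref{lem.tensor-of-quots} becomes canonically isomorphic to $V_1 \otimes (V_2/W_2) \otimes \cdots \otimes (V_m/W_m)$, which is the right-hand side of Lemma~\ref{lem.tensor-of-quots2}. Composing the isomorphism from Lemma~\ref{lem.tensor-of-quots} with the canonical identification $V_1/0 \cong V_1$ on the first tensor factor yields the desired canonical isomorphism, and the proof is complete.

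There is essentially no obstacle here; the only points requiring a word of care are the routine observations that $0 \otimes V_2 \otimes \cdots \otimes V_m = 0$ inside $V_1 \otimes \cdots \otimes V_m$ and that $V_1/0 \cong V_1$ canonically, both of which are standard facts about tensor products and quotient modules over a commutative ring. I would state these inline rather than belabor them. The whole argument is just a specialization, so it should occupy only a few lines.
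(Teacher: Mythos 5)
Your proposal is correct and coincides exactly with the paper's own proof, which specializes Lemma \ref{lem.tensor-of-quots} to $W_{1}=0$ and invokes $V_{1}/0\cong V_{1}$. The extra details you spell out (the vanishing of the $\ell=1$ summand and the canonicity of the identification) are accurate and harmless.
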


\begin{noncompile}
\textbf{(b)} If $V_{1},V_{2},\ldots,V_{m}$ are furthermore left modules over
some $\mathbf{k}$-algebras $A_{1},A_{2},\ldots,A_{m}$, and if each $W_{\ell}$
with $\ell\in\left[  2,m\right]  $ is a left $A_{\ell}$-submodule of $V_{\ell
}$, then this isomorphism is a left $A_{1}\otimes A_{2}\otimes\cdots\otimes
A_{m}$-module isomorphism.
\end{noncompile}

\begin{proof}
Apply Lemma \ref{lem.tensor-of-quots} to $W_{1}=0$, and observe that
$V_{1}/0\cong V_{1}$.
\end{proof}

\subsection{Proof of Theorem \ref{thm.Fi/Fi-1.as-ind}}

We can now prove Theorem \ref{thm.Fi/Fi-1.as-ind}:

\begin{proof}
[Proof of Theorem \ref{thm.Fi/Fi-1.as-ind}.]We shall use the notations of
Lemma \ref{lem.Fi/Fi-1.L3}. Note that each $k\in\left[  m\right]  $ satisfies
$J_{k}=\left[  i_{k-1},\ i_{k}-1\right]  $ and thus
\begin{equation}
\left\vert J_{k}\right\vert =i_{k}-i_{k-1}=j_{k}.
\label{pf.thm.Fi/Fi-1.as-ind.jj}%
\end{equation}
Explicitly, there is a bijection%
\begin{align}
\left[  j_{k}\right]   &  \rightarrow J_{k},\nonumber\\
x  &  \mapsto i_{k-1}-1+x \label{pf.thm.Fi/Fi-1.as-ind.bij}%
\end{align}
for each $k\in\left[  m\right]  $.

Consider the tensor product $\mathcal{H}_{j_{1}}\otimes\mathcal{N}_{j_{2}%
}\otimes\mathcal{N}_{j_{3}}\otimes\cdots\otimes\mathcal{N}_{j_{m}}$. We recall
that the trivial representation $\mathcal{H}_{j_{1}}=\mathbf{k}$ has a
$1$-element basis $\left(  1\right)  $, while each natural representation
$\mathcal{N}_{j_{k}}$ has basis $\left(  e_{p}\right)  _{p\in\left[
j_{k}\right]  }=\left(  e_{1},e_{2},\ldots,e_{j_{k}}\right)  $. However, by
abuse of notation, we shall rename the latter basis of $\mathcal{N}_{j_{k}}$
as $\left(  e_{p}\right)  _{p\in J_{k}}=\left(  e_{i_{k-1}},e_{i_{k-1}%
+1},\ldots,e_{i_{k}-1}\right)  $ instead (by shifting all subscripts up by
$i_{k-1}-1$, that is, renaming each basis vector $e_{x}$ as $e_{i_{k-1}-1+x}%
$). Note that this can be done because $j_{k}=i_{k}-i_{k-1}$.

Having renamed the basis vectors of the $\mathbf{k}$-module $\mathcal{N}%
_{j_{k}}$, let us also replace the symmetric group $S_{j_{k}}$ acting on this
module accordingly. Namely, we reinterpret the symmetric group $S_{j_{k}}$
acting on $\mathcal{N}_{j_{k}}$ as the symmetric group $S_{J_{k}}$ using the
bijection (\ref{pf.thm.Fi/Fi-1.as-ind.bij}) between the corresponding sets
$\left[  j_{k}\right]  $ and $J_{k}$. Thus, the left action of $S_{j_{k}}$ on
$\mathcal{N}_{j_{k}}$ becomes a left action of $S_{J_{k}}$ instead; it is
still a permutation action (given on our now-renamed basis by the formula
$\sigma e_{p}=e_{\sigma\left(  p\right)  }$ for each $p\in J_{k}$ and
$\sigma\in S_{J_{k}}$). With these reinterpretations, the parabolic embedding
$S_{j_{1}}\times S_{j_{2}}\times\cdots\times S_{j_{m}}\rightarrow S_{n}$
becomes the usual embedding $S_{J_{1}}\times S_{J_{2}}\times\cdots\times
S_{J_{m}}\rightarrow S_{n}$, which simply combines the $m$ permutations
without any need for shifting (i.e., any $m$-tuple $\left(  \sigma_{1}%
,\sigma_{2},\ldots,\sigma_{m}\right)  \in S_{J_{1}}\times S_{J_{2}}%
\times\cdots\times S_{J_{m}}$ is identified with the permutation $\sigma\in
S_{n}$ that sends each element $x\in J_{k}$ to $\sigma_{k}\left(  x\right)  $
for each $k\in\left[  m\right]  $).

For each $\mathbf{p}=\left(  p_{2},p_{3},\ldots,p_{m}\right)  \in J_{2}\times
J_{3}\times\cdots\times J_{m}$, we have%
\begin{align*}
\nabla_{\mathbf{p}}  &  \in F\left(  Q_{i}\right)  \ \ \ \ \ \ \ \ \ \ \left(
\text{by Lemma \ref{lem.Fi/Fi-1.L3} \textbf{(b)}}\right) \\
&  \subseteq F_{i}\ \ \ \ \ \ \ \ \ \ \left(  \text{since }F_{i}=F\left(
Q_{1}\right)  +F\left(  Q_{2}\right)  +\cdots+F\left(  Q_{i}\right)  \right)
\end{align*}
and thus $\overline{\nabla_{\mathbf{p}}}\in F_{i}/F_{i-1}$ (where
$\overline{\nabla_{\mathbf{p}}}$ denotes the residue class of $\nabla
_{\mathbf{p}}\in F_{i}$ in the quotient $F_{i}/F_{i-1}$). Hence, we can define
a $\mathbf{k}$-linear map%
\begin{align*}
\Phi:\mathcal{H}_{j_{1}}\otimes\mathcal{N}_{j_{2}}\otimes\mathcal{N}_{j_{3}%
}\otimes\cdots\otimes\mathcal{N}_{j_{m}}  &  \rightarrow F_{i}/F_{i-1},\\
1\otimes e_{p_{2}}\otimes e_{p_{3}}\otimes\cdots\otimes e_{p_{m}}  &
\mapsto\overline{\nabla_{\mathbf{p}}}\\
\ \ \ \ \ \ \ \ \ \ \text{for any }\mathbf{p}  &  =\left(  p_{2},p_{3}%
,\ldots,p_{m}\right)  \in J_{2}\times J_{3}\times\cdots\times J_{m}.
\end{align*}
(This map is defined by linearity, since the pure tensors of the form
$1\otimes e_{p_{2}}\otimes e_{p_{3}}\otimes\cdots\otimes e_{p_{m}}$ with
$\mathbf{p}=\left(  p_{2},p_{3},\ldots,p_{m}\right)  \in J_{2}\times
J_{3}\times\cdots\times J_{m}$ form a basis of the $\mathbf{k}$-module
$\mathcal{H}_{j_{1}}\otimes\mathcal{N}_{j_{2}}\otimes\mathcal{N}_{j_{3}%
}\otimes\cdots\otimes\mathcal{N}_{j_{m}}$.) Consider this map $\Phi$.

For each $\ell\in\left[  2,m\right]  $, we can consider the $\mathbf{k}%
$-submodule $\mathcal{H}_{j_{1}}\otimes\mathcal{N}_{j_{2}}\otimes
\mathcal{N}_{j_{3}}\otimes\cdots\otimes\mathcal{D}_{j_{\ell}}\otimes
\cdots\otimes\mathcal{N}_{j_{m}}$ of $\mathcal{H}_{j_{1}}\otimes
\mathcal{N}_{j_{2}}\otimes\mathcal{N}_{j_{3}}\otimes\cdots\otimes
\mathcal{N}_{j_{m}}$, in which its $\ell$-th factor $\mathcal{N}_{j_{\ell}}$
is replaced by its submodule $\mathcal{D}_{j_{\ell}}=\left\{  \left(
a,a,\ldots,a\right)  \ \mid\ a\in\mathbf{k}\right\}  $. We claim that the map
$\Phi$ sends this submodule to $0$. Indeed, this submodule is spanned by sums
of the form%
\[
\sum_{p_{\ell}\in J_{\ell}}1\otimes e_{p_{2}}\otimes e_{p_{3}}\otimes
\cdots\otimes e_{p_{m}}%
\]
(for fixed $p_{2},p_{3},\ldots,p_{\ell-1},p_{\ell+1},\ldots,p_{m}$ in the
respective intervals $J_{k}$)\ \ \ \ \footnote{\textit{Proof.} The submodule
$\mathcal{D}_{j_{\ell}}$ is spanned by the single vector
\[
\left(  1,1,\ldots,1\right)  =e_{i_{\ell-1}}+e_{i_{\ell-1}+1}+\cdots
+e_{i_{\ell}-1}=\sum_{p_{\ell}\in J_{\ell}}e_{p_{\ell}},
\]
and thus the tensor product $\mathcal{H}_{j_{1}}\otimes\mathcal{N}_{j_{2}%
}\otimes\mathcal{N}_{j_{3}}\otimes\cdots\otimes\mathcal{D}_{j_{\ell}}%
\otimes\cdots\otimes\mathcal{N}_{j_{m}}$ is spanned by the pure tensors of the
form%
\begin{align*}
&  1\otimes e_{p_{2}}\otimes e_{p_{3}}\otimes\cdots\otimes e_{p_{\ell-1}%
}\otimes\left(  \sum_{p_{\ell}\in J_{\ell}}e_{p_{\ell}}\right)  \otimes
e_{p_{\ell+1}}\otimes\cdots\otimes e_{p_{m}}\\
&  =\sum_{p_{\ell}\in J_{\ell}}1\otimes e_{p_{2}}\otimes e_{p_{3}}%
\otimes\cdots\otimes e_{p_{m}}\ \ \ \ \ \ \ \ \ \ \text{for fixed }p_{2}%
,p_{3},\ldots,p_{\ell-1},p_{\ell+1},\ldots,p_{m}.
\end{align*}
}, and the map $\Phi$ sends such sums to%
\[
\sum_{p_{\ell}\in J_{\ell}}\overline{\nabla_{\left(  p_{2},p_{3},\ldots
,p_{m}\right)  }}=\overline{\sum_{p_{\ell}\in J_{\ell}}\nabla_{\left(
p_{2},p_{3},\ldots,p_{m}\right)  }}=0_{F_{i}/F_{i-1}},
\]
since Lemma \ref{lem.Fi/Fi-1.L3} \textbf{(c)} shows that $\sum_{p_{\ell}\in
J_{\ell}}\nabla_{\left(  p_{2},p_{3},\ldots,p_{m}\right)  }\in F_{i-1}$.

Thus, the $\mathbf{k}$-linear map%
\[
\Phi:\mathcal{H}_{j_{1}}\otimes\mathcal{N}_{j_{2}}\otimes\mathcal{N}_{j_{3}%
}\otimes\cdots\otimes\mathcal{N}_{j_{m}}\rightarrow F_{i}/F_{i-1}%
\]
sends all the $\mathbf{k}$-submodules $\mathcal{H}_{j_{1}}\otimes
\mathcal{N}_{j_{2}}\otimes\mathcal{N}_{j_{3}}\otimes\cdots\otimes
\mathcal{D}_{j_{\ell}}\otimes\cdots\otimes\mathcal{N}_{j_{m}}$ for $\ell
\in\left[  2,m\right]  $ to $0$. By linearity, we can thus conclude that
$\Phi$ also sends their sum \newline$\sum_{\ell=2}^{m}\left(  \mathcal{H}%
_{j_{1}}\otimes\mathcal{N}_{j_{2}}\otimes\mathcal{N}_{j_{3}}\otimes
\cdots\otimes\mathcal{D}_{j_{\ell}}\otimes\cdots\otimes\mathcal{N}_{j_{m}%
}\right)  $ to $0$. Therefore, $\Phi$ factors through the quotient
$\mathbf{k}$-module%
\begin{align*}
&  \left(  \mathcal{H}_{j_{1}}\otimes\mathcal{N}_{j_{2}}\otimes\mathcal{N}%
_{j_{3}}\otimes\cdots\otimes\mathcal{N}_{j_{m}}\right)  \diagup\sum_{\ell
=2}^{m}\left(  \mathcal{H}_{j_{1}}\otimes\mathcal{N}_{j_{2}}\otimes
\mathcal{N}_{j_{3}}\otimes\cdots\otimes\mathcal{D}_{j_{\ell}}\otimes
\cdots\otimes\mathcal{N}_{j_{m}}\right) \\
&  \cong\mathcal{H}_{j_{1}}\otimes\left(  \mathcal{N}_{j_{2}}/\mathcal{D}%
_{j_{2}}\right)  \otimes\left(  \mathcal{N}_{j_{3}}/\mathcal{D}_{j_{3}%
}\right)  \otimes\cdots\otimes\left(  \mathcal{N}_{j_{m}}/\mathcal{D}_{j_{m}%
}\right) \\
&  \ \ \ \ \ \ \ \ \ \ \ \ \ \ \ \ \ \ \ \ \left(
\begin{array}
[c]{c}%
\text{by Lemma \ref{lem.tensor-of-quots2}, applied to }V_{1}=\mathcal{H}%
_{j_{1}}\text{ and }V_{\ell}=\mathcal{N}_{j_{\ell}}\text{ for }\ell>1\\
\text{and }W_{\ell}=\mathcal{D}_{j_{\ell}}\text{ for }\ell>1
\end{array}
\right) \\
&  =\mathcal{H}_{j_{1}}\otimes\mathcal{Z}_{j_{2}}\otimes\mathcal{Z}_{j_{3}%
}\otimes\cdots\otimes\mathcal{Z}_{j_{m}}\ \ \ \ \ \ \ \ \ \ \left(
\text{since }\mathcal{N}_{p}/\mathcal{D}_{p}=\mathcal{Z}_{p}\text{ for each
}p>0\right)  .
\end{align*}
Thus, we obtain a $\mathbf{k}$-linear map%
\begin{align*}
\overline{\Phi}:\mathcal{H}_{j_{1}}\otimes\mathcal{Z}_{j_{2}}\otimes
\mathcal{Z}_{j_{3}}\otimes\cdots\otimes\mathcal{Z}_{j_{m}}  &  \rightarrow
F_{i}/F_{i-1},\\
1\otimes\overline{e_{p_{2}}}\otimes\overline{e_{p_{3}}}\otimes\cdots
\otimes\overline{e_{p_{m}}}  &  \mapsto\overline{\nabla_{\mathbf{p}}}\\
\ \ \ \ \ \ \ \ \ \ \text{for any }\mathbf{p}  &  =\left(  p_{2},p_{3}%
,\ldots,p_{m}\right)  \in J_{2}\times J_{3}\times\cdots\times J_{m}.
\end{align*}
Consider this map $\overline{\Phi}$. Using Lemma \ref{lem.Fi/Fi-1.L3}
\textbf{(a)}, it is easy to see that this map $\overline{\Phi}$ is $S_{j_{1}%
}\times S_{j_{2}}\times\cdots\times S_{j_{m}}$%
-equivariant\footnote{\textit{Proof.} Let $\tau=\left(  \tau_{1},\tau
_{2},\ldots,\tau_{m}\right)  \in S_{j_{1}}\times S_{j_{2}}\times\cdots\times
S_{j_{m}}$ be any $m$-tuple, and let $\mathbf{p}=\left(  p_{2},p_{3}%
,\ldots,p_{m}\right)  \in J_{2}\times J_{3}\times\cdots\times J_{m}$. We shall
show that%
\[
\overline{\Phi}\left(  \tau\cdot\left(  1\otimes\overline{e_{p_{2}}}%
\otimes\overline{e_{p_{3}}}\otimes\cdots\otimes\overline{e_{p_{m}}}\right)
\right)  =\tau\cdot\overline{\Phi}\left(  1\otimes\overline{e_{p_{2}}}%
\otimes\overline{e_{p_{3}}}\otimes\cdots\otimes\overline{e_{p_{m}}}\right)  .
\]
By linearity, this will entail that the map $\overline{\Phi}$ is $S_{j_{1}%
}\times S_{j_{2}}\times\cdots\times S_{j_{m}}$-equivariant (since elements of
the form $1\otimes\overline{e_{p_{2}}}\otimes\overline{e_{p_{3}}}\otimes
\cdots\otimes\overline{e_{p_{m}}}$ span $\mathcal{H}_{j_{1}}\otimes
\mathcal{Z}_{j_{2}}\otimes\mathcal{Z}_{j_{3}}\otimes\cdots\otimes
\mathcal{Z}_{j_{m}}$).
\par
Indeed, as we mentioned at the beginning of our proof, we regard each
$S_{j_{k}}$ as $S_{J_{k}}$, so that the permutations $\tau_{1},\tau_{2}%
,\ldots,\tau_{m}$ act not on the sets $\left[  j_{1}\right]  ,\left[
j_{2}\right]  ,\ldots,\left[  j_{m}\right]  $ but rather on the sets
$J_{1},J_{2},\ldots,J_{m}$. The embedding of $S_{J_{1}}\times S_{J_{2}}%
\times\cdots\times S_{J_{m}}$ into $S_{n}$ is the usual one, so that our
$m$-tuple $\tau=\left(  \tau_{1},\tau_{2},\ldots,\tau_{m}\right)  $ is equated
with the permutation $\tau\in S_{n}$ given by%
\begin{equation}
\tau\left(  x\right)  =\tau_{k}\left(  x\right)  \ \ \ \ \ \ \ \ \ \ \text{for
each }k\in\left[  m\right]  \text{ and }x\in J_{k}.
\label{pf.thm.Fi/Fi-1.as-ind.fn.six=}%
\end{equation}
As in Lemma \ref{lem.Fi/Fi-1.L3} \textbf{(a)}, we set%
\[
\tau\mathbf{p}:=\left(  \tau_{2}\left(  p_{2}\right)  ,\tau_{3}\left(
p_{3}\right)  ,\ldots,\tau_{m}\left(  p_{m}\right)  \right)  =\left(
\tau\left(  p_{2}\right)  ,\tau\left(  p_{3}\right)  ,\ldots,\tau\left(
p_{m}\right)  \right)  .
\]
\par
Now, we have $\tau=\left(  \tau_{1},\tau_{2},\ldots,\tau_{m}\right)  $ and
thus%
\begin{align*}
\tau\cdot\left(  1\otimes\overline{e_{p_{2}}}\otimes\overline{e_{p_{3}}%
}\otimes\cdots\otimes\overline{e_{p_{m}}}\right)   &  =\tau_{1}1\otimes
\tau_{2}\overline{e_{p_{2}}}\otimes\tau_{3}\overline{e_{p_{3}}}\otimes
\cdots\otimes\tau_{m}\overline{e_{p_{m}}}\\
&  =1\otimes\overline{e_{\tau_{2}\left(  p_{2}\right)  }}\otimes
\overline{e_{\tau_{3}\left(  p_{3}\right)  }}\otimes\cdots\otimes
\overline{e_{\tau_{m}\left(  p_{m}\right)  }}.
\end{align*}
Therefore,%
\begin{align*}
&  \overline{\Phi}\left(  \tau\cdot\left(  1\otimes\overline{e_{p_{2}}}%
\otimes\overline{e_{p_{3}}}\otimes\cdots\otimes\overline{e_{p_{m}}}\right)
\right) \\
&  =\overline{\Phi}\left(  1\otimes\overline{e_{\tau_{2}\left(  p_{2}\right)
}}\otimes\overline{e_{\tau_{3}\left(  p_{3}\right)  }}\otimes\cdots
\otimes\overline{e_{\tau_{m}\left(  p_{m}\right)  }}\right) \\
&  =\overline{\nabla_{\tau\mathbf{p}}}\ \ \ \ \ \ \ \ \ \ \left(  \text{by the
definition of }\overline{\Phi}\text{, since }\tau\mathbf{p}=\left(  \tau
_{2}\left(  p_{2}\right)  ,\tau_{3}\left(  p_{3}\right)  ,\ldots,\tau
_{m}\left(  p_{m}\right)  \right)  \right) \\
&  =\overline{\tau\nabla_{\mathbf{p}}}\ \ \ \ \ \ \ \ \ \ \left(  \text{since
Lemma \ref{lem.Fi/Fi-1.L3} \textbf{(a)} yields }\nabla_{\tau\mathbf{p}}%
=\tau\nabla_{\mathbf{p}}\right) \\
&  =\tau\cdot\overline{\nabla_{\mathbf{p}}}=\tau\cdot\overline{\Phi}\left(
1\otimes\overline{e_{p_{2}}}\otimes\overline{e_{p_{3}}}\otimes\cdots
\otimes\overline{e_{p_{m}}}\right)
\end{align*}
(since the definition of $\overline{\Phi}$ yields $\overline{\nabla
_{\mathbf{p}}}=\overline{\Phi}\left(  1\otimes\overline{e_{p_{2}}}%
\otimes\overline{e_{p_{3}}}\otimes\cdots\otimes\overline{e_{p_{m}}}\right)
$). This is precisely what we wanted to show. Hence, we have proved that the
map $\overline{\Phi}$ is $S_{j_{1}}\times S_{j_{2}}\times\cdots\times
S_{j_{m}}$-equivariant.}, and thus is a left $\mathbf{k}\left[  S_{j_{1}%
}\times S_{j_{2}}\times\cdots\times S_{j_{m}}\right]  $-module morphism.

But the definition of an induction product yields%
\begin{align*}
&  \mathcal{H}_{j_{1}}\ast\mathcal{Z}_{j_{2}}\ast\mathcal{Z}_{j_{3}}\ast
\cdots\ast\mathcal{Z}_{j_{m}}\\
&  =\operatorname*{Ind}\nolimits_{S_{j_{1}}\times S_{j_{2}}\times\cdots\times
S_{j_{m}}}^{S_{n}}\left(  \mathcal{H}_{j_{1}}\otimes\mathcal{Z}_{j_{2}}%
\otimes\mathcal{Z}_{j_{3}}\otimes\cdots\otimes\mathcal{Z}_{j_{m}}\right) \\
&  =\underbrace{\mathbf{k}\left[  S_{n}\right]  }_{=\mathcal{A}}%
\otimes_{\mathbf{k}\left[  S_{j_{1}}\times S_{j_{2}}\times\cdots\times
S_{j_{m}}\right]  }\left(  \mathcal{H}_{j_{1}}\otimes\mathcal{Z}_{j_{2}%
}\otimes\mathcal{Z}_{j_{3}}\otimes\cdots\otimes\mathcal{Z}_{j_{m}}\right)
\ \ \ \ \ \ \ \ \ \ \left(  \text{by (\ref{eq.IndGH.def})}\right) \\
&  =\mathcal{A}\otimes_{\mathbf{k}\left[  S_{j_{1}}\times S_{j_{2}}%
\times\cdots\times S_{j_{m}}\right]  }\left(  \mathcal{H}_{j_{1}}%
\otimes\mathcal{Z}_{j_{2}}\otimes\mathcal{Z}_{j_{3}}\otimes\cdots
\otimes\mathcal{Z}_{j_{m}}\right)  .
\end{align*}
Hence, we can define a left $\mathcal{A}$-module morphism%
\begin{align*}
\Psi:\mathcal{H}_{j_{1}}\ast\mathcal{Z}_{j_{2}}\ast\mathcal{Z}_{j_{3}}%
\ast\cdots\ast\mathcal{Z}_{j_{m}}  &  \rightarrow F_{i}/F_{i-1},\\
a\otimes_{\mathbf{k}\left[  S_{j_{1}}\times S_{j_{2}}\times\cdots\times
S_{j_{m}}\right]  }v  &  \mapsto a\cdot\overline{\Phi}\left(  v\right)
\end{align*}
(this is well-defined, since $\overline{\Phi}:\mathcal{H}_{j_{1}}%
\otimes\mathcal{Z}_{j_{2}}\otimes\mathcal{Z}_{j_{3}}\otimes\cdots
\otimes\mathcal{Z}_{j_{m}}\rightarrow F_{i}/F_{i-1}$ is a left $\mathbf{k}%
\left[  S_{j_{1}}\times S_{j_{2}}\times\cdots\times S_{j_{m}}\right]  $-module
morphism). Explicitly, $\Psi$ is given by%
\begin{align*}
&  \Psi\left(  a\otimes_{\mathbf{k}\left[  S_{j_{1}}\times S_{j_{2}}%
\times\cdots\times S_{j_{m}}\right]  }\left(  1\otimes\overline{e_{p_{2}}%
}\otimes\overline{e_{p_{3}}}\otimes\cdots\otimes\overline{e_{p_{m}}}\right)
\right) \\
&  =a\cdot\overline{\Phi}\left(  1\otimes\overline{e_{p_{2}}}\otimes
\overline{e_{p_{3}}}\otimes\cdots\otimes\overline{e_{p_{m}}}\right) \\
&  =a\cdot\overline{\nabla_{\mathbf{p}}}\ \ \ \ \ \ \ \ \ \ \text{for any
}a\in\mathcal{A}\text{ and }\mathbf{p}=\left(  p_{2},p_{3},\ldots
,p_{m}\right)  \in J_{2}\times J_{3}\times\cdots\times J_{m}%
\end{align*}
(by the definition of $\overline{\Phi}$). Hence, using Lemma
\ref{lem.Fi/Fi-1.L3} \textbf{(b)}, it is easy to see that this map $\Psi$ is
surjective\footnote{\textit{Proof.} The map $\Psi$ is left $\mathcal{A}%
$-linear. Hence, its image is a left $\mathcal{A}$-submodule of $F_{i}%
/F_{i-1}$.
\par
By the definition of $F_{i}$, we have%
\begin{align*}
F_{i}  &  =F\left(  Q_{1}\right)  +F\left(  Q_{2}\right)  +\cdots+F\left(
Q_{i}\right) \\
&  =\underbrace{F\left(  Q_{1}\right)  +F\left(  Q_{2}\right)  +\cdots
+F\left(  Q_{i-1}\right)  }_{=F_{i-1}}+\,F\left(  Q_{i}\right)  =F_{i-1}%
+F\left(  Q_{i}\right)  .
\end{align*}
Hence, the composition of canonical maps
\begin{equation}
F\left(  Q_{i}\right)  \overset{\text{inclusion}}{\longrightarrow}%
F_{i}\overset{\text{projection}}{\longrightarrow}F_{i}/F_{i-1}
\label{pf.thm.Fi/Fi-1.as-ind.fn4.comp}%
\end{equation}
is surjective.
\par
But Lemma \ref{lem.Fi/Fi-1.L3} \textbf{(b)} shows that the left $\mathcal{A}%
$-module $F\left(  Q_{i}\right)  $ is generated by a single element of the
form $\nabla_{\mathbf{p}}$. Hence, the quotient $\mathcal{A}$-module
$F_{i}/F_{i-1}$ is generated by a single element of the form $\overline
{\nabla_{\mathbf{p}}}$ (since the map (\ref{pf.thm.Fi/Fi-1.as-ind.fn4.comp})
is surjective). But any such element of the form $\overline{\nabla
_{\mathbf{p}}}$ lies in the image of $\Psi$ (since we have $\overline
{\nabla_{\mathbf{p}}}=\Psi\left(  1\otimes_{\mathbf{k}\left[  S_{j_{1}}\times
S_{j_{2}}\times\cdots\times S_{j_{m}}\right]  }\left(  1\otimes\overline
{e_{p_{2}}}\otimes\overline{e_{p_{3}}}\otimes\cdots\otimes\overline{e_{p_{m}}%
}\right)  \right)  $ when $\mathbf{p}=\left(  p_{2},p_{3},\ldots,p_{m}\right)
$). Thus, the image of $\Psi$ must contain a generator of $F_{i}/F_{i-1}$, and
thus must be the entire $\mathcal{A}$-module $F_{i}/F_{i-1}$ (since this image
is a left $\mathcal{A}$-submodule of $F_{i}/F_{i-1}$). In other words, $\Psi$
is surjective.}.

We now know that $\Psi$ is a surjective left $\mathcal{A}$-module morphism
from $\mathcal{H}_{j_{1}}\ast\mathcal{Z}_{j_{2}}\ast\mathcal{Z}_{j_{3}}%
\ast\cdots\ast\mathcal{Z}_{j_{m}}$ to $F_{i}/F_{i-1}$. We shall now show that
$\Psi$ is an isomorphism.

From Lemma \ref{lem.Fi/Fi-1.dim}, we know that the $\mathbf{k}$-module
$F_{i}/F_{i-1}$ is free of rank%
\[
\dfrac{n!}{j_{1}!j_{2}!\cdots j_{m}!}\cdot\prod_{k=2}^{m}\left(
j_{k}-1\right)  .
\]
But the $\mathbf{k}$-module
\[
\mathcal{H}_{j_{1}}\ast\mathcal{Z}_{j_{2}}\ast\mathcal{Z}_{j_{3}}\ast
\cdots\ast\mathcal{Z}_{j_{m}}=\operatorname*{Ind}\nolimits_{S_{j_{1}}\times
S_{j_{2}}\times\cdots\times S_{j_{m}}}^{S_{n}}\left(  \mathcal{H}_{j_{1}%
}\otimes\mathcal{Z}_{j_{2}}\otimes\mathcal{Z}_{j_{3}}\otimes\cdots
\otimes\mathcal{Z}_{j_{m}}\right)
\]
is also free of rank\footnote{Here, we are denoting the rank of a free
$\mathbf{k}$-module $V$ by $\dim V$, and we are using the fact that an induced
representation $\operatorname*{Ind}\nolimits_{H}^{G}V$ is free of rank
$\dfrac{\left\vert G\right\vert }{\left\vert H\right\vert }\cdot\dim V$ (as a
$\mathbf{k}$-module) whenever $V$ is free (as a $\mathbf{k}$-module). (The
latter fact is an easy consequence of the fact that $\mathbf{k}\left[
G\right]  $ is a free right $\mathbf{k}\left[  H\right]  $-module of rank
$\dfrac{\left\vert G\right\vert }{\left\vert H\right\vert }$.)}%
\begin{align*}
&  \underbrace{\dfrac{\left\vert S_{n}\right\vert }{\left\vert S_{j_{1}}\times
S_{j_{2}}\times\cdots\times S_{j_{m}}\right\vert }}_{=\dfrac{n!}{j_{1}%
!j_{2}!\cdots j_{m}!}}\cdot\underbrace{\dim\left(  \mathcal{H}_{j_{1}}%
\otimes\mathcal{Z}_{j_{2}}\otimes\mathcal{Z}_{j_{3}}\otimes\cdots
\otimes\mathcal{Z}_{j_{m}}\right)  }_{\substack{=\dim\left(  \mathcal{H}%
_{j_{1}}\right)  \cdot\prod_{k=2}^{m}\dim\left(  \mathcal{Z}_{j_{k}}\right)
\\=1\cdot\prod_{k=2}^{m}\left(  j_{k}-1\right)  \\\text{(since }%
\mathcal{H}_{j_{1}}\text{ is free of rank }1\text{,}\\\text{whereas each
}\mathcal{Z}_{j_{k}}\text{ is free of rank }j_{k}-1\text{)}}}\\
&  =\dfrac{n!}{j_{1}!j_{2}!\cdots j_{m}!}\cdot\prod_{k=2}^{m}\left(
j_{k}-1\right)  .
\end{align*}
Thus, Lemma \ref{lem.surj=bij} (applied to $M=\mathcal{H}_{j_{1}}%
\ast\mathcal{Z}_{j_{2}}\ast\mathcal{Z}_{j_{3}}\ast\cdots\ast\mathcal{Z}%
_{j_{m}}$ and $N=F_{i}/F_{i-1}$ and $s=\dfrac{n!}{j_{1}!j_{2}!\cdots j_{m}%
!}\cdot\prod_{k=2}^{m}\left(  j_{k}-1\right)  $ and $\rho=\Psi$) shows that
the surjective $\mathbf{k}$-linear map $\Psi:\mathcal{H}_{j_{1}}%
\ast\mathcal{Z}_{j_{2}}\ast\mathcal{Z}_{j_{3}}\ast\cdots\ast\mathcal{Z}%
_{j_{m}}\rightarrow F_{i}/F_{i-1}$ must be an isomorphism. Since $\Psi$ is a
left $\mathcal{A}$-module morphism, we thus conclude that $\Psi$ is a left
$\mathcal{A}$-module isomorphism. Therefore, $F_{i}/F_{i-1}\cong%
\mathcal{H}_{j_{1}}\ast\mathcal{Z}_{j_{2}}\ast\mathcal{Z}_{j_{3}}\ast
\cdots\ast\mathcal{Z}_{j_{m}}$ as left $\mathcal{A}$-modules, i.e., as $S_{n}%
$-representations. Hence, Theorem \ref{thm.Fi/Fi-1.as-ind} is proved.
\end{proof}

\subsection{In terms of Littlewood--Richardson coefficients}

In the characteristic-$0$ case, we can restate the claim of Theorem
\ref{thm.Fi/Fi-1.as-ind} in terms of Littlewood--Richardson coefficients. Let
us first recount the bare minimum of symmetric function theory needed to state this.

We will use standard notations for (integer) partitions; in particular, the
size of a partition $\lambda$ will be denoted by $\left\vert \lambda
\right\vert $. We let $\operatorname*{Par}$ denote the set of all partitions.
We let $\Lambda$ be the ring of symmetric functions over $\mathbb{Z}$ (not
over $\mathbf{k}$); we refer to \cite[\S 2.1]{GriRei} or \cite[\S 4.3]%
{Sagan-SG} for its definition\footnote{Note that \cite[\S 4.3]{Sagan-SG} uses
$\mathbb{C}$ as the base ring, but everything works for any base ring.}. To
each partition $\lambda$ corresponds a special symmetric function $s_{\lambda
}\in\Lambda$ called the \emph{Schur function}; see \cite[(2.2.4)]{GriRei} or
\cite[\S 4.4]{Sagan-SG} or \cite[Definition 5.3]{Egge19} for its definition.
It is well-known (see \cite[(4.26) and Theorem 4.9.4]{Sagan-SG} or
\cite[Definition 2.5.8 and Corollary 2.6.12]{GriRei} or \cite[Theorem
10.40]{Egge19}) that a product $s_{\mu}s_{\nu}$ of two Schur functions (for
$\mu,\nu\in\operatorname*{Par}$) can always be written as an $\mathbb{N}%
$-linear combination of Schur functions -- i.e., there exist coefficients
$c_{\mu,\nu}^{\lambda}\in\mathbb{N}$ for all $\lambda,\mu,\nu\in
\operatorname*{Par}$ such that every two partitions $\mu$ and $\nu$ satisfy%
\begin{equation}
s_{\mu}s_{\nu}=\sum_{\lambda\in\operatorname*{Par}}c_{\mu,\nu}^{\lambda
}s_{\lambda}. \label{eq.LR.2}%
\end{equation}
These coefficients $c_{\mu,\nu}^{\lambda}$ are known as the
\emph{Littlewood--Richardson coefficients}. More generally, if $\mu_{1}%
,\mu_{2},\ldots,\mu_{k}$ are any $k$ partitions, then we can write the product
$s_{\mu_{1}}s_{\mu_{2}}\cdots s_{\mu_{k}}$ in the form%
\begin{equation}
s_{\mu_{1}}s_{\mu_{2}}\cdots s_{\mu_{k}}=\sum_{\lambda\in\operatorname*{Par}%
}c_{\mu_{1},\mu_{2},\ldots,\mu_{k}}^{\lambda}s_{\lambda} \label{eq.LR.k}%
\end{equation}
with coefficients $c_{\mu_{1},\mu_{2},\ldots,\mu_{k}}^{\lambda}\in\mathbb{N}$.
These \textquotedblleft$k$\emph{-Littlewood--Richardson coefficients}%
\textquotedblright\ $c_{\mu_{1},\mu_{2},\ldots,\mu_{k}}^{\lambda}$ are, in
fact, easily computed by recursion using the standard Littlewood--Richardson
coefficients $c_{\mu,\nu}^{\lambda}$: Namely, for $k=0$, we have%
\[
c^{\lambda}=\delta_{\lambda,\varnothing}\ \ \ \ \ \ \ \ \ \ \left(
\text{Kronecker delta}\right)  ;
\]
for $k=1$, we have%
\[
c_{\mu}^{\lambda}=\delta_{\lambda,\mu}\ \ \ \ \ \ \ \ \ \ \left(
\text{Kronecker delta}\right)  ;
\]
and for any higher $k$, we have%
\[
c_{\mu_{1},\mu_{2},\ldots,\mu_{k}}^{\lambda}=\sum_{\nu\in\operatorname*{Par}%
}c_{\mu_{1},\mu_{2},\ldots,\mu_{k-1}}^{\nu}c_{\nu,\mu_{k}}^{\lambda}%
\]
(since the product $s_{\mu_{1}}s_{\mu_{2}}\cdots s_{\mu_{k}}$ can be computed
as $\left(  s_{\mu_{1}}s_{\mu_{2}}\cdots s_{\mu_{k-1}}\right)  s_{\mu_{k}}$).

Note that any Schur function $s_{\lambda}$ is homogeneous of degree
$\left\vert \lambda\right\vert $. Hence, a Littlewood--Richardson coefficient
$c_{\mu,\nu}^{\lambda}$ is always $0$ unless $\left\vert \lambda\right\vert
=\left\vert \mu\right\vert +\left\vert \nu\right\vert $. Thus, we can rewrite
the equality (\ref{eq.LR.2}) as%
\begin{equation}
s_{\mu}s_{\nu}=\sum_{\substack{\lambda\in\operatorname*{Par};\\\left\vert
\lambda\right\vert =\left\vert \mu\right\vert +\left\vert \nu\right\vert
}}c_{\mu,\nu}^{\lambda}s_{\lambda}. \label{eq.LR.2n}%
\end{equation}
Likewise, we can rewrite (\ref{eq.LR.k}) as%
\begin{equation}
s_{\mu_{1}}s_{\mu_{2}}\cdots s_{\mu_{k}}=\sum_{\substack{\lambda
\in\operatorname*{Par};\\\left\vert \lambda\right\vert =\left\vert \mu
_{1}\right\vert +\left\vert \mu_{2}\right\vert +\cdots+\left\vert \mu
_{k}\right\vert }}c_{\mu_{1},\mu_{2},\ldots,\mu_{k}}^{\lambda}s_{\lambda}.
\label{eq.LR.kn}%
\end{equation}

We note that there is a second Littlewood--Richardson rule (\cite[Theorem
4.9.2]{Sagan-SG}, \cite[(2.6.4)]{GriRei}, \cite[Theorem 10.40]{Egge19}) that
decomposes a skew Schur function $s_{\lambda/\mu}$ into an $\mathbb{N}$-linear
combination of (straight) Schur functions $s_{\nu}$ as follows:%
\begin{equation}
s_{\lambda/\mu}=\sum_{\nu\in\operatorname*{Par}}c_{\mu,\nu}^{\lambda}s_{\nu}.
\label{eq.LR.lam/mu}%
\end{equation}
The formula (\ref{eq.LR.k}) can also be viewed as a particular case of that
second rule, since the product $s_{\mu_{1}}s_{\mu_{2}}\cdots s_{\mu_{k}}$ can
be written as the skew Schur function $s_{\mu_{1}\ast\mu_{2}\ast\cdots\ast
\mu_{k}}$ corresponding to the skew shape $\mu_{1}\ast\mu_{2}\ast\cdots\ast
\mu_{k}$ obtained by attaching the Young diagrams of $\mu_{1},\mu_{2}%
,\ldots,\mu_{k}$ to each other along their northeastern/southwestern corners
(see \cite[\S 1]{RemmelWhitney} for the precise definition; this claim follows
from \cite[Proposition 5.9]{Egge19}; cf. also \cite[Figure 7.2]{Stanley-EC2}).
Thus, a $k$-Littlewood--Richardson coefficient $c_{\mu_{1},\mu_{2},\ldots
,\mu_{k}}^{\lambda}$ can actually be rewritten as a (regular)
Littlewood--Richardson coefficient using (\ref{eq.LR.lam/mu}): If we write the
skew shape $\mu_{1}\ast\mu_{2}\ast\cdots\ast\mu_{k}$ as $\alpha/\beta$, then
\begin{equation}
c_{\mu_{1},\mu_{2},\ldots,\mu_{k}}^{\lambda}=c_{\beta,\lambda}^{\alpha}.
\label{eq.LR.c=c}%
\end{equation}

The same Littlewood--Richardson coefficients govern the decomposition of
induction products of Specht modules into Specht modules in characteristic
$0$. Namely, if $\mathbf{k}$ is a field of characteristic $0$, and if $\mu$
and $\nu$ are two partitions of respective sizes $i$ and $j$, then%
\begin{equation}
\mathcal{S}^{\mu}\ast\mathcal{S}^{\nu}\cong\bigoplus_{\substack{\lambda
\in\operatorname*{Par};\\\left\vert \lambda\right\vert =i+j}}\left(
\mathcal{S}^{\lambda}\right)  ^{\oplus c_{\mu,\nu}^{\lambda}}
\label{eq.LR.2Specht}%
\end{equation}
as $S_{i+j}$-modules\footnote{The notation $V^{\oplus k}$ means the direct sum
$V\oplus V\oplus\cdots\oplus V$ of $k$ copies of $V$.}. Indeed, this follows
from the Schur function equality (\ref{eq.LR.2n}) using the Frobenius
characteristic map \cite[Theorem 4.7.4]{Sagan-SG} (in fact, this map -- or,
rather, its inverse -- sends Schur functions $s_{\lambda}$ to Specht modules
$\mathcal{S}^{\lambda}$, while sending products of symmetric functions to
induction products of representations\footnote{For products with two factors,
this is proved in \cite[Theorem 4.7.4]{Sagan-SG} (using characters and
Frobenius reciprocity). For products with $k$ factors, it follows from the
two-factor case using Proposition \ref{prop.indprod.ass}.}). Likewise, if
$\mathbf{k}$ is a field of characteristic $0$, and if $\mu_{1},\mu_{2}%
,\ldots,\mu_{k}$ are $k$ partitions of respective sizes $i_{1},i_{2}%
,\ldots,i_{k}$, then%
\begin{equation}
\mathcal{S}^{\mu_{1}}\ast\mathcal{S}^{\mu_{2}}\ast\cdots\ast\mathcal{S}%
^{\mu_{k}}\cong\bigoplus_{\substack{\lambda\in\operatorname*{Par};\\\left\vert
\lambda\right\vert =i_{1}+i_{2}+\cdots+i_{k}}}\left(  \mathcal{S}^{\lambda
}\right)  ^{\oplus c_{\mu_{1},\mu_{2},\ldots,\mu_{k}}^{\lambda}}
\label{eq.LR.kSpecht}%
\end{equation}
as $S_{i_{1}+i_{2}+\cdots+i_{k}}$-modules. When $\mathbf{k}$ is a field of
characteristic $\neq0$, or, more generally, just any commutative ring, then
the isomorphisms (\ref{eq.LR.2Specht}) and (\ref{eq.LR.kSpecht}) still hold in
a weaker form, where the direct sums are replaced by filtrations whose
subquotients are Specht modules of the form $\mathcal{S}^{\lambda}$ (see
\cite[Theorem 9.7]{Clausen91} for the more general problem of finding such
filtrations of skew Specht modules\footnote{Indeed, the induction product
$\mathcal{S}^{\mu_{1}}\ast\mathcal{S}^{\mu_{2}}\ast\cdots\ast\mathcal{S}%
^{\mu_{k}}$ is easily seen to be isomorphic to the skew Specht module
$\mathcal{S}^{\alpha/\beta}$, where $\alpha/\beta=\mu_{1}\ast\mu_{2}\ast
\cdots\ast\mu_{k}$. Thus, \cite[Theorem 9.7]{Clausen91} (applied to this skew
diagram $\alpha/\beta$) yields a Specht series (i.e., a filtration whose
subquotients are Specht modules of the form $\mathcal{S}^{\lambda}$) for
$\mathcal{S}^{\mu_{1}}\ast\mathcal{S}^{\mu_{2}}\ast\cdots\ast\mathcal{S}%
^{\mu_{k}}$.}).

Now, Theorem \ref{thm.Fi/Fi-1.as-ind} gives rise to the following
decomposition of $F_{i}/F_{i-1}$:

\begin{corollary}
\label{cor.Fi/Fi-1.lr}Assume that $\mathbf{k}$ is a field of characteristic
$0$.

Let $i\in\left[  f_{n+1}\right]  $. Consider the lacunar subset $Q_{i}$ of
$\left[  n-1\right]  $. Write the set $Q_{i}\cup\left\{  n+1\right\}  $ as
$\left\{  i_{1}<i_{2}<\cdots<i_{m}\right\}  $, so that $i_{m}=n+1$.
Furthermore, set $i_{0}:=1$. Set $j_{k}:=i_{k}-i_{k-1}$ for each $k\in\left[
m\right]  $. (Note that $j_{1}\geq0$ and $j_{2},j_{3},\ldots,j_{m}>1$ and
$j_{1}+j_{2}+\cdots+j_{m}=n$; this follows from Lemma \ref{lem.Fi/Fi-1.sum-js}
(applied to $I=Q_{i}$).)

Let $\operatorname*{Par}\nolimits_{n}$ be the set of all partitions of $n$.
Then,%
\[
F_{i}/F_{i-1}\cong\bigoplus_{\lambda\in\operatorname*{Par}\nolimits_{n}%
}\left(  \mathcal{S}^{\lambda}\right)  ^{\oplus c_{\left(  j_{1}\right)
,\ \left(  j_{2}-1,1\right)  ,\ \left(  j_{3}-1,1\right)  ,\ \ldots,\ \left(
j_{m}-1,1\right)  }^{\lambda}}%
\]
(where the subscripts in $c_{\left(  j_{1}\right)  ,\ \left(  j_{2}%
-1,1\right)  ,\ \left(  j_{3}-1,1\right)  ,\ \ldots,\ \left(  j_{m}%
-1,1\right)  }^{\lambda}$ are the partition $\left(  j_{1}\right)  $ followed
by the partitions $\left(  j_{k}-1,1\right)  $ for all $k\in\left[
2,m\right]  $).
\end{corollary}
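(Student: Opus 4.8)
The plan is to deduce this from Theorem \ref{thm.Fi/Fi-1.as-ind} by identifying each factor $\mathcal{H}_{j_{1}}$ and $\mathcal{Z}_{j_{k}}$ with a Specht module, and then applying the Littlewood--Richardson decomposition \eqref{eq.LR.kSpecht} of an induction product of Specht modules.

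First, Theorem \ref{thm.Fi/Fi-1.as-ind} provides an $S_{n}$-representation isomorphism $F_{i}/F_{i-1}\cong\mathcal{H}_{j_{1}}\ast\mathcal{Z}_{j_{2}}\ast\mathcal{Z}_{j_{3}}\ast\cdots\ast\mathcal{Z}_{j_{m}}$. By Lemma \ref{lem.Fi/Fi-1.sum-js} (applied to $I=Q_{i}$), the exponents satisfy $j_{1}\geq0$ and $j_{2},j_{3},\ldots,j_{m}>1$ and $j_{1}+j_{2}+\cdots+j_{m}=n$; in particular $\left(  j_{k}-1,1\right)  $ is a genuine partition of $j_{k}$ for each $k\in\left[  2,m\right]  $, and $\left(  j_{1}\right)  $ is a partition of $j_{1}$ (the empty partition if $j_{1}=0$).

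Next, I would rewrite each tensor factor as a Specht module. The trivial representation satisfies $\mathcal{H}_{j_{1}}\cong\mathcal{S}^{\left(  j_{1}\right)  }$ as $S_{j_{1}}$-representations (recalled above; valid for any $j_{1}\geq0$). Since $\mathbf{k}$ is a field of characteristic $0$, every positive integer --- in particular each $j_{k}$ with $k\in\left[  2,m\right]  $ --- is invertible in $\mathbf{k}$, so Proposition \ref{prop.Z-iso} \textbf{(b)} gives $\mathcal{Z}_{j_{k}}\cong\mathcal{S}^{\left(  j_{k}-1,1\right)  }$ as $S_{j_{k}}$-representations. Because the induction product is functorial in each of its arguments (isomorphic inputs yield isomorphic outputs), substituting these isomorphisms into the displayed one above gives
\[
F_{i}/F_{i-1}\cong\mathcal{S}^{\left(  j_{1}\right)  }\ast\mathcal{S}^{\left(  j_{2}-1,1\right)  }\ast\mathcal{S}^{\left(  j_{3}-1,1\right)  }\ast\cdots\ast\mathcal{S}^{\left(  j_{m}-1,1\right)  }
\]
as $S_{n}$-representations.

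Finally, I would invoke \eqref{eq.LR.kSpecht} for the $m$ partitions $\mu_{1}:=\left(  j_{1}\right)  $ and $\mu_{k}:=\left(  j_{k}-1,1\right)  $ for $k\in\left[  2,m\right]  $, whose respective sizes $j_{1},j_{2},\ldots,j_{m}$ sum to $n$. This rewrites the right-hand side above as $\bigoplus_{\lambda\in\operatorname*{Par}\nolimits_{n}}\left(  \mathcal{S}^{\lambda}\right)  ^{\oplus c_{\mu_{1},\mu_{2},\ldots,\mu_{m}}^{\lambda}}$, which is exactly the asserted decomposition. There is no serious obstacle here: the corollary is a formal consequence of results already proved. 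The only points that warrant a word of care are the invertibility hypothesis of Proposition \ref{prop.Z-iso} \textbf{(b)} (automatic in characteristic $0$) and the degenerate case $j_{1}=0$, in which the factor $\mathcal{H}_{0}\cong\mathbf{k}$ with trivial $S_{0}$-action drops out harmlessly and $c_{\varnothing,\mu_{2},\ldots,\mu_{m}}^{\lambda}=c_{\mu_{2},\ldots,\mu_{m}}^{\lambda}$ (since $s_{\varnothing}=1$), so the stated formula remains correct verbatim.
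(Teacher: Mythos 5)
Your proposal is correct and follows essentially the same route as the paper's own proof: invoke Theorem \ref{thm.Fi/Fi-1.as-ind}, identify $\mathcal{H}_{j_{1}}\cong\mathcal{S}^{\left(  j_{1}\right)  }$ and $\mathcal{Z}_{j_{k}}\cong\mathcal{S}^{\left(  j_{k}-1,1\right)  }$ via Proposition \ref{prop.Z-iso} \textbf{(b)} (using that each $j_{k}>1$ is invertible in characteristic $0$), and then apply (\ref{eq.LR.kSpecht}) together with $j_{1}+j_{2}+\cdots+j_{m}=n$. Your extra remark about the degenerate case $j_{1}=0$ is a harmless refinement not spelled out in the paper.
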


\begin{proof}
The set $Q_{i}\cup\left\{  n+1\right\}  $ is lacunar (since $Q_{i}$ is
lacunar, and since $Q_{i}\subseteq\left[  n-1\right]  $ ensures that $n+1$ is
larger than any element of $Q_{i}$ by at least $2$). In other words, every
$k>1$ satisfies $i_{k}>i_{k-1}+1$. Hence, every $k>1$ satisfies $j_{k}>1$
(since $j_{k}=i_{k}-i_{k-1}$) and therefore%
\begin{equation}
\mathcal{Z}_{j_{k}}\cong\mathcal{S}^{\left(  j_{k}-1,1\right)  }
\label{pf.cor.Fi/Fi-1.lr.Z}%
\end{equation}
(by Proposition \ref{prop.Z-iso} \textbf{(b)}, since $j_{k}$ is invertible in
$\mathbf{k}$).

Also, we have $\mathcal{H}_{p}\cong\mathcal{S}^{\left(  p\right)  }$ for each
$p\in\mathbb{N}$ (since both $\mathcal{H}_{p}$ and $\mathcal{S}^{\left(
p\right)  }$ are trivial $1$-dimensional representations of $S_{p}$). Thus,
$\mathcal{H}_{j_{1}}\cong\mathcal{S}^{\left(  j_{1}\right)  }$.

Now, Theorem \ref{thm.Fi/Fi-1.as-ind} yields%
\begin{align*}
F_{i}/F_{i-1}  &  \cong\underbrace{\mathcal{H}_{j_{1}}\ast\mathcal{Z}_{j_{2}%
}\ast\mathcal{Z}_{j_{3}}\ast\cdots\ast\mathcal{Z}_{j_{m}}}%
_{\substack{\text{the first factor is an }\mathcal{H}\text{,}\\\text{while all
others are }\mathcal{Z}\text{'s}}}\\
&  \cong\mathcal{S}^{\left(  j_{1}\right)  }\ast\mathcal{S}^{\left(
j_{2}-1,1\right)  }\ast\mathcal{S}^{\left(  j_{3}-1,1\right)  }\ast\cdots
\ast\mathcal{S}^{\left(  j_{m}-1,1\right)  }\\
&  \ \ \ \ \ \ \ \ \ \ \ \ \ \ \ \ \ \ \ \ \left(
\begin{array}
[c]{c}%
\text{since }\mathcal{H}_{j_{1}}\cong\mathcal{S}^{\left(  j_{1}\right)
}\text{, and since }\mathcal{Z}_{j_{k}}\cong\mathcal{S}^{\left(
j_{k}-1,1\right)  }\text{ for each }k>1\\
\text{(by (\ref{pf.cor.Fi/Fi-1.lr.Z}))}%
\end{array}
\right) \\
&  \cong\bigoplus_{\substack{\lambda\in\operatorname*{Par};\\\left\vert
\lambda\right\vert =j_{1}+j_{2}+\cdots+j_{m}}}\left(  \mathcal{S}^{\lambda
}\right)  ^{\oplus c_{\left(  j_{1}\right)  ,\ \left(  j_{2}-1,1\right)
,\ \left(  j_{3}-1,1\right)  ,\ \ldots,\ \left(  j_{m}-1,1\right)  }^{\lambda
}}\ \ \ \ \ \ \ \ \ \ \left(  \text{by (\ref{eq.LR.kSpecht})}\right) \\
&  =\bigoplus_{\substack{\lambda\in\operatorname*{Par};\\\left\vert
\lambda\right\vert =n}}\left(  \mathcal{S}^{\lambda}\right)  ^{\oplus
c_{\left(  j_{1}\right)  ,\ \left(  j_{2}-1,1\right)  ,\ \left(
j_{3}-1,1\right)  ,\ \ldots,\ \left(  j_{m}-1,1\right)  }^{\lambda}%
}\ \ \ \ \ \ \ \ \ \ \left(  \text{since }j_{1}+j_{2}+\cdots+j_{m}=n\right) \\
&  =\bigoplus_{\lambda\in\operatorname*{Par}\nolimits_{n}}\left(
\mathcal{S}^{\lambda}\right)  ^{\oplus c_{\left(  j_{1}\right)  ,\ \left(
j_{2}-1,1\right)  ,\ \left(  j_{3}-1,1\right)  ,\ \ldots,\ \left(
j_{m}-1,1\right)  }^{\lambda}}.
\end{align*}
This proves Corollary \ref{cor.Fi/Fi-1.lr}.
\end{proof}

\section{\label{sec.specht-spec}The Specht module spectrum}

\subsection{\label{subsec.specht-spec.thm}The theorem}

We need a few more notations from \cite{s2b1}. For any subset $I$ of $\left[
n\right]  $, we define the following:

\begin{itemize}
\item We let $\widehat{I}$ be the set $\left\{  0\right\}  \cup I\cup\left\{
n+1\right\}  $. We shall refer to $\widehat{I}$ as the \emph{enclosure} of $I$.

For example, if $n=5$, then $\widehat{\left\{  2,3\right\}  }=\left\{
0,2,3,6\right\}  $.

\item For any $\ell\in\left[  n\right]  $, we let $m_{I,\ell}$ be the number%
\[
\left(  \text{smallest element of }\widehat{I}\text{ that is }\geq\ell\right)
-\ell\in\left[  0,n+1-\ell\right]  \subseteq\left[  0,n\right]  .
\]

For example, if $n=6$ and $I=\left\{  2,5\right\}  $, then%
\[
\left(  m_{I,1},\ m_{I,2},\ m_{I,3},\ m_{I,4},\ m_{I,5},\ m_{I,6}\right)
=\left(  1,\ 0,\ 2,\ 1,\ 0,\ 1\right)  .
\]
We note that an $\ell\in\left[  n\right]  $ satisfies $m_{I,\ell}=0$ if and
only if $\ell\in\widehat{I}$ (or, equivalently, $\ell\in I$).
\end{itemize}

We recall that any partition $\lambda$ of $n$ gives rise to an $S_{n}%
$-representation called the Specht module $\mathcal{S}^{\lambda}$. If
$\lambda$ is a partition of $n$, and if $a\in\mathcal{A}$, then the action of
$a$ on $\mathcal{S}^{\lambda}$ (that is, the $\mathbf{k}$-linear map
$\mathcal{S}^{\lambda}\rightarrow\mathcal{S}^{\lambda},\ w\mapsto aw$) will be
denoted by $L_{\lambda}\left(  a\right)  $.

\begin{definition}
\label{def.clI}Let $\lambda$ be a partition of $n$. Let $I$ be a lacunar
subset of $\left[  n-1\right]  $. Write the set $I\cup\left\{  n+1\right\}  $
as $\left\{  i_{1}<i_{2}<\cdots<i_{m}\right\}  $, so that $i_{m}=n+1$.
Furthermore, set $i_{0}:=1$. Set $j_{k}:=i_{k}-i_{k-1}$ for each $k\in\left[
m\right]  $. (Note that Lemma \ref{lem.Fi/Fi-1.sum-js} shows that $j_{1}\geq0$
and $j_{2},j_{3},\ldots,j_{m}>1$, hence $j_{2},j_{3},\ldots,j_{m}\geq2$.)

The $m$-Littlewood--Richardson coefficient $c_{\left(  j_{1}\right)
,\ \left(  j_{2}-1,1\right)  ,\ \left(  j_{3}-1,1\right)  ,\ \ldots,\ \left(
j_{m}-1,1\right)  }^{\lambda}$ (as defined in (\ref{eq.LR.k}), where the
subscripts are the partition $\left(  j_{1}\right)  $ followed by the
partitions $\left(  j_{k}-1,1\right)  $ for all $k\in\left[  2,m\right]  $)
will then be denoted by $c_{I}^{\lambda}$.
\end{definition}

As we recall from Corollary \ref{cor.Fi/Fi-1.lr}, if $I=Q_{i}$ for some
$i\in\left[  f_{n+1}\right]  $, then this coefficient $c_{I}^{\lambda}$ is the
multiplicity of the Specht module $\mathcal{S}^{\lambda}$ in the left
$\mathcal{A}$-module $F_{i}/F_{i-1}$ when $\mathbf{k}$ is a field of
characteristic $0$. Indeed, we can rewrite Corollary \ref{cor.Fi/Fi-1.lr} as
follows using Definition \ref{def.clI}:

\begin{corollary}
\label{cor.Fi/Fi-1.lr-rewr}Assume that $\mathbf{k}$ is a field of
characteristic $0$.

Let $i\in\left[  f_{n+1}\right]  $. Let $\operatorname*{Par}\nolimits_{n}$ be
the set of all partitions of $n$. Then,%
\[
F_{i}/F_{i-1}\cong\bigoplus_{\lambda\in\operatorname*{Par}\nolimits_{n}%
}\left(  \mathcal{S}^{\lambda}\right)  ^{\oplus c_{Q_{i}}^{\lambda}}.
\]

\end{corollary}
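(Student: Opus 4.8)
[Proof of Corollary \ref{cor.Fi/Fi-1.lr-rewr}.]The plan is simply to combine Corollary \ref{cor.Fi/Fi-1.lr} with Definition \ref{def.clI}, since the former already gives the desired decomposition and the latter is exactly the notational abbreviation needed to bring it into the stated form.

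Concretely, first apply Corollary \ref{cor.Fi/Fi-1.lr}: write the set $Q_{i}\cup\left\{  n+1\right\}  $ as $\left\{  i_{1}<i_{2}<\cdots<i_{m}\right\}  $ (so $i_{m}=n+1$), set $i_{0}:=1$, and set $j_{k}:=i_{k}-i_{k-1}$ for each $k\in\left[  m\right]  $. Lemma \ref{lem.Fi/Fi-1.sum-js} (applied to $I=Q_{i}$) guarantees that $j_{1}\geq0$ and $j_{2},j_{3},\ldots,j_{m}>1$ and $j_{1}+j_{2}+\cdots+j_{m}=n$, so this setup is legitimate. Corollary \ref{cor.Fi/Fi-1.lr} then yields
\[
F_{i}/F_{i-1}\cong\bigoplus_{\lambda\in\operatorname*{Par}\nolimits_{n}}\left(  \mathcal{S}^{\lambda}\right)  ^{\oplus c_{\left(  j_{1}\right)  ,\ \left(  j_{2}-1,1\right)  ,\ \left(  j_{3}-1,1\right)  ,\ \ldots,\ \left(  j_{m}-1,1\right)  }^{\lambda}}.
\]

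Now observe that Definition \ref{def.clI}, applied to the lacunar subset $I=Q_{i}$ of $\left[  n-1\right]  $, uses exactly the same data $i_{1},i_{2},\ldots,i_{m}$, $i_{0}=1$, and $j_{1},j_{2},\ldots,j_{m}$, and \emph{defines} $c_{Q_{i}}^{\lambda}$ to be precisely the $m$-Littlewood--Richardson coefficient $c_{\left(  j_{1}\right)  ,\ \left(  j_{2}-1,1\right)  ,\ \left(  j_{3}-1,1\right)  ,\ \ldots,\ \left(  j_{m}-1,1\right)  }^{\lambda}$. Substituting this equality into the displayed decomposition gives
\[
F_{i}/F_{i-1}\cong\bigoplus_{\lambda\in\operatorname*{Par}\nolimits_{n}}\left(  \mathcal{S}^{\lambda}\right)  ^{\oplus c_{Q_{i}}^{\lambda}},
\]
which is the assertion of Corollary \ref{cor.Fi/Fi-1.lr-rewr}. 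There is no real obstacle here: the entire content of the statement is already carried by Corollary \ref{cor.Fi/Fi-1.lr}, and the only thing to check is that the indexing conventions of Definition \ref{def.clI} agree verbatim with those of Corollary \ref{cor.Fi/Fi-1.lr} when $I=Q_{i}$, which they do.
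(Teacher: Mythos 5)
Your proof is correct and matches the paper exactly: the paper treats this corollary as an immediate restatement of Corollary \ref{cor.Fi/Fi-1.lr} via the notational abbreviation of Definition \ref{def.clI}, which is precisely what you do. The only check needed is that the data $i_1,\ldots,i_m$ and $j_1,\ldots,j_m$ in Definition \ref{def.clI} (for $I=Q_i$) coincide with those in Corollary \ref{cor.Fi/Fi-1.lr}, and you verify this.
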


We shall now state our first main theorem:

\begin{theorem}
\label{thm.eigs}Let $\mathbf{k}$ be any field. Let $\lambda$ be a partition of
$n$. Let $\omega_{1},\omega_{2},\ldots,\omega_{n}\in\mathbf{k}$. For each
subset $I$ of $\left[  n\right]  $, we set%
\[
\omega_{I}:=\omega_{1}m_{I,1}+\omega_{2}m_{I,2}+\cdots+\omega_{n}m_{I,n}%
\in\mathbf{k}.
\]
Then:

\begin{enumerate}
\item[\textbf{(a)}] The eigenvalues of the operator $L_{\lambda}\left(
\omega_{1}t_{1}+\omega_{2}t_{2}+\cdots+\omega_{n}t_{n}\right)  $ on the Specht
module $\mathcal{S}^{\lambda}$ are the elements%
\begin{equation}
\omega_{I}\text{ for all lacunar subsets }I\subseteq\left[  n-1\right]  \text{
satisfying }c_{I}^{\lambda}\neq0,\nonumber
\end{equation}
and their respective algebraic multiplicities are the $c_{I}^{\lambda}$ in the
generic case (i.e., if no two $I$'s produce the same $\omega_{I}$; otherwise
the multiplicities of colliding eigenvalues should be added together).

\item[\textbf{(b)}] If all these $\omega_{I}$ (for all lacunar subsets
$I\subseteq\left[  n-1\right]  $ satisfying $c_{I}^{\lambda}\neq0$) are
distinct, then $L_{\lambda}\left(  \omega_{1}t_{1}+\omega_{2}t_{2}%
+\cdots+\omega_{n}t_{n}\right)  $ is diagonalizable.

\item[\textbf{(c)}] We have%
\[
\prod_{\substack{I\subseteq\left[  n-1\right]  \text{ is lacunar;}%
\\c_{I}^{\lambda}\neq0}}\left(  L_{\lambda}\left(  \omega_{1}t_{1}+\omega
_{2}t_{2}+\cdots+\omega_{n}t_{n}\right)  -\omega_{I}\operatorname*{id}%
\nolimits_{\mathcal{S}^{\lambda}}\right)  =0.
\]

\end{enumerate}
\end{theorem}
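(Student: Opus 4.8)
The plan is to exploit the Fibonacci filtration together with Theorem \ref{thm.t-simultri} \textbf{(c)}, using the fact that the eigenvalue data of $L_\lambda(u)$ for $u = \omega_1 t_1 + \cdots + \omega_n t_n$ can be read off from any filtration of $\mathcal{A}$ by $(\mathcal{A},\mathcal{T})$-subbimodules on whose subquotients each $t_\ell$ acts as a scalar. First I would record that, for each $i \in [f_{n+1}]$, Theorem \ref{thm.t-simultri} \textbf{(c)} says $F_i \cdot (t_\ell - m_{Q_i,\ell}) \subseteq F_{i-1}$; hence on the subquotient $F_i/F_{i-1}$, the operator (right multiplication by) $t_\ell$ acts as the scalar $m_{Q_i,\ell}$, and therefore right multiplication by $u$ acts on $F_i/F_{i-1}$ as the scalar $\omega_{Q_i} = \sum_\ell \omega_\ell m_{Q_i,\ell}$. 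Since $u \in \mathcal{T}$ acts on $\mathcal{A}$ by right multiplication, and since $\mathcal{A}$ as a left $\mathcal{A}$-module is a direct sum of Specht modules $\mathcal{S}^\lambda$ each appearing with positive multiplicity (this uses $\mathbf{k}$ a field of characteristic $0$ for the splitting; I'll return to general fields below), the action of $u$ on $\mathcal{A}$ and the action of $u$ on each $\mathcal{S}^\lambda$ are intertwined: more precisely, the map $L_\lambda(u)$ is, up to conjugation, a direct summand of right multiplication by $u$ on $\mathcal{A}$, restricted to the $\lambda$-isotypic component. But to get the sharper per-$\lambda$ statement I need to intersect the Fibonacci filtration with the $\lambda$-isotypic component $\mathcal{A}_\lambda$ of $\mathcal{A}$; because $u$ acts by right multiplication and the isotypic decomposition is by the left action, each $F_i \cap \mathcal{A}_\lambda$ is again stable under right multiplication by $u$, giving a filtration of $\mathcal{A}_\lambda$ whose $i$-th subquotient is $(F_i \cap \mathcal{A}_\lambda)/(F_{i-1} \cap \mathcal{A}_\lambda)$, a subquotient of $F_i/F_{i-1}$, hence a direct sum of copies of $\mathcal{S}^\lambda$ (namely $c_{Q_i}^\lambda \cdot \dim \mathcal{S}^\lambda$ copies, by Corollary \ref{cor.Fi/Fi-1.lr-rewr} and the fact that $\mathcal{A}_\lambda \cong (\mathcal{S}^\lambda)^{\oplus \dim \mathcal{S}^\lambda}$), on which $u$ acts by the scalar $\omega_{Q_i}$.

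From this the three parts follow. For \textbf{(c)}: on $\mathcal{A}_\lambda$, the operator $\prod_{i}(R_u - \omega_{Q_i})$ (product over $i \in [f_{n+1}]$ with $c_{Q_i}^\lambda \neq 0$, where $R_u$ denotes right multiplication by $u$) kills each subquotient of the filtration $(F_i \cap \mathcal{A}_\lambda)_i$ — indeed the factor indexed by $i$ kills the $i$-th subquotient, and subquotients with $c_{Q_i}^\lambda = 0$ are zero so we may drop them — hence kills $\mathcal{A}_\lambda$ itself by the standard filtration argument (if $\theta$ kills every $V_i/V_{i-1}$ in a filtration $0 = V_0 \subseteq \cdots \subseteq V_N$, then $\theta(V_i) \subseteq V_{i-1}$, so $\theta^N(V_N) = 0$; but here the factors commute since they are polynomials in the single operator $R_u$, so the product kills $V_N$). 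Transporting along $\mathcal{A}_\lambda \cong (\mathcal{S}^\lambda)^{\oplus \dim \mathcal{S}^\lambda}$, the same polynomial in $L_\lambda(u)$ vanishes on $\mathcal{S}^\lambda$, which is exactly \textbf{(c)}. Note that one must be slightly careful to include in the product only indices $i$ for which $c_{Q_i}^\lambda \ne 0$, and to observe that distinct lacunar $I \subseteq [n-1]$ with $c_I^\lambda \ne 0$ are exactly the distinct $Q_i$ with $c_{Q_i}^\lambda \ne 0$ (since $Q_1,\ldots,Q_{f_{n+1}}$ lists all lacunar subsets of $[n-1]$ without repetition), so the product in the statement and the product over the filtration agree. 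For \textbf{(a)}: part \textbf{(c)} shows every eigenvalue of $L_\lambda(u)$ lies among the $\omega_I$ with $c_I^\lambda \ne 0$; conversely, the subquotient computation shows that for each such $I = Q_i$, the generalized $\omega_{Q_i}$-eigenspace of $R_u$ on $\mathcal{A}_\lambda$ has dimension at least $c_{Q_i}^\lambda \dim \mathcal{S}^\lambda$ (and in the generic case, when the $\omega_{Q_i}$ are pairwise distinct over the relevant $i$, the dimensions of the generalized eigenspaces must be exactly the $c_{Q_i}^\lambda \dim\mathcal{S}^\lambda$ since they sum to $\dim \mathcal{A}_\lambda$ by additivity in the filtration); dividing by $\dim\mathcal{S}^\lambda$ gives the multiplicities on $\mathcal{S}^\lambda$ itself. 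For \textbf{(b)}: if the $\omega_I$ with $c_I^\lambda \ne 0$ are pairwise distinct, then the polynomial $\prod_I (X - \omega_I)$ in \textbf{(c)} has distinct roots, so $L_\lambda(u)$ is annihilated by a squarefree polynomial that splits over $\mathbf{k}$, hence is diagonalizable.

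The main obstacle is the reduction to characteristic $0$ (or, more precisely, making the argument work over an arbitrary field $\mathbf{k}$ as the theorem asks): the isotypic decomposition $\mathcal{A} = \bigoplus_\lambda \mathcal{A}_\lambda$ and the identification $\mathcal{A}_\lambda \cong (\mathcal{S}^\lambda)^{\oplus \dim \mathcal{S}^\lambda}$, as well as Corollary \ref{cor.Fi/Fi-1.lr-rewr}, are only stated in characteristic $0$, and in positive characteristic Specht modules are neither irreducible nor do they biject with the simple modules. I expect the cleanest fix is a separate reduction step at the front: show that both sides of \textbf{(c)} — indeed all three statements — are insensitive to base change in the following sense. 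Since $\mathbf{k}$ is a field, pick the prime field $\mathbf{k}_0 \subseteq \mathbf{k}$; if $\mathbf{k}$ has characteristic $0$ we may even reduce to $\mathbf{k} = \mathbb{Q}$, and then base-change from $\mathbb{Q}$, where the above argument applies verbatim. In positive characteristic $p$ one instead works over a $\mathbb{Z}$-form: the filtration $F_\bullet$, the elements $t_\ell$, the scalars $m_{Q_i,\ell}$, and the Specht module $\mathcal{S}^\lambda$ all have natural $\mathbb{Z}$-forms, Theorem \ref{thm.t-simultri} holds over $\mathbb{Z}$ hence over any $\mathbf{k}$, so the polynomial identity in \textbf{(c)} — which is really the statement that a certain product of operators on the free $\mathbf{k}$-module $\mathcal{S}^\lambda$ is zero — can be checked on the $\mathbb{Z}$-form and then base-changed; and for \textbf{(a)}, \textbf{(b)} one notes that $c_I^\lambda$ is defined purely combinatorially (independent of $\mathbf{k}$) and that the eigenvalue/diagonalizability statements follow formally from \textbf{(c)} plus the dimension count of generalized eigenspaces, the latter again being stable under flat base change since $\mathcal{S}^\lambda$ is free. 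I would present this base-change reduction first, then give the characteristic-$0$ argument as the core.
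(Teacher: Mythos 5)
Your overall strategy is viable and is genuinely different from the paper's. The paper does not touch the isotypic decomposition of $\mathcal{A}$; instead it transports the Fibonacci filtration to the Specht module itself by setting $F_{i}^{\lambda}:=\left\{ v\in\mathcal{S}^{\lambda}\ \mid\ F_{i}v=0\right\}$ (Theorem \ref{thm.specht.filtr}), computes $\dim\left( F_{i-1}^{\lambda}/F_{i}^{\lambda}\right) =c_{Q_{i}}^{\lambda}$ via the exactness of $\operatorname*{Hom}\nolimits_{\mathcal{A}}\left( -,\mathcal{S}^{\lambda}\right)$ together with $\operatorname*{Hom}\nolimits_{\mathcal{A}}\left( F_{i}/F_{i-1},\mathcal{S}^{\lambda}\right) \cong\mathbf{k}^{\oplus c_{Q_{i}}^{\lambda}}$ (Schur's lemma plus Corollary \ref{cor.Fi/Fi-1.lr-rewr}), and then triangularizes $L_{\lambda}\left( u\right)$ directly in a basis adapted to that filtration. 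You instead intersect the Fibonacci filtration with $\mathcal{A}_{\lambda}$ and study right multiplication $R_{u}$ there. Both routes rest on the same inputs (Theorem \ref{thm.t-simultri}, Corollary \ref{cor.Fi/Fi-1.lr-rewr}, semisimplicity), and your base-change reduction to arbitrary fields is essentially the paper's polynomial-identity trick; one small caution there is that "dimensions of generalized eigenspaces are stable under base change" is not literally true (eigenvalues can collide after specialization), so you should phrase the transferred statement as the characteristic-polynomial identity $\det\left( x\operatorname*{id}-L_{\lambda}\left( u\right) \right) =\prod_{I}\left( x-\omega_{I}\right) ^{c_{I}^{\lambda}}$, which is what actually specializes.

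The one step that is not justified as written is the passage from $R_{u}\big|_{\mathcal{A}_{\lambda}}$ to $L_{\lambda}\left( u\right)$. The isomorphism $\mathcal{A}_{\lambda}\cong\left( \mathcal{S}^{\lambda}\right) ^{\oplus d}$ (with $d=\dim\mathcal{S}^{\lambda}$) is an isomorphism of \emph{left} modules, and $R_{u}$ is a left-module endomorphism; transporting it along that isomorphism yields an a priori arbitrary $d\times d$ matrix over $\operatorname*{End}\nolimits_{\mathcal{A}}\left( \mathcal{S}^{\lambda}\right) =\mathbf{k}$, not $L_{\lambda}\left( u\right)$ applied diagonally. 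So "transporting along $\mathcal{A}_{\lambda}\cong\left( \mathcal{S}^{\lambda}\right) ^{\oplus d}$" does not by itself show that a polynomial annihilating $R_{u}\big|_{\mathcal{A}_{\lambda}}$ annihilates $L_{\lambda}\left( u\right)$, nor that the multiplicities divide by $d$. To repair this you need the two-sided (Artin--Wedderburn) decomposition $\mathcal{A}_{\lambda}\cong\operatorname*{End}\nolimits_{\mathbf{k}}\left( \mathcal{S}^{\lambda}\right) \cong\mathcal{S}^{\lambda}\otimes\left( \mathcal{S}^{\lambda}\right) ^{\ast}$ as an $\left( \mathcal{A},\mathcal{A}\right)$-bimodule, under which $R_{u}$ becomes $\operatorname*{id}\otimes L_{\lambda}\left( u\right) ^{\ast}$; since a matrix is similar to its transpose, $R_{u}\big|_{\mathcal{A}_{\lambda}}$ is similar to $L_{\lambda}\left( u\right) ^{\oplus d}$, and then your annihilation and multiplicity claims follow. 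This is standard and the conclusion is true, but it is a genuinely missing ingredient, and it is precisely the point where the paper's $\operatorname*{Hom}$-based construction avoids any appeal to the bimodule structure. (Minor slip: the $\lambda$-isotypic part of $F_{i}/F_{i-1}$ consists of $c_{Q_{i}}^{\lambda}$ copies of $\mathcal{S}^{\lambda}$, of total dimension $c_{Q_{i}}^{\lambda}\dim\mathcal{S}^{\lambda}$, not "$c_{Q_{i}}^{\lambda}\cdot\dim\mathcal{S}^{\lambda}$ copies"; your later dimension counts use the correct number.)
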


To prove this theorem, we will need a further theorem, which \textquotedblleft
maps\textquotedblright\ the Fibonacci filtration from $\mathcal{A}$ to a given
Specht module $\mathcal{S}^{\lambda}$:

\begin{theorem}
\label{thm.specht.filtr}Let $\mathbf{k}$ be any field of characteristic $0$.
Let $\lambda$ be a partition of $n$. Then, there exists a filtration
\[
0=F_{f_{n+1}}^{\lambda}\subseteq F_{f_{n+1}-1}^{\lambda}\subseteq
F_{f_{n+1}-2}^{\lambda}\subseteq\cdots\subseteq F_{2}^{\lambda}\subseteq
F_{1}^{\lambda}\subseteq F_{0}^{\lambda}=\mathcal{S}^{\lambda}%
\]
(note the \textquotedblleft backward\textquotedblright\ indexing!) of the
Specht module $\mathcal{S}^{\lambda}$ by left $\mathcal{T}$-submodules with
the following four properties:

\begin{enumerate}
\item Each subquotient $F_{i-1}^{\lambda}/F_{i}^{\lambda}$ (for $i\in\left[
f_{n+1}\right]  $) has dimension $c_{Q_{i}}^{\lambda}$ as a $\mathbf{k}%
$-vector space (see Definition \ref{def.clI} for the meaning of $c_{I}%
^{\lambda}$).

\item In particular, an $i\in\left[  f_{n+1}\right]  $ satisfies
$F_{i-1}^{\lambda}=F_{i}^{\lambda}$ if and only if $c_{Q_{i}}^{\lambda}=0$.

\item On each subquotient $F_{i-1}^{\lambda}/F_{i}^{\lambda}$ (for
$i\in\left[  f_{n+1}\right]  $), each element $t_{\ell}\in\mathcal{T}$ (for
$\ell\in\left[  n\right]  $) acts as multiplication by the scalar
$m_{Q_{i},\ell}$.

\item More generally, on each subquotient $F_{i-1}^{\lambda}/F_{i}^{\lambda}$
(for $i\in\left[  f_{n+1}\right]  $), each element $P\left(  t_{1}%
,t_{2},\ldots,t_{n}\right)  \in\mathcal{T}$ (where $P$ is a polynomial in $n$
noncommuting indeterminates over $\mathbf{k}$) acts as multiplication by the
scalar $P\left(  m_{Q_{i},1},m_{Q_{i},2},\ldots,m_{Q_{i},n}\right)  $.
\end{enumerate}
\end{theorem}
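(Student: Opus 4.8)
The plan is to transport the Fibonacci filtration of $\mathcal{A}$ onto $\mathcal{S}^{\lambda}$ by first extracting the ``multiplicity space'' of $\mathcal{S}^{\lambda}$ inside $\mathcal{A}$ via a $\operatorname{Hom}$ functor, and then dualizing. Since $\operatorname{char}\mathbf{k}=0$, the group algebra $\mathcal{A}=\mathbf{k}\left[S_{n}\right]$ is semisimple, so $\mathcal{S}^{\lambda}$ is a projective left $\mathcal{A}$-module and the functor $\operatorname{Hom}_{\mathcal{A}}\left(\mathcal{S}^{\lambda},-\right)$ (from left $\mathcal{A}$-modules to $\mathbf{k}$-vector spaces) is exact. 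Each $F_{i}$ is an $\left(\mathcal{A},\mathcal{T}\right)$-subbimodule of $\mathcal{A}$ by Proposition \ref{prop.fibfilt.A}, so $G_{i}:=\operatorname{Hom}_{\mathcal{A}}\left(\mathcal{S}^{\lambda},F_{i}\right)$ is a right $\mathcal{T}$-module (with $t_{\ell}$ acting by postcomposition with right multiplication by $t_{\ell}$ on $F_{i}$), and the inclusions $F_{i-1}\subseteq F_{i}$ induce inclusions $G_{i-1}\subseteq G_{i}$ of right $\mathcal{T}$-modules. Thus $0=G_{0}\subseteq G_{1}\subseteq\cdots\subseteq G_{f_{n+1}}=\operatorname{Hom}_{\mathcal{A}}\left(\mathcal{S}^{\lambda},\mathcal{A}\right)$ is a filtration by right $\mathcal{T}$-submodules. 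Exactness identifies $G_{i}/G_{i-1}\cong\operatorname{Hom}_{\mathcal{A}}\left(\mathcal{S}^{\lambda},F_{i}/F_{i-1}\right)$, which by Corollary \ref{cor.Fi/Fi-1.lr-rewr} and Schur's lemma (the Specht modules being absolutely irreducible) has dimension $c_{Q_{i}}^{\lambda}$. Moreover, Theorem \ref{thm.t-simultri}\textbf{(c)} gives $F_{i}\left(t_{\ell}-m_{Q_{i},\ell}\right)\subseteq F_{i-1}$, hence $G_{i}\left(t_{\ell}-m_{Q_{i},\ell}\right)\subseteq G_{i-1}$, so $t_{\ell}$ acts on $G_{i}/G_{i-1}$ as the scalar $m_{Q_{i},\ell}$.

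Next I identify $G_{f_{n+1}}=\operatorname{Hom}_{\mathcal{A}}\left(\mathcal{S}^{\lambda},\mathcal{A}\right)$ with $\left(\mathcal{S}^{\lambda}\right)^{\ast}$ as a right $\mathcal{A}$-module: since $\mathbf{k}\left[S_{n}\right]$ is a symmetric algebra, the map sending $\phi$ to the functional $v\mapsto\left(\text{coefficient of }\operatorname{id}\text{ in }\phi\left(v\right)\right)$ is a right-$\mathcal{A}$-linear isomorphism (one may instead restrict everything to the $\lambda$-isotypic block $\mathcal{A}_{\lambda}\cong\mathcal{S}^{\lambda}\otimes_{\mathbf{k}}\left(\mathcal{S}^{\lambda}\right)^{\ast}$, using $F_{i}=\bigoplus_{\mu}\left(F_{i}\cap\mathcal{A}_{\mu}\right)$). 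Hence $\left(G_{f_{n+1}}\right)^{\ast}\cong\left(\mathcal{S}^{\lambda}\right)^{\ast\ast}\cong\mathcal{S}^{\lambda}$ as left $\mathcal{A}$-modules, and in particular as left $\mathcal{T}$-modules. Now apply $\mathbf{k}$-linear duality to the whole chain: using this identification, set
\[
F_{i}^{\lambda}:=\left\{v\in\mathcal{S}^{\lambda}\ \mid\ v|_{G_{i}}=0\right\}=\left(G_{f_{n+1}}/G_{i}\right)^{\ast}\subseteq\left(G_{f_{n+1}}\right)^{\ast}\cong\mathcal{S}^{\lambda}.
\]
Since $G_{f_{n+1}}/G_{i}$ is a right $\mathcal{T}$-module, its $\mathbf{k}$-dual is a left $\mathcal{T}$-module, so $F_{i}^{\lambda}$ is a left $\mathcal{T}$-submodule of $\mathcal{S}^{\lambda}$. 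From $G_{0}=0$ and $G_{f_{n+1}}=\left(\mathcal{S}^{\lambda}\right)^{\ast}$ we get $F_{0}^{\lambda}=\mathcal{S}^{\lambda}$ and $F_{f_{n+1}}^{\lambda}=0$; and $G_{i-1}\subseteq G_{i}$ forces $F_{i}^{\lambda}\subseteq F_{i-1}^{\lambda}$, which is exactly the backward-indexed filtration asserted.

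Finally, dualizing the short exact sequence $0\to G_{i}/G_{i-1}\to G_{f_{n+1}}/G_{i-1}\to G_{f_{n+1}}/G_{i}\to 0$ yields $F_{i-1}^{\lambda}/F_{i}^{\lambda}\cong\left(G_{i}/G_{i-1}\right)^{\ast}$ as left $\mathcal{T}$-modules. This immediately gives property 1, namely $\dim\left(F_{i-1}^{\lambda}/F_{i}^{\lambda}\right)=\dim\left(G_{i}/G_{i-1}\right)=c_{Q_{i}}^{\lambda}$, hence property 2; and property 3, since the transpose of the scalar operator $m_{Q_{i},\ell}\operatorname{id}$ on $G_{i}/G_{i-1}$ is the scalar operator $m_{Q_{i},\ell}\operatorname{id}$ on its dual. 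Property 4 follows formally: the $\mathcal{T}$-action on $F_{i-1}^{\lambda}/F_{i}^{\lambda}$ is a $\mathbf{k}$-algebra homomorphism $\rho\colon\mathcal{T}\to\operatorname{End}_{\mathbf{k}}\left(F_{i-1}^{\lambda}/F_{i}^{\lambda}\right)$ with $\rho\left(t_{\ell}\right)=m_{Q_{i},\ell}\operatorname{id}$, so for a polynomial $P$ in $n$ noncommuting indeterminates, $\rho\left(P\left(t_{1},\ldots,t_{n}\right)\right)=P\left(\rho\left(t_{1}\right),\ldots,\rho\left(t_{n}\right)\right)=P\left(m_{Q_{i},1},\ldots,m_{Q_{i},n}\right)\operatorname{id}$, the scalar operators being central. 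The representation-theoretic input (semisimplicity, Corollary \ref{cor.Fi/Fi-1.lr-rewr}, Theorem \ref{thm.t-simultri}\textbf{(c)}) does all the real work; the only point requiring care is the bookkeeping of left versus right $\mathcal{T}$-actions as one passes through $\operatorname{Hom}_{\mathcal{A}}\left(\mathcal{S}^{\lambda},-\right)$ and $\mathbf{k}$-linear duality, and the verification that duality reverses the indexing so that the chain starts at $\mathcal{S}^{\lambda}$ and ends at $0$.
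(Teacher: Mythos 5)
Your proof is correct. It reaches the same destination as the paper's but by a mirrored route: where the paper applies the \emph{contravariant} exact functor $\operatorname*{Hom}\nolimits_{\mathcal{A}}\left(  -,\mathcal{S}^{\lambda}\right)  $ to the quotients $\mathcal{A}/F_{i}$ and uses the elementary identification $\operatorname*{Hom}\nolimits_{\mathcal{A}}\left(  \mathcal{A}/J,\ \mathcal{S}^{\lambda}\right)  \cong\left\{  v\in\mathcal{S}^{\lambda}\ \mid\ Jv=0\right\}  $ (Lemma \ref{lem.specht.f1A}) to land directly in the annihilator submodules $F_{i}^{\lambda}=\left\{  v\ \mid\ F_{i}v=0\right\}  $, you apply the \emph{covariant} exact functor $\operatorname*{Hom}\nolimits_{\mathcal{A}}\left(  \mathcal{S}^{\lambda},-\right)  $ to the $F_{i}$ themselves, obtaining an increasing chain of right $\mathcal{T}$-modules $G_{i}$, and only then pass to left $\mathcal{T}$-modules by $\mathbf{k}$-linear duality. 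The shared core is identical: semisimplicity supplies exactness, Corollary \ref{cor.Fi/Fi-1.lr-rewr} together with Proposition \ref{prop.specht.tensA} gives the dimensions $c_{Q_{i}}^{\lambda}$, and Theorem \ref{thm.t-simultri} \textbf{(c)} gives the scalar action; your handling of the left/right bookkeeping and of property 4 is sound. The price of your route is the extra identification $\operatorname*{Hom}\nolimits_{\mathcal{A}}\left(  \mathcal{S}^{\lambda},\mathcal{A}\right)  \cong\left(  \mathcal{S}^{\lambda}\right)  ^{\ast}$ via the trace form $x\mapsto\left(  \text{coefficient of }\operatorname*{id}\text{ in }x\right)  $ -- a correct but heavier piece of machinery (the symmetric Frobenius structure of $\mathbf{k}\left[  S_{n}\right]  $) that the paper avoids entirely; what it buys is that the multiplicity spaces $G_{i}/G_{i-1}\cong\operatorname*{Hom}\nolimits_{\mathcal{A}}\left(  \mathcal{S}^{\lambda},F_{i}/F_{i-1}\right)  $ appear before any dualization, which some readers may find more transparent. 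One small trade-off worth noting: the paper's filtration is the concrete annihilator filtration, which is reused verbatim in the proof of Theorem \ref{thm.eigs} and abstracted in Proposition \ref{prop.reflT-gen}, whereas yours is specified only up to the chosen identification $\left(  G_{f_{n+1}}\right)  ^{\ast}\cong\mathcal{S}^{\lambda}$; since the theorem is an existence statement, this is harmless here.
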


\subsection{Lemmas about $S_{n}$-representations}

In order to prove Theorem \ref{thm.eigs} and Theorem \ref{thm.specht.filtr},
we need a few lemmas about left $\mathcal{A}$-modules (i.e., representations
of $S_{n}$). We begin with something basic and well-known (see, e.g.,
\cite[last paragraph of \S 5.13]{Etingof-et-al}):

\begin{proposition}
\label{prop.specht.tensA}Assume that $\mathbf{k}$ is a field of characteristic
$0$. Let $\lambda$ and $\mu$ be two partitions of $n$. Then, the Specht
modules $\mathcal{S}^{\lambda}$ and $\mathcal{S}^{\mu}$ satisfy%
\[
\operatorname*{Hom}\nolimits_{\mathcal{A}}\left(  \mathcal{S}^{\lambda
},\mathcal{S}^{\mu}\right)  \cong%
\begin{cases}
\mathbf{k}, & \text{if }\lambda=\mu;\\
0, & \text{if }\lambda\neq\mu
\end{cases}
\ \ \ \ \ \ \ \ \ \ \text{as }\mathbf{k}\text{-vector spaces.}%
\]
(Here and in the following, \textquotedblleft$\operatorname*{Hom}%
\nolimits_{\mathcal{A}}$\textquotedblright\ always stands for the set of left
$\mathcal{A}$-module morphisms. This is always a $\mathbf{k}$-vector space,
but usually not an $\mathcal{A}$-module on any side.)
\end{proposition}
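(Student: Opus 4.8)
The plan is to read Proposition~\ref{prop.specht.tensA} as a combination of Schur's lemma with the classical fact that $\mathbb{Q}$ (and hence any field of characteristic $0$) is a splitting field for $S_n$. Start with the case $\lambda\neq\mu$: here $\mathcal{S}^{\lambda}$ and $\mathcal{S}^{\mu}$ are non-isomorphic irreducible $\mathcal{A}$-modules. Both are irreducible (as already recalled in the excerpt, or see \cite[\S 7.2]{Fulton97}), and they are non-isomorphic because distinct partitions of $n$ give distinct $S_n$-characters (standard; see \cite[\S 7.2]{Fulton97} or \cite{Sagan-SG}). Schur's lemma for irreducible modules over an arbitrary ring then yields $\operatorname{Hom}_{\mathcal{A}}(\mathcal{S}^{\lambda},\mathcal{S}^{\mu})=0$.

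Next, the case $\lambda=\mu$. Schur's lemma gives that $D:=\operatorname{End}_{\mathcal{A}}(\mathcal{S}^{\lambda})$ is a division $\mathbf{k}$-algebra, and the content is to see $D=\mathbf{k}$. The Young-diagram construction of $\mathcal{S}^{\lambda}$ produces an integral form $\mathcal{S}^{\lambda}_{\mathbb{Z}}$ (spanned by standard polytabloids, whose image stays a basis under any base change), so that $\mathcal{S}^{\lambda}=\mathbf{k}\otimes_{\mathbb{Q}}\mathcal{S}^{\lambda}_{\mathbb{Q}}$ with $\mathcal{S}^{\lambda}_{\mathbb{Q}}:=\mathbb{Q}\otimes_{\mathbb{Z}}\mathcal{S}^{\lambda}_{\mathbb{Z}}$. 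It is classical that $\mathbb{Q}$ is a splitting field for $S_n$ — for instance this follows from the integrality of the character table of $S_n$ together with the criterion that any field in which $\lvert G\rvert$ is invertible and which contains all character values of a finite group $G$ is a splitting field (see \cite[\S 5.13]{Etingof-et-al} and references therein) — so $\operatorname{End}_{\mathbb{Q}[S_n]}(\mathcal{S}^{\lambda}_{\mathbb{Q}})=\mathbb{Q}$. Since $S_n$ is finite and $\mathbf{k}$ has characteristic $0$, the $S_n$-invariants functor is exact, is given by the idempotent $\frac{1}{n!}\sum_{\sigma\in S_n}\sigma$, and commutes with extension of scalars; taking $S_n$-invariants (for the conjugation action) of the canonical $\mathbf{k}[S_n]$-module isomorphism $\operatorname{Hom}_{\mathbf{k}}(\mathcal{S}^{\lambda},\mathcal{S}^{\lambda})\cong\mathbf{k}\otimes_{\mathbb{Q}}\operatorname{Hom}_{\mathbb{Q}}(\mathcal{S}^{\lambda}_{\mathbb{Q}},\mathcal{S}^{\lambda}_{\mathbb{Q}})$ therefore gives $D\cong\mathbf{k}\otimes_{\mathbb{Q}}\operatorname{End}_{\mathbb{Q}[S_n]}(\mathcal{S}^{\lambda}_{\mathbb{Q}})\cong\mathbf{k}\otimes_{\mathbb{Q}}\mathbb{Q}=\mathbf{k}$, which is the claim.

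I expect the main obstacle to be bibliographic rather than mathematical: the ingredients (irreducibility and pairwise non-isomorphism of the $\mathcal{S}^{\lambda}$, $\mathbb{Q}$ being a splitting field for $S_n$, and the compatibility of $\operatorname{Hom}$ with extension of scalars) are all standard but live in different places, so the care goes into citing them and checking that the base-change identification respects the $S_n$-action. A self-contained alternative, avoiding any explicit mention of splitting fields, is a dimension count: $\dim_{\mathbf{k}}\mathcal{S}^{\lambda}$ equals the number $f^{\lambda}$ of standard Young tableaux of shape $\lambda$ independently of $\mathbf{k}$, and $\sum_{\lambda\vdash n}(f^{\lambda})^2=n!=\dim_{\mathbf{k}}\mathcal{A}$; together with semisimplicity of $\mathcal{A}$ (Maschke) and the Wedderburn decomposition, writing $f^{\lambda}=d_{\lambda}\dim_{\mathbf{k}}D_{\lambda}$ forces $\sum_{\lambda}(f^{\lambda})^2\bigl(1-1/\dim_{\mathbf{k}}D_{\lambda}\bigr)=0$, hence $\dim_{\mathbf{k}}D_{\lambda}=1$ for every $\lambda$. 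However, this route still needs to know that the $\mathcal{S}^{\lambda}$ exhaust the simple $\mathcal{A}$-modules, which is again the splitting-field fact in disguise; so I would present the first argument as the primary proof and mention the counting version only as a remark.
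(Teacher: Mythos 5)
Your route is genuinely different from the paper's. The paper argues directly with Young symmetrizers: it identifies $\mathcal{S}^{\lambda}\cong\mathcal{A}\mathbf{E}_{P}$ and $\mathcal{S}^{\mu}\cong\mathcal{A}\mathbf{E}_{Q}$, uses the idempotent $\frac{f^{\lambda}}{n!}\mathbf{E}_{P}$ and the isomorphism $\operatorname*{Hom}\nolimits_{A}\left(Ae,M\right)\cong eM$ to reduce everything to computing $\mathbf{E}_{P}\mathcal{A}\mathbf{E}_{Q}$, and then invokes the two symmetrizer identities ($\mathbf{E}_{P}\mathbf{a}\mathbf{E}_{Q}=0$ for distinct shapes, $\mathbf{E}_{P}\mathbf{a}\mathbf{E}_{P}\in\mathbf{k}\mathbf{E}_{P}$ for equal shapes). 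That argument is uniform in $\lambda,\mu$, never mentions irreducibility or Schur's lemma, and (as the paper notes) works over any commutative ring in which $n!$ is invertible. Your argument instead combines Schur's lemma with a base-change from $\mathbb{Q}$; the base-change step itself (invariants via the averaging idempotent commute with extension of scalars, and $\operatorname*{Hom}_{\mathbf{k}}$ of finite-dimensional modules commutes with it too) is correct and cleanly handles the passage from $\operatorname*{End}_{\mathbb{Q}\left[S_{n}\right]}$ to $\operatorname*{End}_{\mathbf{k}\left[S_{n}\right]}$.

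There is, however, one concrete flaw in your justification of the key input. The ``criterion'' you cite --- that any field in which $\left\vert G\right\vert$ is invertible and which contains all character values of $G$ is a splitting field --- is false in general: the quaternion group $Q_{8}$ has all character values in $\mathbb{Q}$, yet $\mathbb{Q}$ is not a splitting field for it (the $2$-dimensional irreducible has Schur index $2$ and is realized over a quaternion division algebra). Rationality of character values only bounds the center of the endomorphism algebra; it says nothing about Schur indices. The fact that $\mathbb{Q}$ splits $S_{n}$ is true but requires a genuine argument, e.g.\ the explicit rational construction of the Specht modules together with absolute irreducibility (which is exactly what the Young symmetrizer computation $\mathbf{E}_{P}\mathcal{A}\mathbf{E}_{P}=\mathbf{k}\mathbf{E}_{P}$ delivers), or the counting identity $\sum_{\lambda\vdash n}\left(f^{\lambda}\right)^{2}=n!$ that you sketch in your closing remark. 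So replace the false criterion by a citation of absolute irreducibility of the $\mathcal{S}^{\lambda}_{\mathbb{Q}}$ (or fold in your dimension-count argument as the actual proof rather than an aside), and the proof is complete.
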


For the sake of completeness, we give a proof of this proposition in the
Appendix (Subsection \ref{subsec.apx.proofs.specht.tensA}).

\begin{lemma}
\label{lem.specht.exact}Assume that $\mathbf{k}$ is a field of characteristic
$0$. Let $\lambda$ be a partition of $n$. Define the contravariant functor
$\operatorname*{Hom}\nolimits_{\mathcal{A}}\left(  -,\mathcal{S}^{\lambda
}\right)  $ from the category of left $\mathcal{A}$-modules to the category of
$\mathbf{k}$-vector spaces that is given by%
\[
X\mapsto\operatorname*{Hom}\nolimits_{\mathcal{A}}\left(  X,\mathcal{S}%
^{\lambda}\right)  \ \ \ \ \ \ \ \ \ \ \text{on objects}%
\]
and likewise on morphisms. (This is a contravariant Hom functor.)

This contravariant functor $\operatorname*{Hom}\nolimits_{\mathcal{A}}\left(
-,\mathcal{S}^{\lambda}\right)  $ is exact (i.e., respects exact sequences).
\end{lemma}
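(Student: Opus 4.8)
The plan is to deduce this from the semisimplicity of $\mathcal{A}$ in characteristic $0$. First I would recall Maschke's theorem: since $\mathbf{k}$ is a field of characteristic $0$, the order $n!$ of $S_n$ is invertible in $\mathbf{k}$, so the group algebra $\mathcal{A}=\mathbf{k}\left[S_n\right]$ is a semisimple ring. Consequently, every left $\mathcal{A}$-module is injective (as well as projective); in particular, the Specht module $\mathcal{S}^{\lambda}$ is an injective left $\mathcal{A}$-module.

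Next I would invoke the standard homological fact that a contravariant Hom functor $\operatorname{Hom}_{\mathcal{A}}\left(-,M\right)$ is exact if and only if $M$ is injective. Concretely: for any short exact sequence $0\to X\to Y\to Z\to 0$ of left $\mathcal{A}$-modules, applying $\operatorname{Hom}_{\mathcal{A}}\left(-,M\right)$ always yields the exact sequence $0\to\operatorname{Hom}_{\mathcal{A}}\left(Z,M\right)\to\operatorname{Hom}_{\mathcal{A}}\left(Y,M\right)\to\operatorname{Hom}_{\mathcal{A}}\left(X,M\right)$ (left exactness of contravariant Hom, which holds for any $M$), and surjectivity of the last map onto $\operatorname{Hom}_{\mathcal{A}}\left(X,M\right)$ is precisely the extension property defining injectivity of $M$. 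Since exactness of a functor may be checked on short exact sequences, this shows that $\operatorname{Hom}_{\mathcal{A}}\left(-,\mathcal{S}^{\lambda}\right)$ is exact. Alternatively, and avoiding even the Hom/injective equivalence, one can simply observe that in the semisimple ring $\mathcal{A}$ every short exact sequence splits, and that any additive contravariant functor carries a split short exact sequence to a split short exact sequence, which is in particular exact.

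There is essentially no obstacle here; the only points requiring care are (i) spelling out what ``exact'' means for a \emph{contravariant} functor (it reverses the direction of the maps, so a short exact sequence $0\to X\to Y\to Z\to 0$ is sent to $0\to\operatorname{Hom}_{\mathcal{A}}\left(Z,\mathcal{S}^{\lambda}\right)\to\operatorname{Hom}_{\mathcal{A}}\left(Y,\mathcal{S}^{\lambda}\right)\to\operatorname{Hom}_{\mathcal{A}}\left(X,\mathcal{S}^{\lambda}\right)\to 0$), and (ii) citing Maschke's theorem with the correct hypothesis ($\operatorname{char}\mathbf{k}=0$, or more generally $n!$ invertible in $\mathbf{k}$). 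Note that Proposition \ref{prop.specht.tensA} is not needed for this argument, although it provides an alternative route by decomposing the modules involved into simple summands.
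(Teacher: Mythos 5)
Your proof is correct and takes essentially the same approach as the paper: both rest on Maschke's theorem giving semisimplicity of $\mathcal{A}$, and your ``alternative'' argument (every short exact sequence splits, and an additive Hom functor sends split sequences to split sequences) is precisely the argument the paper gives. Your primary phrasing via injectivity of $\mathcal{S}^{\lambda}$ is just a standard repackaging of the same fact.
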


\begin{proof}
The $\mathbf{k}$-algebra $\mathcal{A}=\mathbf{k}\left[  S_{n}\right]  $ is
semisimple (by Maschke's theorem, since $\mathbf{k}$ is a field of
characteristic $0$). Hence, every short exact sequence of left $\mathcal{A}%
$-modules is split. Consequently, any Hom functor from the category of left
$\mathcal{A}$-modules is exact\footnote{We remind the reader of the proof of
this fact: Let $\mathcal{F}$ be a contravariant Hom functor from the category
of left $\mathcal{A}$-modules (the case of a covariant Hom functor is
analogous). We must show that $\mathcal{F}$ is exact.
\par
Let $0\rightarrow X\rightarrow Y\rightarrow Z\rightarrow0$ be a short exact
sequence of left $\mathcal{A}$-modules. Then, this sequence is split (since
every short exact sequence of left $\mathcal{A}$-modules is split), and thus
is isomorphic to the obvious exact sequence $0\rightarrow X\rightarrow X\oplus
Z\rightarrow Z\rightarrow0$. The Hom functor $\mathcal{F}$ sends the latter
sequence to another short exact sequence $0\rightarrow\mathcal{F}\left(
X\right)  \rightarrow\mathcal{F}\left(  X\right)  \oplus\mathcal{F}\left(
Z\right)  \rightarrow\mathcal{F}\left(  Z\right)  \rightarrow0$ (since Hom
functors respect finite direct sums). But $\mathcal{F}$ is a functor; thus,
$\mathcal{F}$ sends any two isomorphic complexes to two isomorphic complexes.
Hence, because the two short exact sequences $0\rightarrow X\rightarrow
Y\rightarrow Z\rightarrow0$ and $0\rightarrow X\rightarrow X\oplus
Z\rightarrow Z\rightarrow0$ are isomorphic, their images under $\mathcal{F}$
must also be isomorphic. In other words, the image of the short exact sequence
$0\rightarrow X\rightarrow Y\rightarrow Z\rightarrow0$ under $\mathcal{F}$ is
isomorphic to the image of $0\rightarrow X\rightarrow X\oplus Z\rightarrow
Z\rightarrow0$ under $\mathcal{F}$. But the latter image is exact, as we have
shown. Hence, the former image is also exact.
\par
Thus, $\mathcal{F}$ sends the original short exact sequence $0\rightarrow
X\rightarrow Y\rightarrow Z\rightarrow0$ to a short exact sequence. Since this
is true for any short exact sequence $0\rightarrow X\rightarrow Y\rightarrow
Z\rightarrow0$, we thus have shown that the functor $\mathcal{F}$ respects
short exact sequences. In other words, $\mathcal{F}$ is exact, qed.}. Thus,
the contravariant Hom functor $\operatorname*{Hom}\nolimits_{\mathcal{A}%
}\left(  -,\mathcal{S}^{\lambda}\right)  $ is exact. This proves Lemma
\ref{lem.specht.exact}.
\end{proof}

Note that the Specht module $\mathcal{S}^{\lambda}$ in Lemma
\ref{lem.specht.exact} could be replaced by any left $\mathcal{A}$-module, but
we will use $\mathcal{S}^{\lambda}$ only. The same applies to the following lemma:

\begin{lemma}
\label{lem.specht.f1A}Let $\lambda$ be a partition of $n$. Let $J$ be a left
$\mathcal{A}$-submodule of $\mathcal{A}$ (that is, a left ideal of
$\mathcal{A}$). Then, there is a canonical $\mathbf{k}$-vector space
isomorphism%
\begin{align*}
\operatorname*{Hom}\nolimits_{\mathcal{A}}\left(  \mathcal{A}/J,\ \mathcal{S}%
^{\lambda}\right)   &  \rightarrow\left\{  v\in\mathcal{S}^{\lambda}%
\ \mid\ Jv=0\right\}  ,\\
f  &  \mapsto f\left(  \overline{1_{\mathcal{A}}}\right)  .
\end{align*}

\end{lemma}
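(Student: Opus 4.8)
The plan is to reduce everything to the well-known identification of $\operatorname*{Hom}\nolimits_{\mathcal{A}}\left(  \mathcal{A},M\right)  $ with $M$ for a left $\mathcal{A}$-module $M$. Concretely, I would first recall that for any left $\mathcal{A}$-module $M$, the map
\[
\operatorname*{Hom}\nolimits_{\mathcal{A}}\left(  \mathcal{A},M\right)  \rightarrow M,\qquad g\mapsto g\left(  1_{\mathcal{A}}\right)
\]
is a $\mathbf{k}$-vector space isomorphism, with inverse sending $v\in M$ to the map $\mathcal{A}\rightarrow M,\ a\mapsto av$. (This is the statement that $\mathcal{A}$ is free of rank $1$ as a left module over itself; the only things to check are $\mathcal{A}$-linearity and that the two maps are mutually inverse, both of which are immediate from $g\left(  a\right)  =g\left(  a\cdot1_{\mathcal{A}}\right)  =a\cdot g\left(  1_{\mathcal{A}}\right)  $.) Under this isomorphism, a morphism $g\in\operatorname*{Hom}\nolimits_{\mathcal{A}}\left(  \mathcal{A},M\right)  $ corresponding to $v=g\left(  1_{\mathcal{A}}\right)  $ satisfies $g\left(  a\right)  =av$ for all $a\in\mathcal{A}$, and hence $g\left(  J\right)  =Jv$.

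Next I would invoke the universal property of the quotient module $\mathcal{A}/J$. The canonical projection $\pi:\mathcal{A}\rightarrow\mathcal{A}/J$ induces, by precomposition, a $\mathbf{k}$-linear map
\[
\operatorname*{Hom}\nolimits_{\mathcal{A}}\left(  \mathcal{A}/J,M\right)  \rightarrow\operatorname*{Hom}\nolimits_{\mathcal{A}}\left(  \mathcal{A},M\right)  ,\qquad f\mapsto f\circ\pi,
\]
which is injective (because $\pi$ is surjective) and whose image is precisely the set of those $g\in\operatorname*{Hom}\nolimits_{\mathcal{A}}\left(  \mathcal{A},M\right)  $ that vanish on $J=\ker\pi$. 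Combining this with the identification from the previous paragraph, the composite
\[
\operatorname*{Hom}\nolimits_{\mathcal{A}}\left(  \mathcal{A}/J,M\right)  \rightarrow M,\qquad f\mapsto\left(  f\circ\pi\right)  \left(  1_{\mathcal{A}}\right)  =f\left(  \overline{1_{\mathcal{A}}}\right)
\]
is an injective $\mathbf{k}$-linear map whose image is $\left\{  v\in M\ \mid\ Jv=0\right\}  $ (since $g=f\circ\pi$ corresponds to $v=f\left(  \overline{1_{\mathcal{A}}}\right)  $, and the condition ``$g$ vanishes on $J$'' reads $Jv=g\left(  J\right)  =0$). In other words, $f\mapsto f\left(  \overline{1_{\mathcal{A}}}\right)  $ is a $\mathbf{k}$-vector space isomorphism from $\operatorname*{Hom}\nolimits_{\mathcal{A}}\left(  \mathcal{A}/J,M\right)  $ onto $\left\{  v\in M\ \mid\ Jv=0\right\}  $. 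Specializing $M=\mathcal{S}^{\lambda}$ then yields the lemma.

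There is no real obstacle here: this is a routine exercise in the universal properties of free modules and quotient modules. If anything needs care, it is the well-definedness of the inverse isomorphism, which sends $v\in\mathcal{S}^{\lambda}$ with $Jv=0$ to the map $\mathcal{A}/J\rightarrow\mathcal{S}^{\lambda},\ \overline{a}\mapsto av$; one checks that this is independent of the chosen representative $a$ of $\overline{a}$ precisely because $Jv=0$, and that it is $\mathcal{A}$-linear and inverse to $f\mapsto f\left(  \overline{1_{\mathcal{A}}}\right)  $.
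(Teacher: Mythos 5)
Your proof is correct and follows essentially the same route as the paper's: both identify $\operatorname*{Hom}\nolimits_{\mathcal{A}}\left(\mathcal{A}/J,\ \mathcal{S}^{\lambda}\right)$ with the morphisms $\mathcal{A}\rightarrow\mathcal{S}^{\lambda}$ vanishing on $J$, and then use the standard isomorphism $\operatorname*{Hom}\nolimits_{\mathcal{A}}\left(\mathcal{A},M\right)\cong M$, $g\mapsto g\left(1_{\mathcal{A}}\right)$, noting that $g\left(J\right)=J\cdot g\left(1_{\mathcal{A}}\right)$. No issues.
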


\begin{proof}
The left $\mathcal{A}$-module morphisms from $\mathcal{A}/J$ to $\mathcal{S}%
^{\lambda}$ can be identified with the left $\mathcal{A}$-module morphisms
from $\mathcal{A}$ to $\mathcal{S}^{\lambda}$ that vanish on $J$. Thus, we
obtain a $\mathbf{k}$-vector space isomorphism%
\begin{align*}
\Phi:\operatorname*{Hom}\nolimits_{\mathcal{A}}\left(  \mathcal{A}%
/J,\ \mathcal{S}^{\lambda}\right)   &  \rightarrow\left\{  g\in
\operatorname*{Hom}\nolimits_{\mathcal{A}}\left(  \mathcal{A},\ \mathcal{S}%
^{\lambda}\right)  \ \mid\ g\left(  J\right)  =0\right\}  ,\\
f  &  \mapsto\left(  \mathcal{A}\rightarrow\mathcal{S}^{\lambda},\ a\mapsto
f\left(  \overline{a}\right)  \right)  .
\end{align*}

However, recall the well-known $\mathbf{k}$-vector space isomorphism
$\operatorname*{Hom}\nolimits_{A}\left(  A,M\right)  \cong M$ that holds for
any $\mathbf{k}$-algebra $A$ and any left $A$-module $M$. Thus, in particular,
the left $\mathcal{A}$-module morphisms from $\mathcal{A}$ to $\mathcal{S}%
^{\lambda}$ can be identified with the elements of $\mathcal{S}^{\lambda}$ via
the $\mathbf{k}$-vector space isomorphism
\begin{align*}
\Psi:\operatorname*{Hom}\nolimits_{\mathcal{A}}\left(  \mathcal{A}%
,\ \mathcal{S}^{\lambda}\right)   &  \rightarrow\mathcal{S}^{\lambda},\\
g  &  \mapsto g\left(  1_{\mathcal{A}}\right)  .
\end{align*}
This latter isomorphism $\Psi$ has the property that an arbitrary
left $\mathcal{A}$-module
morphism $g\in\operatorname*{Hom}\nolimits_{\mathcal{A}}\left(  \mathcal{A}%
,\ \mathcal{S}^{\lambda}\right)  $ satisfies $g\left(  J\right)  =0$ if and
only if its image $\Psi\left(  g\right)  $ satisfies $J\cdot\Psi\left(
g\right)  =0$ (since $g\left(  J\right)  =g\left(  J\cdot1_{\mathcal{A}%
}\right)  =J\cdot\underbrace{g\left(  1_{\mathcal{A}}\right)  }_{=\Psi\left(
g\right)  }=J\cdot\Psi\left(  g\right)  $). Thus, $\Psi$ can be restricted to
a $\mathbf{k}$-vector space isomorphism
\begin{align*}
\Psi^{\prime}:\left\{  g\in\operatorname*{Hom}\nolimits_{\mathcal{A}}\left(
\mathcal{A},\ \mathcal{S}^{\lambda}\right)  \ \mid\ g\left(  J\right)
=0\right\}   &  \rightarrow\left\{  v\in\mathcal{S}^{\lambda}\ \mid
\ Jv=0\right\}  ,\\
g  &  \mapsto g\left(  1_{\mathcal{A}}\right)  .
\end{align*}
The composition $\Psi^{\prime}\circ\Phi$ is thus a $\mathbf{k}$-vector space
isomorphism%
\begin{align*}
\operatorname*{Hom}\nolimits_{\mathcal{A}}\left(  \mathcal{A}/J,\ \mathcal{S}%
^{\lambda}\right)   &  \rightarrow\left\{  v\in\mathcal{S}^{\lambda}%
\ \mid\ Jv=0\right\}  ,\\
f  &  \mapsto f\left(  \overline{1_{\mathcal{A}}}\right)  .
\end{align*}
This is clearly canonical in $J$, so that Lemma \ref{lem.specht.f1A} is proved.
\end{proof}

\subsection{The proofs}

We are now ready to prove Theorem \ref{thm.specht.filtr} and Theorem
\ref{thm.eigs}, in this order.

\begin{proof}
[Proof of Theorem \ref{thm.specht.filtr}.]For each $i\in\left[  0,f_{n+1}%
\right]  $, we define a subset $F_{i}^{\lambda}$ of $\mathcal{S}^{\lambda}$ by%
\[
F_{i}^{\lambda}:=\left\{  v\in\mathcal{S}^{\lambda}\ \mid\ F_{i}v=0\right\}
.
\]
This subset $F_{i}^{\lambda}$ is actually a left $\mathcal{T}$-submodule of
$\mathcal{S}^{\lambda}$ (since Proposition \ref{prop.fibfilt.A} shows that
$F_{i}$ is a right $\mathcal{T}$-submodule of $\mathcal{A}$%
)\ \ \ \ \footnote{\textit{Proof.} To show that $F_{i}^{\lambda}$ is closed
under addition, we observe that any $v,w\in F_{i}^{\lambda}$ satisfy $v+w\in
F_{i}^{\lambda}$ (because $v,w\in F_{i}^{\lambda}$ entails $F_{i}v=0$ and
$F_{i}w=0$ and thus $F_{i}\left(  v+w\right)  \subseteq\underbrace{F_{i}%
v}_{=0}+\underbrace{F_{i}w}_{=0}=0$, so that $F_{i}\left(  v+w\right)  =0$ and
therefore $v+w\in F_{i}^{\lambda}$).
\par
To show that $F_{i}^{\lambda}$ is closed under the left $\mathcal{T}$-action,
we observe that any $t\in\mathcal{T}$ and $v\in F_{i}^{\lambda}$ satisfy
$tv\in F_{i}^{\lambda}$ (because $v\in F_{i}^{\lambda}$ entails $F_{i}v=0$;
but we have $F_{i}t\subseteq F_{i}$ since $F_{i}$ is a right $\mathcal{T}%
$-submodule of $\mathcal{A}$; thus $F_{i}\left(  tv\right)  =\underbrace{F_{i}%
t}_{\subseteq F_{i}}v\subseteq F_{i}v=0$ and therefore $F_{i}\left(
tv\right)  =0$, so that $tv\in F_{i}^{\lambda}$).
\par
An even simpler argument shows that $0\in F_{i}^{\lambda}$.}. Moreover,
because of%
\[
0=F_{0}\subseteq F_{1}\subseteq F_{2}\subseteq\cdots\subseteq F_{f_{n+1}%
}=\mathbf{k}\left[  S_{n}\right]  =\mathcal{A},
\]
we have%
\[
\mathcal{S}^{\lambda}=F_{0}^{\lambda}\supseteq F_{1}^{\lambda}\supseteq
F_{2}^{\lambda}\supseteq\cdots\supseteq F_{f_{n+1}}^{\lambda}=0.
\]
This is a left $\mathcal{T}$-module filtration of $\mathcal{S}^{\lambda}$,
albeit written backwards. We can rewrite it as%
\[
0=F_{f_{n+1}}^{\lambda}\subseteq F_{f_{n+1}-1}^{\lambda}\subseteq
F_{f_{n+1}-2}^{\lambda}\subseteq\cdots\subseteq F_{2}^{\lambda}\subseteq
F_{1}^{\lambda}\subseteq F_{0}^{\lambda}=\mathcal{S}^{\lambda}.
\]
Our goal is to show that this filtration satisfies the four properties 1, 2, 3
and 4 claimed in Theorem \ref{thm.specht.filtr}.

For this purpose, we fix $i\in\left[  f_{n+1}\right]  $. First, we shall show
property 3. We must show that each element $t_{\ell}\in\mathcal{T}$ acts on
$F_{i-1}^{\lambda}/F_{i}^{\lambda}$ as multiplication by the scalar
$m_{Q_{i},\ell}$. So we let $\ell\in\left[  n\right]  $ and $\overline{v}\in
F_{i-1}^{\lambda}/F_{i}^{\lambda}$ (with $v\in F_{i-1}^{\lambda}$) be
arbitrary. We must show that $t_{\ell}\overline{v}=m_{Q_{i},\ell}\overline{v}$
in $F_{i-1}^{\lambda}/F_{i}^{\lambda}$.

We have $v\in F_{i-1}^{\lambda}$. In other words, $v\in\mathcal{S}^{\lambda}$
and $F_{i-1}v=0$ (by the definition of $F_{i-1}^{\lambda}$).

Theorem \ref{thm.t-simultri} \textbf{(c)} yields $F_{i}\cdot\left(  t_{\ell
}-m_{Q_{i},\ell}\right)  \subseteq F_{i-1}$. Hence,%
\[
\underbrace{F_{i}\cdot\left(  t_{\ell}-m_{Q_{i},\ell}\right)  }_{\subseteq
F_{i-1}}v\subseteq F_{i-1}v=0,
\]
so that $F_{i}\cdot\left(  t_{\ell}-m_{Q_{i},\ell}\right)  v=0$. In other
words, $\left(  t_{\ell}-m_{Q_{i},\ell}\right)  v\in F_{i}^{\lambda}$ (by the
definition of $F_{i}^{\lambda}$). In other words, $t_{\ell}v-m_{Q_{i},\ell
}v\in F_{i}^{\lambda}$. In other words, $\overline{t_{\ell}v}=\overline
{m_{Q_{i},\ell}v}$ in $F_{i-1}^{\lambda}/F_{i}^{\lambda}$. In other words,
$t_{\ell}\overline{v}=m_{Q_{i},\ell}\overline{v}$ in $F_{i-1}^{\lambda}%
/F_{i}^{\lambda}$. Thus, the proof of property 3 is complete.

Property 4 follows immediately from property 3.

Let us now prove property 1.

Note that $F_{i}$ is a left $\mathcal{A}$-submodule of $\mathcal{A}$ (by
Proposition \ref{prop.fibfilt.A}). Lemma \ref{lem.specht.f1A} shows that
whenever $J$ is a left $\mathcal{A}$-submodule of $\mathcal{A}$ (that is, a
left ideal of $\mathcal{A}$), there is a canonical $\mathbf{k}$-vector space
isomorphism%
\begin{align*}
\operatorname*{Hom}\nolimits_{\mathcal{A}}\left(  \mathcal{A}/J,\ \mathcal{S}%
^{\lambda}\right)   &  \rightarrow\left\{  v\in\mathcal{S}^{\lambda}%
\ \mid\ Jv=0\right\}  ,\\
f  &  \mapsto f\left(  \overline{1_{\mathcal{A}}}\right)  .
\end{align*}
Thus, we have%
\begin{equation}
\left\{  v\in\mathcal{S}^{\lambda}\ \mid\ Jv=0\right\}  \cong%
\operatorname*{Hom}\nolimits_{\mathcal{A}}\left(  \mathcal{A}/J,\ \mathcal{S}%
^{\lambda}\right)  \label{pf.thm.specht.filtr.caniso}%
\end{equation}
canonically for each left $\mathcal{A}$-submodule $J$ of $\mathcal{A}$. Now,
the definition of $F_{i}^{\lambda}$ yields%
\begin{equation}
F_{i}^{\lambda}=\left\{  v\in\mathcal{S}^{\lambda}\ \mid\ F_{i}v=0\right\}
\cong\operatorname*{Hom}\nolimits_{\mathcal{A}}\left(  \mathcal{A}%
/F_{i},\ \mathcal{S}^{\lambda}\right)  \label{pf.thm.specht.filtr.canisoi}%
\end{equation}
canonically (by (\ref{pf.thm.specht.filtr.caniso})) and similarly%
\begin{equation}
F_{i-1}^{\lambda}\cong\operatorname*{Hom}\nolimits_{\mathcal{A}}\left(
\mathcal{A}/F_{i-1},\ \mathcal{S}^{\lambda}\right)  .
\label{pf.thm.specht.filtr.canisoi-1}%
\end{equation}

However, Lemma \ref{lem.specht.exact} shows that the contravariant functor
$\operatorname*{Hom}\nolimits_{\mathcal{A}}\left(  -,\mathcal{S}^{\lambda
}\right)  $ from the category of left $\mathcal{A}$-modules to the category of
$\mathbf{k}$-vector spaces is exact. Hence, applying this contravariant
functor to the exact sequence%
\[
0\rightarrow F_{i}/F_{i-1}\rightarrow\mathcal{A}/F_{i-1}\rightarrow
\mathcal{A}/F_{i}\rightarrow0
\]
of left $\mathcal{A}$-modules, we obtain an exact sequence%
\[
0\rightarrow\operatorname*{Hom}\nolimits_{\mathcal{A}}\left(  \mathcal{A}%
/F_{i},\ \mathcal{S}^{\lambda}\right)  \rightarrow\operatorname*{Hom}%
\nolimits_{\mathcal{A}}\left(  \mathcal{A}/F_{i-1},\ \mathcal{S}^{\lambda
}\right)  \rightarrow\operatorname*{Hom}\nolimits_{\mathcal{A}}\left(
F_{i}/F_{i-1},\ \mathcal{S}^{\lambda}\right)  \rightarrow0
\]
of $\mathbf{k}$-vector spaces. In view of (\ref{pf.thm.specht.filtr.canisoi-1}%
) and (\ref{pf.thm.specht.filtr.canisoi}), we can rewrite this latter exact
sequence as%
\[
0\rightarrow F_{i}^{\lambda}\rightarrow F_{i-1}^{\lambda}\rightarrow
\operatorname*{Hom}\nolimits_{\mathcal{A}}\left(  F_{i}/F_{i-1},\ \mathcal{S}%
^{\lambda}\right)  \rightarrow0.
\]
The arrow $F_{i}^{\lambda}\rightarrow F_{i-1}^{\lambda}$ here is the canonical
inclusion (since the isomorphisms in (\ref{pf.thm.specht.filtr.canisoi-1}) and
(\ref{pf.thm.specht.filtr.canisoi}) are the canonical ones), and thus we
obtain%
\begin{equation}
\operatorname*{Hom}\nolimits_{\mathcal{A}}\left(  F_{i}/F_{i-1},\ \mathcal{S}%
^{\lambda}\right)  \cong F_{i-1}^{\lambda}/F_{i}^{\lambda}
\label{pf.thm.specht.filtr.iso5}%
\end{equation}
from the exactness of our sequence.

However, Corollary \ref{cor.Fi/Fi-1.lr-rewr} says that%
\begin{equation}
F_{i}/F_{i-1}\cong\bigoplus_{\nu\in\operatorname*{Par}\nolimits_{n}}\left(
\mathcal{S}^{\nu}\right)  ^{\oplus c_{Q_{i}}^{\nu}}.
\label{pf.thm.specht.filtr.iso6}%
\end{equation}
Hence,%
\begin{align}
\operatorname*{Hom}\nolimits_{\mathcal{A}}\left(  F_{i}/F_{i-1},\ \mathcal{S}%
^{\lambda}\right)   &  \cong\operatorname*{Hom}\nolimits_{\mathcal{A}}\left(
\bigoplus_{\nu\in\operatorname*{Par}\nolimits_{n}}\left(  \mathcal{S}^{\nu
}\right)  ^{\oplus c_{Q_{i}}^{\nu}},\ \mathcal{S}^{\lambda}\right) \nonumber\\
&  \cong\bigoplus_{\nu\in\operatorname*{Par}\nolimits_{n}}\left(
\operatorname*{Hom}\nolimits_{\mathcal{A}}\left(  \mathcal{S}^{\nu
},\mathcal{S}^{\lambda}\right)  \right)  ^{\oplus c_{Q_{i}}^{\nu}}
\label{pf.thm.specht.filtr.iso6Hom}%
\end{align}
(since Hom functors respect finite direct sums).

But each $\nu\in\operatorname*{Par}\nolimits_{n}$ satisfies%
\[
\operatorname*{Hom}\nolimits_{\mathcal{A}}\left(  \mathcal{S}^{\nu
},\mathcal{S}^{\lambda}\right)  \cong%
\begin{cases}
\mathbf{k}, & \text{if }\nu=\lambda;\\
0, & \text{if }\nu\neq\lambda
\end{cases}
\ \ \ \ \ \ \ \ \ \ \left(  \text{by Proposition \ref{prop.specht.tensA}%
}\right)
\]
and thus
\begin{equation}
\left(  \operatorname*{Hom}\nolimits_{\mathcal{A}}\left(  \mathcal{S}^{\nu
},\mathcal{S}^{\lambda}\right)  \right)  ^{\oplus c_{Q_{i}}^{\nu}}\cong\left(
\begin{cases}
\mathbf{k}, & \text{if }\nu=\lambda;\\
0, & \text{if }\nu\neq\lambda
\end{cases}
\right)  ^{\oplus c_{Q_{i}}^{\nu}}\cong%
\begin{cases}
\mathbf{k}^{\oplus c_{Q_{i}}^{\nu}}, & \text{if }\nu=\lambda;\\
0, & \text{if }\nu\neq\lambda.
\end{cases}
\nonumber
\end{equation}
Thus, we can rewrite (\ref{pf.thm.specht.filtr.iso6Hom}) as%
\[
\operatorname*{Hom}\nolimits_{\mathcal{A}}\left(  F_{i}/F_{i-1},\ \mathcal{S}%
^{\lambda}\right)  \cong\bigoplus_{\nu\in\operatorname*{Par}\nolimits_{n}}%
\begin{cases}
\mathbf{k}^{\oplus c_{Q_{i}}^{\nu}}, & \text{if }\nu=\lambda;\\
0, & \text{if }\nu\neq\lambda
\end{cases}
\ \ \ =\mathbf{k}^{\oplus c_{Q_{i}}^{\lambda}}.
\]
Comparing this with (\ref{pf.thm.specht.filtr.iso5}), we see that%
\[
F_{i-1}^{\lambda}/F_{i}^{\lambda}\cong\mathbf{k}^{\oplus c_{Q_{i}}^{\lambda}%
}.
\]

Thus, the $\mathbf{k}$-vector space $F_{i-1}^{\lambda}/F_{i}^{\lambda}$ has
dimension $c_{Q_{i}}^{\lambda}$. This proves property 1.

Property 2 follows immediately from property 1 (since $F_{i-1}^{\lambda}%
=F_{i}^{\lambda}$ is equivalent to $\dim\left(  F_{i-1}^{\lambda}%
/F_{i}^{\lambda}\right)  =0$). Hence, our proof of Theorem
\ref{thm.specht.filtr} is complete.
\end{proof}

\begin{proof}
[Proof of Theorem \ref{thm.eigs}.]In the following, the symbol $\dim$ will
always refer to the dimension of a $\mathbf{k}$-vector space, even if some
other module structures are present. Thus, in particular, if $X$ is a
$\mathcal{T}$-module, then $\dim X$ will mean the dimension of $X$ as a
$\mathbf{k}$-vector space. \medskip

\textbf{(a)} Let us first assume that $\mathbf{k}$ is a field of
characteristic $0$. We shall later extend this to the general case.

Theorem \ref{thm.specht.filtr} shows that there exists a filtration
\begin{equation}
0=F_{f_{n+1}}^{\lambda}\subseteq F_{f_{n+1}-1}^{\lambda}\subseteq
F_{f_{n+1}-2}^{\lambda}\subseteq\cdots\subseteq F_{2}^{\lambda}\subseteq
F_{1}^{\lambda}\subseteq F_{0}^{\lambda}=\mathcal{S}^{\lambda}
\label{pf.thm.eigs.filt}%
\end{equation}
of the Specht module $\mathcal{S}^{\lambda}$ by left $\mathcal{T}$-submodules
with the four properties 1, 2, 3 and 4 stated in Theorem
\ref{thm.specht.filtr}. Consider this filtration. Fix any basis $\left(
v_{1},v_{2},\ldots,v_{s}\right)  $ of $\mathcal{S}^{\lambda}$ that conforms
with this filtration (i.e., a basis that begins with a basis of $F_{f_{n+1}%
-1}^{\lambda}$, then extends it to a basis of $F_{f_{n+1}-2}^{\lambda}$, then
extends it to a basis of $F_{f_{n+1}-3}^{\lambda}$, and so on), so that each
$F_{i}^{\lambda}$ is spanned by $v_{1},v_{2},\ldots,v_{j\left(  i\right)  }$
for some $j\left(  i\right)  \in\left[  0,s\right]  $. Note that the
inclusions in (\ref{pf.thm.eigs.filt}) yield%
\[
0=j\left(  f_{n+1}\right)  \leq j\left(  f_{n+1}-1\right)  \leq j\left(
f_{n+1}-2\right)  \leq\cdots\leq j\left(  2\right)  \leq j\left(  1\right)
\leq j\left(  0\right)  =s.
\]
Note that each $i\in\left[  f_{n+1}\right]  $ satisfies $j\left(  i\right)
=\dim\left(  F_{i}^{\lambda}\right)  $ and $j\left(  i-1\right)  =\dim\left(
F_{i-1}^{\lambda}\right)  $ and thus
\begin{align}
j\left(  i-1\right)  -j\left(  i\right)   &  =\dim\left(  F_{i-1}^{\lambda
}\right)  -\dim\left(  F_{i}^{\lambda}\right)  =\dim\left(  F_{i-1}^{\lambda
}/F_{i}^{\lambda}\right) \nonumber\\
&  =c_{Q_{i}}^{\lambda} \label{pf.thm.eigs.j-j}%
\end{align}
(by property 1 of our filtration).

The operator $L_{\lambda}\left(  \omega_{1}t_{1}+\omega_{2}t_{2}+\cdots
+\omega_{n}t_{n}\right)  $ preserves the filtration (\ref{pf.thm.eigs.filt})
(since this filtration is a filtration by left $\mathcal{T}$-submodules, but
$\omega_{1}t_{1}+\omega_{2}t_{2}+\cdots+\omega_{n}t_{n}\in\mathcal{T}$).
Hence, the matrix $M$ that represents this operator with respect to the basis
$\left(  v_{1},v_{2},\ldots,v_{s}\right)  $ is block-upper-triangular with
blocks of sizes
\[
j\left(  i-1\right)  -j\left(  i\right)  \ \ \ \ \ \ \ \ \ \ \text{for all
}i\in\left[  f_{n+1}\right]
\]
(because, e.g., the fact that the operator preserves $F_{i}^{\lambda}$ means
that the first $j\left(  i\right)  $ columns of the matrix $M$ have zeroes
everywhere below the $j\left(  i\right)  $-th row). Moreover, property 4 of
our filtration shows that the element $\omega_{1}t_{1}+\omega_{2}t_{2}%
+\cdots+\omega_{n}t_{n}\in\mathcal{T}$ acts as multiplication by the scalar
\[
\omega_{1}m_{Q_{i},1}+\omega_{2}m_{Q_{i},2}+\cdots+\omega_{n}m_{Q_{i}%
,n}=\omega_{Q_{i}}\ \ \ \ \ \ \ \ \ \ \left(  \text{by the definition of
}\omega_{Q_{i}}\right)
\]
on each subquotient $F_{i-1}^{\lambda}/F_{i}^{\lambda}$. In other words, for
each $v\in F_{i-1}^{\lambda}$, the vector $\overline{v}\in F_{i-1}^{\lambda
}/F_{i}^{\lambda}$ satisfies%
\begin{equation}
\left(  \omega_{1}t_{1}+\omega_{2}t_{2}+\cdots+\omega_{n}t_{n}\right)
\cdot\overline{v}=\omega_{Q_{i}}\overline{v}, \label{pf.thm.eigs.3}%
\end{equation}
and therefore%
\begin{align*}
&  \left(  L_{\lambda}\left(  \omega_{1}t_{1}+\omega_{2}t_{2}+\cdots
+\omega_{n}t_{n}\right)  \right)  \left(  v\right) \\
&  =\left(  \omega_{1}t_{1}+\omega_{2}t_{2}+\cdots+\omega_{n}t_{n}\right)
\cdot v\\
&  =\omega_{Q_{i}}v+\left(  \text{some element of }F_{i}^{\lambda}\right)
\ \ \ \ \ \ \ \ \ \ \left(  \text{by (\ref{pf.thm.eigs.3})}\right) \\
&  =\omega_{Q_{i}}v+\left(  \text{some linear combination of }v_{1}%
,v_{2},\ldots,v_{j\left(  i\right)  }\right)
\end{align*}
(since $F_{i}^{\lambda}$ is spanned by $v_{1},v_{2},\ldots,v_{j\left(
i\right)  }$). We can apply this in particular to $v=v_{k}$ for each
$k\in\left[  j\left(  i-1\right)  \right]  $ (since $F_{i-1}^{\lambda}$ is
spanned by $v_{1},v_{2},\ldots,v_{j\left(  i-1\right)  }$), and conclude that%
\begin{align*}
&  \left(  L_{\lambda}\left(  \omega_{1}t_{1}+\omega_{2}t_{2}+\cdots
+\omega_{n}t_{n}\right)  \right)  \left(  v_{k}\right) \\
&  =\omega_{Q_{i}}v_{k}+\left(  \text{some linear combination of }v_{1}%
,v_{2},\ldots,v_{j\left(  i\right)  }\right)
\end{align*}
for each $k\in\left[  j\left(  i-1\right)  \right]  $.

Thus, the matrix $M$ that represents the operator $L_{\lambda}\left(
\omega_{1}t_{1}+\omega_{2}t_{2}+\cdots+\omega_{n}t_{n}\right)  $ with respect
to the basis $\left(  v_{1},v_{2},\ldots,v_{s}\right)  $ is not only
block-upper-triangular, but also has the property that its $i$-th diagonal
block (for each $i\in\left[  f_{n+1}\right]  $, counted from the end) is the
scalar matrix $\omega_{Q_{i}}\cdot I_{j\left(  i-1\right)  -j\left(  i\right)
}=\omega_{Q_{i}}\cdot I_{c_{Q_{i}}^{\lambda}}$ (by (\ref{pf.thm.eigs.j-j})).
Consequently, the matrix $M$ is upper-triangular, and its diagonal entries are
the elements $\omega_{Q_{i}}$ for all $i\in\left[  f_{n+1}\right]  $, with
each $\omega_{Q_{i}}$ appearing $c_{Q_{i}}^{\lambda}$ times (this means that
if $c_{Q_{i}}^{\lambda}=0$, then $\omega_{Q_{i}}$ does not appear at all).

Of course, this allows us to read off the eigenvalues of this matrix $M$, and
thus of the operator $L_{\lambda}\left(  \omega_{1}t_{1}+\omega_{2}%
t_{2}+\cdots+\omega_{n}t_{n}\right)  $ (since the eigenvalues of a triangular
matrix are just its diagonal entries). We conclude that the eigenvalues of
$L_{\lambda}\left(  \omega_{1}t_{1}+\omega_{2}t_{2}+\cdots+\omega_{n}%
t_{n}\right)  $ are the elements $\omega_{Q_{i}}$ for all $i\in\left[
f_{n+1}\right]  $, with each $\omega_{Q_{i}}$ appearing with algebraic
multiplicity $c_{Q_{i}}^{\lambda}$. Since $Q_{1},Q_{2},\ldots,Q_{f_{n+1}}$ are
just the lacunar subsets of $\left[  n-1\right]  $, we can rewrite this as
follows: The eigenvalues of $L_{\lambda}\left(  \omega_{1}t_{1}+\omega
_{2}t_{2}+\cdots+\omega_{n}t_{n}\right)  $ are the elements $\omega_{I}$ for
all lacunar subsets $I\subseteq\left[  n-1\right]  $, with each $\omega_{I}$
appearing with algebraic multiplicity $c_{I}^{\lambda}$. We can restrict this
list to those lacunar subsets $I\subseteq\left[  n-1\right]  $ that satisfy
$c_{I}^{\lambda}\neq0$ (since an eigenvalue $\omega_{I}$ that appears with
algebraic multiplicity $c_{I}^{\lambda}=0$ simply does not appear at all).

This proves Theorem \ref{thm.eigs} \textbf{(a)} in the case when $\mathbf{k}$
is a field of characteristic $0$. It remains to extend the proof to the case
when $\mathbf{k}$ is an arbitrary field. But there is a standard trick for
this: We recast our result as a polynomial identity. Namely, Theorem
\ref{thm.eigs} \textbf{(a)} is saying that
\[
\underbrace{\det\left(  x\operatorname*{id}\nolimits_{\mathcal{S}^{\lambda}%
}-\,L_{\lambda}\left(  \omega_{1}t_{1}+\omega_{2}t_{2}+\cdots+\omega_{n}%
t_{n}\right)  \right)  }_{\substack{\text{This is the characteristic
polynomial of the}\\\text{endomorphism }L_{\lambda}\left(  \omega_{1}%
t_{1}+\omega_{2}t_{2}+\cdots+\omega_{n}t_{n}\right)  \text{ of }%
\mathcal{S}^{\lambda}}}=\prod_{I\subseteq\left[  n-1\right]  \text{ lacunar}%
}\left(  x-\omega_{I}\right)  ^{c_{I}^{\lambda}}%
\]
in the polynomial ring $\mathbf{k}\left[  x\right]  $. This is a polynomial
identity in the indeterminates $x,\omega_{1},\omega_{2},\ldots,\omega_{n}$
(since the Specht module $\mathcal{S}^{\lambda}$ has a basis consisting of the
standard polytabloids, and the action of $S_{n}$ on this basis is independent
of the base field $\mathbf{k}$). Thus, knowing that this identity holds
whenever $\mathbf{k}$ is a field of characteristic $0$, we can immediately
conclude that it holds for all fields $\mathbf{k}$ (and even all commutative
rings $\mathbf{k}$). This proves Theorem \ref{thm.eigs} \textbf{(a)} in the
general case. \medskip

\textbf{(c)} Again, let us first assume that $\mathbf{k}$ is a field of
characteristic $0$. Recall the filtration (\ref{pf.thm.eigs.filt}) constructed
in the proof of part \textbf{(a)}. As we saw in that proof, the element
$\omega_{1}t_{1}+\omega_{2}t_{2}+\cdots+\omega_{n}t_{n}\in\mathcal{T}$ acts as
multiplication by the scalar $\omega_{Q_{i}}$ on each subquotient
$F_{i-1}^{\lambda}/F_{i}^{\lambda}$ of that filtration. In other words, for
each $i\in\left[  f_{n+1}\right]  $, we have%
\[
\left(  \omega_{1}t_{1}+\omega_{2}t_{2}+\cdots+\omega_{n}t_{n}\right)
\overline{v}=\omega_{Q_{i}}\overline{v}\ \ \ \ \ \ \ \ \ \ \text{for each
}\overline{v}\in F_{i-1}^{\lambda}/F_{i}^{\lambda},
\]
that is,%
\[
\left(  \omega_{1}t_{1}+\omega_{2}t_{2}+\cdots+\omega_{n}t_{n}\right)
v-\omega_{Q_{i}}v\in F_{i}^{\lambda}\ \ \ \ \ \ \ \ \ \ \text{for each }v\in
F_{i-1}^{\lambda}.
\]
In other words, for each $i\in\left[  f_{n+1}\right]  $, we have%
\[
\left(  L_{\lambda}\left(  \omega_{1}t_{1}+\omega_{2}t_{2}+\cdots+\omega
_{n}t_{n}\right)  -\omega_{Q_{i}}\operatorname*{id}\nolimits_{\mathcal{S}%
^{\lambda}}\right)  F_{i-1}^{\lambda}\subseteq F_{i}^{\lambda}.
\]
Hence, the operator%
\[
\prod_{i\in\left[  f_{n+1}\right]  }\left(  L_{\lambda}\left(  \omega_{1}%
t_{1}+\omega_{2}t_{2}+\cdots+\omega_{n}t_{n}\right)  -\omega_{Q_{i}%
}\operatorname*{id}\nolimits_{\mathcal{S}^{\lambda}}\right)  \in
\operatorname*{End}\nolimits_{\mathbf{k}}\left(  \mathcal{S}^{\lambda}\right)
\]
\footnote{This product is well-defined (and does not depend on the order of
its factors), since all its factors (being polynomials in $L_{\lambda}\left(
\omega_{1}t_{1}+\omega_{2}t_{2}+\cdots+\omega_{n}t_{n}\right)  $) commute.}
sends the whole $\mathcal{S}^{\lambda}$ to $0$ (because its first factor sends
$\mathcal{S}^{\lambda}=F_{0}^{\lambda}$ down to $F_{1}^{\lambda}$, then its
second factor sends $F_{1}^{\lambda}$ further down to $F_{2}^{\lambda}$, then
its third factor sends $F_{2}^{\lambda}$ onward to $F_{3}^{\lambda}$, and so
on, until the last factor sends $F_{f_{n+1}-1}^{\lambda}$ down to $F_{f_{n+1}%
}^{\lambda}=0$). Moreover, for this to hold, we do not actually need all the
$f_{n+1}$ factors of this product, but rather only those factors that
correspond to the numbers $i\in\left[  f_{n+1}\right]  $ satisfying $c_{Q_{i}%
}^{\lambda}\neq0$ (because if $c_{Q_{i}}^{\lambda}=0$, then property 2 of our
filtration shows that $F_{i-1}^{\lambda}=F_{i}^{\lambda}$, and thus we don't
need to apply the $L_{\lambda}\left(  \omega_{1}t_{1}+\omega_{2}t_{2}%
+\cdots+\omega_{n}t_{n}\right)  -\omega_{Q_{i}}\operatorname*{id}%
\nolimits_{\mathcal{S}^{\lambda}}$ factor to send us from $F_{i-1}^{\lambda}$
down into $F_{i}^{\lambda}$). Hence, the operator
\[
\prod_{\substack{i\in\left[  f_{n+1}\right]  ;\\c_{Q_{i}}^{\lambda}\neq
0}}\left(  L_{\lambda}\left(  \omega_{1}t_{1}+\omega_{2}t_{2}+\cdots
+\omega_{n}t_{n}\right)  -\omega_{Q_{i}}\operatorname*{id}%
\nolimits_{\mathcal{S}^{\lambda}}\right)  \in\operatorname*{End}%
\nolimits_{\mathbf{k}}\left(  \mathcal{S}^{\lambda}\right)
\]
sends the whole $\mathcal{S}^{\lambda}$ to $0$ as well. In other words,
\[
\prod_{\substack{i\in\left[  f_{n+1}\right]  ;\\c_{Q_{i}}^{\lambda}\neq
0}}\left(  L_{\lambda}\left(  \omega_{1}t_{1}+\omega_{2}t_{2}+\cdots
+\omega_{n}t_{n}\right)  -\omega_{Q_{i}}\operatorname*{id}%
\nolimits_{\mathcal{S}^{\lambda}}\right)  =0.
\]
Since $Q_{1},Q_{2},\ldots,Q_{f_{n+1}}$ are just the lacunar subsets of
$\left[  n-1\right]  $, we can rewrite this as%
\[
\prod_{\substack{I\subseteq\left[  n-1\right]  \text{ is lacunar;}%
\\c_{I}^{\lambda}\neq0}}\left(  L_{\lambda}\left(  \omega_{1}t_{1}+\omega
_{2}t_{2}+\cdots+\omega_{n}t_{n}\right)  -\omega_{I}\operatorname*{id}%
\nolimits_{\mathcal{S}^{\lambda}}\right)  =0.
\]
This proves Theorem \ref{thm.eigs} \textbf{(c)} in the case when $\mathbf{k}$
is a field of characteristic $0$. Just as in our proof of part \textbf{(a)},
we can derive the general case from this case by a polynomial identity
argument (treating $\omega_{1},\omega_{2},\ldots,\omega_{n}$ as
indeterminates, and now considering polynomials with values in
$\operatorname*{End}\nolimits_{\mathbf{k}}\left(  \mathcal{S}^{\lambda
}\right)  $, which can be encoded as tuples of usual polynomials). \medskip

\textbf{(b)} Theorem \ref{thm.eigs} \textbf{(c)} shows that the endomorphism
$L_{\lambda}\left(  \omega_{1}t_{1}+\omega_{2}t_{2}+\cdots+\omega_{n}%
t_{n}\right)  \in\operatorname*{End}\nolimits_{\mathbf{k}}\left(
\mathcal{S}^{\lambda}\right)  $ is annihilated by the polynomial
$\prod_{\substack{I\subseteq\left[  n-1\right]  \text{ is lacunar;}%
\\c_{I}^{\lambda}\neq0}}\left(  x-\omega_{I}\right)  \in\mathbf{k}\left[
x\right]  $ (meaning that the polynomial vanishes when we substitute the
endomorphism for $x$). But it is well-known that a linear endomorphism (of a
finite-dimensional $\mathbf{k}$-vector space) that is annihilated by a
polynomial of the form $\prod\left(  x-r\right)  $ with pairwise distinct
scalars $r$ is always diagonalizable. Hence, if the $\omega_{I}$ in the above
polynomial are pairwise distinct, then the endomorphism $L_{\lambda}\left(
\omega_{1}t_{1}+\omega_{2}t_{2}+\cdots+\omega_{n}t_{n}\right)  $ is
diagonalizable. This proves Theorem \ref{thm.eigs} \textbf{(b)}.
\end{proof}

\section{Final remarks}

Thus we have computed the eigenvalues -- and their algebraic multiplicities --
for the action of any one-sided cycle shuffle $\omega_{1}t_{1}+\omega_{2}%
t_{2}+\cdots+\omega_{n}t_{n}$ on any Specht module $\mathcal{S}^{\lambda}$.
With a trivial amount of work, we could extend this analysis to the action of
any element of $\mathcal{T}$ (that is, of any noncommutative polynomial in
$t_{1},t_{2},\ldots,t_{n}$). This automatically allows us to identify the
eigenvalues of such elements on any $S_{n}$-representation $V$, as long as the
decomposition of $V$ into Specht modules is known.

The proof of our result was achieved in a rather roundabout way: We did no
work in the Specht modules $\mathcal{S}^{\lambda}$ themselves. Instead, we
used a filtration of $\mathcal{A}$ (the Fibonacci filtration) whose
subquotients $F_{i}/F_{i-1}$ we were able to decompose into Specht modules
(Theorem \ref{thm.Fi/Fi-1.as-ind}). Then, we \textquotedblleft
projected\textquotedblright\ this filtration onto each Specht module
$\mathcal{S}^{\lambda}$ (Theorem \ref{thm.specht.filtr}) and used the
semisimplicity of $\mathcal{A}$ (actually, the complete reducibility of
$\mathcal{S}^{\lambda}$ would have sufficed) to triangularize the action of
$\mathcal{T}$ on $\mathcal{S}^{\lambda}$. This is in contrast to other
instances of similar questions, such as the recent \cite{2407.08644}, where
the solution requires significant exploration of the inner life of
$\mathcal{S}^{\lambda}$.

Our above method for proving Theorems \ref{thm.specht.filtr} and
\ref{thm.eigs} -- in which we used the Fibonacci filtration to triangularize
$L_{\lambda}\left(  \omega_{1}t_{1}+\omega_{2}t_{2}+\cdots+\omega_{n}%
t_{n}\right)  $ -- is partly generalizable:

\begin{proposition}
\label{prop.reflT-gen}Let $A$ be a $\mathbf{k}$-algebra\footnotemark, and let
$T$ be a $\mathbf{k}$-subalgebra of $A$. Let%
\begin{equation}
0=F_{0}\subseteq F_{1}\subseteq F_{2}\subseteq\cdots\subseteq F_{m}=A
\label{eq.prop.reflT-gen.filA}%
\end{equation}
be a filtration of $A$ by $\left(  A,T\right)  $-subbimodules. Let $V$ be any
left $A$-module. If $B$ is any $\left(  A,T\right)  $-subbimodule of $A$, then
we can define a left $T$-submodule
\[
V^{B}:=\left\{  v\in V\ \mid\ Bv=0\right\}
\]
of $V$. Then, we have a filtration%
\begin{equation}
0=V^{F_{m}}\subseteq V^{F_{m-1}}\subseteq V^{F_{m-2}}\subseteq\cdots\subseteq
V^{F_{0}}=V \label{eq.prop.reflT-gen.filV}%
\end{equation}
of $V$ by left $T$-submodules.

\begin{enumerate}
\item[\textbf{(a)}] Its subquotients $V^{F_{i-1}}/V^{F_{i}}$ can be
canonically embedded into $\operatorname*{Hom}\nolimits_{A}\left(
F_{i}/F_{i-1},\ V\right)  $ as left $T$-modules. Thus, if some element $t\in
T$ acts triangularly from the right on the filtration
(\ref{eq.prop.reflT-gen.filA}) (meaning that it acts as a scalar on each
subquotient $F_{i}/F_{i-1}$), then it also acts triangularly from the left on
the filtration (\ref{eq.prop.reflT-gen.filV}).

\item[\textbf{(b)}] If $\mathbf{k}$ is a field and the algebra $A$ is
semisimple, then these embeddings $V^{F_{i-1}}/V^{F_{i}}\rightarrow
\operatorname*{Hom}\nolimits_{A}\left(  F_{i}/F_{i-1},\ V\right)  $ are
isomorphisms. Thus, in this case, knowing the dimensions of the Hom-spaces
$\operatorname*{Hom}\nolimits_{A}\left(  F_{i},\ V\right)  $ allows us to
compute the multiplicities of eigenvalues for a triangular $t\in T$ acting on
$V$.
\end{enumerate}
\end{proposition}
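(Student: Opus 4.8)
The plan is to transpose into this abstract setting the machinery built for Theorems~\ref{thm.specht.filtr} and~\ref{thm.eigs}: the role of $\mathcal{S}^{\lambda}$ is played by the arbitrary left $A$-module $V$, and Lemmas~\ref{lem.specht.f1A} and~\ref{lem.specht.exact} are re-proved at this level of generality (Lemma~\ref{lem.specht.f1A} verbatim, Lemma~\ref{lem.specht.exact} only under the semisimplicity hypothesis, which is exactly why part~\textbf{(b)} needs it while part~\textbf{(a)} does not). First I would dispose of the structural facts. For every $(A,T)$-subbimodule $B$ of $A$, the set $V^{B}$ is closed under addition and contains $0$, and for $t\in T$, $v\in V^{B}$ we get $B(tv)=(Bt)v\subseteq Bv=0$ using the right $T$-stability $Bt\subseteq B$; hence $V^{B}$ is a left $T$-submodule of $V$. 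Monotonicity ($B\subseteq B'$ forces $V^{B'}\subseteq V^{B}$) is immediate, so (\ref{eq.prop.reflT-gen.filA}) yields the chain~(\ref{eq.prop.reflT-gen.filV}) of left $T$-submodules; its ends are $V^{F_{0}}=V^{\{0\}}=V$ and $V^{F_{m}}=V^{A}=\{v\in V\mid Av=0\}=0$, the latter because $1\in A$.

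The core is the canonical $\mathbf{k}$-linear isomorphism $V^{B}\cong\operatorname{Hom}_{A}(A/B,\,V)$, $v\mapsto(\overline{a}\mapsto av)$, valid for any left ideal $B$ of $A$, proved exactly as Lemma~\ref{lem.specht.f1A} (identify $\operatorname{Hom}_{A}(A/B,V)$ with the $A$-linear maps $A\to V$ killing $B$, then with $\{v\in V\mid Bv=0\}$ by evaluation at $1$). When $B$ is moreover an $(A,T)$-subbimodule, I would equip $\operatorname{Hom}_{A}(A/B,\,V)$ with the left $T$-action $(t\cdot f)(x):=f(xt)$ induced by the right $T$-action on $A/B$; this is well-defined and $T$-multiplicative because the left $A$- and right $T$-actions commute, and a one-line computation shows the above isomorphism is then $T$-equivariant. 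Applying the left-exact contravariant functor $\operatorname{Hom}_{A}(-,\,V)$ to the short exact sequence $0\to F_{i}/F_{i-1}\to A/F_{i-1}\to A/F_{i}\to 0$ of $(A,T)$-bimodules gives an exact sequence of left $T$-modules $0\to\operatorname{Hom}_{A}(A/F_{i},\,V)\to\operatorname{Hom}_{A}(A/F_{i-1},\,V)\to\operatorname{Hom}_{A}(F_{i}/F_{i-1},\,V)$, which via the isomorphisms just set up becomes $0\to V^{F_{i}}\to V^{F_{i-1}}\to\operatorname{Hom}_{A}(F_{i}/F_{i-1},\,V)$ with the first arrow the inclusion; this is the asserted $T$-linear embedding $V^{F_{i-1}}/V^{F_{i}}\hookrightarrow\operatorname{Hom}_{A}(F_{i}/F_{i-1},\,V)$ of part~\textbf{(a)}. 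If $t\in T$ acts on $F_{i}/F_{i-1}$ by a scalar $c$, then $(t\cdot f)(x)=f(xt)=f(cx)=cf(x)$, so $t$ acts by $c$ on $\operatorname{Hom}_{A}(F_{i}/F_{i-1},\,V)$, hence on its $T$-submodule $V^{F_{i-1}}/V^{F_{i}}$; thus $t$ acts block-scalarly, a fortiori triangularly, along~(\ref{eq.prop.reflT-gen.filV}).

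For part~\textbf{(b)}, when $\mathbf{k}$ is a field and $A$ is semisimple every short exact sequence of left $A$-modules splits, so $\operatorname{Hom}_{A}(-,\,V)$ is exact (as in Lemma~\ref{lem.specht.exact}); hence the sequence above closes up as $0\to V^{F_{i}}\to V^{F_{i-1}}\to\operatorname{Hom}_{A}(F_{i}/F_{i-1},\,V)\to 0$ and the embedding is an isomorphism. The concluding remark then follows by applying the exact functor $\operatorname{Hom}_{A}(-,\,V)$ to $0\to F_{i-1}\to F_{i}\to F_{i}/F_{i-1}\to 0$, which gives $\dim\operatorname{Hom}_{A}(F_{i}/F_{i-1},\,V)=\dim\operatorname{Hom}_{A}(F_{i},\,V)-\dim\operatorname{Hom}_{A}(F_{i-1},\,V)$, so that all the subquotient dimensions, and therefore the eigenvalue multiplicities (colliding ones summed), are recovered from the numbers $\dim\operatorname{Hom}_{A}(F_{i},\,V)$.

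No step is genuinely deep here; it is all standard homological algebra over $\mathbf{k}$. The one place demanding care is the bookkeeping of the two $T$-module structures — transporting the \emph{right} $T$-action on $A/B$ and on $F_{i}/F_{i-1}$ into a \emph{left} $T$-action on the Hom-spaces, and checking that every arrow in the diagram chase (inclusions, connecting maps, the identifications $V^{F_{i}}\cong\operatorname{Hom}_{A}(A/F_{i},V)$) is $T$-equivariant. Getting the sidedness wrong at any point would silently break the triangularity conclusion, so that is the step I would write out most carefully.
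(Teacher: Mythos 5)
Your proof is correct and follows essentially the same route as the paper, whose own ``proof'' of this proposition is just a pointer back to the arguments for Theorems \ref{thm.specht.filtr} and \ref{thm.eigs}; you have simply carried out that abstraction explicitly (Lemma \ref{lem.specht.f1A} verbatim for any left ideal, Lemma \ref{lem.specht.exact} replaced by left-exactness of $\operatorname{Hom}_{A}(-,V)$ for part \textbf{(a)} and by semisimplicity for part \textbf{(b)}). The one detail you supply that the paper leaves tacit --- the left $T$-action $(t\cdot f)(x)=f(xt)$ on the Hom-spaces and the $T$-equivariance of the evaluation isomorphism --- is exactly the right bookkeeping.
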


\footnotetext{Recall that $\mathbf{k}$ is an arbitrary commutative ring.}

\begin{proof}
This is implicit in our above proofs of Theorems \ref{thm.specht.filtr} and
\ref{thm.eigs} (where $A$, $T$ and $V$ were taken to be $\mathcal{A}$,
$\mathcal{T}$ and $\mathcal{S}^{\lambda}$, and where the submodules $V^{F_{i}%
}$ were called $F_{i}^{\lambda}$).
\end{proof}

\appendix

\section{\label{sec.apx.proofs}Omitted proofs}

In this appendix, we give proofs to some folklore facts about representations
of symmetric groups.

\subsection{\label{subsec.apx.pf.prop.Z-iso}Proof of Proposition
\ref{prop.Z-iso}}

\begin{proof}
[Proof of Proposition \ref{prop.Z-iso} (sketched).]We will use the notations
from \cite{sga}.

Let $\left(  e_{1},e_{2},\ldots,e_{n}\right)  $ be the standard basis of the
natural representation $\mathcal{N}_{n}=\mathbf{k}^{n}$. The symmetric group
$S_{n}$ acts on it by the rule $\sigma\cdot e_{i}=e_{\sigma\left(  i\right)
}$ for all $\sigma\in S_{n}$ and $i\in\left[  n\right]  $. The
subrepresentation $\mathcal{D}_{n}$ is then spanned by the single vector
$e_{1}+e_{2}+\cdots+e_{n}=\left(  1,1,\ldots,1\right)  $.

Consider the Young diagram $Y\left(  \left(  n-1,1\right)  \right)  $ of the
partition $\left(  n-1,1\right)  $. This Young diagram gives rise to a Young
module $\mathcal{M}^{\left(  n-1,1\right)  }:=\mathcal{M}^{Y\left(  \left(
n-1,1\right)  \right)  }$ and a Specht module $\mathcal{S}^{\left(
n-1,1\right)  }:=\mathcal{S}^{Y\left(  \left(  n-1,1\right)  \right)  }$.

The Young module $\mathcal{M}^{\left(  n-1,1\right)  }$ has a basis formed by
the $n$-tabloids of shape $Y\left(  \left(  n-1,1\right)  \right)  $. We will
use the short notation $\underline{\overline{k}}$ for the $n$-tabloid of shape
$Y\left(  \left(  n-1,1\right)  \right)  $ that has the entry $k$ in cell
$\left(  2,1\right)  $, as in \cite[Example 5.3.16]{sga}. For instance,%
\[
\text{if }n=5\text{, then }\underline{\overline{3}}%
=\ytableausetup{tabloids}\ytableaushort{1245,3}\ytableausetup{notabloids}\ \ .
\]

The Specht module $\mathcal{S}^{\left(  n-1,1\right)  }$ is the submodule of
$\mathcal{M}^{\left(  n-1,1\right)  }$ spanned by the polytabloids
$\mathbf{e}_{T}=\underline{\overline{T\left(  2,1\right)  }}%
-\underline{\overline{T\left(  1,1\right)  }}$, or, equivalently, by the
differences $\underline{\overline{k}}-\underline{\overline{\ell}}$ for
$k\neq\ell$ in $\left[  n\right]  $. (See \cite[Example 5.4.2]{sga}.)

The $S_{n}$-representation $\mathcal{M}^{\left(  n-1,1\right)  }$ is
isomorphic to the natural representation $\mathcal{N}_{n}$ via the isomorphism%
\begin{align*}
\mathcal{M}^{\left(  n-1,1\right)  }  &  \rightarrow\mathcal{N}_{n},\\
\underline{\overline{k}}  &  \mapsto e_{k}.
\end{align*}
Under this isomorphism, the Specht module $\mathcal{S}^{\left(  n-1,1\right)
}$ becomes the span of the differences $e_{k}-e_{\ell}$ for $k\neq\ell$ in
$\left[  n\right]  $. Hence, $\mathcal{S}^{\left(  n-1,1\right)  }$ is
isomorphic to the zero-sum subrepresentation of $\mathcal{N}_{n}$ (called
$R\left(  \mathbf{k}^{n}\right)  $ in \cite[\S 4.2.5]{sga}). \medskip

\begin{noncompile}
\textit{Old proof of Proposition \ref{prop.Z-iso} \textbf{(a)}:} We define the
$\mathbf{k}$-bilinear form%
\begin{align*}
f:\mathcal{N}_{n}\times\mathcal{N}_{n}  &  \rightarrow\mathbf{k},\\
\left(  e_{i},e_{j}\right)   &  \mapsto\delta_{i,j}\ \ \ \ \ \ \ \ \ \ \left(
\text{Kronecker delta}\right)  .
\end{align*}
This form $f$ is $S_{n}$-invariant and nondegenerate in the strongest sense
(i.e., it gives rise to an isomorphism $\mathcal{N}_{n}\rightarrow
\mathcal{N}_{n}^{\ast}$). Restricting it to the subset $\mathcal{S}^{\left(
n-1,1\right)  }\times\mathcal{N}_{n}$, we obtain a $\mathbf{k}$-bilinear form%
\begin{align*}
f^{\prime}:\mathcal{S}^{\left(  n-1,1\right)  }\times\mathcal{N}_{n}  &
\rightarrow\mathbf{k},\\
\left(  \underline{\overline{k}}-\underline{\overline{\ell}},e_{j}\right)   &
\mapsto\delta_{k,j}-\delta_{\ell,j},
\end{align*}
which gives rise to a $\mathbf{k}$-module morphism
\begin{align*}
f_{R}^{\prime}:\mathcal{N}_{n}  &  \rightarrow\left(  \mathcal{S}^{\left(
n-1,1\right)  }\right)  ^{\ast},\\
v  &  \mapsto\left(  u\mapsto f^{\prime}\left(  u,v\right)  \right)  .
\end{align*}
This morphism $f_{R}^{\prime}$ is a morphism of $S_{n}$-representations (since
the form $f$ and thus the form $f^{\prime}$ is $S_{n}$-invariant), i.e., a
left $\mathcal{A}$-module morphism. Moreover, its kernel is%
\begin{align*}
\operatorname*{Ker}\left(  f_{R}^{\prime}\right)   &  =\left\{  v\in
\mathcal{N}_{n}\ \mid\ f^{\prime}\left(  u,v\right)  =0\text{ for all }%
u\in\mathcal{S}^{\left(  n-1,1\right)  }\right\} \\
&  =\left\{  v\in\mathcal{N}_{n}\ \mid\ f^{\prime}\left(  \underline{\overline
{k}}-\underline{\overline{\ell}},v\right)  =0\text{ for all }k\neq\ell\right\}
\\
&  =\left\{  v\in\mathcal{N}_{n}\ \mid\ v_{k}-v_{\ell}=0\text{ for all }%
k\neq\ell\right\} \\
&  \ \ \ \ \ \ \ \ \ \ \ \ \ \ \ \ \ \ \ \ \left(
\begin{array}
[c]{c}%
\text{where }v_{i}\text{ denotes the }i\text{-th entry of}\\
\text{the vector }v\in\mathcal{N}_{n}=\mathbf{k}^{n}%
\end{array}
\right) \\
&  =\mathcal{D}_{n}.
\end{align*}
Hence, it factors through the quotient $\mathcal{N}_{n}/\mathcal{D}%
_{n}=\mathcal{Z}_{n}$. Thus, we obtain a left $\mathcal{A}$-module morphism%
\begin{align*}
f_{R}^{\prime\prime}:\mathcal{Z}_{n}  &  \rightarrow\left(  \mathcal{S}%
^{\left(  n-1,1\right)  }\right)  ^{\ast},\\
\overline{v}  &  \mapsto\left(  u\mapsto f^{\prime}\left(  u,v\right)
\right)  .
\end{align*}
This morphism $f_{R}^{\prime\prime}$ is injective (since it is obtained from
$f_{R}^{\prime}$ by factoring out the whole kernel $\operatorname*{Ker}\left(
f_{R}^{\prime}\right)  =\mathcal{D}_{n}$) and surjective (since it sends each
basis vector $\overline{e_{i}}$ to the linear form on $\mathcal{S}^{\left(
n-1,1\right)  }$ that sends each vector to its $i$-th coordinate; but the
latter forms span $\left(  \mathcal{S}^{\left(  n-1,1\right)  }\right)
^{\ast}$), hence bijective. Thus, it is a left $\mathcal{A}$-module
isomorphism, i.e., an isomorphism of $S_{n}$-representations. This proves
Proposition \ref{prop.Z-iso} \textbf{(a)}. \medskip
\end{noncompile}

\textbf{(a)} Let $\left(  e_{1}^{\ast},e_{2}^{\ast},\ldots,e_{n}^{\ast
}\right)  $ be the dual basis of the standard basis $\left(  e_{1}%
,e_{2},\ldots,e_{n}\right)  $ of $\mathcal{N}_{n}$. Thus, each $i\in\left[
n\right]  $ satisfies $e_{i}^{\ast}\in\mathcal{N}_{n}^{\ast}$, and each
$i,j\in\left[  n\right]  $ satisfy $e_{i}^{\ast}\left(  e_{j}\right)
=\delta_{i,j}$ (Kronecker delta). Furthermore, it is straightforward to see
that $\sigma\left(  e_{i}^{\ast}\right)  =e_{\sigma\left(  i\right)  }^{\ast}$
for each $\sigma\in S_{n}$ and $i\in\left[  n\right]  $.

Now, consider the $\mathbf{k}$-linear map%
\begin{align*}
\Phi:\mathcal{M}^{\left(  n-1,1\right)  }  &  \rightarrow\mathcal{N}_{n}%
^{\ast},\\
\underline{\overline{k}}  &  \mapsto e_{k}^{\ast}\ \ \ \ \ \ \ \ \ \ \left(
\text{for all }k\in\left[  n\right]  \right)  .
\end{align*}
This map $\Phi$ is invertible (since $\left(  \underline{\overline{k}}\right)
_{k\in\left[  n\right]  }$ and $\left(  e_{k}^{\ast}\right)  _{k\in\left[
n\right]  }$ are bases of the $\mathbf{k}$-modules $\mathcal{M}^{\left(
n-1,1\right)  }$ and $\mathcal{N}_{n}^{\ast}$, respectively) and $S_{n}%
$-equivariant (since each $k\in\left[  n\right]  $ and $\sigma\in S_{n}$
satisfy $\sigma\left(  \underline{\overline{k}}\right)  =\underline{\overline
{\sigma\left(  k\right)  }}$ and $\sigma\left(  e_{k}^{\ast}\right)
=e_{\sigma\left(  k\right)  }^{\ast}$). Hence, this map $\Phi$ is an
isomorphism of $S_{n}$-representations.

Now, consider the subrepresentation $\mathcal{S}^{\left(  n-1,1\right)
}=\operatorname*{span}\nolimits_{\mathbf{k}}\left\{  \underline{\overline{k}%
}-\underline{\overline{\ell}}\ \mid\ k\neq\ell\right\}  $ of $\mathcal{M}%
^{\left(  n-1,1\right)  }$. The image of this subrepresentation under $\Phi$
is
\begin{align}
&  \Phi\left(  \mathcal{S}^{\left(  n-1,1\right)  }\right) \nonumber\\
&  =\Phi\left(  \operatorname*{span}\nolimits_{\mathbf{k}}\left\{
\underline{\overline{k}}-\underline{\overline{\ell}}\ \mid\ k\neq\ell\right\}
\right)  \ \ \ \ \ \ \ \ \ \ \left(  \text{since }\mathcal{S}^{\left(
n-1,1\right)  }=\operatorname*{span}\nolimits_{\mathbf{k}}\left\{
\underline{\overline{k}}-\underline{\overline{\ell}}\ \mid\ k\neq\ell\right\}
\right) \nonumber\\
&  =\operatorname*{span}\nolimits_{\mathbf{k}}\left\{  \Phi\left(
\underline{\overline{k}}-\underline{\overline{\ell}}\right)  \ \mid\ k\neq
\ell\right\}  \ \ \ \ \ \ \ \ \ \ \left(  \text{since }\Phi\text{ is
}\mathbf{k}\text{-linear}\right) \nonumber\\
&  =\operatorname*{span}\nolimits_{\mathbf{k}}\left\{  e_{k}^{\ast}-e_{\ell
}^{\ast}\ \mid\ k\neq\ell\right\}  \label{pf.prop.Z-iso.a.2}%
\end{align}
(since every $k,\ell\in\left[  n\right]  $ satisfy $\Phi\left(
\underline{\overline{k}}-\underline{\overline{\ell}}\right)  =\Phi\left(
\underline{\overline{k}}\right)  -\Phi\left(  \underline{\overline{\ell}%
}\right)  =e_{k}^{\ast}-e_{\ell}^{\ast}$ by the definition of $\Phi$).

Now, let us recall some general properties of dual modules. If $V$ is any
$\mathbf{k}$-module, and if $W$ is a $\mathbf{k}$-submodule of $V$, then we
let $W^{\perp}$ denote the $\mathbf{k}$-submodule $\left\{  f\in V^{\ast
}\ \mid\ f\left(  W\right)  =0\right\}  $ of $V^{\ast}$. It is well-known that
there is a canonical isomorphism $W^{\perp}\cong\left(  V/W\right)  ^{\ast}$
in this situation (since any linear map $f\in W^{\perp}$ annihilates $W$ and
thus can be factored through $V/W$ by the first isomorphism theorem).
Moreover, if $V$ is a representation of a group $G$, and if $W$ is a
subrepresentation of $V$, then $W^{\perp}$ is a subrepresentation of $V^{\ast
}$, and the above-mentioned isomorphism $W^{\perp}\cong\left(  V/W\right)
^{\ast}$ is an isomorphism of $G$-representations. Applying this fact to
$G=S_{n}$ and $V=\mathcal{N}_{n}$ and $W=\mathcal{D}_{n}$, we obtain the
isomorphism%
\begin{equation}
\mathcal{D}_{n}^{\perp}\cong\left(  \mathcal{N}_{n}/\mathcal{D}_{n}\right)
^{\ast} \label{pf.prop.Z-iso.a.3}%
\end{equation}
of $S_{n}$-representations.

Now we shall show that $\operatorname*{span}\nolimits_{\mathbf{k}}\left\{
e_{k}^{\ast}-e_{\ell}^{\ast}\ \mid\ k\neq\ell\right\}  =\mathcal{D}_{n}%
^{\perp}$. Indeed,
\begin{align}
\mathcal{D}_{n}^{\perp}  &  =\left\{  f\in\mathcal{N}_{n}^{\ast}%
\ \mid\ f\left(  \mathcal{D}_{n}\right)  =0\right\}
\ \ \ \ \ \ \ \ \ \ \left(  \text{by the definition of }\mathcal{D}_{n}%
^{\perp}\right) \nonumber\\
&  =\left\{  f\in\mathcal{N}_{n}^{\ast}\ \mid\ f\left(  \left(  1,1,\ldots
,1\right)  \right)  =0\right\}  \label{pf.prop.Z-iso.a.4}%
\end{align}
(since the $\mathbf{k}$-module $\mathcal{D}_{n}$ is spanned by the single
vector $\left(  1,1,\ldots,1\right)  $, and thus a $\mathbf{k}$-linear map
$f\in\mathcal{N}_{n}^{\ast}$ satisfies $f\left(  \mathcal{D}_{n}\right)  =0$
if and only if it satisfies $f\left(  \left(  1,1,\ldots,1\right)  \right)
=0$). For any $k\neq\ell$, we have $\left(  e_{k}^{\ast}-e_{\ell}^{\ast
}\right)  \left(  \left(  1,1,\ldots,1\right)  \right)  =1-1=0$ and thus
$e_{k}^{\ast}-e_{\ell}^{\ast}\in\mathcal{D}_{n}^{\perp}$ (by
(\ref{pf.prop.Z-iso.a.4})). Hence, by linearity, it follows that
\begin{equation}
\operatorname*{span}\nolimits_{\mathbf{k}}\left\{  e_{k}^{\ast}-e_{\ell}%
^{\ast}\ \mid\ k\neq\ell\right\}  \subseteq\mathcal{D}_{n}^{\perp}.
\label{pf.prop.Z-iso.a.6}%
\end{equation}

Let us now show the converse inclusion. Indeed, let $f\in\mathcal{D}%
_{n}^{\perp}$. Then, $f\in\mathcal{N}_{n}^{\ast}$ and $f\left(  \left(
1,1,\ldots,1\right)  \right)  =0$ (by (\ref{pf.prop.Z-iso.a.4})). But $\left(
e_{1}^{\ast},e_{2}^{\ast},\ldots,e_{n}^{\ast}\right)  $ is a basis of
$\mathcal{N}_{n}^{\ast}$; hence, $f$ can be written as a $\mathbf{k}$-linear
combination $f=\alpha_{1}e_{1}^{\ast}+\alpha_{2}e_{2}^{\ast}+\cdots+\alpha
_{n}e_{n}^{\ast}$ with $\alpha_{1},\alpha_{2},\ldots,\alpha_{n}\in\mathbf{k}$
(since $f\in\mathcal{N}_{n}^{\ast}$). Consider these $\alpha_{1},\alpha
_{2},\ldots,\alpha_{n}$. From $f=\alpha_{1}e_{1}^{\ast}+\alpha_{2}e_{2}^{\ast
}+\cdots+\alpha_{n}e_{n}^{\ast}$, we obtain
\begin{align*}
f\left(  \left(  1,1,\ldots,1\right)  \right)   &  =\left(  \alpha_{1}%
e_{1}^{\ast}+\alpha_{2}e_{2}^{\ast}+\cdots+\alpha_{n}e_{n}^{\ast}\right)
\left(  \left(  1,1,\ldots,1\right)  \right) \\
&  =\alpha_{1}1+\alpha_{2}1+\cdots+\alpha_{n}1=\alpha_{1}+\alpha_{2}%
+\cdots+\alpha_{n}.
\end{align*}
Thus, $\alpha_{1}+\alpha_{2}+\cdots+\alpha_{n}=f\left(  \left(  1,1,\ldots
,1\right)  \right)  =0$, so that $\alpha_{1}=-\alpha_{2}-\alpha_{3}%
-\cdots-\alpha_{n}$. Now,%
\begin{align*}
f  &  =\underbrace{\alpha_{1}}_{=-\alpha_{2}-\alpha_{3}-\cdots-\alpha_{n}%
}e_{1}^{\ast}+\alpha_{2}e_{2}^{\ast}+\cdots+\alpha_{n}e_{n}^{\ast}\\
&  =\left(  -\alpha_{2}-\alpha_{3}-\cdots-\alpha_{n}\right)  e_{1}^{\ast
}+\alpha_{2}e_{2}^{\ast}+\alpha_{3}e_{3}^{\ast}+\cdots+\alpha_{n}e_{n}^{\ast
}\\
&  =\alpha_{2}\underbrace{\left(  e_{2}^{\ast}-e_{1}^{\ast}\right)  }%
_{\in\left\{  e_{k}^{\ast}-e_{\ell}^{\ast}\ \mid\ k\neq\ell\right\}
}+\,\alpha_{3}\underbrace{\left(  e_{3}^{\ast}-e_{1}^{\ast}\right)  }%
_{\in\left\{  e_{k}^{\ast}-e_{\ell}^{\ast}\ \mid\ k\neq\ell\right\}  }%
+\cdots+\alpha_{n}\underbrace{\left(  e_{n}^{\ast}-e_{1}^{\ast}\right)  }%
_{\in\left\{  e_{k}^{\ast}-e_{\ell}^{\ast}\ \mid\ k\neq\ell\right\}  }\\
&  \in\operatorname*{span}\nolimits_{\mathbf{k}}\left\{  e_{k}^{\ast}-e_{\ell
}^{\ast}\ \mid\ k\neq\ell\right\}  .
\end{align*}

Forget that we fixed $f$. We thus have shown that $f\in\operatorname*{span}%
\nolimits_{\mathbf{k}}\left\{  e_{k}^{\ast}-e_{\ell}^{\ast}\ \mid\ k\neq
\ell\right\}  $ for each $f\in\mathcal{D}_{n}^{\perp}$. In other words,
$\mathcal{D}_{n}^{\perp}\subseteq\operatorname*{span}\nolimits_{\mathbf{k}%
}\left\{  e_{k}^{\ast}-e_{\ell}^{\ast}\ \mid\ k\neq\ell\right\}  $. Combining
this with (\ref{pf.prop.Z-iso.a.6}), we obtain%
\[
\mathcal{D}_{n}^{\perp}=\operatorname*{span}\nolimits_{\mathbf{k}}\left\{
e_{k}^{\ast}-e_{\ell}^{\ast}\ \mid\ k\neq\ell\right\}  .
\]
Comparing this with (\ref{pf.prop.Z-iso.a.2}), we are led to%
\begin{align*}
\Phi\left(  \mathcal{S}^{\left(  n-1,1\right)  }\right)   &  =\mathcal{D}%
_{n}^{\perp}\cong\left(  \mathcal{N}_{n}/\mathcal{D}_{n}\right)  ^{\ast
}\ \ \ \ \ \ \ \ \ \ \left(  \text{by (\ref{pf.prop.Z-iso.a.3})}\right) \\
&  \cong\mathcal{Z}_{n}^{\ast}\ \ \ \ \ \ \ \ \ \ \left(  \text{since
}\mathcal{N}_{n}/\mathcal{D}_{n}=\mathcal{Z}_{n}\right)  ,
\end{align*}
and this is an isomorphism of $S_{n}$-representations (as we saw above). On
the other hand, however, we also have $\Phi\left(  \mathcal{S}^{\left(
n-1,1\right)  }\right)  \cong\mathcal{S}^{\left(  n-1,1\right)  }$ as $S_{n}%
$-representations (since $\Phi$ is an isomorphism of $S_{n}$-representations).
Thus,%
\[
\mathcal{S}^{\left(  n-1,1\right)  }\cong\Phi\left(  \mathcal{S}^{\left(
n-1,1\right)  }\right)  \cong\mathcal{Z}_{n}^{\ast}%
\ \ \ \ \ \ \ \ \ \ \text{as }S_{n}\text{-representations.}%
\]
Taking duals, we thus obtain%
\begin{equation}
\left(  \mathcal{S}^{\left(  n-1,1\right)  }\right)  ^{\ast}\cong\left(
\mathcal{Z}_{n}^{\ast}\right)  ^{\ast}\ \ \ \ \ \ \ \ \ \ \text{as }%
S_{n}\text{-representations.} \label{pf.prop.Z-iso.a.dual}%
\end{equation}

But the $\mathbf{k}$-module $\mathcal{Z}_{n}=\mathcal{N}_{n}/\mathcal{D}%
_{n}=\mathbf{k}^{n}/\operatorname*{span}\nolimits_{\mathbf{k}}\left\{  \left(
1,1,\ldots,1\right)  \right\}  $ has a finite basis (namely, $\left(
\overline{e_{1}},\overline{e_{2}},\ldots,\overline{e_{n-1}}\right)  $, as can
be easily seen from basic linear algebra). Hence, \cite[Proposition 5.19.22
\textbf{(b)}]{sga} shows that $\left(  \mathcal{Z}_{n}^{\ast}\right)  ^{\ast
}\cong\mathcal{Z}_{n}$ as $S_{n}$-representations. In view of this, we can
rewrite (\ref{pf.prop.Z-iso.a.dual}) as follows:%
\[
\left(  \mathcal{S}^{\left(  n-1,1\right)  }\right)  ^{\ast}\cong%
\mathcal{Z}_{n}\ \ \ \ \ \ \ \ \ \ \text{as }S_{n}\text{-representations.}%
\]
This proves Proposition \ref{prop.Z-iso} \textbf{(a)}. \medskip

\textbf{(b)} Assume that $n$ is invertible in $\mathbf{k}$. Then,
\cite[Proposition 4.2.28 \textbf{(a)}]{sga} says that $\mathbf{k}^{n}=R\left(
\mathbf{k}^{n}\right)  \oplus D\left(  \mathbf{k}^{n}\right)  $, where we are
using the notations of \cite{sga}. In our notations, this is saying that
$\mathcal{N}_{n}=\mathcal{S}^{\left(  n-1,1\right)  }\oplus\mathcal{D}_{n}$
(since the submodule $\mathcal{S}^{\left(  n-1,1\right)  }%
=\operatorname*{span}\nolimits_{\mathbf{k}}\left\{  \underline{\overline{k}%
}-\underline{\overline{\ell}}\ \mid\ k\neq\ell\right\}  $ of $\mathcal{M}%
^{\left(  n-1,1\right)  }$ corresponds to the zero-sum subrepresentation
$R\left(  \mathbf{k}^{n}\right)  =\operatorname*{span}\nolimits_{\mathbf{k}%
}\left\{  e_{k}-e_{\ell}\ \mid\ k\neq\ell\right\}  $ of $\mathbf{k}^{n}$).
Hence, $\mathcal{S}^{\left(  n-1,1\right)  }\cong\mathcal{N}_{n}%
/\mathcal{D}_{n}=\mathcal{Z}_{n}$ as $S_{n}$-representations. This proves
Proposition \ref{prop.Z-iso} \textbf{(b)}.

Alternatively, we can prove Proposition \ref{prop.Z-iso} \textbf{(b)}
directly: Let
\[
\underline{\overline{\operatorname*{avg}}}:=\dfrac{1}{n}\left(
\underline{\overline{1}}+\underline{\overline{2}}+\cdots+\underline{\overline
{n}}\right)  =\dfrac{1}{n}\sum_{j=1}^{n}\underline{\overline{j}}\in
\mathcal{M}^{\left(  n-1,1\right)  }.
\]
Thus, $n\cdot\underline{\overline{\operatorname*{avg}}}=\sum_{j=1}%
^{n}\underline{\overline{j}}=\sum_{k=1}^{n}\underline{\overline{k}}$. Hence,%
\begin{align}
&  \left(  \underline{\overline{1}}-\underline{\overline{\operatorname*{avg}}%
}\right)  +\left(  \underline{\overline{2}}-\underline{\overline
{\operatorname*{avg}}}\right)  +\cdots+\left(  \underline{\overline{n}%
}-\underline{\overline{\operatorname*{avg}}}\right) \nonumber\\
&  =\sum_{k=1}^{n}\left(  \underline{\overline{k}}-\underline{\overline
{\operatorname*{avg}}}\right)  =\sum_{k=1}^{n}\underline{\overline{k}}%
-n\cdot\underline{\overline{\operatorname*{avg}}}=0 \label{pf.prop.Z-iso.b.2}%
\end{align}
(since $n\cdot\underline{\overline{\operatorname*{avg}}}=\sum_{k=1}%
^{n}\underline{\overline{k}}$).

It is furthermore easy to see that each $i\in\left[  n\right]  $ satisfies
$\underline{\overline{i}}-\underline{\overline{\operatorname*{avg}}}%
\in\mathcal{S}^{\left(  n-1,1\right)  }$ (this follows from an easy
computation\footnote{\textit{Proof.} Let $i\in\left[  n\right]  $. Then, from
$\underline{\overline{\operatorname*{avg}}}=\dfrac{1}{n}\sum_{j=1}%
^{n}\underline{\overline{j}}$, we obtain%
\[
\underline{\overline{i}}-\underline{\overline{\operatorname*{avg}}%
}=\underline{\overline{i}}-\dfrac{1}{n}\sum_{j=1}^{n}\underline{\overline{j}%
}=\dfrac{1}{n}\sum_{j=1}^{n}\underbrace{\left(  \underline{\overline{i}%
}-\underline{\overline{j}}\right)  }_{\substack{\in\mathcal{S}^{\left(
n-1,1\right)  }\\\text{(since }\mathcal{S}^{\left(  n-1,1\right)
}=\operatorname*{span}\nolimits_{\mathbf{k}}\left\{  \underline{\overline{k}%
}-\underline{\overline{\ell}}\ \mid\ k\neq\ell\right\}  \text{)}}%
}\in\mathcal{S}^{\left(  n-1,1\right)  }%
\]
(since $\mathcal{S}^{\left(  n-1,1\right)  }$ is a $\mathbf{k}$-module),
qed.}). Moreover, the list $\left(  \underline{\overline{1}}%
-\underline{\overline{\operatorname*{avg}}},\ \underline{\overline{2}%
}-\underline{\overline{\operatorname*{avg}}},\ \ldots,\ \underline{\overline
{n-1}}-\underline{\overline{\operatorname*{avg}}}\right)  $ is a basis of the
$\mathbf{k}$-module $\mathcal{S}^{\left(  n-1,1\right)  }$ (this is an easy
exercise in linear algebra\footnote{For the sake of completeness, here is a
\textit{proof:}
\par
\begin{itemize}
\item The $n-1$ vectors $\underline{\overline{1}}-\underline{\overline
{\operatorname*{avg}}},\ \underline{\overline{2}}-\underline{\overline
{\operatorname*{avg}}},\ \ldots,\ \underline{\overline{n-1}}%
-\underline{\overline{\operatorname*{avg}}}$ are $\mathbf{k}$-linearly
independent.
\par
\textit{Proof:} Let $\alpha_{1},\alpha_{2},\ldots,\alpha_{n-1}\in\mathbf{k}$
be scalars satisfying $\sum_{k=1}^{n-1}\alpha_{k}\left(  \underline{\overline
{k}}-\underline{\overline{\operatorname*{avg}}}\right)  =0$. We must show that
all coefficients $\alpha_{1},\alpha_{2},\ldots,\alpha_{n-1}$ are $0$.
\par
Recall that $\mathcal{M}^{\left(  n-1,1\right)  }$ is a free $\mathbf{k}%
$-module with basis $\left(  \underline{\overline{1}},\underline{\overline{2}%
},\ldots,\underline{\overline{n}}\right)  $. Comparing the coefficients of
$\underline{\overline{n}}$ on both sides of the equality $\sum_{k=1}%
^{n-1}\alpha_{k}\left(  \underline{\overline{k}}-\underline{\overline
{\operatorname*{avg}}}\right)  =0$, we obtain $-\sum_{k=1}^{n-1}\alpha
_{k}\cdot\dfrac{1}{n}=0$ (since the coefficient of $\underline{\overline{n}}$
in $\underline{\overline{\operatorname*{avg}}}$ is $\dfrac{1}{n}$, whereas the
coefficient of $\underline{\overline{n}}$ in $\underline{\overline{k}}$ for
any $k\in\left[  n-1\right]  $ is $0$). Multiplying the latter equality by
$-n$, we find $\sum_{k=1}^{n-1}\alpha_{k}=0$. Hence,%
\[
\sum_{k=1}^{n-1}\alpha_{k}\left(  \underline{\overline{k}}%
-\underline{\overline{\operatorname*{avg}}}\right)  =\sum_{k=1}^{n-1}%
\alpha_{k}\underline{\overline{k}}-\underbrace{\sum_{k=1}^{n-1}\alpha_{k}%
}_{=0}\underline{\overline{\operatorname*{avg}}}=\sum_{k=1}^{n-1}\alpha
_{k}\underline{\overline{k}},
\]
so that $\sum_{k=1}^{n-1}\alpha_{k}\underline{\overline{k}}=\sum_{k=1}%
^{n-1}\alpha_{k}\left(  \underline{\overline{k}}-\underline{\overline
{\operatorname*{avg}}}\right)  =0$. Since the basis vectors
$\underline{\overline{1}},\underline{\overline{2}},\ldots,\underline{\overline
{n-1}}$ are $\mathbf{k}$-linearly independent, this entails that all
coefficients $\alpha_{1},\alpha_{2},\ldots,\alpha_{n-1}$ are $0$. This
completes the proof of the linear independence of the vectors
$\underline{\overline{1}}-\underline{\overline{\operatorname*{avg}}%
},\ \underline{\overline{2}}-\underline{\overline{\operatorname*{avg}}%
},\ \ldots,\ \underline{\overline{n-1}}-\underline{\overline
{\operatorname*{avg}}}$.
\par
\item The $n-1$ vectors $\underline{\overline{1}}-\underline{\overline
{\operatorname*{avg}}},\ \underline{\overline{2}}-\underline{\overline
{\operatorname*{avg}}},\ \ldots,\ \underline{\overline{n-1}}%
-\underline{\overline{\operatorname*{avg}}}$ span the $\mathbf{k}$-module
$\mathcal{S}^{\left(  n-1,1\right)  }$.
\par
\textit{Proof:} Consider the two subsets%
\begin{align*}
W  &  :=\left\{  \underline{\overline{i}}-\underline{\overline
{\operatorname*{avg}}}\ \mid\ i\in\left[  n\right]  \right\}  =\left\{
\underline{\overline{1}}-\underline{\overline{\operatorname*{avg}}%
},\ \underline{\overline{2}}-\underline{\overline{\operatorname*{avg}}%
},\ \ldots,\ \underline{\overline{n}}-\underline{\overline{\operatorname*{avg}%
}}\right\}  \ \ \ \ \ \ \ \ \ \ \text{and}\\
W^{\prime}  &  :=\left\{  \underline{\overline{i}}-\underline{\overline
{\operatorname*{avg}}}\ \mid\ i\in\left[  n-1\right]  \right\}  =\left\{
\underline{\overline{1}}-\underline{\overline{\operatorname*{avg}}%
},\ \underline{\overline{2}}-\underline{\overline{\operatorname*{avg}}%
},\ \ldots,\ \underline{\overline{n-1}}-\underline{\overline
{\operatorname*{avg}}}\right\}
\end{align*}
of $\mathcal{M}^{\left(  n-1,1\right)  }$. Then, $W\subseteq\mathcal{S}%
^{\left(  n-1,1\right)  }$ (since each $i\in\left[  n\right]  $ satisfies
$\underline{\overline{i}}-\underline{\overline{\operatorname*{avg}}}%
\in\mathcal{S}^{\left(  n-1,1\right)  }$), so that $\operatorname*{span}%
W\subseteq\mathcal{S}^{\left(  n-1,1\right)  }$ (by linearity). Similarly,
$\operatorname*{span}W^{\prime}\subseteq\mathcal{S}^{\left(  n-1,1\right)  }$.
Also, the set $W$ differs from $W^{\prime}$ only in the extra element
$\underline{\overline{n}}-\underline{\overline{\operatorname*{avg}}}$. Thus,
$W=W^{\prime}\cup\left\{  \underline{\overline{n}}-\underline{\overline
{\operatorname*{avg}}}\right\}  $. Therefore,%
\[
\operatorname*{span}W=\operatorname*{span}\left(  W^{\prime}\cup\left\{
\underline{\overline{n}}-\underline{\overline{\operatorname*{avg}}}\right\}
\right)  =\operatorname*{span}W^{\prime}+\operatorname*{span}\left\{
\underline{\overline{n}}-\underline{\overline{\operatorname*{avg}}}\right\}
.
\]
However, solving the equation (\ref{pf.prop.Z-iso.b.2}) for
$\underline{\overline{n}}-\underline{\overline{\operatorname*{avg}}}$, we find%
\begin{align*}
\underline{\overline{n}}-\underline{\overline{\operatorname*{avg}}}  &
=-\left(  \underline{\overline{1}}-\underline{\overline{\operatorname*{avg}}%
}\right)  -\left(  \underline{\overline{2}}-\underline{\overline
{\operatorname*{avg}}}\right)  -\cdots-\left(  \underline{\overline{n-1}%
}-\underline{\overline{\operatorname*{avg}}}\right) \\
&  \in\operatorname*{span}\underbrace{\left\{  \underline{\overline{1}%
}-\underline{\overline{\operatorname*{avg}}},\ \underline{\overline{2}%
}-\underline{\overline{\operatorname*{avg}}},\ \ldots,\ \underline{\overline
{n-1}}-\underline{\overline{\operatorname*{avg}}}\right\}  }_{=W^{\prime}%
}=\operatorname*{span}W^{\prime}.
\end{align*}
Therefore, $\operatorname*{span}\left\{  \underline{\overline{n}%
}-\underline{\overline{\operatorname*{avg}}}\right\}  \subseteq
\operatorname*{span}W^{\prime}$, so that%
\[
\operatorname*{span}W=\operatorname*{span}W^{\prime}%
+\underbrace{\operatorname*{span}\left\{  \underline{\overline{n}%
}-\underline{\overline{\operatorname*{avg}}}\right\}  }_{\subseteq
\operatorname*{span}W^{\prime}}\subseteq\operatorname*{span}W^{\prime
}+\operatorname*{span}W^{\prime}\subseteq\operatorname*{span}W^{\prime}.
\]
\par
Recall that the $\mathbf{k}$-module $\mathcal{S}^{\left(  n-1,1\right)  }$ is
spanned by the differences $\underline{\overline{k}}-\underline{\overline
{\ell}}$ with $k\neq\ell$. But each of the latter differences
$\underline{\overline{k}}-\underline{\overline{\ell}}$ belongs to
$\operatorname*{span}W$ (indeed, $\underline{\overline{k}}%
-\underline{\overline{\ell}}=\underbrace{\left(  \underline{\overline{k}%
}-\underline{\overline{\operatorname*{avg}}}\right)  }_{\in W}%
-\underbrace{\left(  \underline{\overline{\ell}}-\underline{\overline
{\operatorname*{avg}}}\right)  }_{\in W}\in W-W\subseteq\operatorname*{span}%
W$). Combining the previous two sentences, we conclude (by linearity) that
$\mathcal{S}^{\left(  n-1,1\right)  }\subseteq\operatorname*{span}%
W\subseteq\operatorname*{span}W^{\prime}$, so that $\mathcal{S}^{\left(
n-1,1\right)  }=\operatorname*{span}W^{\prime}$ (since $\operatorname*{span}%
W^{\prime}\subseteq\mathcal{S}^{\left(  n-1,1\right)  }$). In other words, the
$n-1$ vectors $\underline{\overline{1}}-\underline{\overline
{\operatorname*{avg}}},\ \underline{\overline{2}}-\underline{\overline
{\operatorname*{avg}}},\ \ldots,\ \underline{\overline{n-1}}%
-\underline{\overline{\operatorname*{avg}}}$ span the $\mathbf{k}$-module
$\mathcal{S}^{\left(  n-1,1\right)  }$ (since these $n-1$ vectors are the
elements of $W^{\prime}$).
\end{itemize}
\par
So we know that the $n-1$ vectors $\underline{\overline{1}}%
-\underline{\overline{\operatorname*{avg}}},\ \underline{\overline{2}%
}-\underline{\overline{\operatorname*{avg}}},\ \ldots,\ \underline{\overline
{n-1}}-\underline{\overline{\operatorname*{avg}}}$ are $\mathbf{k}$-linearly
independent and span the $\mathbf{k}$-module $\mathcal{S}^{\left(
n-1,1\right)  }$. Hence, they are a basis of $\mathcal{S}^{\left(
n-1,1\right)  }$.}). Now, the $\mathbf{k}$-linear map%
\begin{align*}
\varrho:\mathcal{N}_{n}  &  \rightarrow\mathcal{S}^{\left(  n-1,1\right)  },\\
e_{i}  &  \mapsto\underline{\overline{i}}-\underline{\overline
{\operatorname*{avg}}}\ \ \ \ \ \ \ \ \ \ \left(  \text{for all }i\in\left[
n\right]  \right)
\end{align*}
is well-defined (since each $i\in\left[  n\right]  $ satisfies
$\underline{\overline{i}}-\underline{\overline{\operatorname*{avg}}}%
\in\mathcal{S}^{\left(  n-1,1\right)  }$) and $S_{n}$-equivariant (since we
can easily see that $\sigma\cdot\underline{\overline{\operatorname*{avg}}%
}=\underline{\overline{\operatorname*{avg}}}$ for each $\sigma\in S_{n}$). It
furthermore sends $e_{1}+e_{2}+\cdots+e_{n}$ to $\left(  \underline{\overline
{1}}-\underline{\overline{\operatorname*{avg}}}\right)  +\left(
\underline{\overline{2}}-\underline{\overline{\operatorname*{avg}}}\right)
+\cdots+\left(  \underline{\overline{n}}-\underline{\overline
{\operatorname*{avg}}}\right)  =0$ (by (\ref{pf.prop.Z-iso.b.2})), and thus
vanishes on the submodule $\mathcal{D}_{n}$ (by linearity, since
$\mathcal{D}_{n}$ is spanned by $e_{1}+e_{2}+\cdots+e_{n}$). Hence, it factors
through an $S_{n}$-equivariant $\mathbf{k}$-linear map $\varrho^{\prime
}:\mathcal{N}_{n}/\mathcal{D}_{n}\rightarrow\mathcal{S}^{\left(  n-1,1\right)
}$. This latter map $\varrho^{\prime}$ sends each residue class $\overline
{e_{i}}$ to $\underline{\overline{i}}-\underline{\overline{\operatorname*{avg}%
}}$, and thus is invertible (since it sends the basis $\left(  \overline
{e_{1}},\overline{e_{2}},\ldots,\overline{e_{n-1}}\right)  $ of $\mathcal{N}%
_{n}/\mathcal{D}_{n}$ to the basis $\left(  \underline{\overline{1}%
}-\underline{\overline{\operatorname*{avg}}},\ \underline{\overline{2}%
}-\underline{\overline{\operatorname*{avg}}},\ \ldots,\ \underline{\overline
{n-1}}-\underline{\overline{\operatorname*{avg}}}\right)  $ of $\mathcal{S}%
^{\left(  n-1,1\right)  }$). Hence, $\varrho^{\prime}$ is an isomorphism of
$S_{n}$-representations (since it is $S_{n}$-equivariant). Therefore,
$\mathcal{S}^{\left(  n-1,1\right)  }\cong\mathcal{N}_{n}/\mathcal{D}%
_{n}=\mathcal{Z}_{n}$ as $S_{n}$-representations. Proposition \ref{prop.Z-iso}
\textbf{(b)} is now proved again.
\end{proof}

\subsection{\label{subsec.apx.pf.prop.indprod.ass}Proof of Proposition
\ref{prop.indprod.ass}}

\begin{proof}
[Proof of Proposition \ref{prop.indprod.ass}.]We shall use the following
general facts about induced representations (\cite[Exercise 4.1.2]{GriRei} and
\cite[Exercise 4.1.3]{GriRei}, respectively\footnote{The facts are stated in
\cite{GriRei} only for $\mathbf{k}=\mathbb{C}$, but the proofs work equally
well for any $\mathbf{k}$.}):

\begin{itemize}
\item \textit{Transitivity of induction:} Let $G$ be a group. Let $H$ be a
subgroup of $G$. Let $I$ be a subgroup of $H$. Let $U$ be a representation of
$I$. Then,
\begin{equation}
\operatorname*{Ind}\nolimits_{H}^{G}\operatorname*{Ind}\nolimits_{I}^{H}%
U\cong\operatorname*{Ind}\nolimits_{I}^{G}U. \label{pf.prop.indprod.ass.trind}%
\end{equation}

\item \textit{Monoidality of induction:} Let $G_{1}$ and $G_{2}$ be two
groups. Let $H_{1}$ be a subgroup of $G_{1}$, and let $H_{2}$ be a subgroup of
$G_{2}$. Let $W_{1}$ be a representation of $H_{1}$, and let $W_{2}$ be a
representation of $H_{2}$. Then,%
\begin{equation}
\operatorname*{Ind}\nolimits_{H_{1}\times H_{2}}^{G_{1}\times G_{2}}\left(
W_{1}\otimes W_{2}\right)  \cong\left(  \operatorname*{Ind}\nolimits_{H_{1}%
}^{G_{1}}W_{1}\right)  \otimes\left(  \operatorname*{Ind}\nolimits_{H_{2}%
}^{G_{2}}W_{2}\right)  . \label{pf.prop.indprod.ass.mind}%
\end{equation}

\end{itemize}

Now, set $G_{1}:=S_{n_{1}+n_{2}+\cdots+n_{i}}$ and $H_{1}:=S_{n_{1}}\times
S_{n_{2}}\times\cdots\times S_{n_{i}}$ and $W_{1}:=U_{1}\otimes U_{2}%
\otimes\cdots\otimes U_{i}$ (a representation of $H_{1}$) and $G_{2}%
:=S_{n_{i+1}+n_{i+2}+\cdots+n_{k}}$ and $H_{2}:=S_{n_{i+1}}\times S_{n_{i+2}%
}\times\cdots\times S_{n_{k}}$ and $W_{2}:=U_{i+1}\otimes U_{i+2}\otimes
\cdots\otimes U_{k}$ (a representation of $H_{2}$). Then, by the definition of
induction products, we have%
\[
U_{1}\ast U_{2}\ast\cdots\ast U_{i}=\operatorname*{Ind}\nolimits_{S_{n_{1}%
}\times S_{n_{2}}\times\cdots\times S_{n_{i}}}^{S_{n_{1}+n_{2}+\cdots+n_{i}}%
}\left(  U_{1}\otimes U_{2}\otimes\cdots\otimes U_{i}\right)
=\operatorname*{Ind}\nolimits_{H_{1}}^{G_{1}}W_{1}%
\]
(by the definitions of $G_{1}$ and $H_{1}$ and $W_{1}$) and likewise%
\[
U_{i+1}\ast U_{i+2}\ast\cdots\ast U_{k}=\operatorname*{Ind}\nolimits_{H_{2}%
}^{G_{2}}W_{2}.
\]
Thus,
\begin{align*}
&  \underbrace{\left(  U_{1}\ast U_{2}\ast\cdots\ast U_{i}\right)
}_{=\operatorname*{Ind}\nolimits_{H_{1}}^{G_{1}}W_{1}}\ast\underbrace{\left(
U_{i+1}\ast U_{i+2}\ast\cdots\ast U_{k}\right)  }_{=\operatorname*{Ind}%
\nolimits_{H_{2}}^{G_{2}}W_{2}}\\
&  =\left(  \operatorname*{Ind}\nolimits_{H_{1}}^{G_{1}}W_{1}\right)
\ast\left(  \operatorname*{Ind}\nolimits_{H_{2}}^{G_{2}}W_{2}\right) \\
&  =\operatorname*{Ind}\nolimits_{S_{n_{1}+n_{2}+\cdots+n_{i}}\times
S_{n_{i+1}+n_{i+2}+\cdots+n_{k}}}^{S_{n_{1}+n_{2}+\cdots+n_{k}}}\left(
\left(  \operatorname*{Ind}\nolimits_{H_{1}}^{G_{1}}W_{1}\right)
\otimes\left(  \operatorname*{Ind}\nolimits_{H_{2}}^{G_{2}}W_{2}\right)
\right) \\
&  \ \ \ \ \ \ \ \ \ \ \ \ \ \ \ \ \ \ \ \ \left(  \text{by the definition of
an induction product}\right) \\
&  =\operatorname*{Ind}\nolimits_{G_{1}\times G_{2}}^{S_{n_{1}+n_{2}%
+\cdots+n_{k}}}\underbrace{\left(  \left(  \operatorname*{Ind}\nolimits_{H_{1}%
}^{G_{1}}W_{1}\right)  \otimes\left(  \operatorname*{Ind}\nolimits_{H_{2}%
}^{G_{2}}W_{2}\right)  \right)  }_{\substack{\cong\operatorname*{Ind}%
\nolimits_{H_{1}\times H_{2}}^{G_{1}\times G_{2}}\left(  W_{1}\otimes
W_{2}\right)  \\\text{(by (\ref{pf.prop.indprod.ass.mind}))}}%
}\ \ \ \ \ \ \ \ \ \ \left(
\begin{array}
[c]{c}%
\text{by the definitions}\\
\text{of }G_{1}\text{ and }G_{2}%
\end{array}
\right) \\
&  \cong\operatorname*{Ind}\nolimits_{G_{1}\times G_{2}}^{S_{n_{1}%
+n_{2}+\cdots+n_{k}}}\left(  \operatorname*{Ind}\nolimits_{H_{1}\times H_{2}%
}^{G_{1}\times G_{2}}\left(  W_{1}\otimes W_{2}\right)  \right) \\
&  \cong\operatorname*{Ind}\nolimits_{H_{1}\times H_{2}}^{S_{n_{1}%
+n_{2}+\cdots+n_{k}}}\left(  W_{1}\otimes W_{2}\right)
\ \ \ \ \ \ \ \ \ \ \left(  \text{by an application of
(\ref{pf.prop.indprod.ass.trind})}\right) \\
&  \cong\operatorname*{Ind}\nolimits_{S_{n_{1}}\times S_{n_{2}}\times
\cdots\times S_{n_{k}}}^{S_{n_{1}+n_{2}+\cdots+n_{k}}}\left(  U_{1}\otimes
U_{2}\otimes\cdots\otimes U_{k}\right)
\end{align*}
(since the definitions of $H_{1}$ and $H_{2}$ yield%
\begin{align*}
H_{1}\times H_{2}  &  =\left(  S_{n_{1}}\times S_{n_{2}}\times\cdots\times
S_{n_{i}}\right)  \times\left(  S_{n_{i+1}}\times S_{n_{i+2}}\times
\cdots\times S_{n_{k}}\right) \\
&  \cong S_{n_{1}}\times S_{n_{2}}\times\cdots\times S_{n_{k}}%
\end{align*}
and%
\begin{align*}
W_{1}\otimes W_{2}  &  =\left(  U_{1}\otimes U_{2}\otimes\cdots\otimes
U_{i}\right)  \otimes\left(  U_{i+1}\otimes U_{i+2}\otimes\cdots\otimes
U_{k}\right) \\
&  \cong U_{1}\otimes U_{2}\otimes\cdots\otimes U_{k},
\end{align*}
and both of these isomorphisms are canonical and \textquotedblleft fit
together\textquotedblright\ in that the latter isomorphism respects the action
of the former groups\footnote{Alternatively, you can argue directly: Both
$\operatorname*{Ind}\nolimits_{H_{1}\times H_{2}}^{S_{n_{1}+n_{2}+\cdots
+n_{k}}}\left(  W_{1}\otimes W_{2}\right)  $ and $\operatorname*{Ind}%
\nolimits_{S_{n_{1}}\times S_{n_{2}}\times\cdots\times S_{n_{k}}}%
^{S_{n_{1}+n_{2}+\cdots+n_{k}}}\left(  U_{1}\otimes U_{2}\otimes\cdots\otimes
U_{k}\right)  $ can be written as%
\[
\mathbf{k}\left[  S_{n_{1}+n_{2}+\cdots+n_{k}}\right]  \otimes\left(
U_{1}\otimes U_{2}\otimes\cdots\otimes U_{k}\right)
\]
modulo the relation
\[
\sigma\left(  \sigma_{1}\ast\sigma_{2}\ast\cdots\ast\sigma_{k}\right)
\otimes\left(  u_{1}\otimes u_{2}\otimes\cdots\otimes u_{k}\right)
-\sigma\otimes\left(  \sigma_{1}u_{1}\otimes\sigma_{2}u_{2}\otimes
\cdots\otimes\sigma_{k}u_{k}\right)
\]
for all $\sigma\in S_{n_{1}+n_{2}+\cdots+n_{k}}$ and $\sigma_{i}\in S_{n_{i}}$
and $u_{i}\in U_{i}$.}). In view of%
\[
U_{1}\ast U_{2}\ast\cdots\ast U_{k}=\operatorname*{Ind}\nolimits_{S_{n_{1}%
}\times S_{n_{2}}\times\cdots\times S_{n_{k}}}^{S_{n_{1}+n_{2}+\cdots+n_{k}}%
}\left(  U_{1}\otimes U_{2}\otimes\cdots\otimes U_{k}\right)
\]
(by the definition of the induction product), we can rewrite this as%
\[
\left(  U_{1}\ast U_{2}\ast\cdots\ast U_{i}\right)  \ast\left(  U_{i+1}\ast
U_{i+2}\ast\cdots\ast U_{k}\right)  \cong U_{1}\ast U_{2}\ast\cdots\ast
U_{k}.
\]
This proves Proposition \ref{prop.indprod.ass}.
\end{proof}

\subsection{\label{subsec.apx.proofs.specht.tensA}Proof of Proposition
\ref{prop.specht.tensA}}

\begin{proof}
[Proof of Proposition \ref{prop.specht.tensA}.]The following proof works more
generally when $\mathbf{k}$ is a commutative ring in which $n!$ is invertible.

We shall use the notations of \cite[Chapter 5]{sga}. Pick any $n$-tableau $P$
of shape $\lambda$ and any $n$-tableau $Q$ of shape $\mu$. Consider the
corresponding Young symmetrizers $\mathbf{E}_{P}$ and $\mathbf{E}_{Q}$ (as
defined in \cite[Definition 5.11.1]{sga}). From \cite[(228)]{sga}, we have
$\mathcal{S}^{\lambda}\cong\mathcal{A}\mathbf{E}_{P}$ and $\mathcal{S}^{\mu
}\cong\mathcal{A}\mathbf{E}_{Q}$. Hence, $\operatorname*{Hom}%
\nolimits_{\mathcal{A}}\left(  \mathcal{S}^{\lambda},\mathcal{S}^{\mu}\right)
\cong\operatorname*{Hom}\nolimits_{\mathcal{A}}\left(  \mathcal{A}%
\mathbf{E}_{P},\mathcal{A}\mathbf{E}_{Q}\right)  $ (since $\operatorname*{Hom}%
\nolimits_{\mathcal{A}}$ is functorial).

From \cite[Lemma 5.11.13]{sga}, we know that the coefficient of the
permutation $\operatorname*{id}\in S_{n}$ in $\mathbf{E}_{P}$ is $1$. Thus,
the single element $\mathbf{E}_{P}$ is $\mathbf{k}$-linearly independent.
Hence, $\mathbf{kE}_{P}\cong\mathbf{k}$ as $\mathbf{k}$-module.

However, \cite[Theorem 5.11.3]{sga} shows that $\mathbf{E}_{P}^{2}=\dfrac
{n!}{f^{\lambda}}\mathbf{E}_{P}$. This shows that the element
$\widetilde{\mathbf{E}}_{P}:=\dfrac{f^{\lambda}}{n!}\mathbf{E}_{P}%
\in\mathcal{A}$ is idempotent\footnote{\textit{Proof.} From
$\widetilde{\mathbf{E}}_{P}=\dfrac{f^{\lambda}}{n!}\mathbf{E}_{P}$, we obtain
$\widetilde{\mathbf{E}}_{P}^{2}=\left(  \dfrac{f^{\lambda}}{n!}\mathbf{E}%
_{P}\right)  ^{2}=\left(  \dfrac{f^{\lambda}}{n!}\right)  ^{2}%
\underbrace{\mathbf{E}_{P}^{2}}_{=\dfrac{n!}{f^{\lambda}}\mathbf{E}_{P}%
}=\left(  \dfrac{f^{\lambda}}{n!}\right)  ^{2}\dfrac{n!}{f^{\lambda}%
}\mathbf{E}_{P}=\dfrac{f^{\lambda}}{n!}\mathbf{E}_{P}=\widetilde{\mathbf{E}%
}_{P}$. In other words, $\widetilde{\mathbf{E}}_{P}$ is idempotent.}.
Moreover, $\widetilde{\mathbf{E}}_{P}$ is a nonzero scalar multiple of
$\mathbf{E}_{P}$ (since the scalar $\dfrac{f^{\lambda}}{n!}$ is clearly
nonzero). Thus, $\mathcal{A}\mathbf{E}_{P}=\mathcal{A}\widetilde{\mathbf{E}%
}_{P}$.

But \cite[Lemma 5.13.4]{Etingof-et-al} says that if $A$ is a $\mathbf{k}%
$-algebra and $e\in A$ is an idempotent, then $\operatorname*{Hom}%
\nolimits_{A}\left(  Ae,M\right)  \cong eM$ for any left $A$-module $M$.
Applying this to $A=\mathcal{A}$ and $e=\widetilde{\mathbf{E}}_{P}$ and
$M=\mathcal{A}\mathbf{E}_{Q}$, we find%
\[
\operatorname*{Hom}\nolimits_{\mathcal{A}}\left(  \mathcal{A}%
\widetilde{\mathbf{E}}_{P},\mathcal{A}\mathbf{E}_{Q}\right)  \cong%
\widetilde{\mathbf{E}}_{P}\mathcal{A}\mathbf{E}_{Q}=\mathbf{E}_{P}%
\mathcal{A}\mathbf{E}_{Q}%
\]
(since $\widetilde{\mathbf{E}}_{P}$ is a nonzero scalar multiple of
$\mathbf{E}_{P}$). Altogether,%
\begin{align}
\operatorname*{Hom}\nolimits_{\mathcal{A}}\left(  \mathcal{S}^{\lambda
},\mathcal{S}^{\mu}\right)   &  \cong\operatorname*{Hom}\nolimits_{\mathcal{A}%
}\left(  \underbrace{\mathcal{A}\mathbf{E}_{P}}_{=\mathcal{A}%
\widetilde{\mathbf{E}}_{P}},\mathcal{A}\mathbf{E}_{Q}\right)
=\operatorname*{Hom}\nolimits_{\mathcal{A}}\left(  \mathcal{A}%
\widetilde{\mathbf{E}}_{P},\mathcal{A}\mathbf{E}_{Q}\right) \nonumber\\
&  \cong\mathbf{E}_{P}\mathcal{A}\mathbf{E}_{Q}.
\label{pf.prop.specht.tensA.all}%
\end{align}

Now, if $\lambda\neq\mu$, then \cite[Proposition 5.11.15]{sga} (applied to
$S=P$ and $T=Q$) shows that $\mathbf{E}_{P}\mathbf{aE}_{Q}=0$ for all
$\mathbf{a}\in\mathcal{A}$, and thus $\mathbf{E}_{P}\mathcal{A}\mathbf{E}%
_{Q}=0$. Hence, if $\lambda\neq\mu$, then (\ref{pf.prop.specht.tensA.all})
becomes%
\begin{equation}
\operatorname*{Hom}\nolimits_{\mathcal{A}}\left(  \mathcal{S}^{\lambda
},\mathcal{S}^{\mu}\right)  \cong\mathbf{E}_{P}\mathcal{A}\mathbf{E}_{Q}=0.
\label{pf.prop.specht.tensA.distinct}%
\end{equation}

This proves Proposition \ref{prop.specht.tensA} in the case when $\lambda
\neq\mu$.

Thus, we now WLOG assume that $\lambda=\mu$. Hence, $\mu=\lambda$ and thus
$\mathcal{S}^{\mu}=\mathcal{S}^{\lambda}\cong\mathcal{A}\mathbf{E}_{P}$.
Hence, the same argument that we used to prove (\ref{pf.prop.specht.tensA.all}%
) can be applied to $\lambda$ and $P$ instead of $\mu$ and $Q$. This results
in%
\begin{equation}
\operatorname*{Hom}\nolimits_{\mathcal{A}}\left(  \mathcal{S}^{\lambda
},\mathcal{S}^{\lambda}\right)  \cong\mathbf{E}_{P}\mathcal{A}\mathbf{E}_{P}.
\label{pf.prop.specht.tensA.7}%
\end{equation}
However, \cite[Proposition 5.11.5]{sga} (applied to $T=P$) shows that each
$\mathbf{a}\in\mathcal{A}$ satisfies $\mathbf{E}_{P}\mathbf{aE}_{P}%
=\kappa\mathbf{E}_{P}$ for some $\kappa\in\mathbf{k}$. In other words, each
$\mathbf{a}\in\mathcal{A}$ satisfies $\mathbf{E}_{P}\mathbf{aE}_{P}%
\in\mathbf{kE}_{P}$. In other words, $\mathbf{E}_{P}\mathcal{A}\mathbf{E}%
_{P}\subseteq\mathbf{kE}_{P}$. On the other hand, each $\kappa\in\mathbf{k}$
satisfies
\[
\kappa\mathbf{E}_{P}=\dfrac{\kappa f^{\lambda}}{n!}\cdot\underbrace{\dfrac
{n!}{f^{\lambda}}\mathbf{E}_{P}}_{\substack{=\mathbf{E}_{P}^{2}=\mathbf{E}%
_{P}1\mathbf{E}_{P}\\\in\mathbf{E}_{P}\mathcal{A}\mathbf{E}_{P}}}\in
\dfrac{\kappa f^{\lambda}}{n!}\mathbf{E}_{P}\mathcal{A}\mathbf{E}_{P}%
\subseteq\mathbf{E}_{P}\mathcal{A}\mathbf{E}_{P}.
\]
Thus, $\mathbf{kE}_{P}\subseteq\mathbf{E}_{P}\mathcal{A}\mathbf{E}_{P}$.
Combining this with $\mathbf{E}_{P}\mathcal{A}\mathbf{E}_{P}\subseteq
\mathbf{kE}_{P}$, we obtain%
\[
\mathbf{E}_{P}\mathcal{A}\mathbf{E}_{P}=\mathbf{kE}_{P}.
\]
Hence, (\ref{pf.prop.specht.tensA.7}) becomes%
\[
\operatorname*{Hom}\nolimits_{\mathcal{A}}\left(  \mathcal{S}^{\lambda
},\mathcal{S}^{\lambda}\right)  \cong\mathbf{E}_{P}\mathcal{A}\mathbf{E}%
_{P}=\mathbf{kE}_{P}\cong\mathbf{k}.
\]
In other words, $\operatorname*{Hom}\nolimits_{\mathcal{A}}\left(
\mathcal{S}^{\lambda},\mathcal{S}^{\mu}\right)  \cong\mathbf{k}$ if
$\lambda=\mu$. Thus, Proposition \ref{prop.specht.tensA} is proved in the case
$\lambda=\mu$ as well.
\end{proof}

\begin{noncompile}
\begin{proof}
[Old proof of Proposition \ref{prop.specht.tensA}.]Recall (e.g., from
\cite[Theorem 5.12.2 and Corollary 5.12.4]{Etingof-et-al}\footnote{We are
using the fact that the Specht module $\mathcal{S}^{\lambda}$ is isomorphic to
the $V_{\lambda}$ in \cite{Etingof-et-al}. This holds by \cite[Theorem 5.5.13
\textbf{(b)}]{sga}.}) that the Specht modules $\mathcal{S}^{\nu}$ for the
partitions $\nu$ of $n$ are pairwise non-isomorphic split irreducible $S_{n}%
$-representations. Hence, the hom-space $\operatorname*{Hom}%
\nolimits_{\mathcal{A}}\left(  \mathcal{S}^{\lambda},\mathcal{S}^{\mu}\right)
$ is given by%
\[
\operatorname*{Hom}\nolimits_{\mathcal{A}}\left(  \mathcal{S}^{\lambda
},\mathcal{S}^{\mu}\right)  \cong%
\begin{cases}
\mathbf{k}, & \text{if }\lambda=\mu;\\
0, & \text{if }\lambda\neq\mu
\end{cases}
\ \ \ \ \ \ \ \ \ \ \text{as }\mathbf{k}\text{-vector spaces.}%
\]
This proves Proposition \ref{prop.specht.tensA}.
\end{proof}
\end{noncompile}

\bibliographystyle{halpha-abbrv}
\bibliography{biblio.bib}

\end{document}